\title{Learning the score under shape constraints}
\author{Rebecca M. Lewis$^*$, Oliver Y. Feng$^\dagger$, Henry W. J. Reeve$^\ddagger$, Min Xu$^\sharp$ \\ 
and Richard J. Samworth$^\flat$\\ \\
$^*$Department of Statistics, University of Oxford\\
$^\dagger$Department of Statistics, London School of Economics and Political Science\\
$^\ddagger$School of Artificial Intelligence, Nanjing University\\
$^\sharp$Department of Statistics, Rutgers University\\
$^\flat$Statistical Laboratory, University of Cambridge}
\date{\today}
\begin{document}

\maketitle

\begin{abstract}
Score estimation has recently emerged as a key modern statistical challenge, due to its pivotal role in generative modelling via diffusion models.  Moreover, it is an essential ingredient in a new approach to linear regression via convex $M$-estimation, where the corresponding error densities are projected onto the log-concave class. Motivated by these applications, we study the minimax risk of score estimation with respect to squared $L^2(P_0)$-loss, where~$P_0$ denotes an underlying log-concave distribution on $\mathbb{R}$. Such distributions have decreasing score functions, but on its own, this shape constraint is insufficient to guarantee a finite minimax risk. We therefore define subclasses of log-concave densities that capture two fundamental aspects of the estimation problem.  First, we establish the crucial impact of tail behaviour on score estimation by determining the minimax rate over a class of log-concave densities whose score function exhibits controlled growth relative to the quantile levels. Second, we explore the interplay between smoothness and log-concavity by considering the class of log-concave densities with a scale restriction and a $(\beta,L)$-H\"older assumption on the log-density for some $\beta \in [1,2]$. We show that the minimax risk over this latter class is of order $L^{2/(2\beta+1)}n^{-\beta/(2\beta+1)}$ up to poly-logarithmic factors, where $n$ denotes the sample size.  When $\beta < 2$, this rate is faster than could be obtained under either the shape constraint or the smoothness assumption alone.  Our upper bounds are attained by a locally adaptive, multiscale estimator constructed from a uniform confidence band for the score function.  This study highlights intriguing differences between the score estimation and density estimation problems over this shape-constrained class. 
\end{abstract}

\section{Introduction}

The \emph{score function} $\psi_0$ associated with a differentiable density $f_0$ on $\mathbb{R}$ is the derivative of $\log f_0$, so that $\psi_0 = f_0'/f_0$.  Score estimation, often carried out via \emph{score matching} \citep{cox1985penalty, hyvarinen05score}, has emerged over the last few years as a key statistical challenge in modern artificial intelligence \citep{{hyvarinen2007extensions,vincent2011connection,lyu2012interpretation,song2020sliced,yu2020simultaneous,yu2022generalized,lederer2023extremes,benton2024denoising}}.  One of the main reasons is that it forms the bedrock of \emph{diffusion models}, which are now routinely employed in generative modelling to sample synthetic data (often images) that mimic instances in an existing database \citep{sohl2015, song19generative,ho2020, song2020sliced, dhariwal2021, song21score}.

Of course, the score function has played a fundamental role in classical parametric statistical theory over the last century \citep{fisher1922mathematical,vdV1998asymptotic}.  Maximum likelihood estimators are typically found by solving \emph{score equations}, while \emph{score tests} \citep{cox1979theoretical} provide a very general strategy for hypothesis testing in parametric models.  The original motivation of \citet{hyvarinen05score} for score matching came from \emph{non-normalised} statistical models, such as `energy-based' models \citep{song2021train} employed in statistical physics and graphical modelling, where the normalisation constant, or \emph{partition function}, is an intractable function of the unknown parameter.  Score matching avoids this difficulty, since this unknown partition function cancels in the ratio that defines the score function, and therefore represents a computationally tractable alternative to maximum likelihood.  As another application, score matching turns out to be equivalent to Stein's unbiased risk estimator of the mixing distribution in empirical Bayes denoising \citep{ghosh2025stein}. %The appearance of the score function in that context relies crucially on the Gaussianity of the noise, through Tweedie's formula \citep{efron2011tweedie}. Similarly, in generative modelling, most commonly used diffusion models add Gaussian noise in the forward process, and it is in this context that many recent theoretical analyses of score estimation have been carried out \citep{oko2023diffusion,dou2024optimal}.

This paper concerns the estimation of decreasing score functions $\psi_0$, which arise from log-concave densities $f_0$.  Beyond the overarching rationale for studying score estimation mentioned above, our particular motivation for considering this class of score functions comes from the \emph{antitonic score matching} technique for linear regression developed by \citet{feng24asm}, with accompanying \texttt{R} package \citet{kao24asm}.  That work shows how, in linear models with real-valued responses, it is possible to improve on the ordinary least squares estimator via a convex $M$-estimator whose data-driven convex loss function is chosen to minimise the asymptotic variance of the resulting estimator of the vector of regression coefficients.  A crucial step in this procedure is to estimate the score function of the error density using the residuals obtained from a pilot regression coefficient estimator. Although it suffices for the purposes of the antitonic efficiency theory of \citet{feng24asm} to have a consistent score estimator, the finite-sample performance is nevertheless affected by the quality of score estimation. 

Our main goal, then, is to determine the optimal rates of score estimation over subclasses of decreasing score functions on the real line. Throughout, we will measure the accuracy of a score estimate $\hat{\psi} \colon \R \to \R$ using the loss function
% (\red{Could use the notation $\norm{\hat{\psi} - \psi_0}_{L^2(P_0)}^2$ throughout instead of introducing $\mathcal{L}$, and refer to `$L^2(P_0)$ score estimation'})
\begin{equation}
\label{eq:Loss}
\mathcal{L}(\hat{\psi},\psi_0) := \int_{-\infty}^\infty (\hat{\psi} - \psi_0)^2 f_0. 
\end{equation}
This choice of loss is motivated by several considerations: first, the total variation sampling error of a score-based generative modelling algorithm can often be bounded in terms of the cumulative score estimation error with respect to density-weighted $L^2$ losses
%through Girsanov's theorem 
\citep{block2022, lee2022, chenM2023, chenH2023, lee2023, benton2024, li2024, li2024towards}. Moreover,~$\mathcal{L}$ is the relevant loss for consistency of score estimation in~\citet{feng24asm}, and is seen more generally in semiparametric estimation problems with nuisance parameters \citep[Chapter~25.8]{vdV1998asymptotic}. In high-density (bulk) regions where $f_0$ may be regarded as a constant,~$\mathcal{L}$ essentially amounts to a squared error loss for derivative estimation (already a harder task than that of estimating $f_0$, and controlled by the smoothness of $f_0'$).  A distinguishing aspect of the score estimation problem, however, is that since $\psi_0 = f_0'/f_0$, errors in estimating $f_0'$ are magnified in regions where~$f_0$ is small. Moreover, these are precisely the regions where there is a relative scarcity of data points, and where we can therefore expect inaccurate pointwise estimates $\hat{\psi}_n(x)$ of $\psi_0(x)$, especially when $|\psi_0(x)|$ grows rapidly as $x$ approaches the boundary of the support of~$f_0$. More positively, however, these potentially large contributions $\bigl(\hat{\psi}_n(x) - \psi_0(x)\bigr)^2$ to the loss in the tails of the distribution are downweighted to some extent by $f_0(x)$.  In fact, our analysis reveals that the interplay between these two competing factors varies considerably between log-concave densities with different tail behaviours.  Surprisingly, it turns out that light-tailed log-concave densities pose the greatest challenges for score estimation, since their score functions grow rapidly relative to the quantile level.  
%\red{Mention early on that very light-tailed (including compactly supported) densities surprisingly pose greater challenges for score estimation? Discuss excluding uniform densities?}

To gain further insight into the influence of the low-density regions on the overall score estimation accuracy with respect to $\mathcal{L}$, consider any estimator $\hat{\psi}_n$ of $\psi_0$ based on independent and identically distributed observations $X_1,\dotsc,X_n$. In the absence of any parametric assumptions on $\psi_0$, there is no clear rationale for any specific method of extrapolating the estimator beyond the range of the data. For simplicity, then, suppose first that $\hat{\psi}_n = 0$ on $\R \setminus [X_{(1)},X_{(n)}]$, where~$X_{(1)}$ and $X_{(n)}$ denote the minimum and maximum order statistics respectively. If $n \geq 2$, then
% $= e^{-2} + O(1/n)$
$[X_{(1)},X_{(n)}] \subseteq [F_0^{-1}(1/n), F_0^{-1}(1 - 1/n)]$ with probability $(1 - 2/n)^n$, where $F_0^{-1}$ denotes the quantile function. On this event,
\begin{equation}
\label{eq:zero-extrapolation}
\mathcal{L}(\hat{\psi}_n,\psi_0) \geq \int_{-\infty}^{F_0^{-1}(1/n)} \psi_0^2 f_0 + \int_{F_0^{-1}(1 - 1/n)}^\infty \psi_0^2 f_0,
\end{equation}
and this right-hand side may decay only very slowly with $n$.  Indeed, suppose that~$f_0$ is a $\mathrm{Beta}(a,a)$ density with $a > 2$, so that $f_0(x) \propto x^{a - 1}(1 - x)^{a - 1}$ for $x \in [0,1]$. Then $f_0$ is log-concave with decreasing score function $x \mapsto \psi_0(x) = \frac{a-1}{x} - \frac{a-1}{1 - x}$ on $(0,1)$, and has finite Fisher information for location $i(f_0) := \int_0^1 \psi_0^2 f_0 = \frac{4(2a-1)(a-1)}{a-2}$. Since $\lim_{u \to 0} u^{-1/a}F_0^{-1}(u) \in (0,\infty)$ and the density is symmetric about $1/2$, there exist $c_a,c_a' > 0$, depending only on $a$, such that
\[
\int_{-\infty}^{F_0^{-1}(1/n)} \psi_0^2 f_0 + \int_{F_0^{-1}(1 - 1/n)}^\infty \psi_0^2 f_0 \geq c_a\int_0^{n^{-1/a}} u^{a - 3} \laplaced u \geq c_a' n^{-(a - 2)/a}.
\]
Here, the exponent can be made arbitrarily close to~0, yielding arbitrarily slow rates in~\eqref{eq:zero-extrapolation}, when~$a$ is close to 2. The issue is not only that $\psi_0(x)^2 = \psi_0(1 - x)^2$ diverges quickly as $x \to 0$, but also that $f_0$ does not decay sufficiently rapidly at the edge of its support to downweight~$\psi_0^2$ adequately in~\eqref{eq:zero-extrapolation}. This poor estimation accuracy is not alleviated by instead performing constant or affine extrapolation of~$\hat{\psi}_n$ beyond the range of the data, since in the tail regions, no affine function is a sufficiently accurate approximation to the diverging score $\psi_0$ with respect to the loss function~$\mathcal{L}$.

Motivated by these considerations, we first show that over the class of all continuous log-concave densities, the minimax risk for score estimation with respect to $\mathcal{L}$ is of constant order even under the scale restriction $i(f_0) \leq r$ for any $r \in (0,\infty)$. To obtain consistent minimax rates, we subsequently define two types of subclasses of log-concave densities; see Definitions~\ref{def:tail-growth} and~\ref{def:holder} in Section~\ref{sec:main-results}. The first of these controls the growth of the score function relative to the quantiles of the distribution, so as to address the aforementioned issues with score estimation in the tails. More precisely, given class parameters $\gamma \in (0,1]$ and $L > 0$, we consider densities~$f_0$ whose score function $\psi_0$ satisfies
\begin{equation}
\label{eq:tail-growth-score}
|\psi_0(x)| \leq L\min\{F_0(x), 1 - F_0(x)\}^{-(1 - \gamma)/2}
\end{equation}
for all $x \in \R$, where $F_0$ denotes the corresponding distribution function.  This defining condition can be equivalently and conveniently expressed in terms of the \textit{density quantile function} $J_0 := f_0 \circ F_0^{-1} \colon [0,1] \to \R$. It turns out that this is a concave function with derivative $J_0' = \psi_0 \circ F_0^{-1}$, so that~\eqref{eq:tail-growth-score} can be stated as $|J_0'(u)| \leq L\min\{u, 1 - u\}^{-(1 - \gamma)/2}$ for $u \in (0,1)$. The requirement in~\eqref{eq:tail-growth-score} becomes progressively weaker as $\gamma$ decreases to 0, eventually encompassing all $\mathrm{Beta}(a,b)$ densities with $a,b > 2$.  In Section~\ref{subsec:tail-growth}, we establish that the minimax rate $\mathcal{M}_n$ of score estimation over the class of log-concave densities satisfying~\eqref{eq:tail-growth-score} takes the form
\begin{equation}
\label{eq:BasicMinimaxRate}
\mathcal{M}_n \asymp L^2 n^{-(\gamma \wedge 1/3)}
\end{equation}
up to a poly-logarithmic factor in $n$.  Thus, there is an unexpected elbow in the rate at $\gamma = 1/3$, which represents a point of transition: for $\gamma > 1/3$ the error from the bulk of the distribution drives the rate, whereas for smaller $\gamma$ the contribution to the loss from the tails dominates.  In fact, Theorem~\ref{thm:tail-growth} yields a more refined conclusion than~\eqref{eq:BasicMinimaxRate} in two respects: first, we further partition the class via an upper bound $r$ on the Fisher information and derive the sharp dependence on~$r$.  Second, working in the minimax quantile framework of \citet{ma2025high}, which provides a finer-grained understanding of the behaviour of the loss function than the minimax risk, we establish the sharp dependence on the quantile level.

In Section~\ref{subsec:holder}, we seek minimax rates faster than $n^{-1/3}$ by imposing smoothness assumptions on the score function in addition to its monotonicity. We establish that the minimax risk of score estimation over classes $\mathcal{F}_{\beta,L}$ of log-concave densities whose logarithms are $(\beta,L)$-H\"older on $\R$ for some $\beta \in [1,2]$ is $L^{2/\beta}n^{-\beta/(2\beta+1)}$, up to a poly-logarithmic factor in $n$ and $L$. As in Section~\ref{subsec:tail-growth}, we further partition the class in terms of the Fisher information to obtain optimal rates for these subclasses too. %The minimax quantile framework is also considered, and the bounds match in their dependence on $n$ up to poly-logarithmic factors. \red{How do we mention the fact that the $\delta$ dependence doesn't match.} 
The analysis reveals two intriguing aspects of the score estimation problem relative to the corresponding density estimation problem.  First, the monotonicity and smoothness restrictions lead to improved rates compared with either assumption alone.  Indeed, in the absence of the shape constraint, we show in Proposition~\ref{prop:lower-bd-no-shape-c} that the minimax rate over the larger class of absolutely continuous densities whose logarithms are $(\beta,L)$-H\"older is at least $L^{2/\beta}n^{-2(\beta-1)/(2\beta+1)}$, which is strictly larger than $L^{2/\beta}n^{-\beta/(2\beta+1)}$ when $\beta \in [1,2)$. The reason for this is that a log-concave density whose logarithm is $(\beta,L)$-H\"older must contain regions on which the function is smoother and where the score can be estimated more accurately.  By contrast, a variant of the lower bound construction of \citet{kim2016global} shows that the minimax rate of $n^{-4/5}$ in squared Hellinger distance for log-concave density estimation can be realised even under the additional restriction of arbitrary smoothness.   %We will see that the optimality of our estimator crucially relies on this observation. 
% We will see that our estimator adapts to these regions, obtaining improved pointwise rates of estimation when the log-density is smoother. 
Second, we find that the minimax risk of score estimation over the class of log-concave densities whose logarithms are piecewise affine with at most $5$ affine pieces is of order $n^{-1/3}$, which matches the rate over the class $\mathcal{F}_{1,L}$, and so these densities have score functions that are among the hardest to estimate within $\bigcup_{\beta \in [1,2] }\mathcal{F}_{\beta,L}$.  On the other hand, \citet{kim2018} show that the maximum likelihood estimator adapts to these classes of densities, attaining a parametric rate with respect to the squared Hellinger distance up to a poly-logarithmic factor.

The upper bounds on the minimax rates arise from an adaptive multiscale estimator constructed in Section~\ref{sec:multiscale}. %In Section~\ref{sec:multiscale}, we construct an estimator that adaptively achieves the \red{upper bounds on the?} high-probability minimax rates over all of the subclasses in Section~\ref{sec:main-results}, up to logarithmic factors in every class parameter. 
 Our rates %The rate optimality of our multiscale estimator stems 
 stem from two key properties of our estimator that hold on an event of high probability (Proposition~\ref{prop:scorePointwiseErr}). First, it is locally adaptive, in that it achieves improved pointwise rates for score estimation in regions where the log-density is smoother; and 
%achieves the optimal pointwise bias-variance tradeoff (up to logarithmic factors) in high-density regions, where the adaptively selected bandwidth may vary considerably to adapt to the local regularity of the score function; 
second, the estimator does not overestimate the score in absolute value at any point, which in particular affords protection against wildly inaccurate estimation in the tail regions.
% Also cite \citet{cai2015framework} for multiscale estimation of a convex function at a point
% \citet{ryter2024tails} proved a result similar to Lemma~\ref{lem:uniformOverIntervals} 
Our approach also yields a valid confidence band for the score function that exploits the monotonicity constraint.

% Recently, connections between score matching and other statistical concepts have been recognised and exploited. For instance, score matching turns out to be equivalent to Stein's unbiased risk estimator of the mixing distribution in empirical Bayes denoising \citep{ghosh2025stein}. The appearance of the score function relies crucially on the Gaussianity of the noise, through Tweedie's formula \citep{efron2011tweedie}. Similarly, in generative modelling, most commonly used diffusion models add Gaussian noise in the forward process, and it is in this context that many recent theoretical analyses of score estimation have been carried out \citep{oko2023diffusion,dou2024optimal}.
% Theoretical analysis of the relative efficiency of score matching and maximum likelihood

Alternative score estimators could also be considered. The log-concave maximum likelihood estimator has been shown to yield optimal rates for density estimation and it would be interesting to study its behaviour for score estimation. However, finite-sample pointwise estimation theory to parallel the asymptotic theory of \citet{balabdaoui2009} is currently unavailable for this estimator, and although some asymptotic results on its tail behaviour have emerged recently \citep{ryter2024tails}, these again do not suffice for our purposes. Score matching may provide another alternative, although the optimisation problem fails over the class of decreasing functions without additional regularisation. A benefit of our approach is that it not only provides an estimator, but also yields valid confidence bands for the score function. 

% \blue{Why not MLE or (penalised) score matching? Because finite-sample pointwise theory is currently unavailable for the MLE, even though we strongly suspect from global adaptation bounds that automatic shape-constrained estimators achieve the optimal pointwise bias-variance tradeoff everywhere in the bulk. Also, little is known about its boundary behaviour, a significant open problem. Score matching fails over the class of all decreasing functions, so some regularisation would be required -- not sure if this would be locally adaptive and the theory could be challenging (just like for regression splines)? Say some but not all of this in the introduction.}

\subsection{Related work}

\textbf{Score estimation:} Nonparametric score estimation was first studied by \citet{cox1985penalty}, who applied integration by parts to the $L^2(P_0)$ loss function and combined the resulting objective with a roughness penalty.  He showed that this yields an estimator with a variational characterisation and established asymptotic guarantees on its rate of convergence over smoothness classes.  \citet{hyvarinen05score} coined the term \emph{score matching} for the Cox's (unpenalised) objective as a way to estimate an unknown finite-dimensional parameter in models where the probability density function is known only up to a multiplicative normalising constant. Score matching has since been generalised \citep{hyvarinen2007extensions, lyu2012interpretation}, and the asymptotic normality of the estimator and its generalisations, as well as their statistical efficiencies, have been studied by \citet{koehler2022statistical}, \citet{pabbaraju2023} and \citet{qin2024}.

% With the primary aim of controlling the accuracy of a diffusion model as a distribution learner, \citet{oko2023diffusion}, \citet{zhang2024minimax} and \citet{dou2024optimal} study $L^2(\probDistribution)$-score estimation of densities $f_t(\cdot) := m_t^{-1}f(\cdot /m_t)*\phi_t(\cdot)$ with $t \in [0,T]$ for some $T \in (0,\infty)$, where $f$ lies in a suitable nonparametric class of densities on $\mathbb{R}^d$, $\phi_t$ is the density of a $N_d(0,\sigma_t I_d)$ random vector with $\sigma_t > 0$, and $m_t$ is a scale parameter.
With the primary aim of controlling the accuracy of a diffusion model as a distribution learner, $L^2(\probDistribution)$-score estimation has recently been considered in more general nonparametric contexts. 
% \blue{By contrast, our primary interest is in the score estimation problem as an end in itself.}
\citet{oko2023diffusion}, \citet{zhang2024minimax}, \citet{wibisono2024optimal} and \citet{dou2024optimal} study $L^2(\probDistribution)$-score estimation of densities $f_t(\cdot) := m_t^{-1}f(\cdot /m_t)*\phi_t(\cdot)$ with $t \in [0,T]$ for some $T \in (0,\infty)$, where $f$ lies in a suitable nonparametric class of densities on $\mathbb{R}^d$, $\phi_t$ is the density of the $N_d(0,\sigma_t^2 I_d)$ distribution with $\sigma_t > 0$, and $m_t$ is a scale parameter. \citet{oko2023diffusion} consider~$f$ contained in a Besov space and supported on $[-1,1]^d$, with the additional restriction that $f$ is bounded above and below by positive constants and smooth on the boundary of its support. They establish an upper bound on the score estimation error of a neural network estimator.
% \blue{and show that it can be achieved by}
% \blue{but, as their primary aim is to study optimality of diffusion models, no lower bounds were provided for the score estimation problem}. 
When $m_t = 1$ and $\sigma_t = t$, \citet{zhang2024minimax} avoid the assumption of compact support and instead consider the class of $\alpha$-sub-Gaussian densities $f$. They show that the optimal rate of order $n^{-1}t^{-(d+2)/2}(t^{d/2}\vee \alpha^d)$ (up to poly-logarithmic factors) is achieved  by a kernel-based score estimator of $f_t$ with a constant bandwidth. The bounds of \citet{oko2023diffusion} and \citet{zhang2024minimax} rely crucially on $t$ being bounded away from zero which, in addition to the smoothness condition on the boundary region of \citet{oko2023diffusion}, ensures that the tails of the density $f_t$ do not decay to zero too quickly. The former condition restricts the analysis of  \citet{oko2023diffusion} and \citet{zhang2024minimax} to the study of diffusion models with early stopping which, when considering their ability to estimate a density, results in additional logarithmic factors. 
% \blue{In contrast, our theory accommodates a far wider variety of tail behaviour.}

\citet{wibisono2024optimal} consider score estimation of sub-Gaussian densities $f$ that are fully supported on $\mathbb{R}^d$ and have a Lipschitz score function, showing that the optimal rate of score estimation is $n^{-2/(d+4)}$ up to logarithmic factors. Similarly to \citet{oko2023diffusion}, \citet{zhang2024minimax}, \citet{wibisono2024optimal} first consider estimation of the score of $f_t$ for an appropriately chosen $t>0$ when $m_t = 1$ and $\sigma_t = t$, although unlike the previously mentioned papers, \citet{wibisono2024optimal} also control the difference between the score functions of $f$ and~$f_t$. The rate $n^{-2/5}$ obtained when $d=1$ coincides with our rate for the related class of log-concave densities on $\mathbb{R}$ with Lipschitz score functions and Fisher information bounded above by one. \citet{wibisono2024optimal} also extend their analysis to sub-Gaussian densities that have $\beta$-H\"older score functions with $\beta \in (1,2]$, establishing an upper bound on the expected $L^2(\probDistribution)$-score estimation error of order $n^{-2(\beta-1)/(2\beta+d)}$. This coincides with the rate obtained by \citet{zhang2024minimax} for $\beta$-H\"older densities with $\beta > 1$. Our Proposition~\ref{prop:lower-bd-no-shape-c} derives a lower bound on the minimax rate for the class of absolutely continuous densities on $\mathbb{R}$ with $\beta$-H\"older score functions. More generally, while we also consider $\beta$-H\"older score functions in this work, we remove the sub-Gaussianity assumption of \citet{wibisono2024optimal}, and instead impose a monotonicity constraint on the score function that substantially changes the results and analysis. 

Continuing in the aforementioned setting with $m_t = 1$ and $\sigma_t = t$, \citet{dou2024optimal} consider densities that are supported on $[-1,1]$ and are $(\beta,L)$-H\"older on this support with $\beta,L > 0$. 
% \blue{As in \citet{oko2023diffusion}, they assume that the densities are bounded above and below by a positive constant, although unlike that work, \citet{dou2024optimal} derive matching upper and lower bounds on the minimax rate of score estimation as a function of $t$} 
Under the assumption that the densities are bounded above and below by positive constants, they derive matching upper and lower bounds on the minimax rate of score estimation as a function of $t$. 
% \blue{Further, in contrast to the work of both~\citet{oko2023diffusion} and~\citet{zhang2024minimax},} 
Notably, \citet{dou2024optimal} consider all $t\geq 0$, thus establishing the optimality of the diffusion model in this context without additional logarithmic factors.  When $t$ is large, the rate is of parametric order $1/(nt^2)$ due to the convolution of $f$ with a high-variance Gaussian density, and when $t$ is small, the rate is $n^{-2(\beta-1)/(2\beta+1)}\wedge t^{\beta-1}$. These rates are achieved by a careful combination of constant bandwidth kernel density estimators, consisting of a Gaussian kernel in the large $t$ regime, and a kernel that has knowledge of $\beta$ in the small $t$ regime. %\blue{They do not consider the dependence of the rate on $L$. Our work shares some similarities with this approach as we also make use of a kernel density estimator \orange{[RL: I'm not sure we can call their approach multiscale, more like multi-kernel?]}. However, our estimator is parameter-free and boasts an adaptive multiscale bandwidth that allows it to adjust to local properties of the score function.} 

%\orange{These paragraphs come across as a little defensive.  They may not be needed. - RL OK, I think we've made these points in some form in other sections anyway.}

% \blue{The works of \citet{zhang2024minimax}, \citet{wibisono2024optimal} and \citet{dou2024optimal} consider score estimators formed from the ratio of a density estimator and its derivative. It is known that these estimators can be unstable in tail regions \orange{(e.g.~kernel estimators ...citation...?)}, and so the authors truncate the density estimator at a carefully chosen level that depends on class assumptions. Our construction avoids this feature through an adaptive estimator that projects a confidence band for the score function onto the origin. In this way, we underestimate the magnitude of the score function on a high probability event, which is optimal in the tail regions.}

% \blue{In summary, our work differs from previous contributions in four key ways. First, we consider a far wider variety of tail behaviour and characterise its effect on the minimax rates. We also focus on the shape-constrained problem, showing that both the log-concavity and smoothness assumptions play a role in facilitating score estimation.  Third, we develop a parameter-free, multiscale approach by constructing a kernel density based estimator with bandwidths that adapt to the local smoothness of the score function. Finally, our approach also provides a confidence band for the score function.}

From a different perspective, score estimation has also been considered in the case where the data are supported on an unknown low-dimensional subspace \citep{oko2023diffusion, chenM2023} 
% \orange{[RL: I haven't added the Oxford references - Deligiannidis, Pidstrigach, Azangulov - as I think they focus more on diffusion models and the manifold hypothesis, rather than score estimation and the manifold hypothesis.]}.
The computational aspects of score estimation have been studied by \citet{song2024}, who shows that accurate score estimation is computationally hard in the absence of strong assumptions on the data distribution.

\medskip
\noindent
\textbf{Shape-constrained estimation and inference:}  Estimation of log-concave densities has bene a central problem within the field of nonparametric inference under shape constraints over the last 10-15 years.  The primary focus has been on the log-concave maximum likelihood estimator (MLE); see \citet{walther2009inference}, \citet{samworth2018recent,samworth2025nonparametric} and \citet{dumbgen2024shape} for surveys of recent progress in this area.  Beyond the estimator's existence and uniqueness \citep{walther2002, dumbgen2011approximation}, this literature has studied its computation \citep{dumbgen2007, cule2010maximum, liu2018}, optimality for density estimation with respect to global error losses \citep{dumbgen2009,kim2016global,kur2019}, its adaptation properties to piecewise affine log-densities \citep{kim2018,feng2021,kur2025}, behaviour under model misspecification \citep{cule2010theoretical, barber2021local} and pointwise limiting distribution theory \citep{balabdaoui2009, doss2019univariate,deng2023inference}.  For alternative approaches to log-concave density estimation, see for example \citet{baraud2016} and~\citet{baraud2022}.

Previous works that employ multiscale techniques in shape-constrained contexts include \citet{dumbgen2001multiscale}, \citet{dumbgen2003optimal}, \citet{cai2015framework}, \citet{chatterjee2021regret}, \citet{walther2022}, \citet{cai2024estimation} and \citet{mukherjee2024optimal}.

\subsection{Notation}
\label{sec:notation}
We adopt the conventions $0 \times \infty := 0 =: 0 \times (-\infty)$ and $\inf\emptyset = \infty$, and let $[n] := \{1,\dotsc,n\}$ for $n \in \N$. We write $[x,y]$ for the interval with (not necessarily ordered) endpoints $x,y \in \R$, i.e.~the convex hull of $\{x,y\}$. For $x \in \R$, let $x_+ := \max\{0,x\}$ and $\sgn(x) := 2\mathbbm{1}_{\{x \geq 0\}} - 1$, and define $\log_+(x) := 1 \vee \log(x)$ when $x > 0$.  For real-valued sequences or functions $a,b$,  
% could be sequences or functions
we write $a \lesssim b$ or $b \gtrsim a$ if $a \leq Cb$ for some universal constant $C \in (0,\infty)$, while $a \asymp b$ means that both $a \lesssim b$ and $b \lesssim a$. More generally, we write $a \lesssim_{\alpha_1,\dotsc,\alpha_r} b$ or $b \gtrsim_{\alpha_1,\dotsc,\alpha_r} a$ if $a \leq Cb$ for some $C \in (0,\infty)$ depending only on given parameters $\alpha_1,\dotsc,\alpha_r$, and define $a \asymp_{\alpha_1,\dotsc,\alpha_r} b$ analogously. For real-valued functions $a,b$ defined on $(0,x_0)$ for some $x_0 > 0$, we write $a(x) \sim b(x)$ as $x \to 0$ if $\lim_{x \to 0} a(x)/b(x) = 1$.

We write $\lambda$ for Lebesgue measure on $\mathbb{R}$. For functions $g,h \colon \mathbb{R} \to \mathbb{R}$ such that $(x,z) \mapsto g(x)h(z-x)$ is integrable on $\mathbb{R}^2$, the \textit{convolution} of $g$ and $h$ is the function $(g*h)(\cdot) := \int_{-\infty}^\infty g(x)h(\cdot - x) \laplaced x$. We also recall the following facts from e.g.~\citet[Section~10.7]{samworth24modern}. A function $f \colon I \to \R$ is said to be \textit{absolutely continuous} on an interval $I \subseteq \R$ if for every $\epsilon > 0$, there exists $\delta > 0$ such that whenever $(x_1,y_1),\dotsc,(x_m,y_m) \subseteq I$ are pairwise disjoint with $\sum_{j=1}^m |y_j - x_j| < \delta$, we have $\sum_{j=1}^m |f(y_j) - f(x_j)| < \epsilon$. 
% When $I$ is compact, this holds if and only if there exists a Lebesgue integrable function $g \colon I \to \R$ such that $f(y) = f(x) + \int_x^y g$ for all $x,y \in I$. 
For an open set $U \subseteq \R$, we say that $f \colon U \to \R$ is \emph{locally absolutely continuous} on $U$ if it is absolutely continuous on every compact interval $I \subseteq U$.  This condition is equivalent to the existence of a Borel measurable function $g \colon U \to \R$ such that for every compact subinterval $I \subseteq U$, we have $\int_I |g| < \infty$ and $f(z_2) = f(z_1) + \int_{z_1}^{z_2} g$ for all $z_1,z_2 \in I$.

\section{Main results}
\label{sec:main-results}

\subsection{Minimax preliminaries}
\label{subsec:main-results-prelim}

% \orange{[I think we need to define the score function outside of this class, e.g.~absolutely continuous densities -- only for Proposition~\ref{prop:lower-bd-no-shape-c} in the appendix]}
Let $\mathcal{F}$ denote the class of all continuous, log-concave densities on $\R$; such densities are absolutely continuous by Lemma~\ref{lem:log-concave-continuity}. For any $f_0 \in \mathcal{F}$, the log-density $\phi_0 := \log f_0$ has well-defined left and right derivatives $\phi_0^{(\mathrm{L})} \geq \phi_0^{(\mathrm{R})}$ on $\support := \{x \in \R : f_0(x) > 0\}$. These are both decreasing functions that agree at all but at most countably many points \citep[][Theorem~23.1]{rockafellar97convex}. By a \textit{score function} we mean any $\psi_0 \colon \R \to [-\infty,\infty]$ satisfying
% $\psi_0(x) \in \partial\phi_0(x)$ -- where do we need to fix a particular version of the score function?
$\phi_0^{(\mathrm{R})}(x) \leq \psi_0(x) \leq \phi_0^{(\mathrm{L})}(x)$ for all $x \in \support$, with $\psi_0(x) := \infty$ for $x \leq \inf\support$ and $\psi_0(x) := -\infty$ for $x \geq \sup\support$. This is equivalent to $\psi_0 \colon \R \to [-\infty,\infty]$ being a decreasing function such that $\int_x^y \psi_0 = \phi_0(y) - \phi_0(x)$ for all $x,y \in \support$. 
% \orange{Or $\psi_0 := \phi_0^{(\mathrm{R})}$ for concreteness to avoid the slight abuse of terminology in referring to any such $\psi_0$ as `\textit{the} score'?} 
% \blue{We measure the discrepancy of a Borel measurable function $\psi \colon \mathbb{R} \to \mathbb{R}$ from $\psi_0$ via the loss function given by
% \begin{align}
% \label{eq:loss-function}
% \mathcal{L}(\psi,\psi_0) := \int_{-\infty}^\infty (\psi - \psi_0)^2 f_0,
% \end{align}
% which}

The loss function $\mathcal{L}$ in~\eqref{eq:Loss} does not depend on the particular version of the score function because $\psi_0$ is uniquely determined except possibly at countably many points. When $\psi = f'/f$ is the score function of a distribution $P$ with a locally absolutely continuous density~$f$ on $\R$ satisfying $\support \subseteq \supp(f)$, the above quantity is the \textit{Fisher divergence} from $P$ to the distribution $\probDistribution$ with density $f_0$; see~\citet[Definition~1.13]{johnson2004information},~\citet[Section~2.2]{feng24asm} and references therein.

We are interested in estimating $\psi_0$ based on $X_1,\ldots,X_n \iid f_0$.  To this end, denote by $\hat{\Psi}_n$ the class of jointly measurable functions\footnote{We do not insist that $\hat{\psi}_n(\,\cdot\,;x_1,\dotsc,x_n)$ be a bona fide score function for any $(x_1,\dotsc,x_n) \in \R^n$.} $\hat{\psi}_n:\R \times \R^n \to \R$. For any $\mathcal{G}\subseteq \mathcal{F}$, the \textit{minimax risk} of score estimation over~$\mathcal{G}$ is defined as
\[
\mathcal{M}_n(\mathcal{G}) := \inf_{\hat{\psi}_n \in \hat{\Psi}_n} \sup_{f_0 \in \mathcal{G}} \mathbb{E}_{f_0}\bigl\{\mathcal{L}\bigl(\hat{\psi}_n(\cdot; X_1,\dots,X_n),\psi_0\bigr)\bigr\}.
\]
The usual minimax risk framework provides a benchmark for quantifying the difficulty of statistical problems over classes of interest and comparing the performance of estimators. However, the expected loss alone may not fully capture the variability and tail behaviour of $\mathcal{L}$ as a data-dependent quantity. With this in mind, we conduct a more fine-grained analysis of score estimation using the notion of minimax quantiles \citep{ma2025high}. For $\delta \in (0,1]$ and $\hat{\psi}_n \in \hat{\Psi}_n$, the \emph{$(1 - \delta)$th quantile} of $\mathcal{L}(\hat{\psi}_n,\psi_0)$ is
\[
\mathrm{Quantile}_{f_0}(1 - \delta,\hat{\psi}_n,\psi_0) := \inf\Bigl\{r \in [0,\infty): \mathbb{P}_{f_0}\bigl\{\mathcal{L}\bigl(\hat{\psi}_n(\cdot;X_1,\ldots,X_n),\psi_0\bigr) \leq r\bigr\} \geq 1 - \delta\Bigr\}.
\]
For $\mathcal{G} \subseteq \mathcal{F}$, the \emph{minimax $(1 - \delta)$th quantile} of $\mathcal{L}$ over~$\mathcal{G}$ is then defined as 
\[
\mathcal{M}_n(\delta,\mathcal{G}) := \inf_{\hat{\psi}_n \in \hat{\Psi}_n} \sup_{f_0 \in \mathcal{G}} \mathrm{Quantile}_{f_0}(1 - \delta,\hat{\psi}_n,\psi_0).
\]

The following proposition shows that further restrictions on the class $\mathcal{F}$ are necessary to obtain non-trivial minimax risk and quantile bounds. Recall the \emph{Fisher information (for location)} of $f_0$ is defined as $i(f_0) := \int_{-\infty}^\infty \psi_0^2 f_0$.

\begin{prop}
\label{prop:risk-inf}
For $r \in (0,\infty)$, define $\mathcal{F}_r := \{f_0 \in \mathcal{F} : i(f_0) \leq r\}$. Then for any $\delta \in (0,1/4]$, we have
\[
\mathcal{M}_n(\mathcal{F}_r) = r\mathcal{M}_n(\mathcal{F}_1) \asymp r \quad\text{and}\quad \mathcal{M}_n(\delta,\mathcal{F}_r) = r\mathcal{M}_n(\delta,\mathcal{F}_1) \asymp r.
\]
Consequently, $\mathcal{M}_n(\mathcal{F}) = \mathcal{M}_n(\delta,\mathcal{F}) = \infty$.
\end{prop}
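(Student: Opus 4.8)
The plan is to establish the two claims separately. For the scaling identity $\mathcal{M}_n(\mathcal{F}_r) = r\,\mathcal{M}_n(\mathcal{F}_1)$ (and its quantile analogue), I would exploit the fact that rescaling space acts predictably on scores, densities and the loss. Concretely, for $\sigma > 0$ and $f_0 \in \mathcal{F}$, let $g_\sigma(x) := \sigma^{-1} f_0(x/\sigma)$; then $g_\sigma$ is again continuous and log-concave, its score is $\psi_{g_\sigma}(x) = \sigma^{-1}\psi_0(x/\sigma)$, and a change of variables gives $i(g_\sigma) = \sigma^{-2} i(f_0)$, so that $f_0 \in \mathcal{F}_1$ iff $g_\sigma \in \mathcal{F}_{\sigma^{-2}}$. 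Moreover, if $X_1,\dots,X_n \iid f_0$ then $\sigma X_1,\dots,\sigma X_n \iid g_\sigma$, and for any estimator $\hat\psi_n$ one can define a rescaled estimator $\tilde\psi_n(\,\cdot\,;y_1,\dots,y_n) := \sigma^{-1}\hat\psi_n(\,\cdot/\sigma\,;y_1/\sigma,\dots,y_n/\sigma)$; a further change of variables in~\eqref{eq:Loss} shows $\mathcal{L}(\tilde\psi_n,\psi_{g_\sigma}) = \sigma^{-2}\mathcal{L}(\hat\psi_n,\psi_0)$ pathwise. Taking $\sigma^2 = 1/r$, this sets up a loss-scaling bijection between estimation problems over $\mathcal{F}_1$ and over $\mathcal{F}_r$ in which every realised loss is multiplied by exactly $r$; hence both the expected loss and every quantile of the loss scale by $r$, and taking the infimum over estimators and supremum over the class yields $\mathcal{M}_n(\mathcal{F}_r) = r\,\mathcal{M}_n(\mathcal{F}_1)$ and $\mathcal{M}_n(\delta,\mathcal{F}_r) = r\,\mathcal{M}_n(\delta,\mathcal{F}_1)$.

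It then remains to prove $\mathcal{M}_n(\mathcal{F}_1) \asymp 1$ and $\mathcal{M}_n(\delta,\mathcal{F}_1) \asymp 1$ for $\delta \in (0,1/4]$; since the $(1-\delta)$th quantile is bounded above by $\delta^{-1}$ times the expected loss by Markov, and since for $\delta \le 1/4$ one has $\mathcal{M}_n(\mathcal{F}_1) \gtrsim \mathcal{M}_n(\delta,\mathcal{F}_1)$ up to the trivial direction, the key points are a constant-order \emph{lower} bound on the minimax quantile and a constant-order \emph{upper} bound on the minimax risk. The upper bound is easy: the estimator $\hat\psi_n \equiv 0$ achieves $\mathcal{L}(0,\psi_0) = i(f_0) \le 1$ for every $f_0 \in \mathcal{F}_1$, so $\mathcal{M}_n(\mathcal{F}_1) \le 1$ and hence also every quantile is at most $1$. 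For the lower bound I would use a two-point (or small finite family) Le Cam argument: construct two densities $f_{0,0}, f_{0,1} \in \mathcal{F}_1$ that are statistically indistinguishable from $n$ samples — e.g. with $n\,\KL(f_{0,0}, f_{0,1})$ bounded — yet whose scores are separated in the loss metric, i.e. $\int (\psi_{0,0} - \psi_{0,1})^2 f_{0,j} \gtrsim 1$ for $j=0,1$. A natural construction is to place a small "feature" (a localized log-concave bump or a slight change of slope in the log-density) far out in the tail, where there is typically no data: because $f_0$ is tiny there, the feature barely changes the Fisher information or the KL divergence, but it changes the score by a large amount on a region carrying enough mass to make the loss contribution order one. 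One must then verify log-concavity of both densities and that both lie in $\mathcal{F}_1$, and conclude via the standard two-point inequality that $\mathcal{M}_n(\delta,\mathcal{F}_1) \gtrsim 1$ for all small $\delta$.

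Finally, the statement $\mathcal{M}_n(\mathcal{F}) = \mathcal{M}_n(\delta,\mathcal{F}) = \infty$ follows immediately by monotonicity of the minimax quantities in the class together with the scaling identity: since $\mathcal{F}_r \subseteq \mathcal{F}$ for every $r$, we get $\mathcal{M}_n(\mathcal{F}) \ge \mathcal{M}_n(\mathcal{F}_r) = r\,\mathcal{M}_n(\mathcal{F}_1) \gtrsim r$, which tends to $\infty$ as $r \to \infty$, and identically for the $(1-\delta)$th quantile.

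I expect the main obstacle to be the lower-bound construction: one must exhibit a two-point family within $\mathcal{F}_1$ that is simultaneously (i) genuinely log-concave, (ii) Fisher-information-bounded by $1$, (iii) indistinguishable from $n$ samples in the sense of bounded total KL, and (iv) order-one separated in the density-weighted $L^2$ loss — the tension being that pushing the distinguishing feature into the tail to kill the KL divergence also risks killing the loss separation through the $f_0$-weighting, so the geometry of the bump (its location, height and width as functions of $n$) has to be chosen carefully, and the log-concavity of the perturbed density must be checked explicitly.
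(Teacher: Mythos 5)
Your scaling argument, zero-estimator upper bound, and the deduction that $\mathcal{M}_n(\mathcal{F}) = \mathcal{M}_n(\delta,\mathcal{F}) = \infty$ by sending $r \to \infty$ all match the paper, which similarly reduces to a fixed base class (the paper uses $\mathcal{F}_2$ rather than $\mathcal{F}_1$, because the two-point family it constructs has Fisher information slightly larger than 1 for the perturbed member, but this is cosmetic). The gap is in the lower-bound construction, and it is not a minor one.

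You propose placing a ``small feature far out in the tail, where $f_0$ is tiny,'' hoping that this keeps the KL small while the score change produces an order-one weighted loss, and you flag the tension yourself: if $f_0$ is tiny in the perturbation region, the $f_0$-weighting in $\mathcal{L}$ suppresses the loss separation. This tension is genuine, and I do not think the tail strategy resolves it; for a fixed light-tailed base density, log-concavity bounds how sharply the score can change over a short interval in the tail, and the available perturbation budget is swamped by the exponentially small weight $f_0$. The paper goes in the opposite direction. It takes the $n$-dependent ``flattened Laplace'' family $f_{a,0}(x) \propto e^{-a(|x|-x_a)_+}$ with $x_a = a - 1/a$ and $i(f_{a,0}) = 1$ for every $a$, and perturbs the score at $x_a$, which is precisely where $f_{a,0}$ attains its maximum value $1/(2a)$, not in the tail. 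The score $\psi_{a,0}$ jumps from $0$ to $-a$ at this high-density corner; shifting or softening that jump over a window of width $h$ changes the log-density by $O(ah^2)$ there, giving $\mathrm{KL}(f_{a,0},f_{a,h}) \lesssim ah^3$, yet the Fisher divergence between $\psi_{a,0}$ and $\psi_{a,h}$ weighted by $f_{a,0} \wedge f_{a,h}$ is $\gtrsim ah$ because the density in the window is $\asymp 1/a$. Taking $h \asymp (an)^{-1/3}$ gives $n\,\mathrm{KL} \lesssim 1$ and loss separation $\asymp (a^2/n)^{1/3}$, and then $a \asymp n^{1/2}$ delivers the constant-order bound. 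The essential insight your sketch misses is that for decreasing scores, a large jump can occur at a high-density point provided the Fisher information is controlled globally, and this is where the perturbation must live; a ``feature in the tail'' attacks the problem from exactly the wrong end of the density.
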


% \orange{Question: What happens if we restrict to those $f_0 \in \mathcal{F}_1$ supported on $[0,1]$? RJS: I think we can ignore this question in this work.}

% \begin{figure}[h]
% \begin{center}
% \subfigure{\includegraphics[width=0.99\textwidth]{Tikz/lowerbd-fa-density.pdf}}
% \vspace{1em}
% \subfigure{\includegraphics[width=0.99\textwidth]{Tikz/lowerbd-fa-score.pdf}}
% \end{center}
% \caption{ \red{Need to change the aspect ratio for this.} The densities $f_a$ and $f_{a+1/a}$ (top), as well as their score functions $\psi_a$ and $\psi_{a+1/a}$ (bottom).}
% \label{fig:fa-densities}
% \end{figure}

Proposition~\ref{prop:risk-inf} and subsequent results reveal several stark differences between our score estimation problem and density estimation over the class of log-concave densities on $\R$, for which the minimax rate in squared Hellinger distance is known to be $n^{-4/5}$ \citep{kim2016global}. First, the loss function $\mathcal{L}$ is not scale invariant: indeed, if $\mathcal{G}_c := \{cf_0(c\,\cdot) : f_0 \in \mathcal{G}\}$ for some $\mathcal{G} \subseteq \mathcal{F}$ and $c > 0$, then
% \blue{as evidenced by the following property that explains the dependence on $r$ in our minimax rates. If $f_0 \in \mathcal{F}$ and $c > 0$, then $f_c(\cdot) := cf_0(c\,\cdot) \in \mathcal{F}$ has score function $\psi_c(\cdot) := c\psi_0(c\,\cdot)$. Moreover, for any Borel measurable $\tilde{\psi} \colon \R \to \R$ and $\tilde{\psi}_c(\cdot) := c\tilde{\psi}(c\,\cdot)$, we have
% \[
% \int_{-\infty}^\infty (\tilde{\psi}_c - \psi_c)^2\,f_c = c^2\int_{-\infty}^\infty \bigl(\tilde{\psi}(cx) - \psi_0(cx)\bigr)^2 cf_0(cx) \laplaced x = c^2\int_{-\infty}^\infty (\tilde{\psi} - \psi_0)^2\,f_0.
% \]
% The choice $\tilde{\psi} \equiv 0$ yields $i(f_c) = c^2 i(f_0)$, while taking $\tilde{\psi} = \hat{\psi}_n(\cdot\,; X_1,\dots,X_n)$ for $\hat{\psi}_n \in \hat{\Psi}_n$ shows that for any $\mathcal{G} \subseteq \mathcal{F}$, the scale-transformed class $\mathcal{G}_c := \{cf_0(c\,\cdot) : f_0 \in \mathcal{G}\}$ satisfies} 
\begin{equation}
\label{eq:loss-scaling}
\mathcal{M}_n(\mathcal{G}_c) = c^2 \mathcal{M}_n(\mathcal{G}) \quad\text{and}\quad \mathcal{M}_n(\delta,\mathcal{G}_c) = c^2 \mathcal{M}_n(\delta,\mathcal{G}).
\end{equation}
Moreover, if $f_0 \in \mathcal{G}$, then $f_c(\cdot) := cf_0(c\,\cdot)$ satisfies $i(f_c) = c^2 i(f_0)$. 
% \blue{Thus, letting $c = r^{1/2}$, we have $\mathcal{F}_r = \{r^{1/2}f_0(r^{1/2}\cdot) : f_0 \in \mathcal{F}_1\}$, so} 
This means that $\mathcal{M}_n(\mathcal{F}_r) = r\mathcal{M}_n(\mathcal{F}_1)$ and $\mathcal{M}_n(\delta,\mathcal{F}_r) = r\mathcal{M}_n(\delta,\mathcal{F}_1)$. These considerations highlight the appropriateness of Fisher information (as opposed to e.g.~the inverse variance) as a scale parameter tailored to the loss function $\mathcal{L}$.  In particular, it is the error of the estimator $\tilde{\psi} \equiv 0$, so $\mathcal{M}_n(\mathcal{F}_r) \vee \mathcal{M}_n(\delta,\mathcal{F}_r) \leq r$.
% \red{The risk would remain infinite over $\mathcal{F}_1$ if a lower bound were imposed on the variance instead, which does not bound $i(f_0)$ from above, e.g.~consider $\mathrm{Beta}(a,a)$ and $\mathrm{Beta}(a + \delta,a + \delta)$ with $a$ close to 2 and $\delta \asymp n^{-1/2}$).}

The more interesting aspect of Proposition~\ref{prop:risk-inf} is the conclusion that $\mathcal{M}_n(\mathcal{F}_1) \asymp \mathcal{M}_n(\delta,\mathcal{F}_1) \gtrsim 1$,
which stems from the fact that for every~$\epsilon > 0$, there exist $f_0,f_1 \in \mathcal{F}_1$ with $\KL(f_0,f_1) := \int_{-\infty}^\infty f_0 \log(f_0/f_1) < \epsilon$ but Fisher divergence of order 1; see the discussion after Theorem~\ref{thm:tail-growth} for further details. For further insight into the mismatch
% incompatibility 
% marked differences
between the Kullback--Leibler and Fisher divergences, see \citet{feng24asm} and~\citet{ghosh2025stein}.
%.... For instance, restore the original discussion and illustration of the constant lower bound over the 1-parameter family $\mathcal{F}' := \{f_a : a \geq 1\} \subseteq \mathcal{F}_1$, where $f_a(x) = e^{-a(|x| - x_a)_+}/(2a)$ for $x \in \R$ and $x_a = a - 1/a$. In this case, the Fisher divergence between $f_a$ and $f_{a + 1/a}$ is $\asymp 1$ as $a \to \infty$ because of the difference between the score functions on $[x_a,x_{a + 1/a}]$, while $\KL(f_a,f_{a + 1/a}) \leq 4/a^2$. Incorporate this as another example on the next page? Issues due to abrupt changes in the score function in high-density regions.

Proposition~\ref{prop:risk-inf} clarifies that additional restrictions on the log-concave class are necessary for the minimax risk to converge to zero with the sample size. We consider two types of conditions that limit the amount that a score function can vary. First, we consider subclasses of $\mathcal{F}$ where the growth of the score function is controlled by the quantile level, establishing the fundamental impact that tail regions have on the estimation error. On the other hand, by also considering subsets of $\mathcal{F}$ that satisfy an additional H\"older assumption, we explore the interaction between smoothness and monotonicity, and its effect on the accuracy of score estimation in the bulk.

\subsection{Tail growth condition}
\label{subsec:tail-growth}

As alluded to in the introduction, the density quantile function $J_0 := f_0 \circ F_0^{-1}$ is particularly well-suited to analysis of $L^2(P_0)$ score estimation for several reasons. First, the fact that $J_0' = \psi_0 \circ F_0^{-1}$ Lebesgue almost everywhere (Lemma~\ref{lem:density-quantile-basics}\textit{(a)}) means that for any Borel measurable $\psi \colon \R \to \R$, a quantile transformation yields
\[
\mathcal{L}(\psi,\psi_0) = \int_0^1 \bigl\{\psi\bigl(F_0^{-1}(u)\bigr) - \psi_0\bigl(F_0^{-1}(u)\bigr)\bigr\}^2 \laplaced u = \int_0^1 (\psi \circ F_0^{-1} - J_0')^2,
\]
thereby rebalancing the pointwise contributions to the overall integrated error. In particular, this standardisation reduces the effect of large pointwise errors in low-density regions, and enables us to compare densities with different supports (not necessarily the entire real line) on a canonical scale; see the examples in Section~\ref{subsec:examples}. Moreover, our analysis relies crucially on the property that a Lebesgue density $f_0$ on $\R$ is log-concave if and only if $J_0$ is concave \citep[Proposition~A.1(c)]{bobkov1996extremal}.

Our first main results concern subclasses of $\mathcal{F}$ defined in terms of envelope functions for $|J_0'|$ that grow polynomially in the tails.

\begin{definition}
\label{def:tail-growth}
Given $\gamma \in (0,1]$ and $L > 0$, define $\mathcal{J}_{\gamma,L}$ to be the set of all $f_0 \in \mathcal{F}$ whose density quantile functions $J_0$ satisfy
\begin{equation}
\label{eq:tail-growth}
|J_0'(u)| \equiv |(\psi_0 \circ F_0^{-1})(u)| \leq \frac{L}{\{u \wedge (1 - u)\}^{(1 - \gamma)/2}} \quad\text{for all }u \in (0,1).
\end{equation}
Moreover, given $r > 0$, let $\mathcal{J}_{\gamma,L,r} := \{f_0 \in \mathcal{J}_{\gamma,L} : i(f_0) \leq r\}$.
\end{definition}

The class $\mathcal{J}_{1,L}$ consists precisely of those $f_0 \in \mathcal{F}$ with score functions uniformly bounded by~$L$ in absolute value, i.e.~with $L$-Lipschitz log-densities $\phi_0$, while smaller values of $\gamma$ in~\eqref{eq:tail-growth} allow the score function to grow more rapidly in the tails relative to the quantile level. 
% Nestedness: $\mathcal{J}_{\gamma,L,r} \subseteq \mathcal{J}_{\gamma',L',r'}$ whenever $\gamma' \leq \gamma$, $L' \geq L$ and $r' \geq r$
The limit $\gamma \to 0$ represents the furthest extent to which the right-hand side of~\eqref{eq:tail-growth} remains a square-integrable function of $u \in (0,1)$, so that by Lemma~\ref{lem:density-quantile-basics}\textit{(b)},
\begin{align}
\label{eq:envelope-integral}
\sup_{f_0 \in \mathcal{J}_{\gamma,L}} i(f_0) = \sup_{f_0 \in \mathcal{J}_{\gamma,L}} \int_0^1 (J_0')^2 \leq 2\int_0^{1/2} \frac{L^2}{u^{1 - \gamma}} \laplaced u \leq \frac{2^{1 - \gamma}L^2}{\gamma} < \infty.
\end{align}
We emphasise that densities in~$\mathcal{J}_{\gamma,L}$ for $\gamma < 1$ need not be supported on the whole of $\R$; for instance, we verify in Example~\ref{ex:beta-DQ} below that a $\mathrm{Beta}(a,b)$ density belongs to $\mathcal{J}_{\gamma,L}$ for some $L > 0$ if and only if $a \wedge b \geq 2/(1 - \gamma)$.

\begin{theorem}
\label{thm:tail-growth}
For $n \geq 3$ and $\delta \in (0,1)$, let $\varepsilon_{n,\delta} := n^{-1}\log(n^2/\delta)$.  Given $\gamma \in (0,1]$ and $L,r > 0$, the minimax $(1 - \delta)$th quantile for score estimation over $\mathcal{J}_{\gamma,L,r}$ satisfies
\begin{equation}
\label{eq:tail-growth-full}
\mathcal{M}_n(\delta,\mathcal{J}_{\gamma,L,r}) \lesssim_\gamma \bar{r}\min\biggl\{\Bigl(\frac{L^{2/\gamma}}{\bar{r}^{1/\gamma}} \varepsilon_{n,\delta}\Bigr)^{\gamma \wedge \frac{1}{3}}\log_+^{\Ind_{\{\gamma = 1/3\}}}\Bigl(\frac{1}{\varepsilon_{n,\delta}}\Bigr),\,1\biggr\}, 
% = \begin{cases}
% \min\bigl\{L^2\varepsilon_{n,\delta}^\gamma,\,\bar{r}\bigr\}
% % The following alternative expressions may highlight more transparently the scaling of these bounds with $\bar{r}$ and $L^2/\bar{r}$ in the two remaining cases, with a graceful transition at \gamma = 1/3:
% \;\; &\text{if }\gamma \in (0,1/3) \\[8pt]
% \min\Bigl\{L^2\varepsilon_{n,\delta}^{1/3}\log_+\Bigl(\dfrac{1}{\varepsilon_{n,\delta}}\Bigr),\,\bar{r}\Bigr\}
% \;\; &\text{if }\gamma = 1/3 \\[8pt]
% \min\Bigl\{\bar{r}\Bigl(\dfrac{L^{2/3}}{\bar{r}^{1/3}}\Bigr)^{1/\gamma}\varepsilon_{n,\delta}^{1/3},\,\bar{r}\Bigr\} \;\; &\text{if }\gamma \in (1/3,1], \\[8pt]
% \end{cases}
\end{equation}
where $\bar{r} := r \wedge L^2$. In particular,
\begin{equation}
\label{eq:tail-growth-simple}
\mathcal{M}_n(\delta,\mathcal{J}_{\gamma,L}) \lesssim_\gamma L^2\min\biggl\{\varepsilon_{n,\delta}^{\gamma \wedge \frac{1}{3}}\log_+^{\Ind_{\{\gamma = 1/3\}}}\Bigl(\frac{1}{\varepsilon_{n,\delta}}\Bigr),\,1\biggr\}.
% = \begin{cases}
% L^2\min\bigl\{\varepsilon_{n,\delta}^\gamma, 1\bigr\} \;\; &\text{if }\gamma \in (0,1/3) \\[3pt]
% L^2\min\bigl\{\varepsilon_{n,\delta}^{1/3}\log_+(1/\varepsilon_{n,\delta}), 1\bigr\} \;\; &\text{if }\gamma = 1/3 \\[3pt]
% L^2\min\bigl\{\varepsilon_{n,\delta}^{1/3}, 1\bigr\} \;\; &\text{if }\gamma \in (1/3,1].
% \end{cases}
\end{equation}
These bounds are attained by the adaptive multiscale estimator $\hat{\psi}_{n,\delta}$ defined by~\eqref{eq:scoreEstimateShapeConstrained} below.
\end{theorem}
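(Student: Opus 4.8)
The plan is to prove the upper bound~\eqref{eq:tail-growth-full} by controlling the loss of the multiscale estimator $\hat{\psi}_{n,\delta}$ on a high-probability event, with~\eqref{eq:tail-growth-simple} then following on taking $r$ large enough that $\mathcal{J}_{\gamma,L,r} = \mathcal{J}_{\gamma,L}$ (possible by~\eqref{eq:envelope-integral}), so that $\bar{r} = L^2$ and $L^{2/\gamma}/\bar{r}^{1/\gamma} = 1$. The engine is Proposition~\ref{prop:scorePointwiseErr}, which supplies an event $\goodEvent$ with $\Pr_{f_0}(\goodEvent) \ge 1 - \delta$ on which $\hat{\psi}_{n,\delta}$ has two properties: (i) it never overestimates the score in modulus, so $|\hat{\psi}_{n,\delta}(x) - \psi_0(x)| \le 2|\psi_0(x)|$ for every $x$; and (ii) it obeys a locally adaptive pointwise bound, in which the error at a point is controlled by a bias term equal to the oscillation of the decreasing function $\psi_0$ over a window around that point, plus a stochastic term that decreases with the number of observations falling in the window. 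It then suffices to show $\mathcal{L}(\hat{\psi}_{n,\delta},\psi_0) \lesssim_\gamma$ the right-hand side of~\eqref{eq:tail-growth-full} on $\goodEvent$. For this we pass to the density quantile representation $\mathcal{L}(\hat{\psi}_{n,\delta},\psi_0) = \int_0^1(\hat{\psi}_{n,\delta}\circ F_0^{-1} - J_0')^2$ and split $[0,1]$ into a central bulk $[\tau,1-\tau]$ and tails $(0,\tau)$, $(1-\tau,1)$, where $\tau$ is of order $\varepsilon_{n,\delta}$ (up to a logarithmic factor when $\gamma = \tfrac{1}{3}$) — morally, the smallest quantile level down to which the confidence band is still informative.

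On the tails, property~(i) and the envelope~\eqref{eq:tail-growth} give $\int_0^\tau (\hat{\psi}_{n,\delta}\circ F_0^{-1} - J_0')^2 \le 4\int_0^\tau (J_0')^2 \le 4L^2\int_0^\tau u^{-(1-\gamma)}\,\laplaced u = (4L^2/\gamma)\,\tau^\gamma$, and symmetrically near $u = 1$. Rewriting $(L^2/\gamma)\tau^\gamma = (\bar{r}/\gamma)(L^{2/\gamma}\tau/\bar{r}^{1/\gamma})^\gamma$, and noting that by~(i) and~\eqref{eq:envelope-integral} the whole loss is in any case $\le 4\,i(f_0) \lesssim_\gamma \bar{r}$ on $\goodEvent$, the tail contribution is $\lesssim_\gamma \bar{r}\min\{(L^{2/\gamma}\varepsilon_{n,\delta}/\bar{r}^{1/\gamma})^\gamma,\,1\}$. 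Since the base of this power is at most $1$ whenever the bound is nontrivial, and $\gamma \ge \gamma\wedge\tfrac{1}{3}$, this is dominated by the right-hand side of~\eqref{eq:tail-growth-full} for every $\gamma \in (0,1]$.

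The substantive part is the bulk. Feeding property~(ii) into $\int_\tau^{1-\tau}(\hat{\psi}_{n,\delta}\circ F_0^{-1} - J_0')^2$ bounds the integrand at quantile level $u$ by an infimum over admissible window masses $p \gtrsim \varepsilon_{n,\delta}$ of a bias term — the oscillation of the decreasing function $J_0'$ over a mass-$p$ window about $u$, hence a difference of two of its values — plus a stochastic term decaying polynomially in $np$. Integrating this in $u$ over the bulk determines the rate: concavity of $J_0$ (equivalently, log-concavity of $f_0$) forces the total variation of $J_0'$ over $[\tau,1-\tau]$ to be at most $|J_0'(\tau)| + |J_0'(1-\tau)| \lesssim L\tau^{-(1-\gamma)/2}$, while the envelope $|J_0'(u)| \le Lu^{-(1-\gamma)/2}$ and the concavity-controlled behaviour of $J_0$ constrain both the bias and the stochastic terms uniformly over the bulk. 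Balancing bias against the stochastic term pointwise, then summing over a dyadic decomposition of the bulk according to the scale at which this balance occurs, produces the exponent $\gamma \wedge \tfrac{1}{3}$, with the two regimes coinciding — and generating the extra factor $\log_+(1/\varepsilon_{n,\delta})$ — precisely at $\gamma = \tfrac{1}{3}$; carrying the constants through gives the $\bar{r}$ and $L^{2/\gamma}/\bar{r}^{1/\gamma}$ dependence. Combining with the tail estimate and optimising over $\tau$ yields~\eqref{eq:tail-growth-full}. The main obstacle is exactly this bulk integral: one must bound, uniformly over all log-concave $f_0$ in the class, an integral of a minimum of a curvature-type bias term and a variance term, and it is the tension between concavity of $J_0$ (which caps the aggregate variation of $J_0'$) and the polynomial envelope (which caps its local blow-up) that creates the elbow at $\gamma = \tfrac{1}{3}$; turning the heuristic bias–variance trade-off into a rigorous bound with the correct dependence on $\bar{r}$ and on $\delta$ through $\varepsilon_{n,\delta}$ is the crux.
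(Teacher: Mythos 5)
Your framework is the same as the paper's: work on the high-probability event from Proposition~\ref{prop:scorePointwiseErr}, use the density quantile representation, split the line into a bulk where the adaptive pointwise bound applies and tails where only the ``never overshoots'' property \textit{(c)} is available, and feed in the envelope~\eqref{eq:tail-growth} and the concavity of $J_0$. Your tail estimate is correct (modulo the spurious factor of $2$ in ``$|\hat{\psi}_{n,\delta} - \psi_0| \le 2|\psi_0|$''; Proposition~\ref{prop:scorePointwiseErr}\textit{(c)} gives the factor $1$, which is harmless), and your reformatting of $L^2\tau^\gamma$ into $\bar{r}(L^{2/\gamma}\tau/\bar{r}^{1/\gamma})^\gamma$ with $\tau \asymp \varepsilon_{n,\delta}$ correctly shows the tails are dominated by the claimed bound.

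The genuine gap is in the bulk. You say the mechanism is ``Balancing bias against the stochastic term pointwise, then summing over a dyadic decomposition of the bulk according to the scale at which this balance occurs,'' but a pointwise bias--variance balance together with the uniform envelope $|J_0'(u)| \lesssim Lu^{-(1-\gamma)/2}$ gives \emph{no} rate at all when $\gamma = 1$: as the paper stresses in the discussion after Proposition~\ref{prop:Holder-basic} (compare~\eqref{eq:Holder-naive} with~\eqref{eq:Holder-basic}), a decreasing function with a jump has bias of order $1$ in a window around the jump, so the best pointwise bound is $O(1)$ there and the naive integrated bound is constant. What rescues the rate is precisely Proposition~\ref{prop:Holder-basic}: a level-set/packing argument showing that the Lebesgue measure of $\{x : \Gamma(x) > \zeta\}$ is $\lesssim V\varepsilon^{1/3}/\zeta^{5/6}$, where $V$ is the total variation of the decreasing function $g$ over the interval. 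This is what turns the total-variation constraint you correctly identify into the $\varepsilon_{n,\delta}^{1/3}$ rate, and you have not supplied it. Moreover, the dyadic decomposition in the paper is over quantile levels (the intervals $[a_j,a_{j+1}]$ of Lemma~\ref{lem:partition-properties}), not over ``the scale at which the bias--variance balance occurs''; the former is essential because within each $[a_j,a_{j+1}]$ the density $f_0$ varies by at most a factor $2$ (so the weight $f_0(x)$ in the loss can be pulled out), and a bandwidth-scale decomposition does not provide this. Finally, the refined $\bar{r} = r\wedge L^2$ dependence requires splitting the range of $j$ at an index $j_\star$ determined by the Fisher information and invoking~\eqref{eq:fisher-information-sum}; you flag this as ``the crux'' but leave it unaddressed. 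Also, the logarithmic factor at $\gamma = 1/3$ does not arise from the choice of $\tau$ as you suggest, but from the geometric series over $j$ collapsing to $j_{n,\delta} \asymp \log(1/\varepsilon_{n,\delta})$ equal terms at that critical exponent.
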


As we will see in Section~\ref{sec:multiscale}, $\hat{\psi}_{n,\delta}$ is adaptive in the sense that its definition does not depend on any of the class parameters $\gamma$, $L$ or $r$. The following upper bound on the minimax risk is a straightforward consequence of Theorem~\ref{thm:tail-growth} and its proof, together with the fact that the trivial estimator $\tilde\psi \equiv 0$ always satisfies $\sup_{f_0 \in \mathcal{J}_{\gamma,L,r}} \int_{-\infty}^\infty (\tilde\psi - \psi_0)^2 f_0 = \sup_{f_0 \in \mathcal{J}_{\gamma,L,r}} i(f_0) \lesssim_\gamma L^2$ by~\eqref{eq:envelope-integral}.

\begin{corollary}
\label{cor:tail-growth-expectation}
In the setting of Theorem~\ref{thm:tail-growth}, the minimax risk for score estimation over $\mathcal{J}_{\gamma,L,r}$ satisfies
\[
\mathcal{M}_n(\mathcal{J}_{\gamma,L,r}) \lesssim_\gamma \bar{r}\min\Bigl\{\Bigl(\frac{L^{2/\gamma}}{\bar{r}^{1/\gamma}} \cdot \frac{1}{n}\Bigr)^{\gamma \wedge \frac{1}{3}}\log^{c_\gamma}n,\,1\Bigr\},
\]
where $c_\gamma := (\gamma \wedge \frac{1}{3}) + \Ind_{\{\gamma = 1/3\}}$.
\end{corollary}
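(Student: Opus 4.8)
The plan is to deduce Corollary~\ref{cor:tail-growth-expectation} from Theorem~\ref{thm:tail-growth} by the standard device of integrating the tail probability bound that underpins the quantile statement. Concretely, for the adaptive multiscale estimator $\hat{\psi}_{n,\delta}$, the proof of Theorem~\ref{thm:tail-growth} establishes that on a high-probability event $\goodEvent$ (with $\Prob_{f_0}(\goodEvent) \geq 1 - \delta$), the loss $\mathcal{L}(\hat{\psi}_{n,\delta},\psi_0)$ is bounded by the right-hand side of~\eqref{eq:tail-growth-full} as a deterministic quantity depending on $\delta$ through $\varepsilon_{n,\delta} = n^{-1}\log(n^2/\delta)$. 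So the first step is to fix a single estimator — for instance $\hat{\psi}_{n,\delta_0}$ with $\delta_0$ a fixed constant such as $1/n$, or better, to verify that the estimator's construction is itself $\delta$-free on the relevant range — and then bound $\E_{f_0}\{\mathcal{L}(\hat{\psi}_{n,\delta_0},\psi_0)\}$ via the layer-cake formula $\E_{f_0}\{\mathcal{L}\} = \int_0^\infty \Prob_{f_0}\{\mathcal{L} > t\}\,\laplaced t$.

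The key technical point is to split this integral at the level $t^* := \bar r \min\{(L^{2/\gamma}\bar r^{-1/\gamma}n^{-1})^{\gamma \wedge 1/3}\log^{c_\gamma}n,\,1\}$. For $t \leq t^*$ we bound $\Prob_{f_0}\{\mathcal{L} > t\} \leq 1$, contributing at most $t^*$. For $t > t^*$, we use that $\mathcal{L} > t$ forces the good event to fail at the value of $\delta$ for which the right-hand side of~\eqref{eq:tail-growth-full} equals $t$; inverting the relationship $t \asymp_\gamma \bar r (L^{2/\gamma}\bar r^{-1/\gamma}\varepsilon_{n,\delta})^{\gamma\wedge 1/3}\log_+^{\ldots}(1/\varepsilon_{n,\delta})$ shows that such a failure has probability at most $\delta \asymp n^2 \exp(-c\, n (t/\bar r)^{1/(\gamma\wedge 1/3)} L^{-2/(\gamma\cdot(\gamma\wedge 1/3))}\bar r^{1/(\gamma\cdot(\gamma\wedge1/3))}/\ldots)$, i.e.\ an exponentially (in the appropriate power of $t$) decaying tail beyond $t^*$. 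Integrating this decaying tail from $t^*$ to $\bar r$ (the loss is deterministically at most $i(f_0) \lesssim_\gamma L^2$, indeed at most $r \wedge L^2 = \bar r$ once we also invoke the trivial-estimator comparison, or simply cap the integral at $\sup_{f_0}i(f_0)$) contributes a further term of the same order as $t^*$ up to the constant $c_\gamma$ adjustment in the logarithmic exponent — this is exactly where the extra $\Ind_{\{\gamma = 1/3\}}$ and the $\gamma \wedge 1/3$ in $c_\gamma$ come from, since the polynomial prefactor generated by integrating $t\, e^{-t^{a}}$ against $\laplaced t$ introduces one extra power of the logarithm.

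For the $\min\{\cdot,1\}$ truncation and the role of the trivial estimator: when the first term in the minimum exceeds $1$, the bound $\bar r \cdot 1 = \bar r$ is simply the deterministic worst-case loss of $\tilde\psi \equiv 0$, since $\mathcal{L}(\tilde\psi,\psi_0) = i(f_0) \leq r$ and $\leq L^2$ by~\eqref{eq:envelope-integral} (recalling $\sup_{f_0 \in \mathcal{J}_{\gamma,L,r}} i(f_0) \lesssim_\gamma L^2$, and more precisely $\leq \bar r$ up to the $\gamma$-dependent constant); so in that regime we take $\hat\psi_n = \tilde\psi$ and there is nothing to integrate. Combining the two regimes and absorbing the $\gamma$-dependent constants from~\eqref{eq:envelope-integral} gives the claimed bound with the stated $c_\gamma$.

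The main obstacle I anticipate is bookkeeping rather than conceptual: one must confirm that the high-probability bound from the proof of Theorem~\ref{thm:tail-growth} genuinely holds with the \emph{same} estimator across a continuum of confidence levels $\delta$ (so that a single estimator achieves the integrated bound), and one must track the $\gamma$-dependent constants carefully through the inversion $t \leftrightarrow \delta$ and the tail integration so as to land exactly on the exponent $c_\gamma = (\gamma \wedge \tfrac13) + \Ind_{\{\gamma = 1/3\}}$ — in particular checking that when $\gamma = 1/3$ the $\log_+(1/\varepsilon_{n,\delta})$ factor already present in~\eqref{eq:tail-growth-full} combines with the extra logarithmic factor from integrating the Gaussian-type tail to give precisely $\log^{1/3 + 1} n = \log^{4/3} n$, and that for $\gamma \neq 1/3$ only the single factor $\log^{\gamma \wedge 1/3} n$ survives.
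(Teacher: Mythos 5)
Your proposal has a genuine gap, and also misattributes the source of the $c_\gamma$ exponent.

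The central obstacle is that the bound $|\scoreEstimateShapeConstrained(x_0) - \psi_0(x_0)| \leq |\psi_0(x_0)|$ from Proposition~\ref{prop:scorePointwiseErr}\textit{(c)} holds \emph{only on the event} $\goodEvent$; off that event the unprojected estimator $\scoreEstimateShapeConstrained$ has no a priori deterministic bound, so your step of ``capping the integral at $\sup_{f_0}i(f_0)$'' is unjustified. The paper resolves this by introducing a \emph{projected} estimator $\scoreEstimateShapeConstrained^{(B)} := (-B) \vee \scoreEstimateShapeConstrained \wedge B$, choosing $B \asymp L(\varepsilon_{n,\delta}\wedge 1)^{-(1-\gamma)/2}$ so that (i) on $\goodEvent$ the projection leaves $\scoreEstimateShapeConstrained$ unchanged on the bulk $[a_{-j_{n,\delta}}, a_{j_{n,\delta}}]$, and (ii) on $\goodEvent^c$ the loss is deterministically $\lesssim B^2 + i(f_0)$. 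With that projection in place, the proof is a one-step event decomposition — $\E[\mathcal{L}] \leq \Pr(\goodEvent^c)(B^2 + \sup i(f_0)) + $ [high-probability bound from Theorem~\ref{thm:tail-growth}] — evaluated at the \emph{single} choice $\delta = 1/n$; no layer-cake integral or $t \leftrightarrow \delta$ inversion is needed.

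Your layer-cake route also runs into a second, more subtle difficulty which you flagged but dismissed as bookkeeping: the estimator $\hat\psi_{n,\delta}$ really does depend on $\delta$ (through $\epsilonByNDeltaMod$ in the band widths), so the family of quantile bounds in Theorem~\ref{thm:tail-growth} does not furnish a tail bound $\Pr\{\mathcal{L}(\hat\psi_{n,\delta_0},\psi_0) > t\}$ for a \emph{fixed} $\delta_0$ and varying $t$; each $\delta$ refers to a different estimator on a different event. Since the minimax risk is an infimum over estimators, one is free to pick a single (projected) estimator, but then one only has a single high-probability bound, not a decaying tail to integrate. Finally, the exponent $c_\gamma$ does not come from a polynomial prefactor generated by integrating a Gaussian-type tail: the $\Ind_{\{\gamma = 1/3\}}$ summand is carried over verbatim from the $\log_+^{\Ind_{\{\gamma = 1/3\}}}(1/\varepsilon_{n,\delta})$ factor already present in~\eqref{eq:tail-growth-full}, and the $(\gamma\wedge\tfrac13)$ summand arises because $\varepsilon_{n,1/n} \asymp (\log n)/n$ carries a factor $\log n$ inside the power $(\cdot)^{\gamma\wedge 1/3}$; the contribution of the bad event, $\delta B^2 \lesssim L^2 n^{-1}(n/\log n)^{1-\gamma}$, is of lower order and is absorbed.
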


The conclusions of Theorem~\ref{thm:tail-growth} and Corollary~\ref{cor:tail-growth-expectation} match the following minimax lower bounds in all parameters $\gamma,L,r,n,\delta$, up to logarithmic factors.

\begin{theorem}
\label{thm:tail-growth-lower-bd}
Let $n \in \N$, $\delta \in (0,1/4]$, $\gamma \in (0,1]$ and $L,r > 0$, and write $\tilde{\varepsilon}_{n,\delta} := n^{-1}\log(1/\delta)$. The minimax $(1 - \delta)$th quantile and minimax risk for score estimation over $\mathcal{J}_{\gamma,L,r}$ satisfy
\begin{align*}
\mathcal{M}_n(\delta,\mathcal{J}_{\gamma,L,r}) \gtrsim_\gamma \bar{r}\min\biggl\{\biggl(\frac{L^{2/\gamma}}{\bar{r}^{1/\gamma}}\tilde{\varepsilon}_{n,\delta}\biggr)^{\gamma \wedge \frac{1}{3}},\,1\biggr\} \;\;\;\text{and}\;\;\;
\mathcal{M}_n(\mathcal{J}_{\gamma,L,r}) \gtrsim_\gamma \bar{r}\min\biggl\{\Bigl(\frac{L^{2/\gamma}}{\bar{r}^{1/\gamma}}\cdot \frac{1}{n}\Bigr)^{\gamma \wedge \frac{1}{3}},\,1\biggr\}.
\end{align*}
\end{theorem}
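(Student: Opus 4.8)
The plan is to prove both displays by Le Cam's two-point method within the minimax-quantile formalism of \citet{ma2025high}. Write $R_{n,\delta}(\gamma,L,r)$ for the claimed rate. For each admissible tuple I will construct $f_0,f_1\in\mathcal{J}_{\gamma,L,r}$ with (i) $\mathcal{L}(\psi_0,\psi_1)\gtrsim_\gamma R_{n,\delta}(\gamma,L,r)$ and (ii) indistinguishability from $n$ samples at confidence $1-\delta$. Two mechanisms handle (ii): if $f_0$ and $f_1$ differ only on a set $A$ with $P_0(A)=P_1(A)\lesssim\tilde{\varepsilon}_{n,\delta}=n^{-1}\log(1/\delta)$ and $f_0\asymp f_1$ on $A$, then with probability at least $(1-P_0(A))^n\gtrsim\delta$ no sample lands in $A$, conditionally on which any estimator has the same law under $P_0$ and $P_1$, and since $\psi_0=\psi_1$ off $A$ one checks $\mathcal{L}(\hat\psi,\psi_0)\vee\mathcal{L}(\hat\psi,\psi_1)\ge\tfrac14\mathcal{L}(\psi_0,\psi_1)$ on that event; alternatively, when $f_0$ and $f_1$ are uniformly comparable, the standard Le Cam inequality with $\dhell^2(f_0,f_1)\lesssim\tilde{\varepsilon}_{n,\delta}$ yields the same conclusion. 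The expected-risk bound then follows by taking $\delta=1/4$ and using $\mathcal{M}_n(\mathcal{G})\ge\delta\,\mathcal{M}_n(\delta,\mathcal{G})$ together with $\tilde{\varepsilon}_{n,1/4}\asymp 1/n$.

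The first construction (dominant when $\gamma\le\tfrac13$, and also covering the truncated regime where $\min\{\,\cdot\,,1\}=1$) flattens the score near a tail. Let $f_0$ be a log-concave density whose density quantile function $J_0$ saturates the envelope, $|J_0'(u)|\asymp_\gamma L\,u^{-(1-\gamma)/2}$, on $(0,u^\ast)$ with $u^\ast\asymp_\gamma(\bar{r}/L^2)^{1/\gamma}\wedge\tfrac12$, and is tame (bounded score) on $(u^\ast,1)$; a rescaled Beta-type density serves here, and by \eqref{eq:envelope-integral} the choice of $u^\ast$ can be made so that $i(f_0)\le r$. Let $f_1$ coincide with $f_0$ on $[F_0^{-1}(w_q),\infty)$, where $w_q\asymp\tilde{\varepsilon}_{n,\delta}\wedge u^\ast$, and on the complementary left tail replace $\psi_0$ by a smaller-magnitude log-concave profile (score comparable to $J_0'(w_q)$) carrying the same mass $w_q$ and matching $f_0$ at $F_0^{-1}(w_q)$; this keeps $f_1$ log-concave and, after a negligible adjustment, within $\mathcal{J}_{\gamma,L,r}$, with $A$ the common differing tail. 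A quantile change of variables gives $\mathcal{L}(\psi_0,\psi_1)=\int_0^{w_q}(J_0'-\psi_1\circ F_0^{-1})^2\gtrsim_\gamma L^2 w_q^\gamma$, which equals $\bar{r}\min\{(L^{2/\gamma}\bar{r}^{-1/\gamma}\tilde{\varepsilon}_{n,\delta})^\gamma,1\}$ up to a $\gamma$-dependent constant.

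The second construction (needed to reach exponent $\tfrac13$ when $\gamma>\tfrac13$) is a localised kink-smoothing argument. Take $f_0$ as above but with a downward jump of size $\Delta\asymp_\gamma L(u^\ast)^{-(1-\gamma)/2}$ inserted in $\psi_0$ at an interior point $x_\ast$ of the saturated region, where $f_0(x_\ast)=\rho\asymp_\gamma L(u^\ast)^{(1+\gamma)/2}$; for $\gamma=1$ this is just an asymmetric ``tent'' log-density with slopes $\asymp L$ and $\asymp\bar{r}/L$, so that $i(f_0)\asymp\bar{r}$. Let $f_1$ replace the jump by a linear ramp of $\psi_1$ across an interval of width $h$ around $x_\ast$, arranged so that $\int(\psi_0-\psi_1)=0$ there; then $\phi_0-\phi_1$ is supported on that interval with no propagating offset, $\mathcal{L}(\psi_0,\psi_1)\asymp\Delta^2 h\rho$ and $\dhell^2(f_0,f_1)\lesssim\Delta^2 h^3\rho$. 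Choosing $h\asymp(\tilde{\varepsilon}_{n,\delta}/(\Delta^2\rho))^{1/3}$ (capped at the length of the region where $f_0\asymp\rho$, which is where $\min\{\,\cdot\,,1\}=1$ takes over) balances these and yields $\mathcal{L}(\psi_0,\psi_1)\asymp(\Delta^2\rho)^{2/3}\tilde{\varepsilon}_{n,\delta}^{1/3}\asymp_\gamma\bar{r}^{\,1-1/(3\gamma)}L^{2/(3\gamma)}\tilde{\varepsilon}_{n,\delta}^{1/3}$ after substituting for $\Delta,\rho$. Taking the larger of the two constructions, and noting that the non-truncated regime is precisely $\tilde{\varepsilon}_{n,\delta}\lesssim(\bar{r}/L^2)^{1/\gamma}$, a short computation confirms the maximum always matches the branch $\gamma\wedge\tfrac13$ of the claimed bound; the expected-risk statement is then immediate.

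The main obstacle is engineering the extremal $f_0$ so that it is simultaneously a genuine log-concave density, respects the envelope with the correct $\gamma$, has $i(f_0)\le r$, and exhibits precisely the local profile ($\Delta$, $\rho$, or the tail exponent) needed for the two-point distance to reach the target; in particular the asymmetric allocation of slope that produces the exponent pair $\bigl(1-\tfrac1{3\gamma},\,\tfrac1{3\gamma}\bigr)$ on $(\bar{r},L)$---and hence makes $\bar{r}=r\wedge L^2$, rather than $r$, the governing scale---is the delicate point. A secondary technical issue is keeping the kink-smoothing perturbation strictly local (so that $\phi_0-\phi_1$ integrates to zero on the perturbation interval), since a globally propagated shift in the log-density would inflate $\dhell^2(f_0,f_1)$ and exhaust the indistinguishability budget; likewise one must verify, for the tail construction, that the flattened density remains log-concave, stays within the envelope, and has Fisher information at most $r$.
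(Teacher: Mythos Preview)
Your strategy is correct and mirrors the paper's: two separate two-point constructions, one yielding the $n^{-\gamma}$ tail rate and one yielding the $n^{-1/3}$ bulk rate, then take the maximum; the paper does exactly this (Lemma~\ref{lem:fa-2pt} for the $n^{-1/3}$ piece and a perturbed flattened-Beta construction for the $n^{-\gamma}$ piece), followed by rescaling to obtain the $L,r$ dependence.

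The differences are in implementation. For the $n^{-1/3}$ bound the paper works with the \emph{symmetric} flattened Laplace $f_{a,0}$ (jump of size $a$ at the edge of the plateau) and handles $(L,r)$ purely by rescaling, whereas you build the asymmetry directly into the base density so that $\Delta^2\rho$ already carries the right powers of $L$ and $\bar r$; both routes land on $\bar r\,(L^{2/\gamma}\bar r^{-1/\gamma}\tilde\varepsilon_{n,\delta})^{1/3}$. For the $n^{-\gamma}$ bound the paper does \emph{not} replace the whole tail: it keeps a flattened $\mathrm{Beta}(b,b)$-type density $g_0$ with $b=(1+\gamma)/(1-\gamma)$ and linearises $\log g_0$ on a short interval $[x_0,2x_0]$ near the support boundary, then bounds $\chi^2(Q_1,Q_0)$ explicitly and invokes \citet[Corollary~6]{ma2025high}. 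Your ``no sample in $A$'' coupling is a valid alternative and arguably more transparent, but note that $f_0$ and $f_1$ are \emph{not} globally comparable on $A$ (polynomial versus exponential decay towards the support boundary); the inequality $\mathcal{L}(\hat\psi,\psi_0)\vee\mathcal{L}(\hat\psi,\psi_1)\gtrsim\int_A(\psi_0-\psi_1)^2(f_0\wedge f_1)$ still goes through, but you must extract the $L^2w_q^\gamma$ contribution from the sub-region $F_0^{-1}([w_q/C,w_q])$ where $f_0\asymp f_1$, not from the full tail. The paper's localised perturbation sidesteps this by keeping $f_0/f_1$ uniformly bounded on the support, at the cost of a longer $\chi^2$ computation.
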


\begin{figure}
\begin{center}
\subfigure{\includegraphics[width=0.45\textwidth]{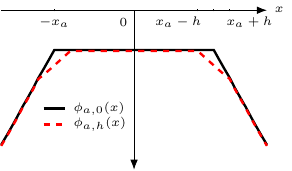}}
\hfill
\subfigure{\includegraphics[width=0.45\textwidth]{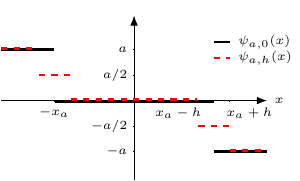}}
\end{center}
\caption{The functions $\phi_{a,0}$ and $\phi_{a,h}$ (left) and corresponding scores $\psi_{a,0}$ and $\psi_{a,h}$ (right), where $\phi_{a,h'} \colon \R \to \R$ is defined for $h' \in \{0,h\}$ by $\phi_{a,h'}(x) := -\log(2a) + \int_0^x \psi_{a,h'}$ so that $f_{a,0}(x) = e^{\phi_{a,0}(x)} = e^{-a(|x| - x_a)_+}/(2a)$ and $f_{a,h}(x) \propto e^{\phi_{a,h}(x)}$. 
}
\label{fig:lower-bd-tails-c1}
\end{figure}

From Theorem~\ref{thm:tail-growth-lower-bd}, we see that as $\gamma$ decreases to 0, the deteriorating worst-case tail behaviour leads to arbitrarily slow minimax rates that scale as $n^{-\gamma}$ when $L,r,\delta$ are regarded as constants. For $\gamma \in (0,1/3]$, the lower bound of order $n^{-\gamma}$ in Theorem~\ref{thm:tail-growth-lower-bd} is based on a perturbation of a `flattened' $\mathrm{Beta}(b,b)$ density with $b = 2/(1 - \gamma)$, whose tail behaviour matches the envelope function in~\eqref{eq:tail-growth} and whose bulk region is modified to satisfy the Fisher information constraint. The overall dependence on the sample size, namely $n^{-(\gamma \wedge \frac{1}{3})}$, reveals an elbow at $\gamma = 1/3$ because a lower bound of order $n^{-1/3}$ holds over densities in the class with scores uniformly bounded in absolute value by $L^{1/\gamma}$.  It is instructive to mention at this point that this latter lower bound is established by considering the two-parameter subfamily $\{f_{a,h} : a \geq \sqrt{2},\,h \in [0,1/a]\}$ of flattened Laplace densities in Lemma~\ref{lem:fa-2pt}, where $f_{a,h}$ has score function $\psi_{a,h}$ given by
\begin{align*}
\psi_{a,h}(x) := -\sgn(x)\Bigl(a\Ind_{\{|x| \geq x_a + h\}} + \frac{a}{2}\Ind_{\{|x| \in (x_a - h, x_a + h)\}} \Bigr)
% \qquad \phi_{a,h}(x) := \int_0^x \psi_{a,h}
\end{align*}
% and 
% \begin{align*}
% %\phi_0(x)&:=-(|x|-x_a)_+ -\log(2a), \quad \quad && 
% \psi_0(x) &:= ( -a\sgn(x)(\Ind_{\{x \geq x_a\}} + \Ind_{\{x < -x_a\}}),\\
% %\phi_i(x) &:= \frac{1}{2a} + \int_0^x \psi_i, \quad \quad &&
% \psi_1(x)  &:= -\sgn(x)\Bigl(a\Ind_{\{|x| \geq x_a + h\}} + \frac{a}{2}\Ind_{\{|x| \in (x_a - h, x_a + h)\}} \Bigr)
% \end{align*}
% for $a \geq \sqrt{2}$, $h=1/a$ and $x_a:=a-1/a$,
and $x_a := a - 1/a \geq 1/a$; see Figure~\ref{fig:lower-bd-tails-c1}. 
% In particular, $f_{a,0}(x) = e^{-a(|x| - x_a)_+}/(2a)$ for $x \in \R$. The score functions $\psi_{a,0}$ and $\psi_{a,h}$ differ by $a/2$ on the interval $[x_a - h, x_a + h]$, where the densities are of order $1/a$, so the Fisher divergence is at least of order $a^2 h \cdot(1/a) = ah$.
% \[
% \int_{x_a - h}^{x_a + h} (\psi_{a,0} - \psi_{a,h})^2 (f_{a,0} \wedge f_{a,h}) \asymp ah.
% \]
% On the other hand, 
For each $a$, we have $f_{a,0},f_{a,h} \in \mathcal{J}_{\gamma,a^\gamma,2}$ with $\KL(f_{a,0},f_{a,h}) < 2ah^3$, but the Fisher divergence is at least $ah/(2e)$. Thus by choosing the largest value of $h \in (0,1/a]$ with $n\mathrm{KL}(f_{a,0},f_{a,h}) \leq 1$, namely $h^* := (2an)^{-1/3} \wedge a^{-1}$, we deduce from Le Cam's two-point lemma \citep[e.g.][Lemma~8.4]{samworth24modern} that
\begin{equation}
\label{eq:fa-2pt}
\mathcal{M}_n(\mathcal{J}_{\gamma,a^\gamma,2}) \geq \mathcal{M}_n(\{f_{a,0},f_{a,h^*}\}) \gtrsim ah^* \asymp \Bigl(\frac{a^2}{n}\Bigr)^{1/3} \wedge 1.
\end{equation}
% This holds for arbitrary $a \geq \sqrt{2}$, so we may take $a \asymp n^{1/2}$ to obtain $\mathcal{M}_n(\mathcal{F}_1) = \mathcal{M}_n(\mathcal{F}_2)/2 \gtrsim 1$. 
Taking $a = L^{1/\gamma}$ when $L \geq 2^{\gamma/2}$ yields the lower bound on the minimax risk in Theorem~\ref{thm:tail-growth-lower-bd} when $r = 2$. A minimax quantile bound follows by virtually identical reasoning based on a high-probability version of Le Cam's two-point lemma \citep[Corollary~6]{ma2025high}.
% The term of order $n^{-1/3}$ in~\eqref{eq:fa-2pt} is especially significant, as we will discuss after Theorem~\ref{thm:tail-growth-lower-bd} in Section~\ref{subsec:tail-growth}.
%, which converges to zero as $a \to \infty$. 
% the 1-parameter family $\mathcal{F}':=\{f_a: a\geq 1\}\subseteq \mathcal{F}_1$  where for $x \in \mathbb{R}$
% \begin{align*}
%     f_a(x):=\frac{1}{2a}e^{-a(|x|-x_a)_+}, \quad \quad \psi_a:=(\log f_a)'=-a\sgn(x)\mathbbm{1}_{\{|x|> x_a\}}
% \end{align*}
% and $x_a:=a-a^{-1}\geq 0$. % at the points $x_a$ and $-x_a$, the magnitude of the score function jumps from zero to $a$. 
% The Fisher divergence between $f_a$ and $f_{a + 1/a}$ is of order one as $a \to \infty$ because of the difference between the score functions on $[x_a,x_{a + 1/a}]$, while $\KL(f_a,f_{a + 1/a}) \leq 4/a^2$, see Figure~\ref{fig:fa-densities}.
This construction relies on $\psi_{a,0}$ having a large jump at a point $x_a$ where $f_{a,0}$ takes its maximum value. In a relatively high-density neighbourhood of $x_a$, we may therefore apply a large pointwise perturbation to $\psi_{a,0}$ that abides by the monotonicity constraint and has negligible effect on the original distribution. Moreover, the lower bound in~\eqref{eq:fa-2pt} applies to piecewise log-affine densities in $\mathcal{J}_{\gamma,L,2}$ with at most 5 affine pieces, so their score functions are among the hardest to estimate when $\gamma \in (1/3,1]$. This reveals another key difference between score and density estimation; for the latter, the log-concave maximum likelihood estimator adaptively achieves a near-parametric squared Hellinger rate of order $k\log^{5/4}(n)/n$ over log-$k$-affine densities.
% The reason for this is a materially different bias-variance tradeoff for derivative estimation that we discuss later

The dependence of the minimax bounds on $L$ and $r$ arises from the following scaling property: for $f_0 \in \mathcal{F}$ and $\lambda > 0$, the density $x \mapsto \lambda f_0(\lambda x)$ has density quantile function $u \mapsto \lambda J_0(u)$ and Fisher information $\lambda^2 i(f_0)$. Therefore,
\[
\mathcal{J}_{\gamma,L,r} = \bigl\{x \mapsto r^{1/2}f_0(r^{1/2}x) : f_0 \in \mathcal{J}_{\gamma,Lr^{-1/2},1}\bigr\},
\]
so $\mathcal{M}_n(\mathcal{J}_{\gamma,L,r}) = r\mathcal{M}_n(\mathcal{J}_{\gamma,Lr^{-1/2},1})$ and $\mathcal{M}_n(\delta,\mathcal{J}_{\gamma,L,r}) = r\mathcal{M}_n(\delta,\mathcal{J}_{\gamma,Lr^{-1/2},1})$ by~\eqref{eq:loss-scaling}. Moreover,~\eqref{eq:envelope-integral} means that $\mathcal{J}_{\gamma,L,r} = \mathcal{J}_{\gamma,L}$ for all $r \geq 2^{1 - \gamma}L^2/\gamma$, so the minimax rates feature~$\bar{r} = r \wedge L^2$ instead of $r$.

\subsection{H\"older classes}
\label{subsec:holder}

To improve upon the minimax rates in Section~\ref{subsec:tail-growth}, which are at best of order $n^{-1/3}$, we must exclude the densities $f_{a,h}$ in Section~\ref{subsec:main-results-prelim}, whose score functions are discontinuous. To this end, we impose a H\"older condition that in particular enforces higher regularity in high-density regions.
% \red{Previously: For $\beta \in [1,2]$ and $L > 0$, we say that $\phi_0$ is $(\beta,L)$-H\"older, and write $\phi_0 \in \mathcal{H}_{\beta,L}$, if $i(f_0) < \infty$ and
% \[
% |\psi_0(x) - \psi_0(y)| \leq L\,i(f_0)^{\beta/2}|x - y|^{\beta-1}
% \]
% for $x,y \in \support$.} \blue{We may no longer need the terminology `$(\beta,L)$-H\"older' and $\mathcal{H}_{\beta,L}$.}

\begin{definition}
\label{def:holder}
For $\beta \in [1,2]$ and $L > 0$, denote by $\mathcal{F}_{\beta,L}$ the set of $f_0 \in \mathcal{F}$ whose score functions satisfy
\begin{align*}
|\psi_0(x) - \psi_0(y)| \leq L|x - y|^{\beta - 1}
\end{align*}
for all $x,y \in \R$.
% \footnote{\red{To see that this scales appropriately with the Fisher information, consider $f_0 \in \mathcal{F}_{1,\beta,L}$ with $i(f_0)= 1$ and $f:=r^{1/2}f_0(r^{1/2}\cdot)$ with $i(f)=r > 0$. Then letting $\psi$ be the right derivative of $f$, we have $|\psi(x) - \psi(y)|=r^{1/2}|\psi_0(r^{1/2}x) - \psi_0(r^{1/2}y)|\leq Lr^{\beta/2}|x-y|^{\beta-1}$ for any $x,y \in \supp(f)$, and so $f \in \mathcal{F}_{r,\beta,L}$. In the class $\mathcal{F}_{r,\beta,L}$, the parameters $\beta$ and $L$ restrict the shape of the density and $r$ restricts the scale.}}
Moreover, for $r > 0$, let
\[
\mathcal{F}_{\beta,L,r} := \{f_0 \in \mathcal{F}_{\beta,L} : i(f_0) \leq r\}.
\]
\end{definition}
% \red{which is non-empty if and only if $L \geq L_\beta > 0$ for some $L_{\beta}$ depending only on $\beta$ (see Lemma~\ref{lem:f0Bdd} and ...lower bounds...)}.
% \footnote{\red{We have $\mathcal{F}_{r,1,L} \neq \emptyset$ if and only if $L\geq 2$ by Lemma~\ref{lem:f0Bdd} and the fact that $r^{1/2}\exp(-r^{1/2}|\cdot|)/2 \in \mathcal{F}_{r,1,L}$ when $L \geq 2$. In fact, the class $\mathcal{F}_{r,1,2}$ is equal to the parametric class of Laplace densities on $\mathbb{R}$ with arbitrary mean and Fisher information bounded above by $r$, so $\mathcal{M}_{n}(\mathcal{F}_{r,1,2})$ is expected to recover the standard parametric rate. We do not claim that the lower bound on $L$ that guarantees $\mathcal{F}_{r,\beta,L}\neq \emptyset$ established in Lemma~\ref{lem:f0Bdd} is tight when $\beta > 1$. Thus, in Theorem~\ref{thm-main}, we focus on $L\geq ...$. Improved rates are expected as $L$ decreases towards $L_{\beta}$.}}}
When $\beta = 1$, the class $\mathcal{F}_{1,L,r}$ coincides with $\mathcal{J}_{1,L,r}$ and contains densities with discontinuous score functions. On the other hand, when $\beta > 1$, the log-density $\phi_0$ is necessarily differentiable (in fact, $\beta$-H\"older continuous), and $\psi_0$ is required to be $(\beta-1)$-H\"older continuous.

\begin{theorem}
\label{thm:holder-minimax-upper}
For $n \geq 3$, $\delta \in (0,1)$, $\beta \in [1,2]$ and $L,r > 0$, the minimax $(1 - \delta)$th quantile for score estimation over $\mathcal{F}_{\beta,L,r}$ satisfies
\begin{equation}
\label{eq:holder-minimax-upper}
\mathcal{M}_n(\delta,\mathcal{F}_{\beta,L,r}) \lesssim_\beta r'\min\biggl\{\Bigl(\frac{L^{2/\beta}}{r'}\varepsilon_{n,\delta}\Bigr)^{\beta/(2\beta + 1)}\log_+^{\frac{\beta - 1}{2\beta + 1}}\Bigl(\frac{L^{2/\beta}}{r'}\Bigr) + \frac{L^{2/\beta}}{r'}\varepsilon_{n,\delta}\log_+^{3 - \frac{2}{\beta}}\Bigl(\frac{1}{\varepsilon_{n,\delta}}\Bigr),\,1\biggr\},
\end{equation}
where $r' := r \wedge L^{2/\beta}$. In particular,
\[
\mathcal{M}_n(\delta,\mathcal{F}_{\beta,L}) \lesssim_\beta L^{2/\beta}(\varepsilon_{n,\delta} \wedge 1)^{\beta/(2\beta + 1)},
% NB: No logarithmic factors
\]
and these bounds are all attained by the adaptive multiscale estimator $\scoreEstimateShapeConstrained$ in~\eqref{eq:scoreEstimateShapeConstrained}.
\end{theorem}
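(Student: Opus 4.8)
\emph{Overall strategy and reductions.} The plan is to deduce Theorem~\ref{thm:holder-minimax-upper} from the high-probability pointwise guarantee of Proposition~\ref{prop:scorePointwiseErr} for the adaptive multiscale estimator $\scoreEstimateShapeConstrained$ of~\eqref{eq:scoreEstimateShapeConstrained}, by integrating that guarantee against $f_0$; the argument runs parallel to, and refines, the proof of the upper bound in Theorem~\ref{thm:tail-growth}. First I would reduce by scaling: the density $x\mapsto cf_0(cx)$ has score $x\mapsto c\psi_0(cx)$, hence $(\beta-1)$-H\"older constant $c^\beta L$, and Fisher information $c^2 i(f_0)$, so~\eqref{eq:loss-scaling} lets us take $L=1$ at the cost of replacing $r$ by $\tilde r:=L^{-2/\beta}r$. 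An analogue of~\eqref{eq:envelope-integral}, namely $\sup_{f_0\in\mathcal F_{\beta,1}}i(f_0)\lesssim_\beta 1$ (proved by centring $\psi_0$ at the mode $\distributionMode$, noting $|\psi_0(x)|\le|x-\distributionMode|^{\beta-1}$ there, and combining with the exponential decay forced on log-concave tails), then gives $\mathcal F_{\beta,1,\tilde r}=\mathcal F_{\beta,1}$ once $\tilde r\gtrsim_\beta 1$; this is why the bounds involve $r'=r\wedge L^{2/\beta}=L^{2/\beta}(\tilde r\wedge 1)$. Finally, since $\scoreEstimateShapeConstrained$ never overestimates $|\psi_0|$ pointwise, $\mathcal L(\scoreEstimateShapeConstrained,\psi_0)\le 4\int\psi_0^2 f_0=4i(f_0)\lesssim_\beta r'$ holds with probability one; this accounts for the $\min\{\,\cdot\,,1\}$ and for the small-$n$ regime, so it remains to establish the non-trivial bound on an event $\goodEvent$ with $\Prob_{f_0}(\goodEvent)\ge 1-\delta$.

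\emph{Decomposition and the tails.} On $\goodEvent$, Proposition~\ref{prop:scorePointwiseErr} provides, for every $x$ between suitable data-dependent endpoints, a bound of the form $|\scoreEstimateShapeConstrained(x)-\psi_0(x)|\lesssim\inf_h\bigl\{\psi_0(x-h)-\psi_0(x+h)+\sqrt{\varepsilon_{n,\delta}/(h^3 f_0(x))}\,\bigr\}$ over admissible bandwidths $h$, together with $|\scoreEstimateShapeConstrained(x)|\le|\psi_0(x)|$. I would fix a tail level $\epsilon$ of order $\varepsilon_{n,\delta}$ up to logarithmic factors, small enough that with high probability at least a constant multiple of $\log(n^2/\delta)$ observations fall in each of $(-\infty,F_0^{-1}(\epsilon)]$ and $[F_0^{-1}(1-\epsilon),\infty)$, and split $\mathcal L(\scoreEstimateShapeConstrained,\psi_0)$ over the bulk $\bulkInterval:=[F_0^{-1}(\epsilon),F_0^{-1}(1-\epsilon)]$ and its complement. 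On the tails, $|\scoreEstimateShapeConstrained|\le|\psi_0|$ gives a contribution of at most $4\bigl(\int_0^\epsilon+\int_{1-\epsilon}^1\bigr)(J_0')^2$, and the key is a uniform envelope for $|J_0'|$ near the endpoints over $\mathcal F_{\beta,L,r}$: if $|\psi_0(x_0)|=M$ with $F_0(x_0)$ small, then the $(\beta-1)$-H\"older property forces $|\psi_0|\ge M/2$ on an interval of radius $\asymp_\beta(M/L)^{1/(\beta-1)}$ on the support side of $x_0$, while monotonicity forces $f_0(x_0)\ge M F_0(x_0)$ and an exponential tail beyond $x_0$ of rate at least $M$; substituting these into $i(f_0)\le r$ yields $M\lesssim_\beta r^{(\beta-1)/(3\beta-2)}L^{1/(3\beta-2)}F_0(x_0)^{-(\beta-1)/(3\beta-2)}$, i.e.\ $|J_0'(u)|\lesssim_\beta r^{(\beta-1)/(3\beta-2)}L^{1/(3\beta-2)}\{u\wedge(1-u)\}^{-(\beta-1)/(3\beta-2)}$. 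Since $(\beta-1)/(3\beta-2)<1/2$ for all $\beta\in[1,2]$, this envelope is square-integrable near $0$ and $1$, so the tail contribution is $\lesssim_\beta r^{2(\beta-1)/(3\beta-2)}L^{2/(3\beta-2)}\epsilon^{\beta/(3\beta-2)}$, which is of strictly smaller order than the claimed bulk rate because $\beta/(3\beta-2)>\beta/(2\beta+1)$; for $\beta=1$ one simply uses $|J_0'|\le L$.

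\emph{The bulk.} Integrating the pointwise bound over $\bulkInterval$ gives $\int_{\bulkInterval}(\scoreEstimateShapeConstrained-\psi_0)^2 f_0\lesssim\int_{\bulkInterval}\inf_h\bigl\{V_h(x)^2+\varepsilon_{n,\delta}/(h^3 f_0(x))\bigr\}f_0(x)\,\laplaced x$, where $V_h(x):=\psi_0(x-h)-\psi_0(x+h)$. The two structural inputs are (i) the pointwise H\"older bound $V_h(x)\lesssim_\beta Lh^{\beta-1}$; and, crucially, (ii) the log-concavity constraint furnished by the integration-by-parts identity $-\int\phi_0'' f_0=i(f_0)$, which yields $\int_{\bulkInterval}V_h(x)f_0(x)\,\laplaced x\lesssim h\,i(f_0)\le hr$ for admissible $h$, together with the bound $\sup_{u\in[0,\epsilon]}|J_0'(u)|\lesssim_\beta r^{(\beta-1)/(3\beta-2)}L^{1/(3\beta-2)}\epsilon^{-(\beta-1)/(3\beta-2)}$ on the total variation of $\psi_0$ over $\bulkInterval$ from the tail envelope. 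Choosing the pointwise variance-optimal bandwidth and using only (i) reproduces the slower, no-shape-constraint rate $n^{-2(\beta-1)/(2\beta+1)}$ of Proposition~\ref{prop:lower-bd-no-shape-c}, while a constant bandwidth gives only $n^{-\beta/(\beta+3)}$; the improvement to the exponent $\beta/(2\beta+1)$ comes from choosing $h(x)$ adaptively, as the largest admissible scale at which the local variation of $\psi_0$ about $x$ stays below a common threshold $\tau$, and then controlling $\int_{\bulkInterval}h(x)^{-3}\,\laplaced x$ by a layer-cake decomposition over the level sets $\{x:V_h(x)>\tau\}$, whose Lebesgue measures are bounded through (ii). Optimising jointly over $\tau$ and the admissible scales, and tracking the floor on $h$ in low-density regions and the union bound over the $O(\log(1/\varepsilon_{n,\delta}))$ dyadic scales in $\scoreEstimateShapeConstrained$, produces the first term in the minimum in~\eqref{eq:holder-minimax-upper} with its $\log_+$ factors; the second term $\frac{L^{2/\beta}}{r'}\varepsilon_{n,\delta}\log_+^{3-2/\beta}(1/\varepsilon_{n,\delta})$ emerges as the governing balance when $r'$ is so small that the variance contribution of the coarsest admissible scales dominates. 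Assembling the bulk and tail estimates with the trivial bound against $r'$ gives~\eqref{eq:holder-minimax-upper}, and taking $r'=L^{2/\beta}$ recovers the displayed rate for $\mathcal F_{\beta,L}$.

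\emph{Main obstacle.} The delicate step is the bulk estimate: quantifying, sharply in $n$, $\delta$, $L$ and $r$, the way log-concavity confines the steep portions of $\psi_0$ to a set that is small in the $f_0$-weighted sense, so that the adaptive bandwidth can be taken large over most of $\bulkInterval$, while simultaneously keeping the bias under control near those steep portions and near the low-density boundary of $\bulkInterval$. Extracting exactly the exponent $\beta/(2\beta+1)$ from this trade-off, rather than $2(\beta-1)/(2\beta+1)$ or $\beta/(\beta+3)$, together with the precise powers of the logarithms, is where essentially all the work lies; some additional care is required as $\beta\downarrow 1$, where $\psi_0$ may be discontinuous and the H\"older floor on bandwidths degenerates, so that one falls back on the monotonicity-only analysis underlying the $\gamma=1$ case of Theorem~\ref{thm:tail-growth}.
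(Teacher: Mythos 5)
Your overall scaffolding agrees with the paper's: apply Proposition~\ref{prop:scorePointwiseErr} on a high-probability event, split the integral into a bulk and a tail, use $|\scoreEstimateShapeConstrained|\le|\psi_0|$ in the tail, and exploit a level-set (layer-cake) argument for the bulk. However, there are three substantive gaps in the proposal that prevent it from delivering the stated rate and logarithmic factors, and the crucial new ingredient of the paper's argument is absent.

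\textbf{The tail envelope is too weak.} You derive a polynomial envelope $|J_0'(u)|\lesssim_\beta r^{(\beta-1)/(3\beta-2)}L^{1/(3\beta-2)}\{u\wedge(1-u)\}^{-(\beta-1)/(3\beta-2)}$ from the combination of the H\"older condition and the Fisher information. The paper's Lemma~\ref{lem:holder-density-quantile} shows that the H\"older condition alone already forces the much stronger \emph{logarithmic} envelope $|J_0'(u)|\le 2L^{1/\beta}\log_+^{(\beta-1)/\beta}\!\bigl(C_\beta/(u\wedge(1-u))\bigr)$, and correspondingly $J_0(u)\lesssim_\beta L^{1/\beta}(u\wedge(1-u))\log_+^{(\beta-1)/\beta}(1/(u\wedge(1-u)))$. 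This logarithmic decay is what produces the exact powers of $\log_+$ in~\eqref{eq:holder-minimax-upper}; a polynomial envelope would contaminate both the tail contribution and the per-subinterval total-variation bounds in the bulk with extra polynomial-in-$\varepsilon_{n,\delta}$ slack.

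\textbf{The bulk mechanism conflates $f_0$-measure with Lebesgue measure.} Your key step (ii) is an $L^1(f_0)$ bound $\int_{\bulkInterval}V_h(x)f_0(x)\,\laplaced x\lesssim h\, i(f_0)$ via integration by parts, and you then propose to bound the \emph{Lebesgue} measure of $\{V_h>\tau\}$ ``through (ii)''. Markov from an $L^1(f_0)$ bound controls the $f_0$-measure of this set, not its Lebesgue measure; since the stochastic error term is $\varepsilon_{n,\delta}/(h^3 f_0(x))$, it is precisely $\int h(x)^{-3}\,\laplaced x$ over each relevant region (Lebesgue, not $f_0$-weighted) that must be bounded. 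The paper resolves this by working on the dyadic quantile subintervals $[a_j,a_{j+1}]$, on each of which $f_0$ varies by at most a factor of $2$ (Lemma~\ref{lem:partition-properties}), so the density weight can be absorbed by replacing $\psi_0$ with the rescaled function $g_j=\psi_0\,f_0(a_j)^{1/2}$ and working with \emph{unweighted} Lebesgue measure throughout (Lemma~\ref{lem:integrated-error-interval}). Your proposal does not supply this localisation, and without it the $L^1(f_0)$ bound does not close the argument.

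\textbf{The engine of the proof is missing.} The paper's Proposition~\ref{prop:Holder-basic} is the key new lemma: for a decreasing $(\beta,L_*)$-H\"older function $g$ with total increment $V$ on $[a-h_{\max},b+h_{\max}]$, it proves the level-set estimate $\lambda(\{x\in[a,b]:\Gamma(x)>\zeta\})\le 3V\varepsilon^{1/3}/\zeta^{5/6}$ for $\Gamma(x)=\inf_h\xi_x(h)^2$, via a maximal-separation/total-variation argument that crucially uses the monotonicity of $g$ (not an integration-by-parts identity), and from it the integrated bound $\int_a^b\Gamma\lesssim V\{(V\varepsilon)^{1/3}\wedge(L_*\varepsilon^\beta)^{1/(2\beta+1)}\}+(b-a)\varepsilon/h_{\max}^3$. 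This is then applied separately on each $[a_j,a_{j+1}]$ with $L_*=Lf_0(a_j)^{1/2}$ and $V=V_j$, and the per-interval contributions are summed using Cauchy--Schwarz together with the Fisher information constraint in the discretised form $\sum_\ell 2^{-|\ell|}J_0'(u_\ell)^2\lesssim i(f_0)\le r$ (Lemma~\ref{lem:J0-fisher-information-sums}). That summation step is where the $r'=r\wedge L^{2/\beta}$-dependence and the remaining $\log_+$ powers appear, via a choice of cutoff index $j_L\asymp\log_2(L^{2/\beta}/r')$. Your proposal gestures at the layer-cake idea but does not formulate the level-set estimate for the pointwise infimum $\Gamma$, does not specify how the per-interval estimates are combined, and does not tie the total-variation quantities to the Fisher information — so it cannot recover the precise exponents and $\log$-powers in~\eqref{eq:holder-minimax-upper}. (Your scaling reduction to $L=1$ is sound but inessential; the paper keeps $L$ explicit throughout.)
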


Similarly to Theorem~\ref{thm:tail-growth}, the dependence of this bound on $L$ and $r$ respects the scaling property
\[
\mathcal{F}_{\beta,L,r} = \bigl\{x \mapsto r^{1/2}f_0(r^{1/2}x) : f_0 \in \mathcal{F}_{\beta,Lr^{-\beta/2},1}\bigr\},
\]
so that $\mathcal{M}_n(\delta,\mathcal{F}_{\beta,L,r}) = r\mathcal{M}_n(\delta,\mathcal{F}_{\beta,Lr^{-\beta/2},1})$ by~\eqref{eq:loss-scaling}. Moreover, by~\eqref{eq:holder-fisher-information} in Lemma~\ref{lem:holder-density-quantile}, $\mathcal{F}_{\beta,L,r} = \mathcal{F}_{\beta,L}$ whenever $r \geq 8L^{2/\beta}$, which enables us to obtain bounds in terms of $r'$ instead of~$r$ throughout.  Corollary~\ref{cor:holder-minimax-expec} below follows from Theorem~\ref{thm:holder-minimax-upper} by arguing similarly to the proof of Corollary~\ref{cor:tail-growth-expectation} in Section~\ref{subsec:tail-growth}.
\begin{corollary}
\label{cor:holder-minimax-expec}
In the setting of Theorem~\ref{thm:holder-minimax-upper}, the minimax risk of score estimation over $\mathcal{F}_{\beta,L,r}$ satisfies
\[
\mathcal{M}_n(\mathcal{F}_{\beta,L,r}) \lesssim_\beta r'\min\biggl\{\Bigl(\frac{L^{2/\beta}}{r'} \cdot \frac{\log^{4 - \frac{2}{\beta}}n}{n}\Bigr)^{\beta/(2\beta + 1)},\,1\biggr\}.
\]
\end{corollary}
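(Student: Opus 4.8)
The plan is to deduce the risk bound from the minimax quantile bound of Theorem~\ref{thm:holder-minimax-upper} in the same way that Corollary~\ref{cor:tail-growth-expectation} is deduced from Theorem~\ref{thm:tail-growth}: run the adaptive multiscale estimator $\hat{\psi}_{n,\delta}$ of~\eqref{eq:scoreEstimateShapeConstrained} at confidence level $\delta = 1/n$, for which $\varepsilon_{n,1/n} = n^{-1}\log(n^3) = 3n^{-1}\log n$, and absorb the residual bad-event contribution. Write $r' := r\wedge L^{2/\beta}$, $\rho := L^{2/\beta}/r' \ge 1$ and $A_n := \rho\,n^{-1}\log^{4-2/\beta}n$, so that the claimed bound is $T_n := r'\min\{A_n^{\beta/(2\beta+1)},1\}$. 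Two facts will be used repeatedly: (i) because $\hat{\psi}_{n,\delta}$ does not overestimate the score in absolute value (Proposition~\ref{prop:scorePointwiseErr}), we have the sure bound $\mathcal{L}(\hat{\psi}_{n,\delta},\psi_0) \le 4i(f_0) \le 32r'$, the last step using~\eqref{eq:holder-fisher-information} in Lemma~\ref{lem:holder-density-quantile} and the Fisher information constraint to bound $\sup_{f_0 \in \mathcal{F}_{\beta,L,r}} i(f_0) \le r \wedge 8L^{2/\beta} \le 8r'$; and (ii) since $n \ge 3$ we have $\log n \ge \log 3 > 1$, so $t \mapsto (\log n)^t$ is increasing.

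The first step is to bound $Q_n := \sup_{f_0 \in \mathcal{F}_{\beta,L,r}} \mathrm{Quantile}_{f_0}(1 - 1/n, \hat{\psi}_{n,1/n}, \psi_0) \lesssim_\beta T_n$. When $A_n \ge 1$, so $T_n = r'$, this is immediate from fact~(i): $Q_n \le 32r' = 32T_n$. When $A_n < 1$, I invoke Theorem~\ref{thm:holder-minimax-upper}, according to which $\hat{\psi}_{n,\delta}$ attains~\eqref{eq:holder-minimax-upper}, substitute $\varepsilon_{n,1/n} = 3n^{-1}\log n$, bound the outer $\min$ by its first argument, and simplify the logarithmic factors: the hypothesis $A_n < 1$ forces $\rho < n/\log^{4-2/\beta}n \le n$, hence $\log_+\rho \le \log n$, while $\log_+(1/\varepsilon_{n,1/n}) = \log_+\{n/(3\log n)\} \le \log n$. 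With these substitutions the first summand of~\eqref{eq:holder-minimax-upper} becomes $\lesssim_\beta r'\rho^{\beta/(2\beta+1)}n^{-\beta/(2\beta+1)}(\log n)^{(2\beta-1)/(2\beta+1)}$, which is at most $r'A_n^{\beta/(2\beta+1)}$ since $A_n^{\beta/(2\beta+1)} = \rho^{\beta/(2\beta+1)}n^{-\beta/(2\beta+1)}(\log n)^{2(2\beta-1)/(2\beta+1)}$ and $2(2\beta-1)/(2\beta+1) \ge (2\beta-1)/(2\beta+1) > 0$; the second summand becomes $\lesssim_\beta r'\rho\,n^{-1}\log^{4-2/\beta}n = r'A_n \le r'A_n^{\beta/(2\beta+1)}$ because $A_n < 1$ and $\beta/(2\beta+1) < 1$. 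Hence $Q_n \lesssim_\beta r'A_n^{\beta/(2\beta+1)} = T_n$. This logarithmic bookkeeping is the only slightly delicate point: one should check that the factor $\log_+^{3-2/\beta}(1/\varepsilon_{n,\delta})$ in the second summand of~\eqref{eq:holder-minimax-upper} is exactly what upgrades the $n^{-1}\log n$ coming from $\varepsilon_{n,1/n}$ to the $n^{-1}\log^{4-2/\beta}n$ of the statement, and that the stray factor $\log_+(L^{2/\beta}/r')$ in the first summand causes no harm precisely in the non-trivial regime $A_n < 1$.

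The second step passes from the quantile to the risk. For each $f_0 \in \mathcal{F}_{\beta,L,r}$, splitting $\E_{f_0}\{\mathcal{L}(\hat{\psi}_{n,1/n},\psi_0)\}$ over the event that $\mathcal{L}(\hat{\psi}_{n,1/n},\psi_0)$ does not exceed its own $(1-1/n)$th quantile (which has probability at least $1 - 1/n$) and its complement, and using the sure bound from fact~(i) on the latter, gives $\E_{f_0}\{\mathcal{L}(\hat{\psi}_{n,1/n},\psi_0)\} \le \mathrm{Quantile}_{f_0}(1-1/n,\hat{\psi}_{n,1/n},\psi_0) + 32r'/n \le Q_n + 32r'/n$, whence $\mathcal{M}_n(\mathcal{F}_{\beta,L,r}) \le Q_n + 32r'/n \lesssim_\beta T_n + r'/n$. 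The residual $r'/n$ is absorbed into $T_n$: if $A_n \ge 1$ then $T_n = r' \ge r'/n$, while if $A_n < 1$ then $A_n > n^{-1}$ (as $\rho \ge 1$ and $\log^{4-2/\beta}n \ge 1$) together with $\beta/(2\beta+1) \le 2/5 < 1$ give $T_n = r'A_n^{\beta/(2\beta+1)} \ge r'n^{-\beta/(2\beta+1)} \ge r'/n$. This establishes the corollary. (Should the no-overestimation property of $\hat{\psi}_{n,\delta}$ be available only on an event of probability $\ge 1 - \delta$ rather than surely, the same argument goes through on replacing $\delta = 1/n$ by a sufficiently negative power of $n$, since any sure polynomial-in-$n$ bound on $\mathcal{L}(\hat{\psi}_{n,\delta},\psi_0)$ would suffice for the bad-event term.)
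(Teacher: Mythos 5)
Your overall strategy and the logarithmic bookkeeping in the first step are correct, and mirror the paper's argument; the gap is in your ``Fact~(i),'' where you assert a \emph{sure} bound $\mathcal{L}(\hat{\psi}_{n,\delta},\psi_0) \leq 4i(f_0)$.  Proposition~\ref{prop:scorePointwiseErr}\textit{(c)} only yields $|\scoreEstimateShapeConstrained(x_0)-\psi_0(x_0)| \leq |\psi_0(x_0)|$ \emph{on the event $\goodEvent$}; off this event, $\scoreEstimateShapeConstrained$ is defined via the confidence band $\hat{\mathcal{I}}_{n,\delta}$, whose endpoints involve ratios of noisy quantities and can be arbitrarily large (indeed infinite) when the kernel denominators $\scoreConfidenceBoundByBandwidthCentreDenominator[-1]$ become small.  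There is no a~priori deterministic bound on $\mathcal{L}(\scoreEstimateShapeConstrained,\psi_0)$, so your quantile-to-risk step $\E_{f_0}\{\mathcal{L}\} \leq \mathrm{Quantile}_{f_0}(1-\delta) + 32r'\delta$ does not go through.  Your closing parenthetical identifies the right worry but does not repair it: shrinking $\delta$ to a negative power of $n$ only helps \emph{given} a sure polynomial-in-$n$ bound on the loss, and no such bound exists for $\scoreEstimateShapeConstrained$ as constructed.

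The fix, as in the paper's proof of this corollary and of Corollary~\ref{cor:tail-growth-expectation}, is to replace $\scoreEstimateShapeConstrained$ with the truncated estimator $\scoreEstimateShapeConstrained^{(B)} := (-B) \vee \scoreEstimateShapeConstrained \wedge B$, with $B := B_{n,\delta}' \asymp L^{1/\beta}\log^{1-1/\beta} n$.  This gives the needed sure bound $\int (\scoreEstimateShapeConstrained^{(B)}-\psi_0)^2 f_0 \lesssim B^2 + i(f_0)$, while on $\goodEvent$ the truncation is transparent on the bulk interval $[a_{-j_{n,\delta}},a_{j_{n,\delta}}]$ (where $|\psi_0| \leq B_{n,\delta}'$ by~\eqref{eq:holder-geometric-quantiles}) and can only strengthen the non-overestimation inequality $|\scoreEstimateShapeConstrained^{(B)}| \leq |\scoreEstimateShapeConstrained| \leq |\psi_0|$ in the tails, so the on-event quantile bound~\eqref{eq:multiscale-holder-bound} continues to apply to $\scoreEstimateShapeConstrained^{(B)}$.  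The residual term $\delta B^2 \asymp n^{-1}L^{2/\beta}\log^{2-2/\beta} n$ is then dominated by your $T_n$.  With this substitution, the rest of your argument---in particular your verification that $\log_+^{3-2/\beta}(1/\varepsilon_{n,\delta})$ upgrades $n^{-1}\log n$ to $n^{-1}\log^{4-2/\beta} n$, and that $\log_+(L^{2/\beta}/r')$ is absorbed when $A_n < 1$---is sound and matches the paper.
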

Theorem~\ref{thm:holder-minimax-upper} and Corollary~\ref{cor:holder-minimax-expec} are complemented by the following lower bounds.
\begin{theorem}
\label{thm:holder-lower-bd}
For $n \in \N$, $\delta \in (0,1/4]$, $\beta \in [1,2]$ and $L,r > 0$, we have
\begin{align*}
\mathcal{M}_n(\delta,\mathcal{F}_{\beta,L,r}) &\gtrsim_\beta r'\min\biggl\{\biggl(\frac{L^{2/\beta}}{r'} \cdot \frac{1}{n}\biggr)^{\beta/(2\beta + 1)} + \biggl(\frac{L^{2/\beta}}{r'} \cdot \tilde{\varepsilon}_{n,\delta}\biggr)^{\beta/(\beta + 2)},\,1\biggr\}, \\
\mathcal{M}_n(\mathcal{F}_{\beta,L,r}) &\gtrsim_\beta r'\min\biggl\{\biggl(\frac{L^{2/\beta}}{r'} \cdot \frac{1}{n}\biggr)^{\beta/(2\beta + 1)},\,1\biggr\}.
\end{align*}
where $r' := r \wedge L^{2/\beta}$.
\end{theorem}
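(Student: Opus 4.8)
The plan is to establish two separate lower bounds for the minimax quantile---one of order $(L^{2/\beta}/(nr'))^{\beta/(2\beta+1)}$ and one of order $(L^{2/\beta}\tilde\varepsilon_{n,\delta}/r')^{\beta/(\beta+2)}$---and then combine them by taking the maximum. By the scaling relation $\mathcal{F}_{\beta,L,r} = \{x\mapsto r^{1/2}f_0(r^{1/2}x): f_0\in\mathcal{F}_{\beta,Lr^{-\beta/2},1}\}$ together with~\eqref{eq:loss-scaling}, it suffices to prove the bounds when $r=1$ (so $r'=1\wedge L^{2/\beta}$), and by a further application of~\eqref{eq:loss-scaling} we may reduce to the case $L^{2/\beta}\le 1$, i.e.\ $r'=L^{2/\beta}$; the truncation at $1$ in the statement is automatic since the zero estimator has risk at most $i(f_0)\lesssim r'$ over this class (using~\eqref{eq:holder-fisher-information} in Lemma~\ref{lem:holder-density-quantile}). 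The bound on the minimax risk $\mathcal{M}_n(\mathcal{F}_{\beta,L,r})$ then follows from the first of the two quantile bounds by a standard reduction (taking $\delta$ a small absolute constant and applying Markov's inequality, or invoking the risk version of the multiple-hypothesis lemma directly).

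For the first, ``bulk-driven'' bound of order $n^{-\beta/(2\beta+1)}$, I would use a many-hypothesis (Assouad or Fano) construction localised in a high-density region. Start from a smooth log-concave base density $f_0$ with $i(f_0)\asymp r'$ whose log-density is $(\beta,cL)$-H\"older with room to spare on an interval $I$ where $f_0\asymp 1$; on $I$ place $m\asymp (L^{2/\beta}\varepsilon_{n,\delta}^{-1})^{1/(2\beta+1)}$ disjoint sub-intervals of width $w\asymp 1/m$ and perturb $\phi_0=\log f_0$ by $\pm L w^{\beta}\,g((\cdot-\text{centre})/w)$ for a fixed smooth bump $g$, with signs indexed by a hypercube. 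Each perturbation changes the score by $O(Lw^{\beta-1})$ on its block, keeps log-concavity (the second derivative perturbation is $O(Lw^{\beta-2})$, absorbed by strong concavity of $\phi_0$ on $I$) and the H\"older norm, and costs KL-divergence $O(Lw^{2\beta}\cdot w\cdot n)$ per coordinate in $n$-fold product; balancing $nL w^{2\beta+1}\asymp 1$ gives $w$ and hence a per-coordinate $L^2$-separation in $\mathcal{L}$ of order $L^2 w^{2\beta-1}\cdot w = L^2 w^{2\beta}$, and summing over the $m\asymp 1/w$ coordinates yields separation $\asymp L^2 w^{2\beta-1}\asymp L^{2/\beta}(L^{2/\beta}/n)^{\beta/(2\beta+1)}\cdot L^{-2/\beta}\cdot L^2$; after the scaling reductions this is $r'(L^{2/\beta}/(nr'))^{\beta/(2\beta+1)}$ as required. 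Assouad's lemma (or its high-probability analogue, \citet[Corollary~6 or the Fano-type bound]{ma2025high}) converts this into the claimed quantile lower bound; the $\log(1/\delta)$ in $\tilde\varepsilon_{n,\delta}$ does not enter this term.

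For the second, ``tail-driven'' bound of order $\tilde\varepsilon_{n,\delta}^{\beta/(\beta+2)}$, I would instead use a two-point (Le Cam) construction that exploits the discrepancy between KL and Fisher divergences already highlighted after Theorem~\ref{thm:tail-growth}, now made smooth. Take $f_0$ with $i(f_0)\asymp r'$ and, near a point $x_*$ in the bulk where $\phi_0''$ is bounded, perturb the score on an interval of width $h$ by a $C^1$ bump of height $\asymp Lh^{\beta-1}$ (so the $(\beta-1)$-H\"older constant is respected and log-concavity survives, exactly as in the single-block computation above). This changes $\mathcal{L}$ by $\asymp L^2 h^{2\beta-1}$ but, because the perturbation is localised and $f_0$ is roughly constant there, changes KL by only $\asymp L^2 h^{2\beta+1}$; choosing $h$ with $nL^2 h^{2\beta+1}\asymp\log(1/\delta)$, i.e.\ $h\asymp(\tilde\varepsilon_{n,\delta}/L^2)^{1/(2\beta+1)}$, wait---I instead match the \emph{quantile} requirement $n\cdot\mathrm{KL}\asymp\log(1/\delta)$ with a perturbation that scales as the square root in $\mathcal L$ versus cube-type in KL, giving separation $L^2h^{2\beta-1}$ with $h^{2\beta+1}\asymp\tilde\varepsilon_{n,\delta}/L^2$, hence separation $\asymp L^2(\tilde\varepsilon_{n,\delta}/L^2)^{(2\beta-1)/(2\beta+1)}$; after the scaling reductions and re-optimising the location/width tradeoff against the Fisher-information budget (which forces the bump into a region of density $\asymp 1$ and caps $h\lesssim 1$) this yields $r'(L^{2/\beta}\tilde\varepsilon_{n,\delta}/r')^{\beta/(\beta+2)}$. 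Le Cam's two-point lemma, in the high-probability form \citet[Corollary~6]{ma2025high}, delivers the quantile bound; the exponent $\beta/(\beta+2)$ (rather than $\beta/(2\beta+1)$) arises precisely because a \emph{single} localised perturbation is used, so there is no $m$-fold summation to flatten the exponent. The main obstacle I anticipate is the bookkeeping that makes both perturbations simultaneously (i) log-concave, (ii) within the prescribed $(\beta,L)$-H\"older ball with the \emph{exact} constant $L$ up to the claimed $\gtrsim_\beta$ dependence, and (iii) compatible with the Fisher-information constraint $i(f_0)\le r$ after scaling---this is where the constants $r'=r\wedge L^{2/\beta}$ and the $\beta$-dependent implicit constants are pinned down, and it requires a carefully chosen base density (e.g.\ a Gaussian-like core splined to a flattened region, as in Figure~\ref{fig:lower-bd-tails-c1}) rather than an off-the-shelf one.
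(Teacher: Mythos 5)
Your high-level strategy — prove two lower bounds (a many-hypothesis bound for the $n^{-\beta/(2\beta+1)}$ term and a two-point bound for the $\tilde\varepsilon_{n,\delta}^{\beta/(\beta+2)}$ term), combine them, and handle $L,r$ by scaling — matches the paper. However, both of your specific constructions have a genuine gap: neither would deliver the stated exponents, and one is not log-concave for the bandwidths you need.

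For the bulk Assouad bound, you place $m\asymp 1/w$ mean-zero bumps of width $w$ in a region where $f_0\asymp 1$ and rely on ``strong concavity of $\phi_0$ on $I$'' to absorb a second-derivative perturbation of order $Lw^{\beta-2}$. This fails: for $\beta<2$ and $w\to 0$ (as required when $n\to\infty$), $Lw^{\beta-2}\to\infty$, so no fixed $\phi_0''$ can absorb it. Moreover, even granting log-concavity, this configuration has per-coordinate $\mathcal{L}$-separation $\asymp L^2w^{2\beta-1}$ (not $L^2w^{2\beta}$ as you wrote — you have an extra factor of $w$), per-coordinate KL $\asymp L^2w^{2\beta+1}$, and $m\asymp 1/w$ coordinates; balancing gives total separation $\asymp L^2w^{2\beta-2}\asymp L^{6/(2\beta+1)}n^{-2(\beta-1)/(2\beta+1)}$, which is the \emph{unconstrained} rate of Proposition~\ref{prop:lower-bd-no-shape-c}, not the target $L^{2/\beta}n^{-\beta/(2\beta+1)}$; the two agree only at $\beta\in\{1,2\}$. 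Your single-bump Le Cam construction has the analogous problem: separation $\asymp L^2h^{2\beta-1}$ against KL $\asymp L^2h^{2\beta+1}$ yields exponent $(2\beta-1)/(2\beta+1)$ in $\tilde\varepsilon_{n,\delta}$, not $\beta/(\beta+2)$, and ``re-optimising the location'' cannot fix it: solving for the density level $\rho$ that would give exponent $\beta/(\beta+2)$ forces $\rho\asymp L^{1/\beta}\tilde\varepsilon_{n,\delta}^{(1-\beta)/(\beta+2)}\to\infty$ for $\beta>1$, which exceeds $\sup f_0\lesssim L^{1/\beta}$ (Lemma~\ref{lem:holder-density-quantile}).

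The missing idea is that the Hölder and shape constraints must be enforced \emph{jointly}, not by subtracting from a strongly concave base. The paper starts from a flattened base (as in Figure~\ref{fig:lower-bd-tails-c1}) and constructs the perturbed score $\psi_\zeta$ directly as a decreasing function by prescribing its weak derivative to be $-\tilde{L}h^{\beta-2}\sum_j(\mathbbm{1}_{(0,h]}+\zeta_j\tau)(\,\cdot-\inf I_j)$; since $\mathbbm{1}_{(0,h]}+\zeta_j\tau\geq 0$, monotonicity holds for \emph{every} $h$, with no concavity budget needed. The bumps sit in a ``shoulder'' region of width $\asymp\tilde{L}^{-1/\beta}$ where the density is $\asymp\tilde{L}^{-1/\beta}$ and the base score ramps from $0$ to $-\tilde{L}^{1/\beta}$. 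Crucially, because the score must \emph{decrease} through each bump, the Hölder condition caps the number of bumps at $m\asymp(\tilde{L}^{1/\beta}h)^{-(\beta-1)}$ (with geometrically increasing gaps between bumps), not $m\asymp\tilde{L}^{-1/\beta}/h$. With the shoulder density this gives total separation $\asymp\tilde{L}h^\beta$ against per-coordinate KL $\asymp\tilde{L}^{2-1/\beta}h^{2\beta+1}$, producing $(\tilde{L}^{2/\beta}/n)^{\beta/(2\beta+1)}$ via Assouad (or \citet[Theorem~8, Proposition~9]{ma2025high} for the quantile version). For the $\tilde\varepsilon_{n,\delta}$-term, the paper flips \emph{all} $m$ bumps at once (a two-point argument with $S=[m]$), making the KL $\asymp\tilde{L}h^{\beta+2}$ and yielding $\tilde{L}h^\beta\asymp(\tilde{L}^{2/\beta}\tilde\varepsilon_{n,\delta})^{\beta/(\beta+2)}$ — the multi-bump flip is essential to get $\beta/(\beta+2)$ rather than $(2\beta-1)/(2\beta+1)$ for $\beta>1$.
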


Theorems~\ref{thm:holder-minimax-upper} and~\ref{thm:holder-lower-bd} establish that when $L,r,\delta$ are regarded as constants, the minimax rate of score estimation over $\mathcal{F}_{\beta,L,r}$ is of order $n^{-\beta/(2\beta + 1)}$ up to logarithmic factors. This rate improves from $n^{-1/3}$ when $\beta = 1$ (as seen in Section~\ref{subsec:tail-growth} for $\gamma = 1$) to $n^{-2/5}$ when $\beta = 2$, in which case the score function is required to be $L$-Lipschitz. 

The log-concavity and smoothness assumptions imposed in Definition~\ref{def:holder} lead to faster rates than those obtained under either assumption alone, and we now discuss these two issues in turn.  Under log-concavity alone, Proposition~\ref{prop:risk-inf} reveals that consistent minimax estimation is not possible without further assumptions.   
% In one regard, this is clear from Proposition~\ref{prop:risk-inf}, where under only a log-concavity assumption and a scale restriction, the minimax risk is of constant order. 
On the other hand, in the presence of log-concavity, a $\beta$-H\"older restriction with $\beta \in (1,2]$ has two beneficial effects.  First, it regulates the tail behaviour via Lemma~\ref{lem:holder-density-quantile}, but second, and arguably more importantly from our perspective, it rules out log-concave densities whose score functions have discontinuities (in particular in relatively high-density regions). These include the densities $f_{a,h}$ mentioned in Section~\ref{subsec:tail-growth} whose piecewise constant score functions give rise to the minimax lower bound of order $n^{-1/3}$ in~\eqref{eq:fa-2pt}.  Together with Theorem~\ref{thm:holder-minimax-upper} and Corollary~\ref{cor:holder-minimax-expec}, this implies that, in addition to the comment regarding $\mathcal{J}_{\gamma,L,r}$ in Section~\ref{subsec:tail-growth}, piecewise log-affine densities with at most 5 affine pieces are among the hardest score functions to estimate within $\bigcup_{\beta \in [1,2]} \mathcal{F}_{\beta,L,r}$; see also the very similar proof of the $\beta = 1$ case of Theorem~\ref{thm:holder-lower-bd}. 
% Over the log-concave class, the beneficial effect of increased H\"older smoothness in minimax score estimation does not however extend to

Now turning to the setting where only H\"older smoothness is assumed, a minimax lower bound of order $L^{2/\beta}n^{-2(\beta-1)/(2\beta+1)}$ holds over the class of locally absolutely continuous densities $f_0$ on $\R$ with $(\beta - 1,L)$-H\"older score functions; see Proposition~\ref{prop:lower-bd-no-shape-c}. Thus, no worst-case rate is achievable when $\beta = 1$, and the rate is always worse than that obtained in Theorem~\ref{thm:holder-minimax-upper} for $\beta \in [1,2)$. The crucial observation that explains this difference is that for a $\beta$-H\"older log-density with $\beta \in [1,2)$, an additional monotonicity or bounded variation constraint forces the score function to have smoother regions where the `local H\"older exponent' exceeds $\beta - 1$. For instance when $\beta = 1$, a uniformly bounded score function may be discontinuous on a set of positive Lebesgue measure,
% see Volterra's function and also https://math.stackexchange.com/questions/112067/how-discontinuous-can-a-derivative-be
but if it has bounded variation over $\R$ then it is necessarily differentiable
% and locally Lipschitz; see Lemma~\ref{lem:tv-ptwise-smoothness}
Lebesgue almost everywhere \citep[Theorem~7.2.7]{dudley2002}.
% Consider $\beta = 1$ as an example. The score function of an $L$-Lipschitz (i.e.~$(1,L)$-H\"older) log-density may not be of bounded variation, but if we restrict to log-concave densities, then we have at most countably many discontinuities, and writing $(\tau_i)_{i \in I}$ for the sequence of absolute changes in the score function at these jumps, we have $\sum_{i \in I} \tau_i \leq 2L$.
For general $\beta \in [1,2]$, Proposition~\ref{prop:Holder-basic} manifests the heterogeneous smoothness of a decreasing $(\beta - 1)$-H\"older function in a precise sense discussed in Section~\ref{subsec:multiscale-ptwise}.
% in terms of the pointwise error of nonparametric derivative estimation that achieves the optimal local bias-variance tradeoff \red{RL: Prop.~\ref{prop:Holder-basic} doesn't depend on the estimator...? Could we delay any mention of estimator until the next sentence?}.
% The proof captures the essence of the phenomenon above
This result informs the construction of the multiscale estimator $\hat{\psi}_{n,\delta}$, which attains the optimal rates by adapting to the local regularity of the score function, and is pivotal to our proof strategy for Theorems~\ref{thm:tail-growth} and~\ref{thm:holder-minimax-upper}; see Section~\ref{sec:multiscale}.

% \begin{figure}
% \begin{center}
% \subfigure{\includegraphics[height=3.4cm]{Tikz/lowerbd-laplace-density.pdf}}
% \hfill
% \subfigure{\includegraphics[height=3.4cm]{Tikz/lowerbd-laplace-score.pdf}}
% \end{center}
% \caption{The Laplace density $f_1$ and its perturbation $f_2$ defined in~\eqref{eq:laplace} (left), as well as their score functions (right). \red{Can base the following discussion on Figure~\ref{fig:lower-bd-tails-c1}}.}
% \label{fig:laplace-density}
% \end{figure}

% The opposite phenomenon is seen
By contrast with the above discussion, in log-concave density estimation with respect to squared Hellinger distance, a variant of the univariate lower bound construction in \citet[Theorem~1]{kim2016global} shows that the global minimax rate of order $n^{-4/5}$ is attained over $\mathcal{F}_{\beta,L,r}$ for any fixed $\beta,L,r$ as above. In other words, as far as integrated rather than pointwise or supremum-norm error is concerned, the rates for density estimation cannot be improved by combining concavity of the log-density (implying twice differentiability almost everywhere by \citet{aleksandorov1939almost} or \citet[p.~31]{schneider2013convex}) with H\"older smoothness for any $\beta \in [1,2]$. The difference between score and density estimation in this regard can be explained in part by the effect of the discontinuities and cusps of a decreasing score function. These turn out to drive the minimax rate of score estimation (as demonstrated by our lower bounds) and also contribute disproportionately to the error of locally adaptive smoothing procedures including our estimator~$\hat{\psi}_{n,\delta}$.
% The latter is due to a pointwise bias-variance tradeoff that is materially different in derivative estimation compared with density estimation. A concave function is twice differentiable almost everywhere \citep[p.~31]{aleksandorov1939almost,schneider2013convex}.
For instance, such estimators incur a pointwise bias of order~1 in an $n^{-1/3}$-neighbourhood of a discontinuity; see Example~\ref{ex:laplace-ptwise} in Section~\ref{subsec:multiscale-ptwise}. Insisting on H\"older smoothness everywhere (not just almost everywhere differentiability) reduces the aggregate influence of points of low regularity. On the other hand, their effect is intrinsically less severe in density estimation because of a more favourable local bias--variance tradeoff to that in derivative estimation. 
% Isotonic regression: O(1) error on an n^{-1} neighbourhood -- also better
It turns out that the pointwise bias is only of order $n^{-1/3}$ in the aforementioned $n^{-1/3}$-neighbourhood of a kink in the log-density, and ruling out this behaviour via global smoothness has no effect on the worst-case integrated error in the univariate case.

\subsection{Examples}
\label{subsec:examples}

\begin{figure}
\begin{center}
\subfigure[$J_0$]{\includegraphics[width=0.45\textwidth]{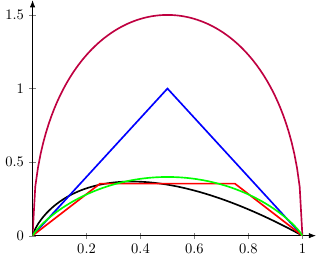}}
\hfill
\subfigure[$|J_0'|$]{\includegraphics[width=0.45\textwidth]{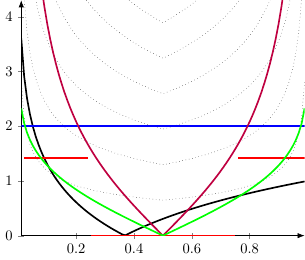}}
\end{center}
\caption{The density quantile function $J_0$ and the absolute value of its derivative $|J_0'|$ for the Laplace with $L=2$ (blue), flattened Laplace with $a=\sqrt{2}$ (red), Subbotin with $\beta=2$ (green), Gumbel (black) and Beta with $a=b=2.001$ (purple) densities. The dotted lines on the right-hand plot are the functions $L\bigl\{u \wedge (1-u)\bigr\}^{-(1-\gamma)/2}$ with $\gamma = 1/4$ and $L \in \{0.5,1,1.5,2,2.5,3\}$.
}
\label{fig:J0}
\end{figure}

To illustrate the range of densities $f_0 \in \mathcal{F}$ encompassed by our subclasses, we present the following prototypical examples. In each case, we compute the score function $\psi_0$ and describe the behaviour of the density quantile function $J_0 = f_0 \circ F_0^{-1}$ and its derivative $J_0' = \psi_0 \circ F_0^{-1}$. The functions are plotted in Figure~\ref{fig:J0}.

\begin{example}[Laplace]
\label{ex:laplace-DQ}
This score function is bounded and has a discontinuity at the origin: for $L > 0$, we have
\begin{alignat*}{3}
f_0(x) &= \frac{Le^{-L|x|}}{2}, &\qquad \psi_0(x) &= -L\sgn(x) &&\quad\text{for }x \in \R, \\
J_0'(u) &= L\sgn\Bigl(\frac{1}{2} - u\Bigr), &\qquad J_0(u) &= L\min\{u, 1 - u\} &&\quad\text{for }u \in (0,1).
\end{alignat*}
Therefore, $f_0 \in \mathcal{J}_{1,L} = \mathcal{F}_{1,L}$, but $f_0 \notin \bigcup_{L' \in (0,\infty)} \mathcal{F}_{\beta,L'}$ for $\beta \in (1,2]$.
\end{example}

\begin{example}[Flattened Laplace]
Recalling the density $f_0 \equiv f_{a,0}$ in Section~\ref{subsec:main-results-prelim}, which can be defined for any $a \geq 1$, we have
\begin{alignat*}{3}
f_0(x) &= \frac{e^{-a(|x| - x_a)_+}}{2a}, &\qquad \psi_0(x) &= -a\sgn(x)\Ind_{\{|x| \geq x_a\}} &&\quad\text{for }x \in \R, \\[6pt]
J_0'(u) &= a\sgn\Bigl(\frac{1}{2} - u\Bigr)\Ind_{\{u \wedge (1 - u) \leq 1/(2a^2)\}}, &\qquad J_0(u) &= a\min\Bigl\{u, 1 - u, \frac{1}{2a^2}\Bigr\} &&\quad\text{for }u \in (0,1),
\end{alignat*}
where $x_a := a - 1/a \geq 0$. Thus $i(f_0) = \int_{-\infty}^\infty \psi_0^2 f_0 = \int_0^1 (J_0')^2 = 1$, and for $\gamma \in (0,1]$ and $L,r > 0$, we have $f_0 \in \mathcal{J}_{\gamma,L,r}$ if and only if $a \leq (2^{(1 - \gamma)/2}L)^{1/\gamma}$ and $r \geq 1$. As in Example~\ref{ex:laplace-DQ}, $f_0 \notin \bigcup_{L' \in (0,\infty)} \mathcal{F}_{\beta,L'}$ for $\beta \in (1,2]$. Therefore, to refine the 
% unfortunate
conclusion $\mathcal{M}_n(\mathcal{F}_1) \asymp 1$ of Proposition~\ref{prop:risk-inf} arising from~\eqref{eq:fa-2pt} for large $a$, the effect of our class restrictions is to place an upper bound on $a$ in Section~\ref{subsec:tail-growth}, or to rule out $\{f_{a,0} : a \geq 1\}$ altogether in Section~\ref{subsec:holder} when $\beta \in (1,2]$.
\end{example}

\begin{example}[Subbotin]
\label{ex:gaussian-DQ}
% https://en.wikipedia.org/wiki/Generalized_normal_distribution
Here we consider a different extension of the family of Laplace densities. For $\beta \in [1,\infty)$, consider the density and corresponding score function given by
\[
f_0(x) = \frac{\beta^{(\beta - 1)/\beta} e^{-|x|^\beta/\beta}}{2\Gamma(1/\beta)}, \qquad \psi_0(x) = -\sgn(x)|x|^{\beta - 1}
\]
for $x \in \R$. The function $f_0$ is also known as a \emph{generalised Gaussian} density. 
% with $\beta = 1$ and $\beta = 2$ corresponding to the standard Laplace and Gaussian densities respectively. 
For $x \geq 0$, we have
% the Mills ratio bounds
\begin{align*}
F_0(-x) = 1 - F_0(x) = \int_x^\infty f_0 \leq \frac{\beta^{(\beta - 1)/\beta}}{2\Gamma(1/\beta)} \int_x^\infty e^{-z^\beta/\beta} \laplaced z \leq \frac{\beta^{(\beta - 1)/\beta} e^{-x^\beta/\beta}}{2x^{\beta - 1}\Gamma(1/\beta)},
\end{align*}
and integration by parts yields
\begin{align*}
\int_x^\infty f_0 = \frac{\beta^{(\beta - 1)/\beta}}{2\Gamma(1/\beta)}\biggl(\frac{e^{-x^\beta/\beta}}{x^{\beta - 1}} - \int_x^\infty \frac{(\beta - 1)e^{-z^\beta/\beta}}{z^\beta} \laplaced z\biggr) \geq \frac{\beta^{(\beta - 1)/\beta} e^{-x^\beta/\beta}}{2x^{\beta - 1}\Gamma(1/\beta)}\Bigl(1 - \frac{\beta - 1}{x^\beta}\Bigr).
\end{align*}
Therefore, as $u \to 0$, the quantile and density quantile functions satisfy
\begin{align*}
F_0^{-1}(u) &= -F_0^{-1}(1 - u) \sim -\Bigl\{\beta\log\Bigl(\frac{\beta^{(\beta - 1)/\beta}}{2\Gamma(1/\beta)u}\Bigr)\Bigr\}^{1/\beta}, \\
J_0'(u) &= -J_0'(1 - u) = (\psi_0 \circ F_0^{-1})(u) \sim \Bigl\{\beta\log\Bigl(\frac{\beta^{(\beta - 1)/\beta}}{2\Gamma(1/\beta)u}\Bigr)\Bigr\}^{(\beta - 1)/\beta}.
% \\
% J_0(u) &= J_0(1 - u) = (f_0 \circ F_0^{-1})(u) \sim u\Bigl\{\beta\log\Bigl(\frac{\beta^{(\beta - 1)/\beta}}{2\Gamma(1/\beta)u}\Bigr)\Bigr\}^{(\beta - 1)/\beta}.
\end{align*}
Hence for all $u \in (0,1)$, we have
\[
|J_0'(u)| \lesssim_\beta \log_+^{(\beta - 1)/\beta}\Bigl(\frac{1}{u \wedge (1 - u)}\Bigr) \quad\text{and}\quad J_0(u) \lesssim_\beta u\log_+^{(\beta - 1)/\beta}\Bigl(\frac{1}{u \wedge (1 - u)}\Bigr).
\]
If $\beta \in [1,2]$, then $f_0 \in \mathcal{F}_{\beta,1}$, and $J_0'$ and $J_0$ behave similarly to their respective envelope functions over $\mathcal{F}_{\beta,1}$ in Lemma~\ref{lem:holder-density-quantile}.
% \[
% \red{z_u = F_0^{-1}(1-u) \leq \Bigl\{\beta\log\Bigl(\frac{\beta^{(\beta - 1)/\beta}}{2\Gamma(1/\beta)u}\Bigr)\Bigr\}^{1/\beta}.}
% \]
% for all $u \in (0,1/2]$
% \[
% \red{1 - F_0(x) - \frac{f_0(x)\Gamma(1/\beta)}{\beta^{(\beta-1)/\beta}} \geq f_0(x)\biggl\{\frac{1}{x^{\beta-1}} - \frac{\beta-1}{x^{2\beta-1}} - \frac{\Gamma(1/\beta)}{\beta^{(\beta-1)/\beta}}\biggr\}.}
% \]
\end{example}

\begin{example}[Gumbel]
\label{ex:gumbel-DQ}
For $x \in \R$, we have
\begin{align*}
f_0(x) = e^{-x - e^{-x}}, \qquad \psi_0(x) = e^{-x} - 1, \qquad F_0(x) = e^{-e^{-x}}.
\end{align*}
Thus $f_0$ decays exponentially in the right tail, and its left tail is considerably lighter than in the previous examples. For $u \in (0,1)$, we have $F_0^{-1}(u) = -\log\log(1/u)$, so
\begin{align*}
J_0'(u) = (\psi_0 \circ F_0^{-1})(u) = \log\Bigl(\frac{1}{eu}\Bigr), \qquad J_0(u) = (f_0 \circ F_0^{-1})(u) = u\log\Bigl(\frac{1}{u}\Bigr).
\end{align*}
\end{example}

The key point is that in both Examples~\ref{ex:gaussian-DQ} and~\ref{ex:gumbel-DQ}, $|J_0'(u)|$ and $|J_0'(1 - u)|$ grow at most logarithmically in $1/u$ as $u \to 0$, so for every $\gamma \in (0,1)$, there exists $L > 0$ such that $f_0 \in \mathcal{J}_{\gamma,L}$. In particular, in the left tail of the Gumbel density, the score function increases exponentially but this growth rate is still fairly benign \textit{relative to the quantile level}, in comparison with the next example.

\begin{example}[Beta]
\label{ex:beta-DQ}
For $a,b > 1$, the $\mathrm{Beta}(a,b)$ density is continuous with
\[
f_0(x) = \frac{x^{a - 1}(1 - x)^{b - 1}}{\mathrm{B}(a,b)}, \qquad \psi_0(x) = \frac{a - 1}{x} - \frac{b - 1}{1 - x}
\]
for $x \in (0,1)$, where $\mathrm{B}(a,b) := \Gamma(a)\Gamma(b)/\Gamma(a + b)$ denotes the beta function.  The distribution function satisfies
\begin{align*}
F_0(x) \leq \int_0^x \frac{z^{a - 1}}{\mathrm{B}(a,b)} \laplaced z \leq \frac{x^a}{a\mathrm{B}(a,b)}, \qquad 1 - F_0(x) \leq \int_x^1 \frac{(1 - z)^{b - 1}}{\mathrm{B}(a,b)} \laplaced z = \frac{(1 - x)^b}{b\mathrm{B}(a,b)},
\end{align*}
with $F_0(x) \sim x^a/\bigl(a\mathrm{B}(a,b)\bigr)$ and $1 - F_0(1 - x) \sim x^b/\bigl(b\mathrm{B}(a,b)\bigr)$ as $x \to 0$. Thus, as $u \to 0$, we have
\begin{alignat*}{2}
F_0^{-1}(u) &\sim \bigl(a\mathrm{B}(a,b)u\bigr)^{1/a}, &\qquad F_0^{-1}(1 - u) &\sim 1 - \bigl(b\mathrm{B}(a,b)u\bigr)^{1/b}, \\
|J_0'(u)| &\asymp_{a,b} \frac{1}{u^{1/a}}, &\qquad |J_0'(1 - u)| &\asymp_{a,b} \frac{1}{u^{1/b}}, \\
J_0(u) &\asymp_{a,b} u^{(a - 1)/a}, &\qquad J_0(1 - u) &\asymp_{a,b} u^{(b - 1)/b},
\end{alignat*}
so $F_0^{-1}(u) \lesssim_{a,b} u^{1/a}$ and $F_0^{-1}(1 - u) \lesssim_{a,b} u^{1/b}$ for all $u \in (0,1)$. In contrast to the previous examples, $|J_0'(u)|$ and $|J_0'(1 - u)|$ exhibit polynomial growth in $1/u$ as $u \to 0$. Since for $\gamma \in (0,1]$ we have 
\begin{align*}
\sup_{u \in (0,1)} \bigl(u \wedge (1 - u)\bigr)^{\frac{1 - \gamma}{2}} |J_0'(u)| &= \sup_{x \in (0,1)} \bigl(F_0(x) \wedge F_0(1 - x)\bigr)^{\frac{1 - \gamma}{2}} |\psi_0(x)| \asymp_{a,b} \sup_{x \in (0,1)} \frac{(x^{a \wedge b})^{\frac{1 - \gamma}{2}}}{x},
\end{align*}
it follows that $f_0 \in \mathcal{J}_{\gamma,L}$ for some $L > 0$ if and only if $a \wedge b \geq 2/(1 - \gamma)$.
\end{example}

\section{Multiscale confidence band and optimal estimator}
\label{sec:multiscale}

\subsection{Construction and validity of the confidence band}

To achieve the minimax optimal rates over the subclasses in the previous section, we seek an estimator that exploits the monotonicity of the score function, adapts to heterogeneous smoothness in the bulk, and avoids erratic behaviour in the tails, all without knowledge of class parameters. To this end, we first use kernel smoothing to construct a valid confidence band for $\psi_0$ whose width adjusts to local properties of the unknown underlying function. Our approach relies on the following result. Let $\kernelFunction[] \colon \R \to [0,1]$ be the triangular kernel given by $\kernelFunction[](x) := (1-|x|)\mathbbm{1}_{\{|x| \leq 1\}}$, and for a bandwidth $h > 0$, write 
$\kernelFunction[h](\cdot) := h^{-1}\kernelFunction[](\cdot/h)$. 

\begin{lemma}
\label{lem:biasControl}
Let $h > 0$ and let $f_0$ be a density that is locally absolutely continuous on $\R$. Then $f_h := K_h * f_0$ is a density that is absolutely continuous as a function on $\R$, with continuous derivative $f_h' = K_h' * f_0 = K_h * f_0'$. Moreover, if $f_0 \in \mathcal{F}$, then for all $x \in \R$, the score function\footnote{Following the convention adopted in Section~\ref{sec:notation}, we define $\psi_h(z) := \infty$ whenever $z + h \leq \inf \support$ and $\psi_h(z) := -\infty$ whenever $z -h \geq \sup \support$.} $\psi_h := f_h'/f_h$ satisfies
\begin{equation}
\label{eq:psi_h-psi_0}
\psi_h(x + h) \leq \psi_0(x) \leq \psi_h(x - h).
\end{equation}
\end{lemma}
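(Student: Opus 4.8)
The plan is to establish the regularity of $f_h$ and the two formulae for $f_h'$ first, and then to deduce~\eqref{eq:psi_h-psi_0} by representing $\psi_h$ as a weighted average of values of $\psi_0$ and invoking the monotonicity of $\psi_0$. For the first assertion, nonnegativity of $f_h = K_h * f_0$ is immediate and $\int_\R f_h = \bigl(\int_\R K_h\bigr)\bigl(\int_\R f_0\bigr) = 1$ by Tonelli, so $f_h$ is a density. The triangular kernel is Lipschitz, with a.e.\ derivative $K_h' = h^{-2}\bigl(\mathbbm{1}_{(-h,0)} - \mathbbm{1}_{(0,h)}\bigr)$ and $\|K_h'\|_\infty = h^{-2}$, so $|f_h(x) - f_h(y)| \le h^{-2}|x-y|\int_\R f_0 = h^{-2}|x-y|$; thus $f_h$ is Lipschitz, hence absolutely continuous on $\R$. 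To identify $f_h'$, I would pass to difference quotients: for $s \neq 0$, $s^{-1}\{f_h(x+s) - f_h(x)\} = \int_\R s^{-1}\{K_h(x+s-t) - K_h(x-t)\}\,f_0(t)\,dt$, where the integrand is dominated by $h^{-2}f_0(t)$ and converges to $K_h'(x-t)f_0(t)$ as $s \to 0$ for every $t$ outside the finite set $\{x, x-h, x+h\}$; dominated convergence then gives $f_h'(x) = (K_h'*f_0)(x) = h^{-2}\{F_0(x+h) - 2F_0(x) + F_0(x-h)\}$ for all $x$, which is continuous since $F_0$ is. For the identity $K_h' * f_0 = K_h * f_0'$, fix $x$ and note that $t \mapsto K_h(x-t)$ is Lipschitz and vanishes outside $[x-h,x+h]$; since $f_0$ is locally absolutely continuous, the product $t \mapsto K_h(x-t)f_0(t)$ is absolutely continuous on $[x-h,x+h]$ with a.e.\ derivative $-K_h'(x-t)f_0(t) + K_h(x-t)f_0'(t)$, so integrating over that interval and using $K_h(-h) = K_h(h) = 0$ to annihilate the boundary term yields $(K_h'*f_0)(x) = (K_h*f_0')(x)$.

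For the second assertion, let $f_0 \in \mathcal{F}$; then $f_0$ is absolutely continuous on $\R$ by Lemma~\ref{lem:log-concave-continuity} (so the first assertion applies), $\psi_0$ is decreasing, and on the interior of $\support$ the log-density $\phi_0 = \log f_0$ is locally absolutely continuous with $\phi_0' = \psi_0$ a.e., whence $f_0' = \psi_0 f_0$ a.e.\ there by the chain rule; since $f_0' = 0 = \psi_0 f_0$ a.e.\ on $\R \setminus \support$ (both sides being $0$, using $0 \times (\pm\infty) = 0$), we obtain $f_0' = \psi_0 f_0$ $\lambda$-a.e. Consequently, whenever $f_h(z) = \int_\R K_h(z-t)f_0(t)\,dt > 0$,
\[
\psi_h(z) = \frac{f_h'(z)}{f_h(z)} = \frac{(K_h * f_0')(z)}{(K_h * f_0)(z)} = \frac{\int_{z-h}^{z+h} K_h(z-t)\,\psi_0(t)\,f_0(t)\,dt}{\int_{z-h}^{z+h} K_h(z-t)\,f_0(t)\,dt},
\]
with finite numerator (it equals $f_h'(z)$), which exhibits $\psi_h(z)$ as an average of the values $\{\psi_0(t) : t \in [z-h, z+h]\}$ with respect to the positive measure $K_h(z-t)f_0(t)\,dt$. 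Now fix $x \in \R$. If $f_h(x-h) > 0$, the average for $z = x-h$ involves only $\psi_0(t)$ with $t \in [x-2h, x]$, and $\psi_0(t) \ge \psi_0(x)$ there by monotonicity, so $\psi_h(x-h) \ge \psi_0(x)$; symmetrically $\psi_h(x+h) \le \psi_0(x)$ when $f_h(x+h) > 0$. If instead $f_h(x-h) = 0$, then $f_0$ vanishes $\lambda$-a.e.\ on $(x-2h,x)$, and since $\support$ is an interval this forces $x \le \inf\support$ or $x - 2h \ge \sup\support$; in the former case $\psi_0(x) = \infty = \psi_h(x-h)$ by the stated conventions (as $(x-h)+h \le \inf\support$) and in the latter $\psi_0(x) = -\infty = \psi_h(x-h)$ (as $(x-h)-h \ge \sup\support$), so $\psi_h(x-h) \ge \psi_0(x)$ holds trivially; the case $f_h(x+h) = 0$ is symmetric. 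This establishes~\eqref{eq:psi_h-psi_0}.

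The Tonelli and dominated-convergence manipulations are routine; I expect the only delicate point to be the bookkeeping near the ends of $\support$ — verifying that the weighted-average representation of $\psi_h$ is valid exactly on $\{f_h > 0\}$, and that the $\pm\infty$ conventions for $\psi_0$ and $\psi_h$ are consistent with the inequality at points where $f_h(x \pm h)$ vanishes.
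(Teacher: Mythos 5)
Your proof is correct, and the core argument for the inequality~\eqref{eq:psi_h-psi_0} is the same as the paper's: writing $f_h' = K_h * f_0' = K_h * (\psi_0 f_0)$ so that $\psi_h(z)$ becomes an average of $\psi_0$ over $[z-h,z+h]$ with respect to the positive measure $K_h(z - \cdot)\,f_0\,\mathrm{d}\lambda$, and then invoking monotonicity; the paper phrases this as the one-line inequality $\omega f_h'(x) \geq \omega\psi_0(x+\omega h)f_h(x)$ rather than spelling out the weighted-average interpretation, but it is the same calculation. Your handling of the degenerate case $f_h(z) = 0$ via the footnote conventions also matches the paper's. Where you diverge, modestly, is in the regularity preliminaries: you establish absolute continuity of $f_h$ via the Lipschitz bound $|f_h(x) - f_h(y)| \leq h^{-2}|x-y|$ and obtain $f_h' = K_h'*f_0$ by a dominated difference-quotient argument (yielding everywhere-differentiability directly, plus the explicit formula $f_h'(x) = h^{-2}\{F_0(x+h)-2F_0(x)+F_0(x-h)\}$ whose continuity is immediate), whereas the paper writes $f_h$ as the integral of $K_h'*f_0$ and checks continuity of the latter by dominated convergence. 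For the identity $K_h'*f_0 = K_h*f_0'$, you integrate by parts on $[x-h,x+h]$ using the product rule for absolutely continuous functions and the vanishing of $K_h$ at $\pm h$; the paper instead uses $\int K_h' = 0$ and Fubini on the representation $f_0(y) - f_0(0) = \int_0^y f_0'$. Both routes are standard and correct; yours has the minor advantage of never leaving a compact interval, so local absolute continuity of $f_0$ is used in precisely the form assumed. Your explicit justification that $f_0' = \psi_0 f_0$ $\lambda$-a.e.\ (chain rule on $\support$, triviality on the complement) is a step the paper takes for granted, and it is carried out correctly.
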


Here,~\eqref{eq:psi_h-psi_0} relies only on $\psi_0$ being decreasing, not any specific version of the score. Given $\delta > 0$ and $X_1,\dotsc,X_n \iid \probDistribution$, Lemma~\ref{lem:biasControl} suggests constructing a $(1 - \delta)$-level confidence band for~$\psi_0$ from upper and lower confidence limits for $\psi_h(\cdot - h)$ and $\psi_h(\cdot + h)$ respectively, for each $h > 0$ in turn, and then intersecting all such bands in a \textit{multiscale} fashion. It suffices to build a family of confidence bands $\bigl[\scoreConfidenceBoundByBandwidthCentre[-1](\cdot), \scoreConfidenceBoundByBandwidthCentre[1](\cdot)\bigr]$ indexed by $h > 0$, each of which is valid for $\psi_h(\cdot)$ simultaneously on some event $\goodEvent$ of probability at least $1 - \delta$, i.e.
% for every $(z,h) \in \R \times (0,\infty)$ we will build a confidence interval $\bigl[\scoreConfidenceBoundByBandwidthCentre[-1](z), \scoreConfidenceBoundByBandwidthCentre[1](z)\bigr]$ for $\psi_h(z)$, with all such intervals required to be simultaneously valid on some event $\goodEvent$ of probability at least $1 - \delta$, i.e.
\begin{equation}
\label{eq:simultaneous-validity}
\goodEvent \subseteq \bigl\{\scoreConfidenceBoundByBandwidthCentre[-1](z) \leq \psi_h(z) \leq \scoreConfidenceBoundByBandwidthCentre[1](z) \;\text{for all }z \in \mathbb{R},\,h > 0\bigr\}.
\end{equation}
On $\goodEvent$,~\eqref{eq:psi_h-psi_0} then ensures that for all $x \in \R$, we have
\begin{equation}
\label{eq:score-upper-confidence}
\psi_0(x) = \inf_{y \leq x} \psi_0(y) \leq \inf_{y \leq x}\,\inf_{h > 0}\,\psi_h(y - h) \leq \inf_{y \leq x}\,\inf_{h > 0}\,\scoreConfidenceBoundByBandwidthCentre[1](y - h) =: \scoreConfidenceBoundShapeConstrained[1](x)
% \\
% &= \inf\{\scoreConfidenceBoundByBandwidthCentre[1](z):  (z,h) \in \R \times (0,\infty),\,x-z \geq h\} =: \scoreConfidenceBoundShapeConstrained[1](x),
\end{equation}
and similarly $\psi_0(x) \geq \sup_{y \leq x}\,\sup_{h > 0}\,\scoreConfidenceBoundByBandwidthCentre[-1](y + h) =: \scoreConfidenceBoundShapeConstrained[-1](x)$. Therefore, $\bigl[\scoreConfidenceBoundShapeConstrained[-1](\cdot),\scoreConfidenceBoundShapeConstrained[1](\cdot)\bigr]$ becomes our valid multiscale confidence band for $\psi_0(\cdot)$. For notational brevity,
% but perhaps not for the enlightenment of the reader
we use an index $\omega \in \{-1,1\}$ below to define lower and upper confidence limits respectively in a unified way.

We proceed to construct confidence bands satisfying~\eqref{eq:simultaneous-validity} using kernel density estimators and their derivatives. For $x_0 \in \R$, define
\[
\hat{f}_{n,h}(x_0) := \frac{1}{n}\sum_{i=1}^n \kernelFunction[h](x_0 - X_i),
\]
and denote by $\hat{f}_{n,h}'(x_0)$ denote the (weak) derivative of $x \mapsto \hat{f}_{n,h}(x)$ at $x_0$. Write $\empiricalProb$ for the empirical distribution of $X_1,\dotsc,X_n$, so that $\empiricalProb(B) := n^{-1}\sum_{i=1}^n \one_{\{X_i \in B\}}$ for Borel sets $B \subseteq \R$. Given $\delta \in (0,1)$,
% \red{with $n > \log(n^2/\delta)$ (RL: remove)}, 
let $\epsilonByNDeltaMod := 2\log(n^2/\delta)/(9n) = 2\varepsilon_{n,\delta}/9$. For $x_0 \in \R$, $h > 0$, $\omega \in \{-1,1\}$ and $\avgDerivSignNoArg \equiv \avgDerivSign{n}{h}(x_0) := \sgn\bigl(\hat{f}_{n,h}'(x_0)\bigr)$,
% 2\mathbbm{1}_{\{\hat{f}_{n,h}'(x_0) \geq 0\}} - 1
three key quantities in the construction are
\begin{align*}
\scoreConfidenceBoundByBandwidthWidth(x_0) &:=3\sqrt{\epsilonByNDeltaMod\empiricalProb([x_0-h, x_0+h]) + \epsilonByNDeltaMod^2} + \frac{1 + 4\epsilonByNDeltaMod}{n},\\
\scoreConfidenceBoundByBandwidthCentreMagnitudeNumerator(x_0) &:= (1 - 2\epsilonByNDeltaMod)\,h^2|\hat{f}_{n,h}'(x_0)| + 2\omega\scoreConfidenceBoundByBandwidthWidth(x_0),\\
\scoreConfidenceBoundByBandwidthCentreDenominator(x_0) &:= h\biggl\{(1 - 2\epsilonByNDeltaMod)\,h\hat{f}_{n,h}(x_0) + \omega\scoreConfidenceBoundByBandwidthWidth(x_0) + \Bigl(3 - \frac{4}{n}\Bigr)\epsilonByNDeltaMod - \frac{1}{n}\biggr\}.
\end{align*}
With $c_{n,\delta} := (1 + 4\epsilonByNDeltaMod)(1 - 2/n)$, these choices are designed to ensure that when $n \geq 3$,
%\footnote{By convention, we write $[a,b]$ for the interval with endpoints $a,b \in \R$ i.e.~the convex hull of $\{a,b\}$.} %, regardless of whether $a \leq b$ or $b \leq a$.} 
\[
\hat{\mathsf{M}}_{n,\delta,h}(\cdot) := \biggl[\frac{\avgDerivSignNoArg\scoreConfidenceBoundByBandwidthCentreMagnitudeNumerator[-\avgDerivSignNoArg](\cdot)}{c_{n,\delta}},\frac{\avgDerivSignNoArg\scoreConfidenceBoundByBandwidthCentreMagnitudeNumerator[\avgDerivSignNoArg](\cdot)}{c_{n,\delta}}\biggr], \quad \quad \hat{\mathsf{D}}_{n,\delta,h}(\cdot) := \biggl[\frac{\scoreConfidenceBoundByBandwidthCentreDenominator[-1](\cdot)}{c_{n,\delta}},\frac{\scoreConfidenceBoundByBandwidthCentreDenominator[1](\cdot)}{c_{n,\delta}}\biggr]
\]
are $(1 - \delta)$ confidence bands for $h^2f_h'$ and $h^2f_h$ respectively that are simultaneously valid over all $h > 0$; see Lemma~\ref{lem:confidenceBandStatisticalErrorControl}. The theory to derive these confidence bands relies on a key concentration inequality for integrals of unimodal functions with respect to $\probDistribution$, given in Lemma~\ref{lem:uniformBoundedFunctions}.
% so $h^2\hat{f}_{n,h}'(x_0)$ is contained in the interval with endpoints $\tau\scoreConfidenceBoundByBandwidthCentreMagnitudeNumerator[\tau](x_0)$ and $\tau\scoreConfidenceBoundByBandwidthCentreMagnitudeNumerator[-\tau](x_0)$ 
The next step is to combine these bands to yield a confidence band for $\psi_h$. Indeed, let 
\begin{align*}
\scoreConfidenceBoundByBandwidthCentre[1](x_0) &:= \sup\,\bigl\{m/d : m \in \hat{\mathsf{M}}_{n,\delta,h}(x_0),\,d \in \hat{\mathsf{D}}_{n,\delta,h}(x_0)\bigr\} \\
\scoreConfidenceBoundByBandwidthCentre[-1](x_0) &:= \inf\,\bigl\{m/d : m \in \hat{\mathsf{M}}_{n,\delta,h}(x_0),\,d \in \hat{\mathsf{D}}_{n,\delta,h}(x_0)\bigr\},
\end{align*}
so that for $\omega \in \{-1,1\}$, we have
\[
\scoreConfidenceBoundByBandwidthCentre(x_0) := 
\begin{cases} \avgDerivSignNoArg\scoreConfidenceBoundByBandwidthCentreMagnitudeNumerator[\omega\avgDerivSignNoArg](x_0)/\scoreConfidenceBoundByBandwidthCentreDenominator[-\omega\avgDerivSignNoArg](x_0) &\text{ if }\scoreConfidenceBoundByBandwidthCentreDenominator[-1](x_0) > 0\text{ and }\scoreConfidenceBoundByBandwidthCentreMagnitudeNumerator[-1](x_0) > 0\\
\avgDerivSignNoArg\scoreConfidenceBoundByBandwidthCentreMagnitudeNumerator[\omega \avgDerivSignNoArg](x_0)/\scoreConfidenceBoundByBandwidthCentreDenominator[-1](x_0) &\text{ if }\scoreConfidenceBoundByBandwidthCentreDenominator[-1](x_0) > 0\text{ and }\scoreConfidenceBoundByBandwidthCentreMagnitudeNumerator[-1](x_0) \leq 0\\
\omega\times \infty &\text{ otherwise}.
\end{cases}
\]
Then $\bigl[\scoreConfidenceBoundByBandwidthCentre[-1](\cdot),\scoreConfidenceBoundByBandwidthCentre[1](\cdot)\bigr]$ is a $(1 - \delta)$-level confidence band for $\psi_h$. 
% the score function $\psi_h$ of the smoothed density $f_h$ 
% Instead of taking $[\scoreConfidenceBoundByBandwidthCentre[-1](x_0),\scoreConfidenceBoundByBandwidthCentre[1](x_0)]$ to be our confidence interval for $\psi_0(x_0)$,
Finally, as described at the start of this section, we convert this into a confidence band for $\psi_0$ by taking
% \blue{Since $\psi_0$ is decreasing,~\eqref{eq:psi_h-psi_0} in Lemma~\ref{lem:biasControl} ensures that up to a correction term, $\scoreConfidenceBoundByBandwidthCentre[1](x_0 - h)$ is positively biased and $\scoreConfidenceBoundByBandwidthCentre[-1](x_0 + h)$ is negatively biased for $\psi_0(x_0)$, for every $h > 0$. In view of this, we may define}
\[
\hat{\mathcal{I}}_{n,\delta}(x_0) := \bigcap_{h > 0} \bigcap_{h' \geq h}\,\bigl[\hat{\psi}_{n,\delta,h}^{(-1)}(x_0 + h'),\hat{\psi}_{n,\delta,h}^{(1)}(x_0 - h')\bigr]
\]
% \blue{\[
% \hat{\mathcal{I}}_{n,\delta,h}(x_0) := \bigcap_{h' \geq h}\,\bigl[\hat{\psi}_{n,\delta,h'}^{(-1)}(x_0 + h'),\hat{\psi}_{n,\delta,h'}^{(1)}(x_0 - h')\bigr]
% \]}
% \blue{for each $x_0 > 0$ and $h > 0$, and then take the closed interval $\hat{\mathcal{I}}_{n,\delta}(x_0) := \bigcap_{h > 0}\hat{\mathcal{I}}_{n,\delta,h}(x_0)$} 
to be our final multiscale confidence interval for $\psi_0(x_0)$. Aggregating over all bandwidths in this way seeks to ensure that the width of the interval adapts optimally to the local regularity of $\psi_0$ at $x_0$. More succinctly, we can write $\hat{\mathcal{I}}_{n,\delta}(x_0) = \bigl[\scoreConfidenceBoundShapeConstrained[-1](x_0), \scoreConfidenceBoundShapeConstrained[1](x_0)\bigr]$, where the expressions for $\scoreConfidenceBoundShapeConstrained[1](x_0),\scoreConfidenceBoundShapeConstrained[-1](x_0)$ in \eqref{eq:score-upper-confidence} and the line below are equivalent to
\begin{align}
\label{eq:scoreEstimateShapeConstrainedInfimum}
\scoreConfidenceBoundShapeConstrained(x_0) = \omega\,\inf\bigl\{\omega\,\scoreConfidenceBoundByBandwidthCentre(z) : (z,h) \in \R \times (0,\infty),\,\omega (x_0 - z) \geq h\bigr\}
\end{align}
for $\omega \in \{-1,1\}$.

The desirable coverage and width properties of our confidence band hold simultaneously on a basic high-probability event defined in terms of the Kullback--Leibler divergence between certain Bernoulli distributions.  More precisely, define $\kullbackLeibler \colon [0,1] \times [0,1] \to [0,\infty]$ as follows: for $p \in (0,1)$ and $q \in [0,1]$, let
\begin{align}
\label{eq:kl-bernoulli}
\kullbackLeibler(p,q) :=  p\log\Bigl(\frac{p}{q}\Bigr) + (1-p)\log\Bigl(\frac{1-p}{1-q}\Bigr),
\end{align}
while for $q \in [0,1]$ define $\kullbackLeibler(0,q) := -\log(1-q)$ and $\kullbackLeibler(1,q) := -\log q$. Given $n \in \N$ and $\delta \in (0,1)$, let $\goodEvent$ denote the event that for every interval $A \subseteq \R$, we have
\begin{align}
\label{eq:goodEvent}
\inf_{t \in [0,1]} \kullbackLeibler\biggl(\empiricalProb(A),\Bigl(1 - \frac{2}{n}\Bigr)\,\probDistribution(A) + \frac{2t}{n}\biggr) < \frac{\log(n^2/\delta)}{n}.
\end{align}
The specific definition of the $\kullbackLeibler$ function is tied to a reformulation of~\citet[Theorem~1]{hoeffding1963probability} for bounded random variables (Lemma~\ref{lem:kl-binomial}), whose purpose is to ensure that the right-hand side of~\eqref{eq:goodEvent} does not depend on $P_0(A)$.

The following result confirms that the confidence band $\hat{\mathcal{I}}_{n,\delta}$ achieves the desired coverage.

\begin{prop}
\label{prop:confidenceBandsAreValid}
For any Borel probability distribution $\probDistribution$ on $\mathbb{R}$, any $n \in \mathbb{N}$ and any $\delta \in (0,1)$, we have $\mathbb{P}(\goodEvent) \geq 1 - \delta$. If in addition $P_0$ has density $f_0 \in \mathcal{F}$ and $n \geq 3$, then %\red{satisfy $n > \log(n^2/\delta)$ (RL: I don't think we use this condition)}, 
\begin{align*}
\goodEvent \subseteq \bigcap_{x \in \R} \bigl\{\psi_0(x) \in \hat{\mathcal{I}}_{n,\delta}(x)\bigr\}.
\end{align*}
\end{prop}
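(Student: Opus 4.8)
The plan is to treat the two assertions separately: first the distribution-free bound $\Pr(\goodEvent) \geq 1 - \delta$, which is the probabilistic core; then, under the additional hypothesis $f_0 \in \mathcal{F}$, the deterministic inclusion $\goodEvent \subseteq \bigcap_{x \in \R}\{\psi_0(x) \in \hat{\mathcal{I}}_{n,\delta}(x)\}$, which mostly amounts to assembling ingredients already established above.

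For the first assertion, the difficulty is that~\eqref{eq:goodEvent} asks for control over the uncountable family of all intervals $A \subseteq \R$. I would first observe that the quantity $\inf_{t \in [0,1]} \kullbackLeibler\bigl(\empiricalProb(A), (1 - 2/n)\probDistribution(A) + 2t/n\bigr)$, regarded as a function of $\probDistribution(A)$ with $\empiricalProb(A)$ held fixed, vanishes on an interval of values and is quasiconvex, hence attains its maximum at the endpoints of whatever range $\probDistribution(A)$ can occupy. Since $A \mapsto \empiricalProb(A)$ is piecewise constant with jumps only at the sample points, while $A \mapsto \probDistribution(A)$ varies continuously and monotonically along nested families of intervals, this reduces the supremum over all intervals to a maximum over a finite collection of $O(n^2)$ \emph{canonical} intervals whose endpoints may be taken among the order statistics $X_{(1)},\dots,X_{(n)}$ together with $\pm\infty$ (in both open and closed variants). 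For each canonical interval, $n\empiricalProb(A)$ is $\mathrm{Bin}\bigl(n,\probDistribution(A)\bigr)$, and the Hoeffding reformulation in Lemma~\ref{lem:kl-binomial} --- alternatively, Lemma~\ref{lem:uniformBoundedFunctions} applied to the indicator of an interval, which is unimodal --- bounds the corresponding bad event by a quantity that is independent of $\probDistribution(A)$ and at most a constant multiple of $\delta/n^2$. Crucially, the terms $(1 - 2/n)\probDistribution(A)$ and $2t/n$ in~\eqref{eq:goodEvent} are exactly what is needed both to make this one-sided tail bound $\probDistribution$-free and to absorb the discrepancy between a canonical interval and the arbitrary interval it represents --- namely a shift of the endpoints past at most two further order statistics, the choice of open versus closed endpoints, and possible atoms of $\probDistribution$. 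A union bound over the $O(n^2)$ canonical intervals then gives $\Pr(\goodEvent^c) \leq \delta$, the logarithmic term $\log(n^2/\delta)$ being calibrated to this count. The hard part here will be making the discretisation fully rigorous: keeping track of ties in the sample and atoms of $\probDistribution$, and verifying that the slack afforded by the $t$-infimum genuinely dominates the gap between a canonical interval and the intervals it stands in for.

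For the second assertion, suppose $\probDistribution$ has density $f_0 \in \mathcal{F}$ and $n \geq 3$, and work throughout on $\goodEvent$. First, by Lemma~\ref{lem:confidenceBandStatisticalErrorControl} the bands $\hat{\mathsf{M}}_{n,\delta,h}(\cdot)$ and $\hat{\mathsf{D}}_{n,\delta,h}(\cdot)$ contain $h^2 f_h'(\cdot)$ and $h^2 f_h(\cdot)$ respectively, simultaneously over all $h > 0$, where $f_h := K_h * f_0$. Since $\psi_h(x_0) = f_h'(x_0)/f_h(x_0)$ is the ratio of an element of $\hat{\mathsf{M}}_{n,\delta,h}(x_0)$ to an element of $\hat{\mathsf{D}}_{n,\delta,h}(x_0)$, while $\hat{\psi}_{n,\delta,h}^{(1)}(x_0)$ and $\hat{\psi}_{n,\delta,h}^{(-1)}(x_0)$ are by construction the supremum and infimum of all such ratios --- the case distinctions in their definition being precisely what copes with the regime in which $\hat{\mathsf{D}}_{n,\delta,h}(x_0)$ is not contained in $(0,\infty)$, which is exactly where $x_0$ lies outside or on the edge of $\support$ and $\psi_h(x_0) = \pm\infty$ --- I would deduce that $\psi_h(x_0) \in \bigl[\hat{\psi}_{n,\delta,h}^{(-1)}(x_0),\hat{\psi}_{n,\delta,h}^{(1)}(x_0)\bigr]$ for every $h > 0$ and $x_0 \in \R$; that is,~\eqref{eq:simultaneous-validity} holds with $\goodEvent$ in the role of the abstract high-probability event. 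Feeding this into Lemma~\ref{lem:biasControl}, whose conclusion~\eqref{eq:psi_h-psi_0} uses only that $\psi_0$ is decreasing, and running the chain of inequalities in~\eqref{eq:score-upper-confidence} and the line after it, I obtain $\hat{\psi}_{n,\delta}^{(-1)}(x) \leq \psi_0(x) \leq \hat{\psi}_{n,\delta}^{(1)}(x)$ for all $x \in \R$. By the identity $\hat{\mathcal{I}}_{n,\delta}(x_0) = \bigl[\hat{\psi}_{n,\delta}^{(-1)}(x_0),\hat{\psi}_{n,\delta}^{(1)}(x_0)\bigr]$ recorded after~\eqref{eq:scoreEstimateShapeConstrainedInfimum}, which follows from a routine reparametrisation $z = x_0 - h'$ in the double intersection defining $\hat{\mathcal{I}}_{n,\delta}$, this is exactly the claimed inclusion. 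The only point requiring care in this part is the bookkeeping of the $\pm\infty$ values of $\psi_h$ against the ``$\omega \times \infty$'' branch of $\hat{\psi}_{n,\delta,h}^{(\omega)}$ at points where $f_h$ vanishes, which is a direct check from the definitions.
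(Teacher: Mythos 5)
The second half of your proposal is sound and matches the paper: on $\goodEvent$, Lemma~\ref{lem:confidenceBandStatisticalErrorControl} gives $\scoreConfidenceBoundByBandwidthCentre[-1](z) \leq \psi_h(z) \leq \scoreConfidenceBoundByBandwidthCentre[1](z)$ simultaneously in $(z,h)$, and then~\eqref{eq:psi_h-psi_0} from Lemma~\ref{lem:biasControl}, together with monotonicity of $\psi_0$ and the reparametrisation $z = x_0 - \omega h'$, yields $\scoreConfidenceBoundShapeConstrained[-1](x) \leq \psi_0(x) \leq \scoreConfidenceBoundShapeConstrained[1](x)$ for all $x$. Your remark about the ``$\omega\times\infty$'' branch lining up with $\psi_h(z)=\pm\infty$ when $[z-h,z+h]\cap\support=\emptyset$ is exactly the right thing to check, and the relevant case analysis is all in Lemma~\ref{lem:confidenceBandStatisticalErrorControl}.

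The first half has a genuine gap. You discretise to intervals with endpoints among the order statistics and then assert that for each such canonical interval $A$ the count $n\empiricalProb(A)$ is $\mathrm{Bin}\bigl(n,\probDistribution(A)\bigr)$. This is false: once the endpoints of $A$ are taken to be order statistics, the interval is data-dependent, so $\empiricalProb(A)$ is not a binomial proportion in $\probDistribution(A)$ --- indeed for $A=[X_{(1)},X_{(n)}]$ one has $\empiricalProb(A)=1$ deterministically while $\probDistribution(A)$ is random. You cannot apply Lemma~\ref{lem:kl-binomial} directly at this point, so the union bound as you describe it does not close. The paper's Lemma~\ref{lem:uniformOverIntervals} resolves exactly this by a conditioning argument: fix a pair $(X_{n-1},X_n)$ (extended by symmetry to all pairs $(X_i,X_j)$), note that conditionally on this pair the remaining $n-2$ indicators $\one_{\{X_i \in \mathcal{I}(X_{n-1},X_n)\}}$ are i.i.d.\ Bernoulli with (conditionally deterministic) success probability $\probDistribution\{\mathcal{I}(X_{n-1},X_n)\}$, and apply Lemma~\ref{lem:kl-binomial} to these $n-2$ variables. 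The factors $1-2/n$ and $2t/n$ in~\eqref{eq:goodEvent} are then calibrated precisely to convert the conclusion about $\empiricalProb[n-2]$ into one about $\empiricalProb$ via the identity $\empiricalProb = (1-2/n)\empiricalProb[n-2] + \text{(mass of the two conditioned points)}/n$, using the joint convexity of $\kullbackLeibler_+$ (Lemma~\ref{lem:kl-joint-convexity}) --- not, as you suggest, to absorb shifts of endpoints past further order statistics or atom effects. The passage from pairs of sample points to arbitrary intervals is a separate step, handled by choosing for each $A$ (in the nontrivial regimes $\mathbb{D}_n(A)>1$ or $\mathbb{D}_n(A)<0$) a pair whose interval is contained in $A$ or $\R\setminus A$ with matching empirical mass, and invoking the monotonicity of $q \mapsto \kullbackLeibler_+(p,q)$. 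Your quasiconvexity heuristic points in roughly the right direction, but the conditioning device is the essential missing idea, and without it the binomial claim on which your union bound rests is simply not true.
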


\subsection{Pointwise and integrated error of a multiscale score estimator}
\label{subsec:multiscale-ptwise}

For each $x_0 \in \R$, a natural estimator of $\psi_0(x_0)$ is formed by choosing the element of the confidence interval $\hat{\mathcal{I}}_{n,\delta}(x_0)$ with smallest absolute value, i.e.
% projecting 0 onto the confidence interval
\begin{equation}
\label{eq:scoreEstimateShapeConstrained}
\scoreEstimateShapeConstrained(x_0) := \argmin_{y \in \hat{\mathcal{I}}_{n,\delta}(x_0)} |y| =
\begin{cases}
\scoreConfidenceBoundShapeConstrained[-1](x_0) &\text{ if } \scoreConfidenceBoundShapeConstrained[-1](x_0) > 0 \\
\scoreConfidenceBoundShapeConstrained[1](x_0) &\text{ if } \scoreConfidenceBoundShapeConstrained[1](x_0) < 0 \\
0 & \text{ otherwise}.
\end{cases}
\end{equation}
On the event $\goodEvent$, the fact that $\psi_0$ lies in the confidence band means that the estimation error of $\scoreEstimateShapeConstrained(x_0)$ is at most the width of the confidence interval $\hat{\mathcal{I}}_{n,\delta}(x_0) = \bigl[\scoreConfidenceBoundShapeConstrained[-1](x_0),\scoreConfidenceBoundShapeConstrained[1](x_0)\bigr]$, and also that $|\scoreEstimateShapeConstrained(x_0) - \psi_0(x_0)| \leq |\psi_0(x_0)|$ by~\eqref{eq:scoreEstimateShapeConstrained}. We will bound the signed errors $\omega \bigl(\scoreConfidenceBoundShapeConstrained[\omega](x_0) - \psi_0(x_0)\bigr)$ for $\omega \in \{-1,1\}$. To outline our arguments, for $z \in \R$ and $h > 0$, define 
\begin{equation}
\label{eq:C-zh}
\begin{split}
\COneZH &:= 138 + 35\biggl(\frac{\probDistribution([z-h,z+h]) \vee \epsilonByNDeltaMod}{\{hf_h(z)\} \vee \epsilonByNDeltaMod}\biggr)^{1/2},\\
\CTwoZH &:= 374\max_{\omega \in \{-1,1\}}\biggl(\frac{\probDistribution([z,z+\omega h]) \vee \epsilonByNDeltaMod}{\{hf_h(z)\} \vee \epsilonByNDeltaMod}\biggr)^{1/2}
\end{split}
\end{equation}
and
\begin{align*}
\alpha_{z,h} &:= \COneZH^2\frac{\epsilonByNDeltaMod}{hf_h(z)}, \qquad
% \alpha_{z,h} &:= \COneZH\biggl\{\Bigl(\frac{\epsilonByNDeltaMod}{hf_h(z)}\Bigr)^{1/2} \vee \frac{\epsilonByNDeltaMod}{hf_h(z)}\biggr\}, \\
\beta_{z,h} &:= 4(\COneZH\vee \CTwoZH)\Bigl(|\psi_h(z)| \vee \frac{1}{h}\Bigr)\biggl\{\Bigl(\frac{\epsilonByNDeltaMod}{hf_h(z)}\Bigr)^{1/2} \vee \frac{\epsilonByNDeltaMod}{hf_h(z)}\biggr\}.
\end{align*}
On the event $\goodEvent$ of Proposition~\ref{prop:confidenceBandsAreValid} and for any $\omega \in \{-1,1\}$, the quantity $\alpha_{z,h}^{1/2}$ controls the relative error of $\scoreConfidenceBoundByBandwidthCentreDenominator(z)/c_{n,\delta}$ as an estimator of $h^2 f_h(z)$, provided that $\alpha_{z,h} \lesssim 1$; see Lemma~\ref{lem:scoreErrNumDenom}, where we also bound the error of $\hat{\tau}\scoreConfidenceBoundByBandwidthCentreMagnitudeNumerator[\omega\hat{\tau}]/c_{n,\delta}$ as an estimator of $h^2 f_h'(z)$. When $\alpha_{z,h} \leq 1/4$,
% A similar bound can be obtained on the relative error between $h^2f_h'(x)$ and $\hat{\tau}_{n,h}(x)\scoreConfidenceBoundByBandwidthCentreMagnitudeNumerator[\omega \hat{\tau}_{n,h}(x)](x)$ whereas
the quantity $\beta_{z,h}$ turns out to be an upper bound on the stochastic error $\omega\bigl(\scoreConfidenceBoundByBandwidthCentre(z) - \psi_h(z)\bigr)$, which is non-negative on $\goodEvent$ by \eqref{eq:simultaneous-validity}. Thus, to control $\omega\bigl(\scoreConfidenceBoundShapeConstrained[\omega](x_0) - \psi_0(x_0)\bigr)$, we seek to minimise the sum of $\beta_{z,h}$ and the bias $|\psi_h(z) - \psi_0(x_0)|$ over feasible pairs $(z,h) \in \R \times (0,\infty)$, namely those that satisfy $\omega (x_0 - z) \geq h$ and $\alpha_{z,h} \leq 1/4$. To this end, for $\omega \in \{-1,1\}$, let 
% \orange{(for the Section~\ref{sec:main-results} proofs, can we just take $z = x - \omega h$ to simplify $\mathcal{H}_{f_0}^{(\omega)}(x_0),\Delta_{f_0}^{(\omega)}(x_0)$ below?)}
\begin{align*}
\mathcal{H}_{f_0}^{(\omega)}(x_0) &:=  
\bigl\{(z,h) \in \R \times (0,\infty) : \omega(x_0 - z) \geq h > 0,\,\alpha_{z,h} \leq 1/4\bigr\},\\
% \Delta^{(\omega)}_{f_0,1}(x) &:=  \frac{|\psi_h(x)|(C_{1,x,h} \vee C_{2,x,h})}{1 \wedge (h|\psi_h(x)|)}\biggl[\biggl(\frac{\epsilonByNDeltaMod}{hf_h(x)}\biggr)^{1/2} \vee \frac{\epsilonByNDeltaMod}{hf_h(x)}\biggr]\\
\Delta_{f_0}^{(\omega)}(x_0) &:= \inf_{(z,h) \in \mathcal{H}_{f_0}^{(\omega)}(x_0)}\bigl\{\bigl|\psi_0(z - \omega h) - \psi_0(x_0)\bigr| + \beta_{z,h}\bigr\}.
\end{align*}

\begin{prop}
\label{prop:scorePointwiseErr}
If $f_0 \in \mathcal{F}$, $n \geq 3$ and $\delta \in (0,1)$, then on the event $\goodEvent$, the following bounds hold for all $x_0 \in \R$:
\begin{enumerate}[(a)]
\item $0 \leq \omega \bigl(\scoreConfidenceBoundShapeConstrained[\omega](x_0) - \psi_0(x_0)\bigr)\leq \Delta_{f_0}^{(\omega)}(x_0)$; %whenever $\omega \in \{-1,1\}$ is such that $|\mathcal{H}^{(\omega )}_{f_0}(x_0)| \neq \emptyset$;
\item $\bigl|\scoreEstimateShapeConstrained(x_0) - \psi_0(x_0)\bigr| \leq \Delta^{(-1)}_{f_0}(x_0) + \Delta^{(1)}_{f_0}(x_0)$; %whenever $|\mathcal{H}^{(-1)}_{f_0}(x_0) |\wedge |\mathcal{H}^{(1)}_{f_0}(x_0) |\geq 1$;
\item $\bigl|\scoreEstimateShapeConstrained(x_0) - \psi_0(x_0)\bigr| \leq |\psi_0(x_0)|$.
\end{enumerate}
\end{prop}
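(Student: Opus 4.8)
The plan is to derive (a) first, and then obtain (b) by a confidence-interval width argument and (c) by a metric-projection argument. Throughout we work on the event $\goodEvent$. The lower bound $0 \leq \omega\bigl(\scoreConfidenceBoundShapeConstrained[\omega](x_0) - \psi_0(x_0)\bigr)$ in (a) is immediate from Proposition~\ref{prop:confidenceBandsAreValid}, which on $\goodEvent$ places $\psi_0(x_0)$ in $\hat{\mathcal{I}}_{n,\delta}(x_0) = \bigl[\scoreConfidenceBoundShapeConstrained[-1](x_0),\scoreConfidenceBoundShapeConstrained[1](x_0)\bigr]$. For the upper bound, fix $\omega \in \{-1,1\}$ and an arbitrary feasible pair $(z,h) \in \mathcal{H}_{f_0}^{(\omega)}(x_0)$; it then suffices to show $\omega\bigl(\scoreConfidenceBoundShapeConstrained[\omega](x_0) - \psi_0(x_0)\bigr) \leq \bigl|\psi_0(z - \omega h) - \psi_0(x_0)\bigr| + \beta_{z,h}$ and take the infimum over $\mathcal{H}_{f_0}^{(\omega)}(x_0)$. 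Since the constraint $\omega(x_0 - z) \geq h$ holds, the infimum representation~\eqref{eq:scoreEstimateShapeConstrainedInfimum} of $\scoreConfidenceBoundShapeConstrained[\omega]$ gives $\omega\scoreConfidenceBoundShapeConstrained[\omega](x_0) \leq \omega\scoreConfidenceBoundByBandwidthCentre[\omega](z)$, so it is enough to bound $\omega\bigl(\scoreConfidenceBoundByBandwidthCentre[\omega](z) - \psi_0(x_0)\bigr)$, which I split as the sum of the stochastic error $\omega\bigl(\scoreConfidenceBoundByBandwidthCentre[\omega](z) - \psi_h(z)\bigr)$ and the smoothing bias $\omega\bigl(\psi_h(z) - \psi_0(x_0)\bigr)$.

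For the stochastic error, membership in $\mathcal{H}_{f_0}^{(\omega)}(x_0)$ entails $\alpha_{z,h} \leq 1/4$, which in turn forces $hf_h(z) > 0$ (since $\epsilonByNDeltaMod > 0$ and $\COneZH \geq 138$), so that $\psi_h(z)$ is finite and $\scoreConfidenceBoundByBandwidthCentre[\omega](z)$ is built from the ratio of the valid confidence bands $\hat{\mathsf{M}}_{n,\delta,h}(z)$ and $\hat{\mathsf{D}}_{n,\delta,h}(z)$ for $h^2 f_h'(z)$ and $h^2 f_h(z)$. On $\goodEvent$, Lemma~\ref{lem:scoreErrNumDenom} then gives $0 \leq \omega\bigl(\scoreConfidenceBoundByBandwidthCentre[\omega](z) - \psi_h(z)\bigr) \leq \beta_{z,h}$, the non-negativity being~\eqref{eq:simultaneous-validity}. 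For the bias, Lemma~\ref{lem:biasControl} — more precisely~\eqref{eq:psi_h-psi_0} applied at the point $z - \omega h$ — shows $\psi_h(z) \leq \psi_0(z - h)$ when $\omega = 1$ and $\psi_h(z) \geq \psi_0(z + h)$ when $\omega = -1$; in both cases $\omega\bigl(\psi_h(z) - \psi_0(x_0)\bigr) \leq \omega\bigl(\psi_0(z - \omega h) - \psi_0(x_0)\bigr) \leq \bigl|\psi_0(z - \omega h) - \psi_0(x_0)\bigr|$ (trivially so if $z - \omega h \notin \support$, where the right-hand side is infinite). Summing the two bounds and taking the infimum over $(z,h) \in \mathcal{H}_{f_0}^{(\omega)}(x_0)$ yields (a), with the case $\mathcal{H}_{f_0}^{(\omega)}(x_0) = \emptyset$ being vacuous since then $\Delta_{f_0}^{(\omega)}(x_0) = \infty$.

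For (b), note that on $\goodEvent$ both $\psi_0(x_0)$ and $\scoreEstimateShapeConstrained(x_0)$ lie in the interval $\hat{\mathcal{I}}_{n,\delta}(x_0)$ — the former by Proposition~\ref{prop:confidenceBandsAreValid} and the latter by the definition~\eqref{eq:scoreEstimateShapeConstrained} — so their distance is at most the width of the interval, namely
\[
\bigl|\scoreEstimateShapeConstrained(x_0) - \psi_0(x_0)\bigr| \leq \scoreConfidenceBoundShapeConstrained[1](x_0) - \scoreConfidenceBoundShapeConstrained[-1](x_0) = \sum_{\omega \in \{-1,1\}} \omega\bigl(\scoreConfidenceBoundShapeConstrained[\omega](x_0) - \psi_0(x_0)\bigr) \leq \Delta_{f_0}^{(-1)}(x_0) + \Delta_{f_0}^{(1)}(x_0)
\]
by part (a). For (c), observe that $\scoreEstimateShapeConstrained(x_0) = \argmin_{y \in \hat{\mathcal{I}}_{n,\delta}(x_0)} |y|$ is precisely the metric projection of $0$ onto the closed interval $\hat{\mathcal{I}}_{n,\delta}(x_0)$; since such a projection is $1$-Lipschitz and fixes every point of the interval, and $\psi_0(x_0) \in \hat{\mathcal{I}}_{n,\delta}(x_0)$ on $\goodEvent$, we conclude $\bigl|\scoreEstimateShapeConstrained(x_0) - \psi_0(x_0)\bigr| \leq |0 - \psi_0(x_0)| = |\psi_0(x_0)|$. (Alternatively, one checks (c) directly in each of the three cases of~\eqref{eq:scoreEstimateShapeConstrained}.)

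The genuine difficulty lies outside this proof, in Lemma~\ref{lem:scoreErrNumDenom}, which establishes that $\beta_{z,h}$ controls the one-sided stochastic error of $\scoreConfidenceBoundByBandwidthCentre[\omega](z)$ once $\alpha_{z,h} \leq 1/4$: this requires propagating the confidence-band errors for $h^2 f_h'(z)$ and $h^2 f_h(z)$ through their ratio, with the condition $\alpha_{z,h} \leq 1/4$ precisely ensuring that the denominator estimate stays bounded away from $0$, so that its relative error is controlled by $\alpha_{z,h}^{1/2}$. Within the present argument the only care needed is the bookkeeping around degenerate configurations — unbounded intervals, points $z - \omega h$ outside $\support$, and the vanishing-denominator regime — all of which either make the asserted inequality vacuous or are excluded by the feasibility constraint $\alpha_{z,h} \leq 1/4$.
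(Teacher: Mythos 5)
Your decomposition for part~(a) — isolating the bias $\omega\bigl(\psi_h(z) - \psi_0(x_0)\bigr)$ via Lemma~\ref{lem:biasControl} and the stochastic error $\omega\bigl(\scoreConfidenceBoundByBandwidthCentre[\omega](z) - \psi_h(z)\bigr)$, then infimising over $\mathcal{H}_{f_0}^{(\omega)}(x_0)$ using~\eqref{eq:scoreEstimateShapeConstrainedInfimum} — matches the paper exactly, and your arguments for (b) (both $\psi_0(x_0)$ and $\scoreEstimateShapeConstrained(x_0)$ lie in the interval $\hat{\mathcal{I}}_{n,\delta}(x_0)$, so the error is bounded by the width) and for (c) (metric-projection/Lipschitz argument) are also precisely the paper's.

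The gap is in the stochastic-error step. You write that ``Lemma~\ref{lem:scoreErrNumDenom} then gives $0 \leq \omega\bigl(\scoreConfidenceBoundByBandwidthCentre[\omega](z) - \psi_h(z)\bigr) \leq \beta_{z,h}$'' and later assert that ``the genuine difficulty lies outside this proof, in Lemma~\ref{lem:scoreErrNumDenom}, which establishes that $\beta_{z,h}$ controls the one-sided stochastic error of $\scoreConfidenceBoundByBandwidthCentre[\omega](z)$.'' That is not what Lemma~\ref{lem:scoreErrNumDenom} establishes: it bounds the additive errors of $\scoreConfidenceBoundByBandwidthCentreDenominator[\omega](z)/c_{n,\delta}$ and $\hat\tau\,\scoreConfidenceBoundByBandwidthCentreMagnitudeNumerator[\omega\hat\tau](z)/c_{n,\delta}$ as estimators of $h^2 f_h(z)$ and $h^2 f_h'(z)$ separately, in terms of $\COneZH$ and $\CTwoZH$. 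The propagation through the ratio — showing that the resulting estimator of $\psi_h(z) = f_h'(z)/f_h(z)$ has one-sided error at most $\beta_{z,h}$ — is not contained in that lemma; it is carried out inside the paper's proof of this proposition, and is the substantive technical content of part~(a). Concretely, one must first use $\alpha_{z,h} \leq 1/4$ together with~\eqref{eq:scoreErrDenom} to show that $\scoreConfidenceBoundByBandwidthCentreDenominator[-1](z) \geq c_{n,\delta} h^2 f_h(z)/2 > 0$, then treat the cases $\psi_h(z) = 0$ and $\psi_h(z) \neq 0$ separately: in the first the bound follows directly from the numerator error over the lower-bounded denominator, while in the second one needs the algebraic identity $\omega\bigl(\scoreConfidenceBoundByBandwidthCentre[\omega](z) - \psi_h(z)\bigr) = \omega\psi_h(z)\bigl\{(\text{num.\ rel.})/(\text{denom.\ rel.}) - 1\bigr\}$ and the factor-of-two denominator control to convert this into a sum of the two relative errors, which is where the factor $|\psi_h(z)| \vee h^{-1}$ and the combination $\COneZH \vee \CTwoZH$ in $\beta_{z,h}$ arise. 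By delegating this to a lemma that does not contain it, you have left the central estimate of part~(a) unproved.
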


% On a bulk region where $f_0$ is large enough for $\mathcal{H}_{f_0}^{(\omega)}(x_0)$ to be non-empty
On $\goodEvent$, Proposition~\ref{prop:scorePointwiseErr}\emph{(a)} bounds the distance between $\psi_0(x_0)$ and the relevant endpoint of the confidence interval by $\Delta^{(\omega)}_{f_0}(x_0)$, so it follows immediately that in \textit{(b)}, the pointwise error of our estimator is at most $\Delta^{(-1)}_{f_0}(x_0) + \Delta^{(1)}_{f_0}(x_0)$. As mentioned previously, \textit{(c)} is a direct consequence of the definition of the estimator in~\eqref{eq:scoreEstimateShapeConstrained} and the fact that $\psi_0$ lies in the confidence band on $\goodEvent$.
% , i.e.~the error of our estimator is bounded above by the magnitude of the score function at every point.
% We thus have that on the event $\goodEvent$,
% \[\int_{-\infty}^{\infty}\bigl(\scoreEstimateShapeConstrained(x_0) - \psi_0(x_0)\bigr)^2f_0(x_0)\laplaced x_0 \leq \int_{-\infty}^{\infty}\bigl\{\bigl|\Delta^{(-1)}_{f_0}(x_0) + \Delta^{(1)}_{f_0}(x_0)\bigr|\wedge |\psi_0(x_0)|\bigr\}^2f_0(x_0).\]

We now outline how the key properties above are used to establish that the integrated error $\mathcal{L}(\scoreEstimateShapeConstrained,\psi_0) = \int_{-\infty}^\infty (\scoreEstimateShapeConstrained - \psi_0)^2 f_0$ adaptively attains the upper bounds in Theorems~\ref{thm:tail-growth} and~\ref{thm:holder-minimax-upper}. On $\goodEvent$, we will apply parts \textit{(b)} and \textit{(c)} of Proposition~\ref{prop:scorePointwiseErr} on complementary subsets of the real line. First, our `bulk' region $[x_{\min},x_{\max}]$ has endpoints $x_{\min} \leq x_{\max}$ such that $F_0(x_{\min}) = 1 - F_0(x_{\max}) \asymp \varepsilon_{n,\delta} \wedge 1$, meaning that it typically contains all but logarithmically many data points. We will apply Proposition~\ref{prop:scorePointwiseErr}\textit{(b)} to every $x_0 \in [x_{\min},x_{\max}]$, for which we show in Lemma~\ref{lem:integrated-error-interval}\textit{(c)} that $\mathcal{H}_{f_0}^{(-1)}(x_0)$ and $\mathcal{H}_{f_0}^{(1)}(x_0)$ are non-empty. On the other hand, this pointwise bound becomes uninformative on the `tail' region $[x_{\min},x_{\max}]^c$ in which data is scarce, but Proposition~\ref{prop:scorePointwiseErr}\textit{(c)} nevertheless protects us against overestimating the magnitude of the score function.  Consequently, on $\goodEvent$, the contribution to the integrated estimation error from the tail region satisfies
\[
\int_{[x_{\min},x_{\max}]^c} (\scoreEstimateShapeConstrained - \psi_0)^2 f_0 \leq \int_{-\infty}^{x_{\min}} \psi_0^2 f_0 + \int_{x_{\max}}^\infty \psi_0^2 f_0,
\]
and our choices of $x_{\min}$ and $x_{\max}$ ensure that this does not dominate the bound obtained from the bulk, where the analysis is more delicate. We further partition $[x_{\min},x_{\max}]$ into subintervals $[a_j,a_{j+1}]$ defined for integer indices $j \in \{-j_{n,\delta},\dotsc,j_{n,\delta} - 1\}$ by
\[
a_j := F_0^{-1}\bigl(\Ind_{\{j \geq 0\}} - 2^{-(|j|+1)}\bigr),
\]
where $j_{n,\delta} \asymp \log_+(1/\varepsilon_{n,\delta})$ is such that $F_0(x_{\min}) = 1 - F_0(x_{\max}) = 2^{-(|j_{n,\delta}| + 1)}$. The sequence $(a_j)$ has several convenient properties arising from the log-concavity of $f_0$ (Lemma~\ref{lem:density-quantile-basics}), including that~$f_0$ varies by at most a multiplicative factor of 2 on each $[a_j,a_{j+1}]$. The purpose of this refinement of the bulk is to exploit upper bounds on $f_0(a_j)$ implied by the class definitions, which become stronger for larger values of $|j|$. Lemma~\ref{lem:integrated-error-interval}\textit{(a)} first makes the inequality in Proposition~\ref{prop:scorePointwiseErr}\textit{(b)} more explicit by showing that on $\goodEvent$, we have for all $j \in \{-j_{n,\delta},\dotsc,j_{n,\delta} - 1\}$ and $x_0 \in [a_j,a_{j+1}]$ that
\begin{align}
& \bigl(\scoreEstimateShapeConstrained(x_0) - \psi_0(x_0)\bigr)^2 f_0(x_0) \leq 2\sum_{\omega \in \{-1,1\}} \Delta_{f_0}^{(\omega)}(x_0)^2 f_0(x_0) \notag \\ 
% \label{eq:inf-Prop-pointwise}
% &\hspace{1.5cm} \lesssim \sum_{\omega \in \{-1,1\}}  \inf\biggl\{\bigl(\psi_0(x_0 + \omega h) - \psi_0(x_0)\bigr)^2f_0(x_0) \vee \frac{\varepsilon_{n,\delta}}{h^3} : \frac{C_0\varepsilon_{n,\delta}}{f_0(x_0)} \leq h \leq \kappa_j\biggr\} \\
\label{eq:inf-Prop-pointwise-2}
&\hspace{0.5cm} \asymp \sum_{\omega \in \{-1,1\}} \inf\Bigl\{\bigl(\psi_0(x_0 + \omega h) - \psi_0(x_0)\bigr)^2 f_0(a_j) \vee \frac{\varepsilon_{n,\delta}}{h^3} : \frac{C_0\varepsilon_{n,\delta}}{f_0(x_0)} \leq h \leq \kappa_j\Bigr\} =: \Gamma_{f_0}(x_0),
\end{align}
where $\kappa_j := (a_{j+1} - a_j)/8$ and $C_0$ is a universal constant that we specify in the proof of Lemma~\ref{lem:integrated-error-interval}. Having absorbed the multiplicative weight $f_0(x_0)$ into the infimum in~\eqref{eq:inf-Prop-pointwise-2}, we recognise the two terms as corresponding to density-weighted bias and stochastic error terms respectively, where the latter is the leading-order term arising from $\beta_{z,h}$ for $z = x_0 - \omega h/2$. The form of this bias--variance tradeoff is characteristic of nonparametric derivative estimation problems, where the variance bounds are of order $1/(nh^3)$ as opposed to $1/(nh)$ in conventional density and regression function estimation.
%, where we have weighted $\Delta_{f_0}^{(\omega)}(x_0)$ by $f_0^{1/2}(x_0)$ in view of our loss function in~\eqref{eq:loss-function}. Then by Proposition~\ref{prop:scorePointwiseErr}, on the event $\goodEvent$, the pointwise error of our estimator $\bigl|\scoreEstimateShapeConstrained(x_0) - \psi_0(x_0)\bigr|f_0^{1/2}(x_0)$ can be controlled by minimising the maximum of the stochastic error term $(\varepsilon_{n,\delta}/h^3)^{1/2}$ and the bias term $|\psi_0(x_0-\omega h)- \psi_0(x_0)|$ over bandwidths $h \in (0,\kappa]$, for each $\omega \in \{-1,1\}$.

The optimal choice of~$h$ in~\eqref{eq:inf-Prop-pointwise-2} depends on the local smoothness of the log-density, which may vary significantly with $x_0$. To illustrate this, we now characterise for two specific densities $f_0$ the bandwidth $h_\omega(x_0)$ at which the minimal pointwise bound $\Gamma_{f_0}(x_0)$ is attained. We then integrate the latter over $[a_0,a_1] = [F_0^{-1}(1/2),F_0^{-1}(3/4)]$, on which Lemma~\ref{lem:integrated-error-interval} guarantees that $C_0\varepsilon_{n,\delta}/f_0(x_0) \leq \kappa_0 \asymp a_1 - a_0 \asymp 1$ in these examples.

\begin{example}[Standard Gaussian]
% \label{ex:gaussian-ptwise}
The score function $\psi_0$ is given by $\psi_0(x) = -x$ for $x \in \R$, so for $\omega \in \{-1,1\}$ and $h > 0$, we have
\[
\bigl(\psi_0(x_0 - \omega h) - \psi_0(x_0)\bigr)^2 f_0(x_0) \vee \frac{\varepsilon_{n,\delta}}{h^3} = h^2 f_0(x_0) \vee \frac{\varepsilon_{n,\delta}}{h^3}.
\]
This is minimised over $[C_0\varepsilon_{n,\delta}/f_0(x_0),\kappa_j]$ at $h_\omega(x_0) \asymp \bigl(\varepsilon_{n,\delta}/f_0(x_0)\bigr)^{1/5} \wedge \kappa_j$. Thus, $\Gamma_{f_0}(x_0) \asymp \varepsilon_{n,\delta}^{2/5}f_0(x_0)^{3/5} \vee \varepsilon_{n,\delta} \kappa_j^{-3}$ when $C_0\varepsilon_{n,\delta}(C_0^{1/4} \vee \kappa_j^{-1}) \leq f_0(x_0)$, so in particular $\Gamma_{f_0}(x_0) \asymp \varepsilon_{n,\delta}^{2/5}$ for $x_0 \in [a_0,a_1]$. Therefore,
\[
\int_{a_0}^{a_1} \Gamma_{f_0}(x_0) \laplaced x_0 \asymp \varepsilon_{n,\delta}^{2/5}.
\]
Here, the smoothness of $\psi_0$ is homogeneous over $\R$, so to obtain a tight bound on the integral above, it suffices to bound $\Gamma_{f_0}^{(\omega)}(x_0)$ for $x \in [a_0,a_1]$ by a quantity that does not depend on $x_0$. 
\end{example}

\begin{example}[Standard Laplace]
\label{ex:laplace-ptwise}
The score function is $\psi_0(\cdot) = -\sgn(\cdot)$ on $\R$, so if $x_0 \geq 0$ and $\omega = 1$, then
\begin{align*}
\bigl(\psi_0(x_0 - \omega h) - \psi_0(x_0)\bigr)^2 f_0(x_0) \vee \frac{\varepsilon_{n,\delta}}{h^3} = 4f_0(x_0)\one_{\{h > x_0\}} \vee \frac{\varepsilon_{n,\delta}}{h^3}.
\end{align*}
This is minimised over $(0,\kappa_j]$ at $h_\omega(x_0) \asymp \max\bigl\{x_0,\bigl(\varepsilon_{n,\delta}/\{4f_0(x_0)\}\bigr)^{1/3}\bigr\} \wedge \kappa_j$. On the other hand, if $\omega = -1$, then $h_\omega(x_0) = \kappa_j$. Analogous conclusions hold for $x_0 < 0$ by symmetry. Thus, for $x_0 \in \R$ such that $C_0\varepsilon_{n,\delta}(2C_0^{1/2} \vee \kappa_j^{-1}) \leq f_0(x_0)$, we have
\[
\Gamma_{f_0}(x_0) \asymp \Bigl\{\frac{\varepsilon_{n,\delta}}{|x_0|^3} \wedge f_0(x_0)\Bigr\} \vee \frac{\varepsilon_{n,\delta}}{\kappa_j^3}.
\]
In particular, when $j = 0$, we have $h_*(x_0) = \min_{\omega \in \{-1,1\}} h_\omega(x_0) \asymp \varepsilon_{n,\delta}^{1/3} \vee |x_0|$ and $\Gamma_{f_0}(x_0) \asymp (\varepsilon_{n,\delta}/|x_0|^3) \wedge 1$, so
\[
\int_{a_0}^{a_1} \Gamma_{f_0}(x_0) \laplaced x_0 \asymp \varepsilon_{n,\delta}^{1/3}.
\]
A significant contribution to this integral comes from the interval $[0,\varepsilon_{n,\delta}^{1/3}]$, on which the pointwise error bound is of order 1 due to the discontinuity of $\psi_0$ at 0. On the other hand, for $x_0 \geq \varepsilon_{n,\delta}^{1/3}$, the minimising bandwidth $h_*(x_0) \asymp |x_0|$ increases with the distance from $x_0$ to the discontinuity because $\psi_0$ is constant on $(0,\infty)$, and the bound on $\Gamma_{f_0}(x_0)$ improves. This simple example of heterogeneous smoothness illustrates the importance of spatially adaptive bandwidth selection to achieve optimal estimation accuracy over the subclasses in Section~\ref{sec:main-results}.
% which attains its supremum on the interval $I_1 := \bigl[-\varepsilon_{n,\delta}^{1/3},\varepsilon_{n,\delta}^{1/3}\bigr]$ and is a decreasing function of $|x_0|$.
\end{example}

More generally, when $f_0 \in \mathcal{F}_{\beta,L}$ for $\beta \in [1,2)$ and $L > 0$, integrating a na\"ive pointwise inequality of the form
\begin{align}
\label{eq:sub-opt-bound}
\Gamma_{f_0}(x_0) \lesssim \inf\Bigl\{L^2h^{2(\beta-1)}f_0(x_0) \vee \frac{\varepsilon_{n,\delta}}{h^3} : \frac{C_0\varepsilon_{n,\delta}}{f_0(x_0)}\leq h \leq \kappa_j\Bigr\}
\end{align}
over $[a_j,a_{j+1}]$ leads to a suboptimal bound of order $\varepsilon_{n,\delta}^{(2\beta - 1)/(2\beta + 1)}$ on the integrated estimation error of $\hat{\psi}_{n,\delta}$ over this interval.  The issue is that~\eqref{eq:sub-opt-bound} does not exploit the heterogenous smoothness induced by the monotonicity of the score function; see the discussion in Section~\ref{subsec:holder}. We instead bound $\int_{a_j}^{a_{j+1}} \Gamma_{f_0}(x_0) \laplaced x_0$ using the refined approach in Proposition~\ref{prop:Holder-basic}, which yields the improved rate of order $\varepsilon_{n,\delta}^{\beta/(2\beta + 1)}$ seen in Theorem~\ref{thm:holder-minimax-upper}.

\bibliographystyle{apalike}
\bibliography{bib}
\newpage

\appendix

\section*{Appendix}

The proofs of the main results in Sections~\ref{sec:main-results} and~\ref{sec:multiscale} appear in Appendices~\ref{sec:proofs-main-results} and \ref{sec:proofs-multiscale} respectively. These two proof sections have minimal overlap and may be read independently: the upper bounds in Section~\ref{subsec:proofs-upper-bds} for the multiscale estimator~\eqref{eq:scoreEstimateShapeConstrained} depend only on the pointwise properties stated in Proposition~\ref{prop:scorePointwiseErr}, not on the specific construction studied in Appendix~\ref{sec:proofs-multiscale}. Auxiliary results used throughout are collected in Appendix~\ref{sec:auxiliary}.

\section{Proofs for Section~\ref{sec:main-results}}
\label{sec:proofs-main-results}

We present separately the proofs of the upper and lower bounds in Sections~\ref{subsec:proofs-upper-bds} and~\ref{subsec:proofs-lower-bds} respectively.

\subsection{Proofs of upper bounds}
\label{subsec:proofs-upper-bds}

First, we establish a result of independent interest that bounds the integrated error of a pointwise optimal estimator over a compact subinterval, exploiting the heterogeneous smoothness induced by a H\"older condition combined with monotonicity (or more generally a bounded variation constraint). Fix $\varepsilon > 0$ and let $g \colon [a - h_{\max}, b + h_{\max}] \to \R$ be a decreasing function, where $-\infty < a < b < \infty$ and $h_{\max} > 0$. For $x \in [a,b]$, $\omega \in \{-1,1\}$ and $h \in (0,h_{\max}]$, define 
\begin{align*}
\xi_{x,\omega}(h) &:= \omega\bigl(g(x) - g(x + \omega h)\bigr) \vee \Bigl(\frac{\varepsilon}{h^3}\Bigr)^{1/2}, \qquad \xi_x(h) := \xi_{x,-1}(h) \vee \xi_{x,1}(h).
\end{align*}

\begin{prop}
\label{prop:Holder-basic}
Suppose that $g \colon [a - h_{\max}, b + h_{\max}] \to \R$ is decreasing and that there exist $\beta \in [1,2]$ and $L_* > 0$ such that $|g(x) - g(y)| \leq L_*|x - y|^{\beta - 1}$ for all $x,y \in [a - h_{\max}, b + h_{\max}]$. Let
\begin{alignat*}{2}
h_{\beta,L_*} &:= \Bigl(\frac{\varepsilon}{L_*^2}\Bigr)^{1/(2\beta + 1)}, &\qquad h_{\min} &:= h_{\beta,L_*} \wedge h_{\max}, \\
V &:= g(a - h_{\max}) - g(b + h_{\max}), &\qquad \Gamma_{\max} &:= \Bigl(V^2 \wedge \frac{\varepsilon}{h_{\beta,L_*}^3}\Bigr) \vee \frac{\varepsilon}{h_{\max}^3}.
\end{alignat*}
Then $x \mapsto \inf_{h \in [h_{\min},h_{\max}]} \xi_x(h)^2 =: \Gamma(x)$ is a Borel measurable function from $[a,b]$ to $[0,\Gamma_{\max}]$, with
% Obviously $\Gamma(x) \geq \varepsilon/h_{\max}^3$ for all $x$
\begin{equation}
\label{eq:Holder-variation-measure}
\lambda\bigl(\{x \in [a,b] : \Gamma(x) > \zeta\}\bigr) \leq \frac{3V\varepsilon^{1/3}}{\zeta^{5/6}} \wedge (b - a)
\end{equation}
for all $\zeta \in [0,\Gamma_{\max}]$. Consequently,
\begin{align}
\label{eq:Holder-basic}
\int_a^b \Gamma(x) \laplaced x &\leq 18V\bigl\{(V\varepsilon)^{1/3} \wedge (L_*\varepsilon^\beta)^{1/(2\beta + 1)}\bigr\} + \frac{(b - a)\varepsilon}{h_{\max}^3}.
\end{align}
\end{prop}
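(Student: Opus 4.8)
The plan is to prove the tail bound \eqref{eq:Holder-variation-measure} first and then integrate it to obtain \eqref{eq:Holder-basic}. The starting observation is that for a fixed $x \in [a,b]$, the function $h \mapsto \xi_{x,\omega}(h)^2$ is a pointwise maximum of an increasing quantity --- namely $\bigl(\omega(g(x)-g(x+\omega h))\bigr)^2$, which is increasing in $h$ since $g$ is decreasing --- and the decreasing quantity $\varepsilon/h^3$. I would use the H\"older bound $\omega(g(x)-g(x+\omega h)) \le L_* h^{\beta-1}$ to control the increasing piece. The infimum over $h \in [h_{\min},h_{\max}]$ is then essentially attained where the two pieces balance, at $h \asymp h_{\beta,L_*}$ (clipped to the allowed range), which gives the uniform bound $\Gamma(x) \le \Gamma_{\max}$. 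Borel measurability of $\Gamma$ follows since it is an infimum over $h$ in a fixed compact interval of jointly measurable functions (continuity in $h$, measurability in $x$ via monotonicity of $g$).

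For the measure bound \eqref{eq:Holder-variation-measure}, the key step is the following: suppose $\Gamma(x) > \zeta$. Then for \emph{every} admissible $h$ we have $\xi_x(h)^2 > \zeta$, so taking $h = h_\zeta := (\varepsilon/\zeta)^{1/3}$ (which I would first check lies in $[h_{\min},h_{\max}]$ using $\zeta \le \Gamma_{\max}$), the term $(\varepsilon/h_\zeta^3)^{1/2} = \zeta^{1/2}$ does \emph{not} certify $\xi_x(h_\zeta)^2 > \zeta$, so the bias term must: that is, $\max_\omega |g(x) - g(x+\omega h_\zeta)| > \zeta^{1/2}$. The crucial consequence is a lower bound on the \emph{variation} of $g$ accumulated near $x$: on an interval of length $h_\zeta$ to the left or right of $x$, $g$ decreases by more than $\zeta^{1/2}$. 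A covering/packing argument --- select a maximal $h_\zeta$-separated subset of $\{x : \Gamma(x) > \zeta\}$, noting the associated disjoint-ish intervals each carry variation $> \zeta^{1/2}$ and $g$ has total variation $V$ over $[a-h_{\max},b+h_{\max}]$ --- yields that this set is contained in $O(V/\zeta^{1/2})$ intervals of length $h_\zeta$, hence has measure $\lesssim V h_\zeta/\zeta^{1/2} = V\varepsilon^{1/3}/\zeta^{5/6}$. Tracking constants carefully should give the factor $3$; intersecting with the trivial bound $b-a$ gives \eqref{eq:Holder-variation-measure}.

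The passage to \eqref{eq:Holder-basic} is then the layer-cake formula $\int_a^b \Gamma = \int_0^{\Gamma_{\max}} \lambda(\{\Gamma > \zeta\})\,d\zeta$, split at the threshold where the two terms of the minimum in \eqref{eq:Holder-variation-measure} cross. The term $3V\varepsilon^{1/3}\zeta^{-5/6}$ is integrable at $0$ (exponent $5/6 < 1$), contributing $\asymp V\varepsilon^{1/3}\Gamma_{\max}^{1/6}$; substituting the definition of $\Gamma_{\max}$ and simplifying --- using $h_{\beta,L_*}^3 = (\varepsilon/L_*^2)^{3/(2\beta+1)}$ so that $\varepsilon/h_{\beta,L_*}^3 = L_*^{6/(2\beta+1)}\varepsilon^{(2\beta-2)/(2\beta+1)}$, whence $\varepsilon^{1/3}(\varepsilon/h_{\beta,L_*}^3)^{1/6} = (L_*\varepsilon^\beta)^{1/(2\beta+1)}$ --- produces the two competing terms $(V\varepsilon)^{1/3}$ and $(L_*\varepsilon^\beta)^{1/(2\beta+1)}$ inside the minimum, while the $\varepsilon/h_{\max}^3$ part of $\Gamma_{\max}$ over the region where the trivial bound $b-a$ applies contributes the final additive term $(b-a)\varepsilon/h_{\max}^3$. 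The main obstacle I anticipate is bookkeeping: verifying that $h_\zeta$ genuinely lies in $[h_{\min},h_{\max}]$ for all $\zeta \le \Gamma_{\max}$ (this is exactly why $\Gamma_{\max}$ is defined with both the $V^2 \wedge \varepsilon/h_{\beta,L_*}^3$ and $\varepsilon/h_{\max}^3$ pieces), and chasing the numerical constant $18$ through the packing argument and the layer-cake integral without slack. The structural argument itself --- "large $\Gamma$ forces large local variation of a monotone function, of which there is a bounded budget" --- is the clean core and should not be hard.
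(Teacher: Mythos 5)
Your proposal follows essentially the same route as the paper: the two-endpoint bound for $\Gamma_{\max}$ (decreasing property at $h_{\max}$, H\"older bound at $h_{\min}$), the packing argument at scale $h_\zeta = (\varepsilon/\zeta)^{1/3}$ against the total-variation budget $V$ yielding~\eqref{eq:Holder-variation-measure}, and layer-cake integration to~\eqref{eq:Holder-basic}. One caveat worth noting: a general decreasing $g$ may be discontinuous, so $h \mapsto g(x + \omega h)$ need not be continuous, and an uncountable infimum of measurable functions is not automatically measurable; the paper handles measurability of $\Gamma$ by exploiting the monotonicity of $h \mapsto \omega\bigl(g(x) - g(x + \omega h)\bigr)$ to reduce the infimum to a countable dense set of bandwidths, which is the correct version of the step you gesture at.
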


It is instructive to compare this with the bound obtained using the H\"older condition alone, in the absence of monotonicity. To this end,
% Optimal bandwidth for pointwise estimation of a H\"older function *and its derivatives*
suppose only that $|g(x) - g(y)| \leq L_*|x - y|^{\beta - 1}$ for all $x,y \in [a - h_{\beta,L_*}, b + h_{\beta,L_*}]$, and moreover that $h_{\beta,L_*} \leq h_{\max}$ for ease of comparison. Then using the pointwise inequality $\xi_x(h_{\beta,L_*})^2 \leq (L_*h_{\beta,L_*}^{\beta-1})^2 \vee (\varepsilon/h_{\beta,L_*}^3) = \varepsilon/h_{\beta,L_*}^3$ for all $x \in [a,b]$, we have 
\begin{equation}
\label{eq:Holder-naive}
\int_a^b \inf_{h \in [h_{\min},h_{\max}]}\xi_x(h)^2 \laplaced x \leq \frac{(b - a)\varepsilon}{h_{\beta,L_*}^3} = (b - a)L_*^{6/(2\beta + 1)}\varepsilon^{2(\beta - 1)/(2\beta + 1)}.
\end{equation}
When $\varepsilon = 1/n$, this bound yields the optimal rate $n^{-2(\beta - 1)/(2\beta + 1)}$ for nonparametric estimation of the derivative of a univariate $\beta$-H\"older function \citep[e.g.][]{stone1980optimal,stone1982optimal}; see also Proposition~\ref{prop:lower-bd-no-shape-c}. The two terms in the definition of $\xi_{x,\omega}(h)$ correspond to the pointwise bias and standard deviation respectively of a kernel estimator with bandwidth $h$. On the other hand, both of the terms $V(L_*\varepsilon^\beta)^{1/(2\beta + 1)}$ and $(b - a)\varepsilon/h_{\max}^3$ in~\eqref{eq:Holder-basic} can yield tighter bounds than $(b - a)\varepsilon/h_{\beta,L_*}^3$ when $h_{\max} \gg h_{\beta,L_*}$ and the vertical increment $V$ is much smaller than its maximal value $L_*(b - a + 2h_{\max})^{\beta-1}$, as is typically the case in our applications of Proposition~\ref{prop:Holder-basic}. In particular, when $\varepsilon = 1/n$, the first of the aforementioned terms in~\eqref{eq:Holder-basic} yields a strictly faster rate $n^{-\beta/(2\beta + 1)}$ than that in~\eqref{eq:Holder-naive} for all $\beta \in [1,2)$. For instance, when $\beta = 1$ and $b - a$, $L_* = V$ and $h_{\max}$ are all regarded as universal constants, the bound in~\eqref{eq:Holder-basic} scales as $n^{-1/3}$ while that in~\eqref{eq:Holder-naive} is of constant order. 
% Observe also that the first term in~\eqref{eq:Holder-basic} depends on the vertical increment $V$, i.e.~the total variation of $g$, not the length of the interval $[a,b]$

\begin{proof}
First, we show that $\Gamma(x) \leq \Gamma_{\max}$ for all $x \in [a,b]$. Indeed, since $g$ is decreasing,
\[
\Gamma(x) \leq \xi_x(h_{\max})^2 \leq V^2 \vee \frac{\varepsilon}{h_{\max}^3}.
\]
Moreover, because $(L_*h^{\beta - 1})^2 \leq \varepsilon/h^3$ for all $h \in (0,h_{\beta,L_*}]$, it follows from the H\"older condition that
\[
\Gamma(x) \leq \xi_x(h_{\min})^2 \leq (L_*h_{\min}^{\beta - 1})^2 \vee \frac{\varepsilon}{h_{\min}^3} = \frac{\varepsilon}{h_{\min}^3} = \frac{\varepsilon}{h_{\beta,L_*}^3} \vee \frac{\varepsilon}{h_{\max}^3},
\]
so taking the minimum of these bounds yields $\Gamma(x) \leq \Gamma_{\max}$, as required.

To verify that $\Gamma \colon [a,b] \to [0,\Gamma_{\max}]$ is Borel measurable, 
% An alternative argument via Lemma~\ref{lem:inf-continuity} shows that $\Gamma$ is in fact continuous at all but countably many points, but this is unnecessary here 
we define $\mathcal{H} := \bigl((h_{\min},h_{\max}] \cap \Q\bigr) \cup \{h_{\min}\}$ and show that
\begin{equation}
\label{eq:Gamma-measurable}
\Gamma(x) = \inf_{h \in \mathcal{H}} \xi_x(h)^2
\end{equation}
for every $x \in [a,b]$. Indeed, given $\eta > 0$, we can find $h' \in [h_{\min},h_{\max}]$ such that $\xi_x(h')^2 < \Gamma(x) + \eta$. Since $h \mapsto \omega\bigl(g(x) - g(x + \omega h)\bigr)$ is increasing for $\omega \in \{-1,1\}$, since $h \mapsto (\varepsilon/h^3)^{1/2}$ is continuous, and since $\mathcal{H}$ is dense in $[h_{\min},h_{\max}]$, there exists $h'' \in \mathcal{H}$ with $h'' \leq h'$ such that $\xi_x(h'')^2 < \Gamma(x) + \eta$. Since $\eta$ was arbitrary, this establishes~\eqref{eq:Gamma-measurable}. Thus, $x \mapsto \Gamma(x)$ is a countable pointwise infimum of Borel measurable functions $x \mapsto \xi_x(h)$ over $h \in \mathcal{H}$, and hence is Borel measurable.

To derive the bound~\eqref{eq:Holder-variation-measure} on the Lebesgue measure of $\mathcal{S}_\zeta := \{x \in [a,b] : \Gamma(x) > \zeta\}$ for $\zeta \in [0,\Gamma_{\max}]$, first consider $\zeta \geq \varepsilon/h_{\max}^3$. Since $\Gamma_{\max} \leq \varepsilon/h_{\min}^3$, we have $h_\zeta := (\varepsilon/\zeta)^{1/3} \in [h_{\min},h_{\max}]$, so if $\max_{\omega \in \{-1,1\}} \omega\bigl(g(x) - g(x + \omega h_\zeta)\bigr) \leq \sqrt{\zeta}$, then $\Gamma(x) \leq \xi_x(h_\zeta)^2 \leq \zeta$. Thus, with
\begin{align*}
\mathcal{X}(\zeta) := \bigl\{x \in [a - h_{\max}, b + h_{\max} - h_\zeta] : g(x) - g(x + h_\zeta) > \sqrt{\zeta}\bigr\},
\end{align*}
each $x' \in \mathcal{S}_\zeta$ satisfies either $x' \in \mathcal{X}(\zeta)$ or $x' - h_\zeta \in \mathcal{X}(\zeta)$, i.e.~$\mathcal{S}_\zeta \subseteq \bigcup_{x \in \mathcal{X}(\zeta)} \{x, x + h_\zeta\}$. Let $\mathcal{X}_\circ(\zeta)$ be a maximal $h_\zeta$-separated subset of $\mathcal{X}(\zeta)$, so that any distinct $x,x' \in \mathcal{X}_\circ(\zeta)$ satisfy $|x-x'| > h_\zeta$. Then $\mathcal{X}(\zeta) \subseteq \bigcup_{x \in \mathcal{X}_\circ(\zeta)} [x - h_\zeta, x + h_\zeta]$, 
% A packing set is a covering set
and
\begin{align*}
\sqrt{\zeta}|\mathcal{X}_\circ(\zeta)| < \sum_{x \in \mathcal{X}_\circ(\zeta)} \bigl(g(x) - g(x + h_\zeta)\bigr) \leq g(a - h_{\max}) - g(b + h_{\max}) = V,
\end{align*}
since $g$ is decreasing. Consequently,
\begin{align*}
\mathcal{S}_\zeta \subseteq \bigcup_{x \in \mathcal{X}(\zeta)} \{x, x + h_\zeta\} \subseteq \bigcup_{x \in \mathcal{X}_\circ(\zeta)} [x - h_\zeta,x + 2h_\zeta],
\end{align*}
so $\lambda(\mathcal{S}_\zeta) \leq 3h_\zeta|\mathcal{X}_\circ(\zeta)| < 3h_\zeta V/\sqrt{\zeta} = 3V\varepsilon^{1/3}/\zeta^{5/6}$ for $\zeta \in [\varepsilon/h_{\max}^3,\Gamma_{\max}]$. Moreover, $\lambda(\mathcal{S}_\zeta) \leq b - a$ trivially for all $\zeta \in [0,\Gamma_{\max}]$.
% and $\mathcal{S}_\zeta = \emptyset$ for $\zeta > \Gamma_{\max}$
This yields~\eqref{eq:Holder-variation-measure}, and hence by Fubini's theorem,
\begin{align*}
\int_a^b \Gamma(x) \laplaced x = \int_a^b \int_0^{\Gamma_{\max}} \one_{\{\Gamma(x) > \zeta\}} \laplaced \zeta \laplaced x  &= \int_0^{\Gamma_{\max}} \lambda(\mathcal{S}_\zeta) \laplaced \zeta \leq \frac{(b - a)\varepsilon}{h_{\max}^3} + \int_{\varepsilon/h_{\max}^3}^{\Gamma_{\max}} \frac{3V\varepsilon^{1/3}}{\zeta^{5/6}} \laplaced \zeta \\
&\leq \frac{(b - a)\varepsilon}{h_{\max}^3} + 18V\varepsilon^{1/3}\Gamma_{\max}^{1/6} \Ind_{\{\Gamma_{\max} > \varepsilon/h_{\max}^3\}} \\
% \frac{(b - a)\varepsilon}{h_{\max}^3} + 18V\varepsilon\Bigl(V^2 \wedge \frac{\varepsilon}{h_{\beta,L}^3}\Bigr)^{1/6},
&\leq \frac{(b - a)\varepsilon}{h_{\max}^3} + 18V\bigl\{(V\varepsilon)^{1/3} \wedge (L_*\varepsilon^\beta)^{1/(2\beta + 1)}\bigr\},
\end{align*}
where the final inequality follows from the definition of $\Gamma_{\max}$. Therefore,~\eqref{eq:Holder-basic} holds.
\end{proof}

By \citet[Proposition~A.1(c)]{bobkov1996extremal}, a Lebesgue density $f_0$ on $\R$ is log-concave if and only if the corresponding \textit{density quantile function} $J_0 := f_0 \circ F_0^{-1}$ \citep{parzen1979nonparametric} is concave on $[0,1]$, where $F_0$ and $F_0^{-1} \colon [0,1] \to [-\infty,\infty]$ denote the corresponding distribution and quantile functions respectively. We will use the following elementary facts \citep[e.g.][Remark~3 and Lemma~17]{feng24asm}.
% Lemma~S1 in the published version

\begin{lemma}
\label{lem:density-quantile-basics}
If $f_0$ is an absolutely continuous Lebesgue density on $\R$ with score function $\psi_0$, then
\begin{enumerate}[(a)]
\item $J_0(0) = J_0(1) = 0$ and $J_0$ is absolutely continuous with $J_0' = \psi_0 \circ F_0^{-1}$ Lebesgue almost everywhere on $(0,1)$.  
% Moreover, when $f_0$ is log-concave and $\psi_0$ is the right-continuous version of its score function, $J_0^{(\mathrm{R})}(u) = (\psi_0 \circ F_0^{-1})(u)$ for all $u \in (0,1)$.
\item $\int_x^{x'} \psi_0^2 f_0 = \int_{F_0(x)}^{F_0(x')} (J_0')^2$ for all $x,x' \in \R$.
\item $F_0^{-1}(v) - F_0^{-1}(u) = \int_u^v 1/J_0$ for $u,v \in (0,1)$.
\end{enumerate}
\end{lemma}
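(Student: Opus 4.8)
The plan is to establish (a) and then deduce (b) and (c) from it. Since $f_0$ is continuous and integrable, $F_0(x) = \int_{-\infty}^x f_0$ is continuously differentiable with $F_0' = f_0$, and for every $u \in (0,1)$ there is a point mapping to $u$, so $F_0\bigl(F_0^{-1}(u)\bigr) = u$ and $J_0(u) = f_0\bigl(F_0^{-1}(u)\bigr)$ is well defined there. I would work on the interior $(l,r)$ of $\{f_0 > 0\}$, an interval with $-\infty \le l < r \le \infty$ in all the applications (automatically so when $f_0$ is log-concave), on which $F_0$ is strictly increasing, making $F_0^{-1}\colon (0,1) \to (l,r)$ a continuous, strictly increasing bijection.

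For the core of (a), rather than differentiating the composition $f_0 \circ F_0^{-1}$ directly --- which would require a chain rule for a.e.-differentiable functions --- I would prove the identity $J_0(v) - J_0(u) = \int_u^v \psi_0\bigl(F_0^{-1}(w)\bigr)\laplaced w$ for $u \le v$ in $(0,1)$ and read off the conclusion. With $x := F_0^{-1}(u)$ and $x' := F_0^{-1}(v)$, absolute continuity of $f_0$ on $[x,x']$ and the fact that $f_0'(t) = \psi_0(t)f_0(t)$ at a.e.\ $t$ (by definition of the score where $f_0 > 0$, both sides being a.e.\ zero where $f_0 = 0$) give
\[
J_0(v) - J_0(u) = f_0(x') - f_0(x) = \int_x^{x'} f_0'(t)\laplaced t = \int_x^{x'}\psi_0(t)f_0(t)\laplaced t;
\]
changing variables $w = F_0(t)$, i.e.\ using that $F_0$ pushes $f_0\,\lambda|_{[x,x']}$ forward to $\lambda|_{[u,v]}$ and that $\psi_0(t) = \psi_0\bigl(F_0^{-1}(F_0(t))\bigr)$ for a.e.\ $t \in [x,x']$, rewrites the last integral as $\int_u^v \psi_0\bigl(F_0^{-1}(w)\bigr)\laplaced w$, whose integrand lies in $L^1[u,v]$ since $\int_u^v |\psi_0 \circ F_0^{-1}| = \int_x^{x'}|f_0'| < \infty$. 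Thus $J_0$ is an indefinite integral of a function integrable on every compact subinterval of $(0,1)$, hence absolutely continuous on such subintervals, and $J_0' = \psi_0 \circ F_0^{-1}$ a.e.\ by Lebesgue differentiation. For the endpoint values, $F_0^{-1}(u) \downarrow l$ as $u \downarrow 0$ and $F_0^{-1}(v) \uparrow r$ as $v \uparrow 1$, and $f_0$ vanishes at a finite endpoint by continuity and at an infinite endpoint because an integrable function absolutely continuous on $\R$ is uniformly continuous, hence tends to $0$ at $\pm\infty$; so $J_0$ extends continuously to $[0,1]$ with $J_0(0) = J_0(1) = 0$.

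Part (b) then follows from the same pushforward with integrand $w \mapsto \psi_0\bigl(F_0^{-1}(w)\bigr)^2$, using $J_0' = \psi_0 \circ F_0^{-1}$ a.e.: $\int_{F_0(x)}^{F_0(x')}\bigl(J_0'(w)\bigr)^2\laplaced w = \int_{F_0(x)}^{F_0(x')}\psi_0\bigl(F_0^{-1}(w)\bigr)^2\laplaced w = \int_x^{x'}\psi_0^2 f_0$. For (c), $F_0$ is $C^1$ and strictly increasing on $(l,r)$ with $F_0' = f_0 > 0$, so its inverse is absolutely continuous on $(0,1)$ with $(F_0^{-1})'(w) = 1/J_0(w)$ a.e.\ (inverse function theorem, plus the standard fact that the inverse of a strictly increasing absolutely continuous function with a.e.-positive derivative is absolutely continuous), whence $F_0^{-1}(v) - F_0^{-1}(u) = \int_u^v 1/J_0$. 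I do not expect any genuinely hard step; the points needing care are the change-of-variables bookkeeping (measurability and integrability of $\psi_0 \circ F_0^{-1}$, and the a.e.\ identity $\psi_0 = (\psi_0 \circ F_0^{-1})\circ F_0$), the behaviour of $f_0$ at the edge of its support, and the fact that if $\{f_0 > 0\}$ is not an interval then $F_0^{-1}$ has jumps across the gaps --- harmless for (a) and (b), but meaning (c) is to be read with the support an interval (the relevant case, automatic under log-concavity).
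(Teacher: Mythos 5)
The paper does not prove Lemma~\ref{lem:density-quantile-basics} in this source; it is cited from \citet[Remark~3 and Lemma~17]{feng24asm}, so there is no in-paper argument to compare against. Taken on its own terms, your proof is essentially correct and is the natural self-contained route: establish $J_0(v)-J_0(u)=\int_u^v \psi_0\circ F_0^{-1}$ by the fundamental theorem of calculus for $f_0$ plus the pushforward of $f_0\lambda$ to $\lambda|_{(0,1)}$ under $F_0$, then read off $J_0'$ by Lebesgue differentiation, repeat the pushforward with the squared integrand for part \textit{(b)}, and use the $C^1$ inverse function theorem for part \textit{(c)}. Your caveat that \textit{(c)} requires $\{f_0>0\}$ to be an interval is apt and automatic under log-concavity. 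One small point worth making explicit: what you actually establish for \textit{(a)} is that $J_0$ is locally absolutely continuous on $(0,1)$ and extends continuously to $[0,1]$ with $J_0(0)=J_0(1)=0$. Absolute continuity \emph{on $[0,1]$} requires in addition that $\int_0^1 |J_0'| = \int_{-\infty}^\infty |f_0'| < \infty$, i.e.\ that $f_0$ has bounded variation on $\R$, which fails for some merely absolutely continuous densities. In the setting the paper actually uses (continuous log-concave $f_0$), $J_0$ is concave on $[0,1]$ with finite boundary values, so absolute continuity on $[0,1]$ is immediate from, e.g., \citet[Corollary~24.2.1]{rockafellar97convex}; you could either invoke that, or add the bounded-variation hypothesis if aiming for the general form of the statement. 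None of this affects \textit{(b)} or \textit{(c)} or any of the downstream uses in the paper.
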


The proofs of our upper bounds rely on applying Proposition~\ref{prop:Holder-basic} on subintervals $\{[a_j,a_{j+1}]:j \in \Z\}$ defined as follows. 
% Key construction / partition:
Given a continuous log-concave density $f_0$, we henceforth take $\psi_0 = \phi_0^{(\mathrm{R})}$ and $J_0' \equiv J_0^{(\mathrm{R})} = \psi_0 \circ F_0^{-1}$ for concreteness, although our analysis does not depend on any particular version of the score function provided that it is decreasing. For $j \in \Z$, define
\[
u_j := 
\begin{cases}
2^{j-1} \quad &\text{if }j \leq 0 \\
1 - 2^{-(j+1)} \quad &\text{otherwise},
\end{cases}
\qquad\quad
a_j := F_0^{-1}(u_j),
% Could instead take $u_j = 1/(1 + 2^{-j})$ to avoid the case split -- essentially the same, also with $u_{-j} + u_j = 1$
\]
so that $u_j \wedge (1-u_j)=2^{-|j|-1}$, and by the definition of $J_0$ and Lemma~\ref{lem:density-quantile-basics}\textit{(a)},
\[
F_0(a_j) = u_j, \quad f_0(a_j) = J_0(u_j) \quad\text{and}\quad \psi_0(a_j) = J_0'(u_j).
\]
The sequence $(a_j)_{j \in \Z}$ has the following key properties.

\begin{lemma}
\label{lem:partition-properties}
For any continuous log-concave density $f_0$ and $j \in \Z$, we have
\begin{align}
\label{eq:partition-property-1}
|\psi_0(a_j)| = |J_0'(u_j)| &\leq \frac{J_0(u_j)}{u_j \wedge (1 - u_j)} = 2^{|j| + 1}f_0(a_j), \\
\label{eq:partition-property-2}
\sup_{x',x'' \in [a_j,a_{j+1}]} \frac{f_0(x')}{f_0(x'')} &= \sup_{u',u'' \in [u_j,u_{j+1}]} \frac{J_0(u')}{J_0(u'')} \leq 2, \\
\label{eq:partition-property-3}
% Constants could probably be improved to 1/4 and 4
\frac{a_j - a_{j-1}}{8} &\leq a_{j+1} - a_j \leq 8(a_j - a_{j-1}), \\
\label{eq:partition-property-4}
% Maybe the factor of 2 in the upper bound is unnecessary?
\frac{2^{-(|j|+3)}}{f_0(a_j)} \leq a_{j+1} - a_j &\leq \frac{2^{-|j|}}{f_0(a_j)} \wedge \frac{2^{-|j+1|}}{f_0(a_{j+1})} \leq \frac{2}{|\psi_0(a_j)| \vee |\psi_0(a_{j+1})|}.
\end{align}
\end{lemma}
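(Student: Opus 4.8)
The plan is to derive all four inequalities from the concavity of $J_0$ on $[0,1]$, the boundary conditions $J_0(0) = J_0(1) = 0$ from Lemma~\ref{lem:density-quantile-basics}\textit{(a)}, and the identity $a_{j+1} - a_j = \int_{u_j}^{u_{j+1}} 1/J_0$ from Lemma~\ref{lem:density-quantile-basics}\textit{(c)}. Throughout I write $v_j := u_j \wedge (1 - u_j) = 2^{-|j|-1}$, so that $[u_j, u_{j+1}]$ has length $v_{j}$ when $j \geq 0$ and length $v_{j+1} = v_j/2$... more carefully, $u_{j+1} - u_j = 2^{-|j+1|-1}\cdot(\text{sign factor})$; I will just note that $u_{j+1} - u_j \asymp v_j \asymp v_{j+1}$ with explicit constants, since $|j|$ and $|j+1|$ differ by~$1$. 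For~\eqref{eq:partition-property-1}, concavity of $J_0$ together with $J_0 \geq 0$ and $J_0(0) = J_0(1) = 0$ gives that the chord from $(0,0)$ to $(u_j, J_0(u_j))$ lies below $J_0$ on $[0, u_j]$ and above it for $u > u_j$ (similarly on the right), so the right-derivative satisfies $|J_0'(u_j)| \leq J_0(u_j)/u_j$ and $|J_0'(u_j)| \leq J_0(u_j)/(1 - u_j)$; taking the binding one gives $|J_0'(u_j)| \leq J_0(u_j)/v_j = 2^{|j|+1} J_0(u_j) = 2^{|j|+1} f_0(a_j)$, which is the claim.

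For~\eqref{eq:partition-property-2}, I use that a nonnegative concave function on $[0,1]$ vanishing at both endpoints is unimodal, and more specifically that on any subinterval $[u_j, u_{j+1}]$ its ratio of sup to inf is controlled: by concavity $J_0$ restricted to $[u_j, u_{j+1}]$ lies above the chord joining its endpoint values, and above the line through $(0,0)$ and the smaller endpoint (or through $(1,0)$ and the smaller endpoint), while below the line extending from $(0,0)$ or $(1,0)$; the key point is that $u_{j+1}/u_j \leq 2$ for $j < 0$ and $(1-u_j)/(1-u_{j+1}) \leq 2$ for $j \geq 0$, and combining with the fact that $J_0(u)/u$ is decreasing and $J_0(u)/(1-u)$ is increasing (both from concavity plus the vanishing boundary values) pins the ratio at most~$2$. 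I would spell this out by bounding $\max_{[u_j,u_{j+1}]} J_0 / \min_{[u_j,u_{j+1}]} J_0$ via the two monotone ratios depending on whether $j \geq 0$ or $j < 0$. The translation from $J_0$-ratios to $f_0$-ratios on $[a_j, a_{j+1}]$ is immediate since $f_0 = J_0 \circ F_0$ and $F_0$ maps $[a_j, a_{j+1}]$ onto $[u_j, u_{j+1}]$.

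For~\eqref{eq:partition-property-3} and~\eqref{eq:partition-property-4}, I combine Lemma~\ref{lem:density-quantile-basics}\textit{(c)}, which gives $a_{j+1} - a_j = \int_{u_j}^{u_{j+1}} \frac{\laplaced u}{J_0(u)}$, with~\eqref{eq:partition-property-2}: since $J_0$ varies by a factor at most~$2$ on $[u_j, u_{j+1}]$, we get $\frac{u_{j+1} - u_j}{2 J_0(u_j^\star)} \leq a_{j+1} - a_j \leq \frac{2(u_{j+1} - u_j)}{J_0(u_j^\star)}$ for any reference point $u_j^\star \in [u_j, u_{j+1}]$, in particular for $u_j$ itself and for $u_{j+1}$. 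Using $u_{j+1} - u_j = 2^{-|j|-1}$ when $j \geq 0$ and $= 2^{j-1}$ when $j < 0$ (so $u_{j+1} - u_j \in [2^{-|j|-2}, 2^{-|j|-1}]$ in all cases, with the precise value being $v_j$ for $j<0$ and $v_{j+1}$ for $j\geq 0$... I will compute both cases), and $J_0(u_j) = f_0(a_j)$, this yields~\eqref{eq:partition-property-4}, including the last inequality via~\eqref{eq:partition-property-1} applied at both $a_j$ and $a_{j+1}$. Finally,~\eqref{eq:partition-property-3} follows by applying the two-sided bound of~\eqref{eq:partition-property-4} to consecutive indices and dividing: $\frac{a_{j+1}-a_j}{a_j - a_{j-1}}$ is bounded above and below by ratios of the form $2^{\pm|j| \mp |j\pm1|} f_0(a_{j\pm 1})/f_0(a_j)$ up to absolute constants, and since $\big||j| - |j\pm1|\big| \leq 1$ and adjacent density values $f_0(a_{j-1}), f_0(a_j), f_0(a_{j+1})$ are comparable (each pair lies in a common interval $[a_{j-1},a_j]$ or $[a_j,a_{j+1}]$ up to endpoints, so~\eqref{eq:partition-property-2} gives a factor~$2$), all factors are absolute constants; I then check that they fit inside $[1/8, 8]$. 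The main obstacle is purely bookkeeping: handling the sign of $j$ (and the transition at $j = 0$, where $u_j = 1/2$ is the mode location of $J_0$) consistently so that the constants genuinely come out as the stated $2$, $8$, $2^{-|j|-3}$, etc., rather than merely $\lesssim$; I would organise this by treating $j \geq 0$ and $j \leq -1$ symmetrically via the reflection $u \mapsto 1 - u$ and verifying the constants in one case.
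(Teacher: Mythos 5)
Your overall route -- concavity of $J_0$ with $J_0(0)=J_0(1)=0$ for~\eqref{eq:partition-property-1}, monotonicity of $J_0(u)/u$ and $J_0(u)/(1-u)$ for~\eqref{eq:partition-property-2}, and the representation $a_{j+1}-a_j=\int_{u_j}^{u_{j+1}}1/J_0$ for~\eqref{eq:partition-property-3} and~\eqref{eq:partition-property-4} -- is the same as the paper's. Two issues, one minor and one more serious.

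Minor: in your argument for~\eqref{eq:partition-property-1}, it is not true that both $|J_0'(u_j)|\leq J_0(u_j)/u_j$ and $|J_0'(u_j)|\leq J_0(u_j)/(1-u_j)$ hold. Concavity with the boundary values gives $J_0'(u)\leq J_0(u)/u$ and $J_0'(u)\geq -J_0(u)/(1-u)$, so $|J_0'(u)|$ is bounded by the \emph{maximum} of the two quantities, which equals $J_0(u)/\{u\wedge(1-u)\}$. Your stated conclusion is correct, but the phrase ``taking the binding one'' after asserting both bounds for the absolute value reads as taking a minimum, which would give a false strengthening.

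More serious: your plan for~\eqref{eq:partition-property-3} -- ``applying the two-sided bound of~\eqref{eq:partition-property-4} to consecutive indices and dividing'' -- does not actually produce the constant $8$. Plugging in the pre-packaged bounds from~\eqref{eq:partition-property-4} for intervals $[a_{j-1},a_j]$ and $[a_j,a_{j+1}]$ gives, for the direction $a_{j+1}-a_j\leq C(a_j-a_{j-1})$,
\[
\frac{a_{j+1}-a_j}{a_j-a_{j-1}}\leq\frac{2^{-|j|}/f_0(a_j)}{2^{-(|j-1|+3)}/f_0(a_{j-1})}=2^{|j-1|-|j|+3}\,\frac{f_0(a_{j-1})}{f_0(a_j)}.
\]
For $j\leq 0$ we have $|j-1|-|j|=1$, so the prefactor is $16$, and $f_0(a_{j-1})/f_0(a_j)$ can only be controlled by a factor $2$ via~\eqref{eq:partition-property-2}; this yields $32$, not $8$. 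Using the other term $2^{-|j+1|}/f_0(a_{j+1})$ of the minimum in~\eqref{eq:partition-property-4} makes this worse. The loss comes from routing the comparison through $f_0(a_{j-1})$ rather than $f_0(a_j)$: the ready-made lower bound in~\eqref{eq:partition-property-4} for the interval $[a_{j-1},a_j]$ is anchored at the wrong endpoint. The fix is to bound both integrals using the \emph{common} reference point $u_j$: since $J_0(u)\in[J_0(u_j)/2,\,2J_0(u_j)]$ on $[u_{j-1},u_{j+1}]$ by~\eqref{eq:partition-property-2}, one gets $a_j-a_{j-1}\leq 2(u_j-u_{j-1})/J_0(u_j)$ and $a_{j+1}-a_j\geq (u_{j+1}-u_j)/\{2J_0(u_j)\}$, and then $(u_j-u_{j-1})\leq 2(u_{j+1}-u_j)$ (and vice versa), giving exactly $8$. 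So~\eqref{eq:partition-property-3} should be proved directly from the integral representation and~\eqref{eq:partition-property-2}, not as a corollary of~\eqref{eq:partition-property-4}.

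Also, a small arithmetic slip: $u_{j+1}-u_j$ equals $2^{-(j+2)}$ for $j\geq 0$, not $2^{-|j|-1}$; you do record the correct value later as $v_{j+1}$, but the initial formula would contradict your own constant in~\eqref{eq:partition-property-4} if carried forward.
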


\begin{proof}
Both~\eqref{eq:partition-property-1} and~\eqref{eq:partition-property-2} rely on the fact that $J_0$ is concave \citep[Proposition~A.1(c)]{bobkov1996extremal}, with $J_0(0) = J_0(1) = 0$ by Lemma~\ref{lem:density-quantile-basics}\textit{(a)}. It follows that $0 = J_0(0) \leq J_0(u) - uJ_0'(u)$ and $0 = J_0(1) \leq J_0(u) + (1 - u)J_0'(u)$ for $u \in (0,1)$, so
\[
|J_0'(u)| = \max\{J_0'(u),-J_0'(u)\} \leq \frac{J_0(u)}{u} \vee \frac{J_0(u)}{1 - u},
% = \frac{J_0(u)}{u \wedge (1 - u)}
\]
and~\eqref{eq:partition-property-1} holds.  Now fix $j \in \Z$ and any $x',x'' \in [a_j,a_{j+1}]$ such that $x' \leq x''$. Then $u' := F_0(x')$ and $u'' := F_0(x'')$ satisfy $u' \leq u'' \leq u_{j+1} \leq 2u_j \leq 2u'$ and $1 - u'' \leq 1 - u' \leq 1 - u_j \leq 2(1 - u_{j+1}) \leq 2(1 - u'')$, 
% because $u_j \leq u' \leq u'' \leq u_{j+1}$
so
\begin{align*}
\frac{f_0(x')}{u'} = \frac{J_0(u')}{u'} &\geq \frac{J_0(u'')}{u''} = \frac{f_0(x'')}{u''} \geq \frac{f_0(x'')}{2u'}, \\
\frac{f_0(x'')}{1 - u''} = \frac{J_0(u'')}{1 - u''} &\geq \frac{J_0(u')}{1 - u'} = \frac{f_0(x')}{1 - u'} \geq \frac{f_0(x')}{2(1 - u'')}.
\end{align*}
Thus, $1/2 \leq f_0(x')/f_0(x'') = J_0(u')/J_0(u'') \leq 2$, which establishes~\eqref{eq:partition-property-2}. Consequently, $1/2 \leq J_0(u)/J_0(u_j) \leq 2$ for all $u \in [u_{j-1},u_{j+1}]$. Moreover, 
% NB: u_{j+1} - u_j = u_j \wedge (1 - u_{j+1}); see the final display of the proof
$u_{j+1} - u_j = 2^{j-1}$ for $j \leq -1$ and $u_{j+1} - u_j = 2^{-(j+2)}$ for $j \geq 0$, so $1/2 \leq (u_{j+1} - u_j)/(u_j - u_{j-1}) \leq 2$ for all $j \in \Z$. Hence by Lemma~\ref{lem:density-quantile-basics}\textit{(c)},
\begin{align}
\label{eq:aj-differences-1}
a_j - a_{j-1} &= \int_{u_{j-1}}^{u_j} \frac{1}{J_0} \leq \frac{2(u_j - u_{j-1})}{J_0(u_j)} \leq \frac{4(u_{j+1} - u_j)}{J_0(u_j)} \leq \int_{u_j}^{u_{j+1}} \frac{8}{J_0} = 8(a_{j+1} - a_j), \\
\label{eq:aj-differences-2}
a_{j+1} - a_j &= \int_{u_j}^{u_{j+1}} \frac{1}{J_0} \leq \frac{2(u_{j+1} - u_j)}{J_0(u_j)} \leq \frac{4(u_j - u_{j-1})}{J_0(u_j)} \leq \int_{u_{j-1}}^{u_j} \frac{8}{J_0} = 8(a_j - a_{j-1}),
% Can improve the constants to 1/2 and 2 when f_0(a_{j-1}) \leq f_0(a_j) \leq f_0(a_{j+1}) and j < 0:
% \frac{a_{j+1} - a_j}{2} &= \frac{1}{2}\int_{2^j}^{2^{j+1}} \frac{1}{J_0} \leq \frac{2^{j-1}}{J_0(2^j)} \leq \int_{2^{j-1}}^{2^j} \frac{1}{J_0} = a_j - a_{j-1}, \\
% a_j - a_{j-1} &= \int_{2^{j-1}}^{2^j} \frac{1}{J_0} \leq \frac{2^{j-1}}{J_0(2^{j-1})} \leq \frac{2^{j+1}}{J_0(2^{j+1})} \leq 2\int_{2^j}^{2^{j+1}} \frac{1}{J_0} = 2(a_{j+1} - a_j),
\end{align}
which yields~\eqref{eq:partition-property-3}. Moreover, by~\eqref{eq:partition-property-2}, $J_0(u_{j+1})/2 \leq J_0(u_j) \leq 2J_0(u_{j+1})$ for all $j \in \Z$.  Thus, by applying the final inequality in~\eqref{eq:aj-differences-1}, followed by the first inequality in~\eqref{eq:aj-differences-2} (when $j \leq -1$) and the first inequality in~\eqref{eq:aj-differences-1} with a shifted index (when $j \geq 0$), we obtain
\begin{align*}
\frac{2^{-(|j|+3)}}{J_0(u_j)} \leq \frac{u_{j+1} - u_j}{2J_0(u_j)} \leq a_{j+1} - a_j &\leq
\begin{cases}
\dfrac{2(u_{j+1} - u_j)}{J_0(u_j)} 
= \dfrac{2^{-j}}{J_0(u_j)}
% = \dfrac{2^j}{J_0(u_j)} \wedge \dfrac{2 \cdot 2^j}{J_0(u_{j+1})} 
&\text{if }j \leq -1 \\[10pt]
\dfrac{2(u_{j+1} - u_j)}{J_0(u_{j+1})} 
= \dfrac{2^{-(j+1)}}{J_0(u_{j+1})}
% \dfrac{2 \cdot 2^{-(j+1)}}{J_0(u_j)} \wedge \dfrac{2^{-(j+1)}}{J_0(u_{j+1})} 
&\text{if }j \geq 0
\end{cases} \\
&= \dfrac{2^{-|j|}}{J_0(u_j)} \wedge \dfrac{2^{-|j+1|}}{J_0(u_{j+1})} \leq 
% \frac{2}{|J_0'(u_j)|} \wedge \frac{2}{|J_0'(u_{j+1})|} = 
\frac{2}{|\psi_0(a_j)|} \wedge \frac{2}{|\psi_0(a_{j+1})|},
\end{align*}
where the final bound follows from~\eqref{eq:partition-property-1}. This completes the proof of~\eqref{eq:partition-property-4}.
\end{proof}
Since $J_0'$ is decreasing, it turns out that the integrals $\int_{u_j}^{u_k} J_0'$ and $\int_{u_j}^{u_k} (J_0')^2$ for $j,k \in \Z$ are always within a universal constant factor of their Riemann sum approximations with respect to the grid $(u_\ell)_{\ell=j}^k$. We will use the following two explicit bounds to incorporate constraints on the Fisher information in the proofs of our main results.

\begin{lemma}
\label{lem:J0-fisher-information-sums}
For $j,k \in \Z$ such that $j < k$, we have
\begin{align}
\label{eq:density-quantile-alt}
|J_0(u_j) - J_0(u_k)| &\leq \sum_{\ell=j}^k 2^{-|\ell|}|J_0'(u_\ell)|, \\
\label{eq:fisher-information-sum}
i(f_0) &\geq \frac{1}{8}\sum_{\ell \in \Z} 2^{-|\ell|}J_0'(u_{\ell})^2.
\end{align}
\end{lemma}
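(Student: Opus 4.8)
The plan is to reduce both inequalities to elementary comparisons between integrals over the dyadic cells $[u_\ell,u_{\ell+1}]$ and the values of $J_0'$ at the grid points $u_\ell$, using only that $J_0$ is absolutely continuous with decreasing a.e.\ derivative $J_0'=\psi_0\circ F_0^{-1}$ (Lemma~\ref{lem:density-quantile-basics}\textit{(a)}) together with the exact spacing identity $u_{\ell+1}-u_\ell=2^{-|\ell|-1}\wedge 2^{-|\ell+1|-1}$ for $\ell\in\Z$, which one checks by the casework $\ell\le-1$, $\ell=0$, $\ell\ge1$ already used in the proof of Lemma~\ref{lem:partition-properties}. In particular $\tfrac14\,2^{-|\ell|}\le u_{\ell+1}-u_\ell\le 2^{-|\ell|-1}\wedge 2^{-|\ell+1|-1}$, and applying this with $\ell$ shifted also yields $u_\ell-u_{\ell-1}\ge\tfrac14\,2^{-|\ell|}$ (using $|\ell-1|\le|\ell|+1$).

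For \eqref{eq:density-quantile-alt}: by absolute continuity, $J_0(u_k)-J_0(u_j)=\int_{u_j}^{u_k}J_0'=\sum_{\ell=j}^{k-1}\int_{u_\ell}^{u_{\ell+1}}J_0'$. Since $J_0'$ is decreasing it lies between $J_0'(u_{\ell+1})$ and $J_0'(u_\ell)$ on $[u_\ell,u_{\ell+1}]$, so $\bigl|\int_{u_\ell}^{u_{\ell+1}}J_0'\bigr|\le(u_{\ell+1}-u_\ell)\bigl(|J_0'(u_\ell)|\vee|J_0'(u_{\ell+1})|\bigr)\le 2^{-|\ell|-1}|J_0'(u_\ell)|+2^{-|\ell+1|-1}|J_0'(u_{\ell+1})|$. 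Summing the triangle inequality over $\ell=j,\dots,k-1$, each interior term $|J_0'(u_m)|$ with $j<m<k$ is hit once by the cell on its left and once by the cell on its right, each with coefficient $2^{-|m|-1}$, for a total of $2^{-|m|}$; the endpoint terms $|J_0'(u_j)|$ and $|J_0'(u_k)|$ pick up only $2^{-|j|-1}\le2^{-|j|}$ and $2^{-|k|-1}\le2^{-|k|}$ respectively. This gives exactly \eqref{eq:density-quantile-alt}.

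For \eqref{eq:fisher-information-sum}: first, letting the interval in Lemma~\ref{lem:density-quantile-basics}\textit{(b)} expand to $\R$ (the integrands being nonnegative) gives $i(f_0)=\int_{-\infty}^\infty\psi_0^2f_0=\int_0^1(J_0')^2$. The crux is the cell-wise lower bound $\int_{u_{\ell-1}}^{u_{\ell+1}}(J_0')^2\ge\tfrac14\,2^{-|\ell|}J_0'(u_\ell)^2$ for each $\ell\in\Z$, which I would establish by choosing the appropriate one of the two cells adjacent to $u_\ell$: if $J_0'(u_\ell)\ge0$, monotonicity forces $J_0'\ge J_0'(u_\ell)\ge0$ on $[u_{\ell-1},u_\ell]$, so $\int_{u_{\ell-1}}^{u_\ell}(J_0')^2\ge(u_\ell-u_{\ell-1})J_0'(u_\ell)^2\ge\tfrac14\,2^{-|\ell|}J_0'(u_\ell)^2$; if $J_0'(u_\ell)<0$, then $J_0'\le J_0'(u_\ell)<0$ on $[u_\ell,u_{\ell+1}]$ and the same bound follows from $u_{\ell+1}-u_\ell\ge\tfrac14\,2^{-|\ell|}$. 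Summing over $\ell\in\Z$ and observing that each dyadic cell is counted exactly twice, $\sum_{\ell\in\Z}\int_{u_{\ell-1}}^{u_{\ell+1}}(J_0')^2=2\int_0^1(J_0')^2=2\,i(f_0)$, whence $2\,i(f_0)\ge\tfrac14\sum_{\ell\in\Z}2^{-|\ell|}J_0'(u_\ell)^2$, i.e.\ \eqref{eq:fisher-information-sum}.

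I do not anticipate a real obstacle: both parts are ``Riemann sum versus integral'' estimates that become routine once the exact dyadic spacings and the monotonicity of $J_0'$ are recorded. The only genuinely delicate point is that $J_0'$ may change sign inside a single cell $[u_\ell,u_{\ell+1}]$, so $(J_0')^2$ cannot be bounded below there by the smaller of its two squared endpoint values; this is exactly what the neighbour-selection trick in part~(b) circumvents, and afterwards the stated constant $\tfrac18=\tfrac12\cdot\tfrac14$ is precisely what the double-counting (factor $\tfrac12$) and the spacing lower bound (factor $\tfrac14$) produce.
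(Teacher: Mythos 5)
Your proof is correct and follows essentially the same route as the paper: both parts are Riemann-sum comparisons driven by the dyadic spacings $u_{\ell+1}-u_\ell \asymp 2^{-|\ell|}$ and the monotonicity of $J_0'$. For~\eqref{eq:density-quantile-alt} the two arguments are identical. For~\eqref{eq:fisher-information-sum} the paper instead locates the unique index $m+1$ at which $J_0'$ changes sign, drops that one cell, bounds $(J_0')^2$ below by $J_0'(u_{\ell-1})^2 \wedge J_0'(u_\ell)^2$ on each remaining cell $(u_{\ell-1},u_\ell)$, and then reindexes (costing a further factor $2$ via $2^{-|\ell+1|}\ge 2^{-|\ell|-1}$); your version instead centres an overlapping window $[u_{\ell-1},u_{\ell+1}]$ at each $u_\ell$, selects the favourable half by the sign of $J_0'(u_\ell)$, and pays the factor $2$ through double-counting of cells. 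Both bookkeeping schemes yield the constant $1/8$, and the underlying idea — at each grid point, one adjacent cell has $|J_0'|$ uniformly at least $|J_0'(u_\ell)|$ — is the same; your windowed version is arguably a touch cleaner in that it avoids having to single out the sign-change index.
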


\begin{proof}
Since $2^{-(|\ell| + 2)} \leq u_\ell - u_{\ell - 1} \leq 2^{-(|\ell| + 1)}$ for $\ell \in \Z$ and $J_0'$ is decreasing on $(0,1)$, we have
\begin{align*}
|J_0(u_j) - J_0(u_k)|
% = J_0(2^{-(j+1)}) - J_0(2^{-(k+1)}) = \int_{2^{-(k+1)}}^{2^{-(j+1)}} J_0' 
\leq \sum_{\ell=j+1}^k \biggl|\int_{u_{\ell-1}}^{u_\ell} J_0'\biggr| &\leq \sum_{\ell=j+1}^k (u_\ell - u_{\ell-1})\bigl(|J_0'(u_{\ell-1})| \vee |J_0'(u_\ell)|\bigr) \leq \sum_{\ell=j}^k 2^{-|\ell|}|J_0'(u_\ell)|,
\end{align*}
so~\eqref{eq:density-quantile-alt} holds. Next, $J_0 = f_0 \circ F_0^{-1}$ is concave with $\sup_{u \in (0,1)} J_0(u) > 0$, and $J_0(0) = J_0(1) = 0$ by Lemma~\ref{lem:density-quantile-basics}\textit{(a)}, so $\lim_{u \searrow 0} J_0'(u) > 0 > \lim_{u \nearrow 1} J_0'(u)$ and hence $m := \max\{j \in \Z : J_0'(u_j) > 0\}$ is finite. Since $J_0'$ is decreasing, for $\ell \in \Z$ with $\ell \neq m + 1$, we have $|J_0'(u)| \geq |J_0'(u_{\ell-1})| \wedge |J_0'(u_{\ell})|$ for all $u \in (u_{\ell-1}, u_{\ell})$. Therefore, by Lemma~\ref{lem:density-quantile-basics}\textit{(b)},
\begin{align*}
i(f_0) = \sum_{\ell \in \Z} \int_{u_{\ell - 1}}^{u_\ell} (J_0')^2 &\geq \sum_{\ell \in \Z, \ell \neq m+1} 2^{-(|\ell| + 2)} \bigl(J_0'(u_{\ell-1})^2 \wedge J_0'(u_\ell)^2\bigr) \\
% (J_0')^2 is decreasing and then increasing
&=\frac{1}{4}\sum_{\ell=-\infty}^m 2^{-|\ell|} J_0'(u_{\ell})^2 + \frac{1}{4}\sum_{\ell=m+2}^\infty 2^{-|\ell|} J_0'(u_{\ell-1})^2 \\
&= \frac{1}{4}\sum_{\ell=-\infty}^m 2^{-|\ell|} J_0'(u_{\ell})^2 + \frac{1}{4}\sum_{\ell=m+1}^\infty 2^{-|\ell+1|} J_0'(u_\ell)^2 
% =\frac{1}{4}\sum_{\ell=-\infty}^m 2^{-|\ell|} J_0'(u_{\ell})^2 + \frac{1}{4}\sum_{\ell=m+1}^\infty 2^{-|\ell+1|} J_0'(u_{\ell})^2
\geq \frac{1}{8}\sum_{\ell \in \Z} 2^{-|\ell|}J_0'(u_{\ell})^2,
\end{align*}
which yields~\eqref{eq:fisher-information-sum}.
\end{proof}

Recall from~\eqref{eq:scoreEstimateShapeConstrained} the definition of our multiscale estimator $\scoreEstimateShapeConstrained$. The following lemma is a central plank of our proof strategy. The high-probability bound~\eqref{eq:integrated-error-interval} facilitates the application of~\eqref{eq:Holder-basic} in Proposition~\ref{prop:Holder-basic} 
% with $a = a_j$, $b = a_{j+1}$ and $h_{\max} = \kappa_j$
by simplifying the pointwise inequality in Proposition~\ref{prop:scorePointwiseErr}\textit{(b)}. Lemma~\ref{lem:integrated-error-interval}\textit{(b)} serves a dual purpose: first, it is used to verify in Lemma~\ref{lem:integrated-error-interval}\textit{(c)} and later~\eqref{eq:Vj-min-bandwidth} that the restrictions on $h$ in~\eqref{eq:integrated-error-interval} are compatible with those in the definition of $\Gamma$ in Proposition~\ref{prop:Holder-basic}; second, we will bound $V_j$ uniformly over the subclasses of log-concave densities $f_0$ in the following sections, and then sum the resulting contributions from~\eqref{eq:Holder-basic} over $j \in \Z$ to obtain our main results.

\begin{lemma}
\label{lem:integrated-error-interval}
For $j \in \Z$, let $\kappa_j := (a_{j+1} - a_j)/8$ and define the function $g_j \colon [a_j - \kappa_j, a_{j+1} + \kappa_j] \to \R$ by
% rescaled score function
\begin{equation}
\label{eq:gj}
g_j(x) := \psi_0(x)f_0(a_j)^{1/2}.
\end{equation}
\begin{enumerate}[(a)]
\item Let $n \geq 3$ and $\delta \in (0,1)$. Then there exists a universal constant $C_0 > 0$ such that on the event $\goodEvent$ of probability at least $1 - \delta$, we have for all $j \in \Z$ that
\begin{align}
\label{eq:integrated-error-interval-1}
&\int_{a_j}^{a_{j+1}} \bigl(\scoreEstimateShapeConstrained(x) - \psi_0(x)\bigr)^2 f_0(x) \laplaced x \leq 2\sum_{\omega \in \{-1,1\}} \int_{a_j}^{a_{j+1}} \Delta_{f_0}^{(\omega)}(x)^2 f_0(x) \laplaced x \\
\label{eq:integrated-error-interval}
&\hspace{1.5cm} \lesssim \sum_{\omega \in \{-1,1\}} \int_{a_j}^{a_{j+1}} \inf\biggl\{\bigl(g_j(x - \omega h) - g_j(x)\bigr)^2 \vee \frac{\varepsilon_{n,\delta}}{h^3} : \frac{C_0\varepsilon_{n,\delta}}{f_0(x)} \leq h \leq \kappa_j\biggr\} \laplaced x.
\end{align}
% \teal{$\mathcal{H}_{f_0}^{(\omega)}(x) \neq \emptyset$} for all $x \in [a_j,a_{j+1}]$ and $\omega \in \{-1,1\}$.
\item Let $V_j := g_j(a_j - \kappa_j) - g_j(a_{j+1} + \kappa_j)$. Then
\[
V_j^2 \leq \bigl(|J_0'(u_{j-1})| + |J_0'(u_{j+2})|\bigr)^2 J_0(u_j) \leq 2^{2|j|+10}J_0(u_{j+1})^2 J_0(u_j) \leq 2^{2|j|+12}J_0(u_j)^3.
\]
\item If $2^{-|j|} \geq 2^{13/2}C_0^{3/2}\varepsilon_{n,\delta}$,
% NB: This is of order $1/n$ (cf.~the minimum order statistic) up to a logarithmic factor -- very important!
% NB: We can have \varepsilon_{n,\delta} \geq 1, so this condition may not even be satisfied by j = 0
then $C_0\varepsilon_{n,\delta}/f_0(x) \leq \kappa_j$, so the integrand in~\eqref{eq:integrated-error-interval} is finite for all $x \in [a_j,a_{j+1}]$.
\end{enumerate}
\end{lemma}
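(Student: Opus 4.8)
The plan is to handle the three parts in turn, with \emph{(a)} carrying essentially all the work. For \emph{(a)}, the first inequality in~\eqref{eq:integrated-error-interval-1} is immediate: on $\goodEvent$, Proposition~\ref{prop:scorePointwiseErr}\emph{(b)} gives $\bigl(\scoreEstimateShapeConstrained(x) - \psi_0(x)\bigr)^2 \le \bigl(\Delta_{f_0}^{(-1)}(x) + \Delta_{f_0}^{(1)}(x)\bigr)^2 \le 2\sum_{\omega \in \{-1,1\}} \Delta_{f_0}^{(\omega)}(x)^2$ for all $x$, and one integrates $f_0(x)$ times this over $[a_j,a_{j+1}]$. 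For~\eqref{eq:integrated-error-interval} it then suffices to prove the pointwise bound
\[
\Delta_{f_0}^{(\omega)}(x)^2 f_0(x) \lesssim \inf\Bigl\{\bigl(g_j(x - \omega h) - g_j(x)\bigr)^2 \vee \tfrac{\varepsilon_{n,\delta}}{h^3} : \tfrac{C_0\varepsilon_{n,\delta}}{f_0(x)} \le h \le \kappa_j\Bigr\}
\]
for every $j \in \Z$, $x \in [a_j,a_{j+1}]$ and $\omega \in \{-1,1\}$, and then integrate over $x$ and sum over $\omega$. Fixing $j,x,\omega$ and an admissible $h$, I would feed into the infimum defining $\Delta_{f_0}^{(\omega)}(x)$ the pair $(z,h') := \bigl(x - \tfrac{1}{2}\omega h,\, \tfrac{h}{2}\bigr)$: then $\omega(x - z) = h'$ and $z - \omega h' = x - \omega h$, so the first constraint in $\mathcal{H}_{f_0}^{(\omega)}(x)$ holds and the bias term equals $|\psi_0(x - \omega h) - \psi_0(x)|$, whose square weighted by $f_0(x)$ is $\asymp \bigl(g_j(x - \omega h) - g_j(x)\bigr)^2$ because $f_0(a_j) \asymp f_0(x)$ on $[a_j,a_{j+1}]$ by~\eqref{eq:partition-property-2}.

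Two estimates then finish \emph{(a)}. First, $[z - h',z + h'] \subseteq [a_j - \kappa_j,a_{j+1} + \kappa_j] \subseteq [a_{j-1},a_{j+2}]$ (using $\kappa_j = (a_{j+1}-a_j)/8$ and~\eqref{eq:partition-property-3}), and chaining~\eqref{eq:partition-property-2} over $[a_{j-1},a_j],[a_j,a_{j+1}],[a_{j+1},a_{j+2}]$ shows $f_0 \asymp f_0(a_j)$ on $[a_{j-1},a_{j+2}]$; hence $f_{h'}(z) \asymp f_0(x)$, so $h'f_{h'}(z) \gtrsim C_0\varepsilon_{n,\delta}$ by admissibility of $h$. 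This forces $\alpha_{z,h'} \le 1/4$ once the universal constant $C_0$ is chosen large enough (so $(z,h') \in \mathcal{H}_{f_0}^{(\omega)}(x)$), and makes the factors $\Pi_{1,z,h'},\Pi_{2,z,h'}$ appearing in $\beta_{z,h'}$ of order $1$ (since $P_0([z-h',z+h']) \le 2h'\sup_{[a_{j-1},a_{j+2}]}f_0 \asymp h'f_0(x)$). Second, $|\psi_{h'}(z)| \le |\psi_0(a_{j-1})| \vee |\psi_0(a_{j+2})|$ by monotonicity of $\psi_0$, and $\kappa_j\bigl(|\psi_0(a_{j-1})| \vee |\psi_0(a_{j+2})|\bigr) \lesssim 1$ follows from~\eqref{eq:partition-property-3}--\eqref{eq:partition-property-4}, so with $h \le \kappa_j$ we get $|\psi_{h'}(z)| \vee \tfrac{1}{h'} \lesssim \tfrac{1}{h}$. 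Substituting into the definition of $\beta_{z,h'}$, whose bracketed factor is $\asymp \bigl(\varepsilon_{n,\delta}/(h'f_{h'}(z))\bigr)^{1/2} \asymp \bigl(\varepsilon_{n,\delta}/(hf_0(x))\bigr)^{1/2}$, yields $\beta_{z,h'} \lesssim \bigl(\varepsilon_{n,\delta}/(h^3 f_0(x))\bigr)^{1/2}$ and hence $\beta_{z,h'}^2 f_0(x) \lesssim \varepsilon_{n,\delta}/h^3$. Therefore $\Delta_{f_0}^{(\omega)}(x)^2 f_0(x) \le 2\bigl(|\psi_0(x-\omega h) - \psi_0(x)|^2 + \beta_{z,h'}^2\bigr)f_0(x) \lesssim \bigl(g_j(x-\omega h) - g_j(x)\bigr)^2 \vee \varepsilon_{n,\delta}/h^3$, and taking the infimum over admissible $h$ gives the pointwise bound.

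Parts \emph{(b)} and \emph{(c)} are short computations from Lemma~\ref{lem:partition-properties}. For \emph{(b)}: $a_j - \kappa_j \ge a_{j-1}$ and $a_{j+1} + \kappa_j \le a_{j+2}$ by~\eqref{eq:partition-property-3}, so since $\psi_0$ is decreasing, $0 \le \psi_0(a_j - \kappa_j) - \psi_0(a_{j+1}+\kappa_j) \le |\psi_0(a_{j-1})| + |\psi_0(a_{j+2})| = |J_0'(u_{j-1})| + |J_0'(u_{j+2})|$; multiplying by $f_0(a_j) = J_0(u_j)$ gives the first inequality, and the other two follow by bounding $|J_0'(u_{j-1})|,|J_0'(u_{j+2})|$ via~\eqref{eq:partition-property-1} and comparing $J_0(u_{j-1}),J_0(u_{j+2})$ to $J_0(u_{j+1})$ and $J_0(u_j)$ via~\eqref{eq:partition-property-2}, tracking powers of $2$. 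For \emph{(c)}: by~\eqref{eq:partition-property-4}, $\kappa_j = (a_{j+1}-a_j)/8 \ge 2^{-(|j|+6)}/f_0(a_j)$, while $f_0(x) \ge f_0(a_j)/2$ on $[a_j,a_{j+1}]$ by~\eqref{eq:partition-property-2}, so $C_0\varepsilon_{n,\delta}/f_0(x) \le 2C_0\varepsilon_{n,\delta}/f_0(a_j) \le \kappa_j$ whenever $2^{|j|+7}C_0\varepsilon_{n,\delta} \le 1$; the hypothesis $2^{-|j|} \ge 2^{13/2}C_0^{3/2}\varepsilon_{n,\delta}$ implies this provided the constant $C_0$ from \emph{(a)} is taken at least $2$ (so that $2^{13/2}C_0^{3/2} \ge 2^7 C_0$). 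Then $[C_0\varepsilon_{n,\delta}/f_0(x),\kappa_j]$ is a non-empty compact set, and since $h \le \kappa_j$ places $x - \omega h$ in the domain $[a_j - \kappa_j, a_{j+1}+\kappa_j]$ of $g_j$, the integrand in~\eqref{eq:integrated-error-interval} is finite.

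The main obstacle will be the bookkeeping in \emph{(a)}: the constant $C_0$ must be fixed once and for all large enough that $\alpha_{z,h'} \le 1/4$ — which is exactly the regime in which Lemma~\ref{lem:scoreErrNumDenom} certifies that $\beta_{z,h'}$ bounds the relevant stochastic error — and one must repeatedly invoke Lemma~\ref{lem:partition-properties} to interchange $f_0(a_j)$, $f_0(x)$, $f_{h'}(z)$ and $\sup_{[a_{j-1},a_{j+2}]}f_0$ up to universal constants. Given those facts, parts \emph{(b)} and \emph{(c)} are routine consequences of the same partition properties.
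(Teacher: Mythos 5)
Your proposal is correct and takes essentially the same approach as the paper's proof. The only cosmetic difference is that you parametrize the feasible pair as $(z,h') = (x - \omega h/2, h/2)$ with constraint $h \le \kappa_j$, whereas the paper writes $z = x - \omega h$ with bandwidth $h \le \kappa_j/2$ and then rescales at the end to obtain $C_0 = 32C_1^2$; these are the same argument up to a change of variables, and the remaining steps (chaining~\eqref{eq:partition-property-2} to compare $f_0$, $f_{h'}(z)$ and $P_0([z-h',z+h'])$ on $[a_{j-1},a_{j+2}]$, bounding $\Pi_{1,z,h'}$ and $\Pi_{2,z,h'}$ by universal constants, controlling $|\psi_{h'}(z)|$ via~\eqref{eq:partition-property-3}--\eqref{eq:partition-property-4}, and the short computations in \emph{(b)} and \emph{(c)} from Lemma~\ref{lem:partition-properties}) match the paper's proof in both content and order.
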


\begin{proof}
\textit{(a)} The initial bound~\eqref{eq:integrated-error-interval-1} follows directly from Proposition~\ref{prop:scorePointwiseErr}\textit{(b)}. Next, for fixed $j \in \Z$,
% such that $\mathcal{H}_{f_0}^{(\omega)}(x) \neq \emptyset$ for every $x \in [a_j,a_{j+1}]$ and $\omega \in \{-1,1\}$
$x \in [a_j,a_{j+1}]$ and $\omega \in \{-1,1\}$, we seek $(z,h) \in \mathcal{H}_{f_0}^{(\omega)}(x)$. Consider $z = x - \omega h$ and $h \in (0,\kappa_j/2]$. By~\eqref{eq:partition-property-3} in Lemma~\ref{lem:partition-properties},
\begin{equation}
\label{eq:max-bandwidth}
\kappa_j = \frac{a_{j+1} - a_j}{8} \leq (a_j - a_{j-1}) \wedge (a_{j+2} - a_{j+1}),
\end{equation}
so for any $(z,h)$ as above, we either have $[z-h, z+h] = [x - 2\omega h, x] \subseteq [a_{j-1},a_{j+1}]$ or $[x - 2\omega h, x] \subseteq [a_j,a_{j+2}]$. Since $K_h(\cdot)$ and $\Ind_{[-h,h]}/(2h)$ are densities supported on $[-h,h]$, both
\[
\frac{P_0([z-h, z+h])}{2h} = \frac{1}{2h}\int_{-h}^h f_0(x - \omega h - y) \laplaced y \quad\;\text{and}\quad\; f_h(z) = \int_{-h}^h f_0(x - \omega h - y)K_h(y) \laplaced y
\]
lie in $\bigl[\inf_{x' \in [x - 2\omega h, x]} f_0(x'), \sup_{x' \in [x - 2\omega h, x]} f_0(x')\bigr]$. Thus, by~\eqref{eq:partition-property-2} in Lemma~\ref{lem:partition-properties},
\begin{align*}
\frac{f_0(x)}{f_h(z)} 
% = \frac{f_0(x)}{f_h(x - \omega h)} 
\leq \frac{f_0(x)}{\inf_{x' \in [x - 2\omega h, x]} f_0(x')} &\leq \sup_{x',x'' \in [x - 2\omega h, x]} \frac{f_0(x')}{f_0(x'')} \leq \max_{k \in \{j,j+1\}} \sup_{x',x'' \in [a_{k-1},a_{k+1}]} \frac{f_0(x')}{f_0(x'')} \leq 4, \\
\max_{\omega \in \{-1,1\}}\frac{\probDistribution([z,z+\omega h])}{hf_h(z)} &\leq \frac{\probDistribution([z-h,z+h])}{hf_h(z)} \leq 2\sup_{x',x'' \in [x - 2\omega h, x]} \frac{f_0(x')}{f_0(x'')} \leq 8,
\end{align*}
so
\begin{align*}
\COneZH &= 138 + 35\biggl(\frac{\probDistribution([z-h,z+h]) \vee \epsilonByNDeltaMod}{\{hf_h(z)\} \vee \epsilonByNDeltaMod}\biggr)^{1/2} \leq 138 + 35 \times 2^{3/2} =: C_1, \\
\CTwoZH &= 374\max_{\omega \in \{-1,1\}}\biggl(\frac{\probDistribution([z,z+\omega h]) \vee \epsilonByNDeltaMod}{\{hf_h(z)\} \vee \epsilonByNDeltaMod}\biggr)^{1/2} \leq 374 \times 2^{3/2}
\end{align*}
are bounded above by universal constants. Moreover, $\psi_0$ is decreasing and $[z-h, z+h] \subseteq [a_{j-1},a_{j+2}]$, so by Lemma~\ref{lem:biasControl} together with~\eqref{eq:partition-property-4} in Lemma~\ref{lem:partition-properties} and~\eqref{eq:max-bandwidth},
\begin{align*}
|\psi_h(z)| \leq |\psi_0(z - h)| \vee |\psi_0(z + h)| &\leq |\psi_0(a_{j-1})| \vee |\psi_0(a_{j+2})| \\
&\leq \max_{k \in \{j-1,j+1\}}\frac{2}{a_{k+1} - a_k} \leq \frac{2}{\kappa_j} \leq \frac{1}{h}.
\end{align*}
Thus, if $x \in [a_j,a_{j+1}]$ and
\[
\frac{16C_1^2\epsilonByNDeltaMod}{f_0(x)} \leq h \leq \frac{\kappa_j}{2},
\]
then $z = x - \omega h$ satisfies
\begin{align*}
\frac{\epsilonByNDeltaMod}{hf_h(z)} &\leq \frac{4\epsilonByNDeltaMod}{hf_0(x)} \leq \frac{1}{4C_1^2} < 1, \qquad \alpha_{z,h} \leq \frac{C_1^2\epsilonByNDeltaMod}{hf_h(z)} \leq \frac{1}{4}, \\
\beta_{z,h} &\leq \frac{4 \times 374 \times 2^{3/2}}{h}\Bigl(\frac{\epsilonByNDeltaMod}{hf_h(z)}\Bigr)^{1/2} \lesssim 
% 748 \times 2^{9/2}
\Bigl(\frac{\epsilonByNDeltaMod}{h^3 f_0(x)}\Bigr)^{1/2}.
\end{align*}
Hence $(z,h) \in \mathcal{H}_{f_0}^{(\omega)}(x)$, and
\begin{align*}
\Delta_{f_0}^{(\omega)}(x)^2 f_0(x) \lesssim \inf\biggl\{\bigl(\psi_0(x - 2\omega h) - \psi_0(x)\bigr)^2 f_0(x) + \frac{\varepsilon_{n,\delta}}{h^3} : \frac{16C_1^2\varepsilon_{n,\delta}}{f_0(x)} \leq h \leq \frac{\kappa_j}{2}%\frac{a_{j+1} - a_j}{16}
\biggr\}
\end{align*}
for $\omega \in \{-1,1\}$.  Therefore, by integrating over $x \in [a_j,a_{j+1}]$,~\eqref{eq:integrated-error-interval} holds with $C_0 = 32C_1^2$.
% Or define $\Delta_{f_0}(x) := \Delta_{f_0}^{(-1)}(x) \vee \Delta_{f_0}^{(1)}(x)$ in line with the statement of Proposition~\ref{prop:Holder-basic}

\medskip
\noindent
\textit{(b)} By~\eqref{eq:partition-property-2} in Lemma~\ref{lem:partition-properties}, $f_0(x) \leq 2f_0(a_j)$ for all $x \in [a_j,a_{j+1}]$. Moreover, since $[a_j - \kappa_j, a_{j+1} + \kappa_j] \subseteq [a_{j-1},a_{j+2}]$ by~\eqref{eq:max-bandwidth},
\begin{align*}
V_j := g_j(a_j - \kappa_j) - g_j(a_{j+1} + \kappa_j) &\leq 
% \leq g_j(a_{j-1}) - g_j(a_{j+2}) since [a_j - \kappa_j, a_{j+1} + \kappa_j] \subseteq [a_{j-1},a_{j+2}] by~\eqref{eq:max-bandwidth}
\bigl(\psi_0(a_{j-1}) - \psi_0(a_{j+2})\bigr)f_0(a_j)^{1/2} \\
&= \bigl(J_0'(u_{j-1}) - J_0'(u_{j+2})\bigr)J_0(u_j)^{1/2}.
\end{align*}
Together with~\eqref{eq:partition-property-1} and~\eqref{eq:partition-property-2} in Lemma~\ref{lem:partition-properties}, this yields
\begin{align*}
V_j^2 \leq \bigl(|J_0'(u_{j-1})| + |J_0'(u_{j+2})|\bigr)^2 J_0(u_j) &\leq \bigl(2^{|j-1| + 1}J_0(u_{j-1}) + 2^{|j+2| + 1}J_0(u_{j+2})\bigr)^2 J_0(u_j) \\
&\leq (2 \times 2^{|j|+4})^2 J_0(u_{j+1})^2 J_0(u_j) \leq 2^{2|j|+12}J_0(u_j)^3.
\end{align*}
\textit{(c)} If $2^{-|j|} \geq 2^{13/2}C_0^{3/2}\varepsilon_{n,\delta}$, then by~\eqref{eq:partition-property-2} and~\eqref{eq:partition-property-4} in Lemma~\ref{lem:partition-properties}, every $x \in [a_j,a_{j+1}]$ satisfies
\[
% \label{eq:Vj-min-bandwidth}
\frac{C_0\varepsilon_{n,\delta}}{f_0(x)} \leq \frac{2C_0\varepsilon_{n,\delta}}{f_0(a_j)} \leq 2^{|j|+4}C_0\varepsilon_{n,\delta}(a_{j+1} - a_j) \leq \frac{a_{j+1} - a_j}{2^{5/2}C_0^{1/2}} < \frac{a_{j+1} - a_j}{8} = \kappa_j. \qedhere
\]
\end{proof}

\subsubsection{Proofs for Section~\ref{subsec:tail-growth}}

To gain further insight into the dependence on $L,r$ in the conclusions~\eqref{eq:tail-growth-full} and~\eqref{eq:tail-growth-simple} of Theorem~\ref{thm:tail-growth}, and motivate the choice of the index $j_\star$ in two crucial bounds~\eqref{eq:gamma-bulk-1} and~\eqref{eq:gamma-bulk-2} in the following proof, the following elementary considerations provide some helpful parallels. Given a non-negative sequence $(b_j)$ such that $b_j \leq L$ for all $j \in \N$ (corresponding to $|\psi_0|$ being bounded uniformly by $L$ when $\gamma = 1$), we have
\[
\sum_{j=1}^\infty 2^{-2j/3}b_j^2 \leq L^2\sum_{j=1}^\infty 2^{-2j/3} = \frac{L^2}{2^{2/3} - 1}.
\]
Suppose that we impose the additional restriction $\sum_{j=1}^\infty 2^{-j}b_j^2 \leq r$, which is analogous to the condition $i(f_0) \leq r$ via~\eqref{eq:fisher-information-sum}. First, we can replace $r$ with $r' := r \wedge L^2$ since $\sum_{j=1}^\infty 2^{-j}b_j^2 \leq L^2\sum_{j=1}^\infty 2^{-j} = L^2$ a priori. Then to bound the left-hand side of the display above, we can define $j^* := \floor{\log_2(L^2/r')} + 1$, so that
\begin{equation}
\label{eq:fisher-information-sum-comparison}
\sum_{j=1}^\infty 2^{-2j/3}b_j^2 \leq 2^{j^*/3}\sum_{j=1}^{j^*} 2^{-j}b_j^2 + L^2\sum_{j=j^*+1}^\infty 2^{-2j/3} \leq 2^{j^*/3}r' + \frac{2^{-2j^*/3}L^2}{2^{2/3} - 1} \lesssim r'\frac{L^{2/3}}{r'^{1/3}},
\end{equation}
where the choice of $j^*$ balances the two terms in the penultimate inequality. 
% so that 2^J \asymp L^2/r' \geq 1
To see that this bound is tight up to a multiplicative universal constant, consider the sequence $(b_j)$ with $b_j = 0$ for $j \in [j^*]$ and $b_j = L$ for $j \geq j^* + 1$. The right-hand side of~\eqref{eq:fisher-information-sum-comparison} is akin to the multiplicative factor in front of $\varepsilon_{n,\delta}^{1/3}$ on the right-hand side of~\eqref{eq:tail-growth-full} when $\gamma = 1$.
% Takeaway: The Fisher information constraint is binding only in the bulk of the distribution

\begin{proof}[Proof of Theorem~\ref{thm:tail-growth}]
Fix $f_0 \in \mathcal{J}_{\gamma,L,r}$ and let $j_{n,\delta} := \big\lfloor \log_2(2^{-13/2}C_0^{-3/2}/\varepsilon_{n,\delta})\big\rfloor_+$, where $C_0$ is the universal constant in Lemma~\ref{lem:integrated-error-interval}\textit{(a)}. If $j \in \{-j_{n,\delta},\dotsc,j_{n,\delta} - 1\}$, 
% NB: which is empty when 2^{-13/2}C_0^{-3/2}/\varepsilon_{n,\delta} > 1
then necessarily $j_{n,\delta} \in \N$ and $2^{-|j|} \geq 2^{13/2}C_0^{3/2}\varepsilon_{n,\delta}$. By Lemma~\ref{lem:integrated-error-interval}\textit{(c)}, $C_0\varepsilon_{n,\delta}/f_0(x) \leq \kappa_j$, so the infimum in~\eqref{eq:integrated-error-interval} is taken over a non-empty set of bandwidths $h$ for every $x \in [a_j,a_{j+1}]$. Moreover, by~\eqref{eq:partition-property-2} in Lemma~\ref{lem:partition-properties} and Lemma~\ref{lem:integrated-error-interval}\textit{(b)},
\begin{equation}
\label{eq:Vj-min-bandwidth}
\frac{C_0\varepsilon_{n,\delta}}{f_0(x)} \leq \frac{2C_0\varepsilon_{n,\delta}}{\bigl(f_0(a_{j+1})^2 f_0(a_j)\bigr)^{1/3}} \leq \Bigl(\frac{\varepsilon_{n,\delta}}{2^{2|j|+10}J_0(u_{j+1})^2 J_0(u_j)}\Bigr)^{1/3} \leq \Bigl(\frac{\varepsilon_{n,\delta}}{V_j^2}\Bigr)^{1/3}.
\end{equation}
We may therefore apply Proposition~\ref{prop:Holder-basic} to the function $g \equiv g_j$ in~\eqref{eq:gj}, with $\beta = 1$, $L_* = V \equiv V_j$, $\varepsilon \equiv \varepsilon_{n,\delta}$, $h_{\max} \equiv \kappa_j$, and deduce from Lemma~\ref{lem:integrated-error-interval}\textit{(a,b)} and~\eqref{eq:partition-property-4} in Lemma~\ref{lem:partition-properties} that
\begin{align}
&\sum_{\omega \in \{-1,1\}} \int_{a_j}^{a_{j+1}}\Delta_{f_0}^{(\omega)}(x)^2 f_0(x) \laplaced x \notag \\
&\hspace{1cm}\lesssim \sum_{\omega \in \{-1,1\}} \int_{a_j}^{a_{j+1}} \inf\Bigl\{\bigl(g_j(x) - g_j(x - \omega h)\bigr)^2 \vee \frac{\varepsilon_{n,\delta}}{h^3} : \Bigl(\frac{\varepsilon_{n,\delta}}{V_j^2}\Bigr)^{1/3} \wedge \kappa_j \leq h \leq \kappa_j\Bigr\} \laplaced x \notag \\
&\hspace{1cm}\lesssim V_j^{4/3}\varepsilon_{n,\delta}^{1/3} + \frac{(a_{j+1} - a_j)\varepsilon_{n,\delta}}{\kappa_j^3}  \notag \\
\label{eq:gamma-bulk-interval}
&\hspace{1cm}\lesssim \bigl(|J_0'(u_{j-1})| \vee |J_0'(u_{j+2})|\bigr)^{4/3}J_0(u_j)^{2/3}\varepsilon_{n,\delta}^{1/3} + 2^{2(|j|+3)}J_0(u_j)^2\varepsilon_{n,\delta}.
\end{align}
Next, since $J_0(0) = J_0(1) = 0$ by Lemma~\ref{lem:density-quantile-basics}\textit{(a)},
% from~\eqref{eq:tail-growth}
\[
|J_0(u)| \leq \int_0^u |J_0'| \wedge \int_u^1 |J_0'| \leq L\int_0^{u \wedge (1 - u)} v^{-(1 - \gamma)/2} \laplaced v = 2L\,\frac{u^{(1 + \gamma)/2} \wedge (1 - u)^{(1 + \gamma)/2}}{1 + \gamma}
\]
for $u \in [0,1]$, so
\begin{equation}
\label{eq:J0-gamma-bound}
f_0(a_j) = J_0(u_j) \leq 2L\,\frac{\{u_j \wedge (1 - u_j)\}^{(1 + \gamma)/2}}{1 + \gamma} = \frac{2^{1 - (|j|+1)(1 + \gamma)/2}L}{1 + \gamma}
\end{equation}
for all $j \in \Z$. We will now bound~\eqref{eq:gamma-bulk-interval} in two different ways depending on the value of $j$. First, recalling from~\eqref{eq:envelope-integral} that $i(f_0) \leq \min\{r,2^{1 - \gamma}L^2/\gamma\} =: \tilde{r}$, let $j_\star := \min\{j \in \N : 2^j \geq (L^2/\tilde{r})^{1/\gamma}\}$. If $j \in \{0,\dotsc,j_\star\}$, then by~\eqref{eq:density-quantile-alt},~\eqref{eq:J0-gamma-bound}, the Cauchy--Schwarz inequality and~\eqref{eq:fisher-information-sum} in Lemma~\ref{lem:J0-fisher-information-sums},
\begin{align}
2^{j/2}J_0(u_j) &\lesssim 2^{j/2}J_0(u_{j_\star}) + \sum_{\ell=j}^{j_\star} 2^{(j - 2\ell)/2}|J_0'(u_\ell)| \lesssim_\gamma 2^{j/2}2^{-\frac{j_\star(1 + \gamma)}{2}}L + \sum_{\ell=j}^{j_\star} 2^{-(\ell - j)/2} \cdot 2^{-\ell/2}|J_0'(u_\ell)| \notag \\
% \label{eq:gamma-bulk-1a}
&\lesssim 2^{-j_\star\gamma/2}L + \biggl(\sum_{\ell=j}^{j_\star} 2^{-(\ell - j)}\biggr)^{1/2}\biggl(\sum_{\ell=j}^{j_\star} 2^{-\ell}J_0'(u_\ell)^2\biggr)^{1/2} \lesssim \frac{\tilde{r}^{1/2}}{L}L + i(f_0)^{1/2} \lesssim \tilde{r}^{1/2}.
\end{align}
Moreover, by H\"older's inequality and~\eqref{eq:fisher-information-sum},
\begin{align}
\sum_{j=0}^{j_\star} 2^{-j/3}\bigl(|J_0'(u_{j-1})| \vee |J_0'(u_{j+2})|\bigr)^{4/3} &= 2^{j_\star/3} \sum_{j=0}^{j_\star} 2^{-(j_\star - j)/3} \cdot 2^{-2j/3}\bigl(|J_0'(u_{j-1})| \vee |J_0'(u_{j+2})|\bigr)^{4/3} \notag \\
&\lesssim \Bigl(\frac{L^{2/3}}{\tilde{r}^{1/3}}\Bigr)^{1/\gamma} \biggl(\sum_{j=0}^{j_\star} 2^{-(j_\star - j)}\biggr)^{1/3} \biggl(\sum_{j=-1}^{j_\star + 2} 2^{-j}J_0'(u_j)^2\biggr)^{2/3} \notag \\
&\lesssim \Bigl(\frac{L^{2/3}}{\tilde{r}^{1/3}}\Bigr)^{1/\gamma}\tilde{r}^{2/3}.
\end{align}
Similarly, for $j \in \{-j_\star,\dotsc,0\}$, we have
\begin{equation}
\label{eq:gamma-bulk-1b}
2^{|j|/2}J_0(u_j) \lesssim \tilde{r}^{1/2}, \qquad \sum_{j=-j_\star}^0 2^{-|j|/3}\bigl(|J_0'(u_{j-1})| \vee |J_0'(u_{j+2})|\bigr)^{4/3} \lesssim \Bigl(\frac{L^{2/3}}{\tilde{r}^{1/3}}\Bigr)^{1/\gamma}\tilde{r}^{2/3}.
\end{equation}
Thus, letting $j' := j_\star \wedge j_{n,\delta}$, we deduce from~\eqref{eq:gamma-bulk-interval}--\eqref{eq:gamma-bulk-1b} that
\begin{align}
&\sum_{\omega \in \{-1,1\}} \int_{a_{-j'}}^{a_{j'}} \Delta_{f_0}^{(\omega)}(x)^2 f_0(x) \laplaced x \notag \\
&\hspace{1cm}\lesssim_\gamma \varepsilon_{n,\delta}^{1/3}\sum_{j=-j'}^{j'-1} \bigl(|J_0'(u_{j-1})| \vee |J_0'(u_{j+2})|\bigr)^{4/3}J_0(u_j)^{2/3} + L^2\varepsilon_{n,\delta}\sum_{j=-j'}^{j'-1} 2^{|j|(1 - \gamma)} \notag \\
&\hspace{1cm}\lesssim \tilde{r}^{1/3}\varepsilon_{n,\delta}^{1/3} \sum_{j=-j_\star}^{j_\star} 2^{-|j|/3}\bigl(|J_0'(u_{j-1})| \vee |J_0'(u_{j+2})|\bigr)^{4/3} + 2^{j_\star(1 - \gamma)}L^2\varepsilon_{n,\delta} \notag \\
\label{eq:gamma-bulk-1}
% \lesssim \tilde{r}\Bigl(\frac{L^{2/3}}{\tilde{r}^{1/3}}\Bigr)^{1/(1 - 2\gamma)}\varepsilon_{n,\delta}^{1/3} + L^2\Bigl(\frac{L^2}{\tilde{r}}\Bigr)^{2\gamma/(1 - 2\gamma)}\varepsilon_{n,\delta} 
&\hspace{1cm}\lesssim \tilde{r}\Bigl(\frac{L^{2/3}}{\tilde{r}^{1/3}}\Bigr)^{1/\gamma}\varepsilon_{n,\delta}^{1/3} + \tilde{r}\Bigl(\frac{L^2}{\tilde{r}}\Bigr)^{1/\gamma}\varepsilon_{n,\delta}.
\end{align}
For our second bound, it follows from~\eqref{eq:gamma-bulk-interval}, Lemma~\ref{lem:integrated-error-interval}\textit{(b)} and~\eqref{eq:J0-gamma-bound} that if $j \in \{-j_{n,\delta},\dotsc,j_{n,\delta} - 1\}$, then
\begin{align*}
\sum_{\omega \in \{-1,1\}} \int_{a_j}^{a_{j+1}} \Delta_{f_0}^{(\omega)}(x)^2 f_0(x) \laplaced x &\lesssim_\gamma 2^{4|j|/3}J_0(u_j)^2\varepsilon_{n,\delta}^{1/3} + 2^{2|j|}J_0(u_j)^2\varepsilon_{n,\delta} \\
% &\lesssim_\gamma 2^{\frac{4|j|}{3} - |j|(1 + \gamma)}L^2\varepsilon_{n,\delta}^{1/3} + 2^{2|j|}J_0(u_j)^2\varepsilon_{n,\delta} \\
&\lesssim_\gamma L^2\bigl(2^{|j|(\frac{1}{3} - \gamma)}\varepsilon_{n,\delta}^{1/3} + 2^{|j|(1 - \gamma)}\varepsilon_{n,\delta}\bigr).
\end{align*}
Therefore, if $j_\star < j_{n,\delta}$, in which case $j_{n,\delta} \geq 1$ and $\varepsilon_{n,\delta} \asymp 2^{-j_{n,\delta}} < 2^{-j_\star} \asymp_\gamma (\tilde{r}/L^2)^{1/\gamma}$, then
% otherwise (L^2/\tilde{r})^{1/(1 - 2\gamma)} \gg 1/\varepsilon_{n,\delta}, which is when the overall minimax upper bound is \tilde{r}; rule out \varepsilon_{n,\delta} \gg 1
% $\sum_{j=0}^{j_{n,\delta}} 2^{2j\rho}\varepsilon_{n,\delta} \lesssim_\rho \varepsilon_{n,\delta}^{-2\rho_+}$ for $\rho \neq 0$. Thus, summing~\eqref{eq:gamma-bulk-interval} yields
\begin{align}
\label{eq:gamma-bulk-2}
&\sum_{\omega \in \{-1,1\}} \int_{a_{j_\star}}^{a_{j_{n,\delta}}} \Delta_{f_0}^{(\omega)}(x)^2 f_0(x) \laplaced x \lesssim_\gamma L^2\sum_{j=j_\star}^{j_{n,\delta}-1} \bigl(2^{j(\frac{1}{3} - \gamma)}\varepsilon_{n,\delta}^{1/3} + 2^{j(1 - \gamma)}\varepsilon_{n,\delta}\bigr) \\
&\hspace{1cm} \lesssim_\gamma M :=
\begin{cases}
% Tail dominates
L^2\bigl(2^{j_{n,\delta}(\frac{1}{3}-\gamma)}\varepsilon_{n,\delta}^{1/3} + \varepsilon_{n,\delta}^\gamma\bigr) \asymp L^2\varepsilon_{n,\delta}^\gamma \;\; &\text{if }\gamma \in (0,1/3) \\[8pt]
L^2\bigl(j_{n,\delta}\varepsilon_{n,\delta}^{1/3} + \varepsilon_{n,\delta}^{1/3}\bigr) \asymp L^2\varepsilon_{n,\delta}^{1/3}\log_+\Bigl(\dfrac{1}{\varepsilon_{n,\delta}}\Bigr) \;\; &\text{if }\gamma = 1/3 \\[10pt]
% Bulk dominates
L^2\bigl(2^{-j_\star(\gamma - \frac{1}{3})}\varepsilon_{n,\delta}^{1/3} + \varepsilon_{n,\delta}^\gamma\bigr) \asymp_\gamma 
% \tilde{r}\Bigl(\dfrac{L^{2/3}}{\tilde{r}^{1/3}}\Bigr)^{1/\gamma}\varepsilon_{n,\delta}^{1/3} + \tilde{r}\dfrac{L^2}{\tilde{r}}\varepsilon_{n,\delta}^\gamma 
\tilde{r}\Bigl(\dfrac{L^{2/3}}{\tilde{r}^{1/3}}\Bigr)^{1/\gamma}\varepsilon_{n,\delta}^{1/3} \;\; &\text{if }\gamma \in (1/3,1].
\end{cases}
\notag
\end{align}
Similarly, $\sum_{\omega \in \{-1,1\}} \int_{a_{-j_{n,\delta}}}^{a_{-j_\star}} \Delta_{f_0}^{(\omega)}(x)^2 f_0(x) \laplaced x \lesssim_\gamma M$. 
Further, by~\eqref{eq:tail-growth} and Lemma~\ref{lem:density-quantile-basics}\textit{(b)},
\begin{align}
\int_{-\infty}^{a_{-j_{n,\delta}}} \psi_0^2 f_0 &= \int_0^{u_{-j_{n,\delta}}} (J_0')^2 \leq \int_0^{2^{-(j_{n,\delta}+1)}} \frac{L^2}{u^{1 - \gamma}} \laplaced u \leq \frac{L^2}{2^{\gamma(j_{n,\delta}+1)} \gamma} \lesssim_\gamma L^2\varepsilon_{n,\delta}^\gamma, \notag \\
\label{eq:gamma-tail}
\int_{a_{j_{n,\delta}}}^\infty \psi_0^2 f_0 &= \int_{u_{j_{n,\delta}}}^1 (J_0')^2 \leq \int_{1 - 2^{-(j_{n,\delta}+1)}}^1 \frac{L^2}{(1 - u)^{1 - \gamma}} \laplaced u \lesssim_\gamma L^2\varepsilon_{n,\delta}^\gamma.
\end{align}
Moreover,~\eqref{eq:envelope-integral} and Proposition~\ref{prop:scorePointwiseErr}\textit{(c)} ensure that on the event $\goodEvent$ of probability at least $1 - \delta$ defined in~\eqref{eq:goodEvent}, the multiscale estimator $\hat{\psi}_{n,\delta}$ satisfies
\[
% \label{eq:tail-growth-fisher-information}
\sup_{f_0 \in \mathcal{J}_{\gamma,L,r}} \int_{-\infty}^\infty (\hat{\psi}_{n,\delta} - \psi_0)^2 f_0 \leq \sup_{f_0 \in \mathcal{J}_{\gamma,L,r}} \int_{-\infty}^\infty \psi_0^2 f_0 = \sup_{f_0 \in \mathcal{J}_{\gamma,L,r}} i(f_0) \leq r \wedge \frac{2^{1 - \gamma}L^2}{\gamma} = \tilde{r},
\]
so $\mathcal{M}_n(\delta,\mathcal{J}_{\gamma,L,r}) \lesssim_\gamma \bar{r}$. In combination with~\eqref{eq:gamma-bulk-1}--\eqref{eq:gamma-tail}, we conclude from Proposition~\ref{prop:scorePointwiseErr}\textit{(b,c)} that on~$\goodEvent$, the estimator~$\scoreEstimateShapeConstrained$ satisfies
% Or define $\Delta_{f_0}(x) := \Delta_{f_0}^{(-1)}(x) \vee \Delta_{f_0}^{(1)}(x)$ in line with the statement of Proposition~\ref{prop:Holder-basic}
\begin{align}
\int_{-\infty}^\infty \bigl(\scoreEstimateShapeConstrained(x) &- \psi_0(x)\bigr)^2 f_0(x) \laplaced x \notag \\
&\lesssim \biggl\{\sum_{\omega \in \{-1,1\}} \int_{a_{-j_{n,\delta}}}^{a_{j_{n,\delta}}} \Delta_{f_0}^{(\omega)}(x)^2 f_0(x) \laplaced x + \int_{-\infty}^{a_{-j_{n,\delta}}} \psi_0^2 f_0 + \int_{a_{j_{n,\delta}}}^\infty \psi_0^2 f_0\biggr\} \wedge \tilde{r} \notag \\ 
&\lesssim_\gamma \biggl\{\tilde{r}\Bigl(\frac{L^{2/3}}{\tilde{r}^{1/3}}\Bigr)^{1/\gamma}\varepsilon_{n,\delta}^{1/3} + \tilde{r}\Bigl(\frac{L^2}{\tilde{r}}\Bigr)^{1/\gamma}\varepsilon_{n,\delta} + M + \tilde{r}\,\frac{L^2}{\tilde{r}}\varepsilon_{n,\delta}^\gamma\biggr\} \wedge \tilde{r} \notag \\
\label{eq:tail-growth-multiscale}
&\lesssim \biggl\{\tilde{r}\Bigl(\frac{L^{2/\gamma}}{\tilde{r}^{1/\gamma}}\varepsilon_{n,\delta}\Bigr)^{\gamma \wedge \frac{1}{3}} + M\biggr\} \wedge \tilde{r} \asymp_\gamma M \wedge \bar{r}.
\end{align}
This yields the upper bound~\eqref{eq:tail-growth-full} on the minimax $(1 - \delta)$th quantile $\mathcal{M}_n(\delta,\mathcal{J}_{\gamma,L,r})$ for every $\gamma \in (0,1]$. In particular, when $r = 2^{1 - \gamma}L^2/\gamma$, we have $\mathcal{J}_{\gamma,L,r} = \mathcal{J}_{\gamma,L}$, so~\eqref{eq:tail-growth-full} simplifies to the bound~\eqref{eq:tail-growth-simple} on $\mathcal{M}_n(\delta,\mathcal{J}_{\gamma,L})$.
\end{proof}

\begin{proof}[Proof of Corollary~\ref{cor:tail-growth-expectation}]
We use the notation and definitions in the statement and proof of Theorem~\ref{thm:tail-growth}. For $B \in (0,\infty]$, consider the projected estimator $\scoreEstimateShapeConstrained^{(B)} := (-B) \vee \scoreEstimateShapeConstrained \wedge B$. On the event $\goodEvent$, Proposition~\ref{prop:scorePointwiseErr}\textit{(c)} ensures that
\[
\bigl|\scoreEstimateShapeConstrained^{(B)}(x)\bigr| \leq |\scoreEstimateShapeConstrained(x)| \leq |\psi_0(x)|
\]
for all $x \in \R$; moreover, if $x \in [a_{-{j_{n,\delta}}},a_{{j_{n,\delta}}}]$, then by~\eqref{eq:tail-growth},
\begin{align*}
\bigl|\scoreEstimateShapeConstrained(x)\bigr| \leq \bigl|\psi_0(x)\bigr| &\leq |\psi_0(a_{-j_{n,\delta}})| \vee |\psi_0(a_{j_{n,\delta}})| = |J_0'(u_{-j_{n,\delta}})| \vee |J_0'(u_{j_{n,\delta}})| \\
&\leq 2^{\frac{(j_{n,\delta} + 1)(1 - \gamma)}{2}}L \leq \min\bigl(2^{11/2}C_0^{3/2}\varepsilon_{n,\delta}, 1\bigr)^{-\frac{1 - \gamma}{2}}L =: B_{n,\delta}.
% NB: B_{n,\delta} is an upper bound on |J_0'(u_{-j_{n,\delta}})| \vee |J_0'(u_{j_{n,\delta}})|, essentially the magnitude of the score function at the extreme order statistics (upper and lower 1/n-quantiles)
\end{align*}
Thus, if $B \geq B_{n,\delta}$, then $\scoreEstimateShapeConstrained^{(B)}(x) = \scoreEstimateShapeConstrained(x)$ for all $x \in [a_{-{j_{n,\delta}}},a_{{j_{n,\delta}}}]$, so by~\eqref{eq:tail-growth-multiscale} and~\eqref{eq:gamma-tail},
\begin{align}
\label{eq:tail-growth-multiscale-proj}
\int_{-\infty}^{\infty}\bigl(\scoreEstimateShapeConstrained^{(B)} - \psi_0\bigr)^2 f_0 &\leq \biggl\{\int_{a_{-{j_{n,\delta}}}}^{a_{{j_{n,\delta}}}} (\scoreEstimateShapeConstrained - \psi_0)^2 f_0 + \int_{-\infty}^{a_{-{j_{n,\delta}}}}\psi_0^2 f_0 + \int_{a_{{j_{n,\delta}}}}^{\infty}\psi_0^2 f_0\biggr\} \lesssim_\gamma B \wedge \bar{r}
\end{align}
on $\goodEvent$. In other words, the upper bound on $\mathcal{M}_n(\delta,\mathcal{J}_{\gamma,L,r})$ is achieved by $\hat{\psi}_{n,\delta}^{(B)}$ for any $B \geq B_{n,\delta}$.

Next, by considering the trivial estimator $\tilde\psi \equiv 0$, we deduce from~\eqref{eq:envelope-integral} that
\[
\mathcal{M}_n(\mathcal{J}_{\gamma,L,r}) \leq \sup_{f_0 \in \mathcal{J}_{\gamma,L,r}} \int_{-\infty}^\infty \psi_0^2 f_0 \leq r \wedge \frac{2^{1 - \gamma}L^2}{\gamma} = \tilde{r}. 
\]
Next, taking $\delta = 1/n$ and $B := B_{n,\delta}$, we have $\Pr(\goodEvent^c) \leq \delta = 1/n$ by Proposition~\ref{prop:confidenceBandsAreValid},
and
\[
\varepsilon_{n,\delta} = \frac{2\log(n^2/\delta)}{9n} = \frac{2\log n}{3n}, \qquad |\scoreEstimateShapeConstrained^{(B)}| \leq B \asymp L(\varepsilon_{n,\delta} \wedge 1)^{-(1 - \gamma)/2} \asymp L\Bigl(\frac{n}{\log n}\Bigr)^{\frac{1 - \gamma}{2}}. 
\]
Thus, by~\eqref{eq:tail-growth-multiscale-proj} and~\eqref{eq:envelope-integral},
\begin{align*}
\mathcal{M}_n(\mathcal{J}_{\gamma,L,r}) &\leq \sup_{f_0 \in \mathcal{J}_{\gamma,L,r}} \E\biggl(\int_{-\infty}^{\infty} \bigl(\scoreEstimateShapeConstrained^{(B)} - \psi_0\bigr)^2 f_0\biggr) \wedge \tilde{r} \\
% The guarantee |\scoreEstimateShapeConstrained - \psi_0| \leq |\psi_0| only holds on \goodEvent, so we use the crude bound B^2 + \tilde{r} to handle \goodEvent^c
&\leq \sup_{f_0 \in \mathcal{J}_{\gamma,L,r}} \biggl\{\E\biggl(\Ind_{\goodEvent^c}\int_{-\infty}^{\infty} (B - \psi_0)^2 f_0\biggr) + \E\biggl(\Ind_{\goodEvent}\int_{-\infty}^{\infty} \bigl(\scoreEstimateShapeConstrained^{(B)} - \psi_0\bigr)^2 f_0\biggr)\biggr\} \wedge \tilde{r} \\
&\lesssim \min\biggl\{\delta\Bigl(B^2 + \sup_{f_0 \in \mathcal{J}_{\gamma,L,r}} \int_{-\infty}^\infty \psi_0^2 f_0\Bigr) + \bar{r}\Bigl(\frac{L^{2/\gamma}}{\bar{r}^{1/\gamma}} \varepsilon_{n,\delta}\Bigr)^{\gamma \wedge \frac{1}{3}} \log_+^{\Ind_{\{\gamma = 1/3\}}}\Bigl(\frac{1}{\varepsilon_{n,\delta}}\Bigr),\,\bar{r}\biggr\} \\
% &\lesssim \delta L^2\Bigl(\frac{1}{\varepsilon_{n,\delta}^{1 - \gamma}} \vee 1\Bigr) + \delta\bar{r} + \biggl(\frac{L^{2/\gamma}}{\bar{r}^{1/\gamma}}\varepsilon_{n,\delta}\biggr)^{\gamma \wedge \frac{1}{3}} \log_+^{\Ind_{\{\gamma = 1/3\}}}\Bigl(\frac{1}{\varepsilon_{n,\delta}}\Bigr) \\
&\lesssim_\gamma \bar{r}\min\biggl\{\frac{L^2}{\bar{r}} \cdot \frac{1}{n}\Bigl(\frac{n}{\log n}\Bigr)^{1 - \gamma} + \frac{1}{n} + \Bigl(\frac{L^{2/\gamma}}{\bar{r}^{1/\gamma}} \cdot \frac{\log n}{n}\Bigr)^{\gamma \wedge \frac{1}{3}} \log^{\Ind_{\{\gamma = 1/3\}}}\Bigl(\frac{n}{\log n}\Bigr),\,1\biggr\} \\
&\lesssim_\gamma \bar{r}\min\Bigl\{\Bigl(\frac{L^{2/\gamma}}{\bar{r}^{1/\gamma}} \cdot \frac{1}{n}\Bigr)^{\gamma \wedge \frac{1}{3}}\log^{c_\gamma}n,\,1\Bigr\},
\end{align*}
as required.
\end{proof}

\subsubsection{Proofs for Section~\ref{subsec:holder}}

The proof strategy for Theorem~\ref{thm:holder-minimax-upper} is analogous to that for Theorem~\ref{thm:tail-growth}, in that we employ the same sequence $(a_j)_{j \in \Z}$ as in Lemma~\ref{lem:partition-properties} but use Lemma~\ref{lem:holder-density-quantile} instead of~\eqref{eq:tail-growth} to bound terms involving $J_0'$ and $J_0$. The H\"older condition restricts the range of indices $j$ for which Proposition~\ref{prop:Holder-basic} is applicable on $[a_j,a_{j+1}]$ via Lemma~\ref{lem:integrated-error-interval}; see~\eqref{eq:holder-density-indices} below. 
% A further complication is that to incorporate the Fisher information constraint appropriately, we resort to alternative bounds on $J_0'(u_j)$ and $J_0(u_j)$ when $|j|$ is small (i.e.~in the bulk of the distribution); see~\eqref{eq:density-quantile-alt},~\eqref{eq:fisher-information-sum} and~\eqref{eq:holder-bulk-sum-1}.

\begin{proof}[Proof of Theorem~\ref{thm:holder-minimax-upper}]
Consider an arbitrary $f_0 \in \mathcal{F}_{\beta,L,r}$. We first assemble some useful bounds that will be invoked repeatedly later on. First, since $C_\beta \leq 2/\sqrt{\pi}$ and $\log_+(\lambda x) \leq \lambda\log_+(x)$ for $\lambda \geq 1$ and $x > 0$, 
% NB: $x \mapsto \log(x)$ has derivative 1/e = \log(e)/e at $x = e$, so \log_+(x) = cx has exactly one solution for every $c > 0$, and \log_+(x)/x \geq \log+(y)/y whenever x < y since \log is concave on (e,\infty)
it follows from the definition of $u_j$ for $j \in \Z$ and~\eqref{eq:holder-density-quantile-deriv} and~\eqref{eq:holder-density-quantile} in Lemma~\ref{lem:holder-density-quantile} that
% \blue{[RL: I need the constant below for the expectation bound.]}
\begin{equation}
\label{eq:holder-geometric-quantiles}
\begin{split}
|J_0'(u_j)| &\leq 2L^{1/\beta}\log_+^{(\beta - 1)/\beta}(2^{|j|+1}C_\beta) \leq 3L^{1/\beta}(|j| \vee 1)^{(\beta - 1)/\beta}, \\
J_0(u_j) &\leq 2^{-|j|+1}L^{1/\beta}\log_+^{(\beta - 1)/\beta}(2^{|j|+1}C_\beta) \leq 2^{-|j|}L^{1/\beta}(|j| \vee 1)^{(\beta - 1)/\beta}.
\end{split}
\end{equation}
Fix $\omega \in \{-1,1\}$ and let $j_{n,\delta} := \big\lfloor\log_2(2^{-13/2}C_0^{-3/2}/\varepsilon_{n,\delta})\big\rfloor_+$. If $j \in \{-j_{n,\delta},\dotsc,j_{n,\delta} - 1\}$, then for every $x \in [a_j, a_{j+1}]$, we have $C_0\varepsilon_{n,\delta}/f_0(x) 
% \leq \Bigl(\frac{\varepsilon_{n,\delta}}{V_j^2}\Bigr)^{1/3} \wedge \kappa_j 
\leq \kappa_j$ by Lemma~\ref{lem:integrated-error-interval}\textit{(c)}, so the infimum in~\eqref{eq:integrated-error-interval} is taken over a non-empty set of bandwidths $h$. % for every $x \in [a_j,a_{j+1}]$.
Since $f_0 \in \mathcal{F}_{\beta,L}$, the function $g_j \colon x \mapsto \psi_0(x)f_0(a_j)^{1/2}$ in Lemma~\ref{lem:integrated-error-interval} satisfies $|g_j(x) - g_j(y)| \leq Lf_0(a_j)^{1/2}|x - y|^{\beta - 1}$ for all $x,y \in \R$, so we seek to apply Proposition~\ref{prop:Holder-basic} to $g \equiv g_j$ on $[a_j - \kappa_j, a_{j+1} + \kappa_j]$ with $L_* \equiv Lf_0(a_j)^{1/2}$ and $V \equiv V_j = g_j(a_j - \kappa_j) - g_j(a_{j+1} + \kappa_j)$.
% \blue{\lesssim \bigl(|J_0'(u_{j-1})| \vee |J_0'(u_{j+2})|\bigr)J_0^{1/2}(u_j)} 
To this end, observe that whenever
\begin{equation}
\label{eq:holder-density-indices}
J_0(u_j) \geq L^{1/\beta}(2C_0)^{(2\beta + 1)/(2\beta)}\varepsilon_{n,\delta},
\end{equation}
it follows from \eqref{eq:partition-property-2} in Lemma~\ref{lem:partition-properties} that every $x \in [a_j,a_{j+1}]$ satisfies
\begin{equation}
\label{eq:C0epsilonbound}
\frac{C_0\varepsilon_{n,\delta}}{f_0(x)} \leq \frac{2C_0\varepsilon_{n,\delta}}{f_0(a_j)} \leq \Bigl(\frac{\varepsilon_{n,\delta}}{L^2 f_0(a_j)}\Bigr)^{1/(2\beta + 1)}. 
% = \Bigl(\frac{\varepsilon_{n,\delta}}{L_*^2}\Bigr)^{1/(2\beta + 1)}.
\end{equation}
If there exists $j \in \{-j_{n,\delta},\dotsc,j_{n,\delta} - 1\}$ for which~\eqref{eq:holder-density-indices} holds, then let $j'$ and $j''$ be the minimum and maximum such indices respectively. Otherwise, let $j' = j'' := 1$. 
% NB: Such indices j may not exist either because j_{n,\delta} = 0 or the density is small within these central quantiles (covering cases where f_0 is skewed or \norm{f_0}_\infty is small) 
% \blue{From now on, consider $j' < j''$ unless otherwise stated.} 
Then by~\eqref{eq:holder-geometric-quantiles}, $j_* := \max(|j'|,|j''|)$ satisfies $(|j_*| \vee 1)^{(\beta - 1)/\beta}2^{-|j_*|} \gtrsim \varepsilon_{n,\delta}$ 
% \blue{[RL: I've added the other comment to exclude the case $j'=j''=0$?]}
% universal constant \gtrsim because $\beta \in [1,2]$?
and hence
\begin{equation}
\label{eq:bulk-indices}
2^{|j_*|} \lesssim \frac{1}{\varepsilon_{n,\delta}}\log_+\Bigl(\frac{1}{\varepsilon_{n,\delta}}\Bigr).
% NB: This is of order n up to a log factor
\end{equation}
If $j' \leq j < j''$, 
% NB: Bound below is unnecessary when no such j exists
then by Lemma~\ref{lem:integrated-error-interval}\textit{(a)},~\eqref{eq:C0epsilonbound}, Proposition~\ref{prop:Holder-basic}, Lemma~\ref{lem:integrated-error-interval}\textit{(b)} and~\eqref{eq:partition-property-4} in Lemma~\ref{lem:partition-properties},
\begin{align}
&\hspace{-0.4cm}\sum_{\omega \in \{-1,1\}} \int_{a_j}^{a_{j+1}} \Delta_{f_0}^{(\omega)}(x)^2 f_0(x) \laplaced x \notag \\
&\lesssim \sum_{\omega \in \{-1,1\}} \int_{a_j}^{a_{j+1}} \inf\Bigl\{\bigl(g_j(x) - g_j(x - \omega h)\bigr)^2 \vee \frac{\varepsilon_{n,\delta}}{h^3} : \Bigl(\frac{\varepsilon_{n,\delta}}{L^2 f_0(a_j)}\Bigr)^{1/(2\beta + 1)} \wedge \kappa_j \leq h \leq \kappa_j\Bigr\} \laplaced x \notag \\
&\lesssim V_j L^{1/(2\beta + 1)}f_0(a_j)^{1/(4\beta+2)}\varepsilon_{n,\delta}^{\beta/(2\beta + 1)} + \frac{(a_{j+1} - a_j)\varepsilon_{n,\delta}}{\kappa_j^3} \notag \\
\label{eq:holder-bulk-interval}
&\lesssim L^{1/(2\beta + 1)}\bigl(|J_0'(u_{j-1})| \vee |J_0'(u_{j+2})|\bigr)\bigl(J_0(u_j)^{\beta + 1}\varepsilon_{n,\delta}^\beta\bigr)^{1/(2\beta + 1)} + 2^{2(|j|+3)}J_0(u_j)^2\varepsilon_{n,\delta}.
\end{align}
Similarly to Theorem~\ref{thm:tail-growth}, we will bound~\eqref{eq:holder-bulk-interval} in two different ways depending on the value of $j$. First, recalling from~\eqref{eq:holder-fisher-information} that $f_0 \in \mathcal{F}_{\beta,L,r}$ satisfies $i(f_0) \leq r \wedge 8L^{2/\beta} =: s$, let $j_L := \min\bigl\{j \in \N : 2^j \geq (L^{2/\beta}/s)\log_+^{2(\beta-1)/\beta}(L^{2/\beta}/s)\bigr\}$. 
% \N does not contain 0
If $j \in \{0,\dotsc,j_L\}$, then by~\eqref{eq:density-quantile-alt},~\eqref{eq:holder-geometric-quantiles}, the Cauchy--Schwarz inequality and~\eqref{eq:fisher-information-sum},
\begin{align}
2^{j/2}J_0(u_j) &\lesssim 2^{j/2}J_0(u_{j_L}) + 2^{j/2}\sum_{\ell=j}^{j_L} 2^{-\ell}|J_0'(u_\ell)| \notag \\
&\lesssim L^{1/\beta}2^{-j_L/2}j_L^{(\beta - 1)/\beta} + \sum_{\ell=j}^{j_L} 2^{-(\ell - j)/2} \cdot 2^{-\ell/2}|J_0'(u_\ell)| \notag \\
&\lesssim s^{1/2} + \biggl(\sum_{\ell=j}^{j_L} 2^{-(\ell - j)}\biggr)^{1/2}\biggl(\sum_{\ell=j}^{j_L} 2^{-\ell}J_0'(u_\ell)^2\biggr)^{1/2} \lesssim s^{1/2} + i(f_0)^{1/2} \lesssim s^{1/2}.
\end{align}
By another application of the Cauchy--Schwarz inequality and~\eqref{eq:fisher-information-sum},
\begin{align}
\sum_{j=0}^{j_L} 2^{-\frac{j(\beta + 1)}{2(2\beta + 1)}}\bigl(|J_0'(u_{j-1})| \vee |J_0'(u_{j+2})|\bigr) &= 2^{\frac{j_L\beta}{2(2\beta + 1)}} \sum_{j=0}^{j_L} 2^{-\frac{(j_L - j)\beta}{2(2\beta + 1)}} \cdot 2^{-j/2}\bigl(|J_0'(u_{j-1})| \vee |J_0'(u_{j+2})|\bigr) \notag \\
&\lesssim 2^{\frac{j_L\beta}{2(2\beta + 1)}} \biggl(\sum_{j=0}^{j_L} 2^{-\frac{(j_L - j)\beta}{2\beta + 1}}\biggr)^{1/2}\biggl(\sum_{j=-1}^{j_L + 2} 2^{-j}J_0'(u_j)^2\biggr)^{1/2} \notag \\
\label{eq:holder-bulk-interval-2}
&\lesssim \Bigl(\frac{L}{s^{\beta/2}}\Bigr)^{\frac{1}{2\beta + 1}}\log_+^{\frac{\beta - 1}{2\beta + 1}}\Bigl(\frac{L^{2/\beta}}{s}\Bigr)s^{1/2}.
\end{align}
The same bounds hold for $j \in \{-j_L,\dotsc,-1\}$ by analogous reasoning, so we deduce from~\eqref{eq:holder-bulk-interval}--\eqref{eq:holder-bulk-interval-2} that 
\begin{align}
\sum_{j=j' \vee -j_L}^{(j'' \wedge j_L) - 1} &\sum_{\omega \in \{-1,1\}} \int_{a_j}^{a_{j+1}} \Delta_{f_0}^{(\omega)}(x)^2 f_0(x) \laplaced x \notag \\
&\lesssim (L\varepsilon_{n,\delta}^\beta)^{\frac{1}{2\beta + 1}}\sum_{j=-j_L}^{j_L} J_0(u_j)^{\frac{\beta + 1}{2\beta + 1}}\bigl(|J_0'(u_{j-1})| \vee |J_0'(u_{j+2})|\bigr) + s\sum_{j=-j_L}^{j_L} 2^{|j|}\varepsilon_{n,\delta} \notag \\
&\lesssim (L\varepsilon_{n,\delta}^\beta s^{\frac{\beta + 1}{2}})^{\frac{1}{2\beta + 1}} \sum_{j=-j_L}^{j_L} 2^{-\frac{|j|(\beta + 1)}{2(2\beta + 1)}}\bigl(|J_0'(u_{j-1})| \vee |J_0'(u_{j+2})|\bigr) + L^{2/\beta}\log_+^{\frac{2(\beta-1)}{\beta}}\Bigl(\frac{L^{2/\beta}}{s}\Bigr) \varepsilon_{n,\delta}  \notag \\
\label{eq:holder-bulk-sum-1}
&\lesssim s\Bigl(\frac{L^{2/\beta}}{s}\varepsilon_{n,\delta}\Bigr)^{\beta/(2\beta + 1)}\log_+^{\frac{\beta - 1}{2\beta + 1}}\Bigl(\frac{L^{2/\beta}}{s}\Bigr) + s\Bigl(\frac{L^{2/\beta}}{s}\varepsilon_{n,\delta}\Bigr)\log_+^{\frac{2(\beta-1)}{\beta}}\Bigl(\frac{L^{2/\beta}}{s}\Bigr).
% NB: For \rho \in [0,1], investigate \sum_{j=0}^{j_L} 2^{2j}J_0(u_j)^2 \leq \sum_{j=0}^{j_L} 2^{\rho j}J_0(u_j)^{2\rho} \cdot J_0(u_j)^{2(1 - \rho)}2^{(2 - \rho)j} \lesssim s^\rho\log^{2\rho}\Bigl(\frac{L^{2/\beta}e}{s}\Bigr) L^{2(1 - \rho)/\beta}\sum_{j=0}^{j_L} j^{2(1 - \frac{1}{\beta})(1 - \rho)} 2^{\rho j} \lesssim_\rho s^\rho\log^{2\rho}\Bigl(\frac{L^{2/\beta}e}{s}\Bigr) L^{2(1 - \rho)/\beta} j_L^{2(1 - \frac{1}{\beta})(1 - \rho)} 2^{\rho j_L} \lesssim L^{2/\beta} j_L^{2\rho + 2(1 - \frac{1}{\beta})(1 - \rho)} = L^{2/\beta} j_L^{2 - 2(1 - \rho)/\beta} when \rho > 0: want to take \rho to be as small as possible?
\end{align}

Alternatively, for $j' \leq j < j''$, we can apply~\eqref{eq:holder-geometric-quantiles} directly to~\eqref{eq:holder-bulk-interval} to obtain
% NB: with the convention that the sum is empty when the upper limit of the sum is strictly less than the lower limit (in particular when j' = j'' for the reasons above)
\begin{align}
\sum_{\omega \in \{-1,1\}} &\int_{a_j}^{a_{j+1}} \Delta_{f_0}^{(\omega)}(x)^2 f_0(x) \laplaced x \notag \\
&\lesssim L^{\frac{1}{2\beta + 1}}L^{1/\beta}(|j| \vee 1)^{1 - \frac{1}{\beta}}\bigl(2^{-|j|}(|j| \vee 1)^{1 - \frac{1}{\beta}}L^{1/\beta}\bigr)^{\frac{\beta + 1}{2\beta + 1}}\varepsilon_{n,\delta}^{\beta/(2\beta + 1)} + L^{2/\beta}(|j| \vee 1)^{2(1 - \frac{1}{\beta})}\varepsilon_{n,\delta} \notag \\
\label{eq:holder-bulk-interval-3}
&= L^{2/\beta}\bigl\{2^{-|j|\frac{\beta + 1}{2\beta + 1}}(|j| \vee 1)^{\frac{(3\beta+2)(\beta-1)}{\beta(2\beta+1)}}\,\varepsilon_{n,\delta}^{\beta/(2\beta + 1)} + (|j| \vee 1)^{2(\beta - 1)/\beta}\varepsilon_{n,\delta}\bigr\}.
\end{align}
If $(j_1,j_2) = (j',j' \vee -j_L)$ or $(j_1,j_2) = (j'' \wedge j_L,j'')$ and $j_1 < j_2$, then $j_* \geq 1$, and by~\eqref{eq:bulk-indices},
\begin{align*}
\sum_{j=j_1}^{j_2 - 1} L^{2/\beta}&\bigl\{2^{-|j|\frac{\beta + 1}{2\beta + 1}}(|j| \vee 1)^{\frac{(3\beta+2)(\beta-1)}{\beta(2\beta+1)}}\,\varepsilon_{n,\delta}^{\beta/(2\beta + 1)} + (|j| \vee 1)^{2(\beta - 1)\beta}\varepsilon_{n,\delta}\bigr\} \\
&\lesssim_\beta L^{2/\beta}\varepsilon_{n,\delta}^{\beta/(2\beta + 1)}2^{-j_L\frac{\beta + 1}{2\beta + 1}}j_L^{\frac{(3\beta+2)(\beta-1)}{\beta(2\beta+1)}} + L^{2/\beta}j_*^{(3\beta - 2)/\beta}\varepsilon_{n,\delta} \\
&\lesssim s\Bigl(\frac{L^{2/\beta}}{s}\varepsilon_{n,\delta}\Bigr)^{\beta/(2\beta + 1)}\log_+^{\frac{\beta - 1}{2\beta + 1}}\Bigl(\frac{L^{2/\beta}}{s}\Bigr) + s\Bigl(\frac{L^{2/\beta}}{s}\varepsilon_{n,\delta}\Bigr)\log_+^{(3\beta - 2)/\beta}\Bigl(\frac{1}{\varepsilon_{n,\delta}}\Bigr).
% c(1 - \frac{1}{\beta}) \leq 2 \times 1/2 = 1
\end{align*}
% NB: This bound is trivial when $j' = j'' = 1$
Together with~\eqref{eq:holder-bulk-sum-1} and~\eqref{eq:holder-bulk-interval-3}, this yields
\begin{align}
\sum_{\omega \in \{-1,1\}} \int_{a_{j'}}^{a_{j''}} \Delta_{f_0}^{(\omega)}(x)^2 f_0(x) \laplaced x &\lesssim_\beta s\Bigl(\frac{L^{2/\beta}}{s}\varepsilon_{n,\delta}\Bigr)^{\beta/(2\beta + 1)}\log_+^{\frac{\beta - 1}{2\beta + 1}}\Bigl(\frac{L^{2/\beta}}{s}\Bigr) \notag \\
\label{eq:holder-bulk-new}
&\quad+ s\Bigl(\frac{L^{2/\beta}}{s}\varepsilon_{n,\delta}\Bigr)\Bigl\{\log_+^{\frac{2(\beta-1)}{\beta}}\Bigl(\frac{L^{2/\beta}}{s}\Bigr) + \log_+^{(3\beta - 2)/\beta}\Bigl(\frac{1}{\varepsilon_{n,\delta}}\Bigr)\Bigr\}.
\end{align}

Next, recall that $j' \geq -j_{n,\delta}$ by definition. If $j' > -j_{n,\delta}$, then in both of the cases $j' < j''$ and $j' = j''$,~\eqref{eq:holder-density-indices} does not hold for any $j \in \{-j_{n,\delta},\dotsc,j'-1\}$, so by~\eqref{eq:partition-property-2} in Lemma~\ref{lem:partition-properties},
\[
\sup_{u \in (u_{-j_{n,\delta}}, u_{j'}]}J_0(u) \leq 2 \max_{j \in \{-j_{n,\delta},\dotsc,j'-1\}}J_0(u_{j}) \lesssim L^{1/\beta}\varepsilon_{n,\delta}.
\]
Therefore, by Lemma~\ref{lem:density-quantile-basics}\textit{(b)},~\eqref{eq:holder-density-quantile-deriv}, a calculation similar to~\eqref{eq:holder-information-integral} in the proof of Lemma~\ref{lem:holder-density-quantile}, and finally~\eqref{eq:bulk-indices}, it follows in both cases that
\begin{align}
% NB: The second term is very important for covering the case where j' is small (e.g.~when $J_0(u_j) \ll L^{1/\beta}\varepsilon_{n,\delta}$ for all $\Z$) -- the bound could probably be tightened but something like this is essential
\int_{-\infty}^{a_{j'}} \psi_0^2 f_0 = \int_0^{u_{j'}} (J_0')^2 &\leq \int_0^{u_{-j_{n,\delta}}} (J_0')^2 + \sup_{u \in (u_{-j_{n,\delta}},u_{j'}]} |J_0'(u)| \int_{u_{-j_{n,\delta}}}^{u_{j'}} |J_0'| \notag \\
&\lesssim L^{2/\beta} \int_0^{u_{-j_{n,\delta}}} \log_+^{2(\beta - 1)/\beta}\Bigl(\frac{C_\beta}{u}\Bigr) \laplaced u + L^{1/\beta}j_{n,\delta}^{(\beta - 1)/\beta}\sup_{u \in (u_{-j_{n,\delta}}, u_{j'}]} J_0(u) \notag \\ 
\label{eq:holder-tail-bound}
&\lesssim L^{2/\beta}\varepsilon_{n,\delta}\log_+^{2(\beta - 1)/\beta}\Bigl(\frac{1}{\varepsilon_{n,\delta}}\Bigr).
\end{align}
By analogous reasoning, the same bound holds for $\int_{a_{j''}}^\infty \psi_0^2 f_0$. Moreover, by Proposition~\ref{prop:scorePointwiseErr}\textit{(c)} and~\eqref{eq:holder-fisher-information} in Lemma~\ref{lem:holder-density-quantile},
% or consider the trivial estimator $\tilde{\psi} \equiv 0$
\[
\int_{-\infty}^\infty \bigl(\scoreEstimateShapeConstrained(x) -\psi_0(x)\bigr)^2 f_0(x) \laplaced x \leq i(f_0) \leq r \wedge 8L^{2/\beta} = s. 
\]
Combining this with~\eqref{eq:holder-bulk-new} and~\eqref{eq:holder-tail-bound}, we deduce from Proposition~\ref{prop:scorePointwiseErr}\textit{(b,c)} that on the event $\goodEvent$ of probability at least $1 - \delta$, the estimator $\scoreEstimateShapeConstrained$ satisfies
\begin{align}
% \sup_{f_0 \in \mathcal{F}_{\beta,L,r}}
&\hspace{-0.1cm}\int_{-\infty}^\infty \bigl(\scoreEstimateShapeConstrained(x) -\psi_0(x)\bigr)^2 f_0(x) \laplaced x \notag \\
&\lesssim \biggl\{\sum_{\omega \in \{-1,1\}} \int_{a_{j'}}^{a_{j''}} \Delta_{f_0}^{(\omega)}(x)^2 f_0(x) \laplaced x + \int_{-\infty}^{a_{j'}} \psi_0^2 f_0 + \int_{a_{j''}}^\infty \psi_0^2 f_0\biggr\} \wedge s \notag \\
&\lesssim_\beta \biggl\{s\Bigl(\frac{L^{2/\beta}}{s}\varepsilon_{n,\delta}\Bigr)^{\frac{\beta}{2\beta + 1}}\log_+^{\frac{\beta - 1}{2\beta + 1}}\Bigl(\frac{L^{2/\beta}}{s}\Bigr) + s\Bigl(\frac{L^{2/\beta}}{s}\varepsilon_{n,\delta}\Bigr)\biggl(\log_+^{\frac{2(\beta-1)}{\beta}\Bigl(\frac{L^{2/\beta}}{s}\Bigr) + \log_+^{3 - \frac{2}{\beta}}\Bigl(\frac{1}{\varepsilon_{n,\delta}}\Bigr)}\biggr)\biggr\} \wedge s \notag \\
\label{eq:multiscale-holder-bound}
&\lesssim_\beta r'\min\biggl\{\Bigl(\frac{L^{2/\beta}}{r'}\varepsilon_{n,\delta}\Bigr)^{\beta/(2\beta + 1)}\log_+^{\frac{\beta - 1}{2\beta + 1}}\Bigl(\frac{L^{2/\beta}}{r'}\Bigr) + \frac{L^{2/\beta}}{r'}\varepsilon_{n,\delta}\log_+^{3 - \frac{2}{\beta}}\Bigl(\frac{1}{\varepsilon_{n,\delta}}\Bigr),\,1\biggr\}.
\end{align}
This yields the required upper bound~\eqref{eq:holder-minimax-upper} on the minimax $(1 - \delta)$th quantile $\mathcal{M}_n(\delta,\mathcal{F}_{\beta,L,r})$. In particular, when $r = 8L^{2/\beta}$, we have $\mathcal{F}_{\beta,L,r} = \mathcal{F}_{\beta,L}$, so~\eqref{eq:multiscale-holder-bound} simplifies to a bound of order $L^{2/\beta}(\varepsilon_{n,\delta} \wedge 1)^{\beta/(2\beta + 1)}$ on $\mathcal{M}_n(\delta,\mathcal{F}_{\beta,L})$.
\end{proof}

\begin{proof}[Proof of Corollary~\ref{cor:holder-minimax-expec}]
We argue similarly to the proof of Corollary~\ref{cor:tail-growth-expectation}. Proposition~\ref{prop:scorePointwiseErr}\textit{(c)} ensures that on $\goodEvent$, the projected estimator $\scoreEstimateShapeConstrained^{(B)} := (-B) \vee \scoreEstimateShapeConstrained \wedge B$ with $B \in (0,\infty]$ satisfies
\[
\bigl|\scoreEstimateShapeConstrained^{(B)}(x)\bigr| \leq |\scoreEstimateShapeConstrained(x)| \leq |\psi_0(x)|
\]
for all $x \in \R$. Moreover, if $x \in [a_{-{j_{n,\delta}}},a_{{j_{n,\delta}}}]$, then by~\eqref{eq:holder-geometric-quantiles},
\begin{align*}
\bigl|\scoreEstimateShapeConstrained(x)\bigr| \leq \bigl|\psi_0(x)\bigr| \leq |J_0'(u_{-j_{n,\delta}})| \vee |J_0'(u_{j_{n,\delta}})| \leq 3L^{1/\beta}(j_{n,\delta} \vee 1)^{(\beta - 1)/\beta} =: B_{n,\delta}'
% NB: B_{n,\delta}' is an upper bound on |J_0'(u_{-j_{n,\delta}})| \vee |J_0'(u_{j_{n,\delta}})|, essentially the magnitude of the score function at the extreme order statistics (upper and lower 1/n-quantiles)
\end{align*}
on $\goodEvent$. Thus, if $B \geq B_{n,\delta}'$, then $\scoreEstimateShapeConstrained^{(B)}(x) = \scoreEstimateShapeConstrained(x)$ for all $x \in [a_{-{j_{n,\delta}}},a_{{j_{n,\delta}}}]$, so by~\eqref{eq:multiscale-holder-bound} and~\eqref{eq:holder-tail-bound},
\begin{align}
&\int_{-\infty}^{\infty}\bigl(\scoreEstimateShapeConstrained^{(B)} - \psi_0\bigr)^2 f_0 \leq \biggl\{\int_{a_{-{j_{n,\delta}}}}^{a_{{j_{n,\delta}}}} (\scoreEstimateShapeConstrained - \psi_0)^2 f_0 + \int_{-\infty}^{a_{-{j_{n,\delta}}}}\psi_0^2 f_0 + \int_{a_{{j_{n,\delta}}}}^{\infty}\psi_0^2 f_0\biggr\} \notag \\
\label{eq:holder-multiscale-proj}
&\hspace{1.5cm}\lesssim_\beta r'\min\biggl\{\Bigl(\frac{L^{2/\beta}}{r'}\varepsilon_{n,\delta}\Bigr)^{\beta/(2\beta + 1)}\log_+^{\frac{\beta - 1}{2\beta + 1}}\Bigl(\frac{L^{2/\beta}}{r'}\Bigr) + \frac{L^{2/\beta}}{r'}\varepsilon_{n,\delta}\log_+^{3 - \frac{2}{\beta}}\Bigl(\frac{1}{\varepsilon_{n,\delta}}\Bigr),\,1\biggr\}
\end{align}
on $\goodEvent$. In other words, the upper bound on $\mathcal{M}_n(\delta,\mathcal{F}_{\beta,L,r})$ is achieved by $\hat{\psi}_{n,\delta}^{(B)}$ for any $B \geq B_{n,\delta}'$.
% $\scoreEstimateShapeConstrained^{(B)}$ for any $B \geq B_{n,\delta}' := 3L^{1/\beta}\log_+^{1 - \frac{1}{\beta}}\bigl(2^{-13/2}C_0^{-3/2}/\varepsilon_{n,\delta}\bigr)$
Moreover, by considering the estimator $\tilde\psi \equiv 0$, we deduce from~\eqref{eq:holder-fisher-information} that $\mathcal{M}_n(\mathcal{F}_{\beta,L,r}) \leq \sup_{f_0 \in \mathcal{F}_{\beta,L,r}} i(f_0) \leq s$. Now let $\delta = 1/n$ 
% so that $\varepsilon_{n,\delta} \asymp \log(n)/n$) 
and $B = B_{n,\delta}' \asymp L^{1/\beta}\log_+^{1 - \frac{1}{\beta}}(1/\varepsilon_{n,\delta}) \asymp L^{1/\beta}\log^{1 - \frac{1}{\beta}}n$. 
% when n \geq 2
Then by Proposition~\ref{prop:confidenceBandsAreValid} and the fact that $|\scoreEstimateShapeConstrained^{(B)}| \leq B$, together with~\eqref{eq:holder-fisher-information} and~\eqref{eq:holder-multiscale-proj},
\begin{align*}
&\hspace{-0.3cm}\mathcal{M}_n(\mathcal{F}_{\beta,L,r}) \\
&\leq \sup_{f_0 \in \mathcal{F}_{\beta,L,r}} \E\biggl(\int_{-\infty}^{\infty} \bigl(\scoreEstimateShapeConstrained^{(B)} - \psi_0\bigr)^2 f_0\biggr) \wedge s \\
% \leq \sup_{f_0 \in \mathcal{F}_{\beta,L,r}} \biggl\{\E\biggl(\Ind_{\goodEvent^c}\int_{-\infty}^{\infty} (M - \psi_0)^2 f_0\biggr) + \E\biggl(\Ind_{\goodEvent}\int_{-\infty}^{\infty} \bigl(\scoreEstimateShapeConstrained^{(B)} - \psi_0\bigr)^2 f_0\biggr)\biggr\}
&\lesssim_\beta \min\biggl\{(B^2 + r')\,\delta + r'\Bigl(\frac{L^{2/\beta}}{r'}\varepsilon_{n,\delta}\Bigr)^{\beta/(2\beta + 1)}\log_+^{\frac{\beta - 1}{2\beta + 1}}\Bigl(\frac{L^{2/\beta}}{r'}\Bigr) + r'\frac{L^{2/\beta}}{r'}\varepsilon_{n,\delta}\log_+^{3 - \frac{2}{\beta}}\Bigl(\frac{1}{\varepsilon_{n,\delta}}\Bigr),\,r'\biggr\} \\
&\lesssim_\beta r'\min\biggl\{\frac{L^{2/\beta}}{r'} \cdot \frac{\log^{2(1 - \frac{1}{\beta})}n}{n} + \Bigl(\frac{L^{2/\beta}}{r'} \cdot \frac{\log n}{n}\Bigr)^{\beta/(2\beta + 1)}\log_+^{\frac{\beta - 1}{2\beta + 1}}n + \frac{L^{2/\beta}}{r'} \cdot \frac{\log n}{n} \log^{3 - \frac{2}{\beta}}n,\,1\biggr\} \\
&\lesssim_\beta r'\min\biggl\{\Bigl(\frac{L^{2/\beta}}{r'} \cdot \frac{\log^{(4\beta - 2)/\beta}n}{n}\Bigr)^{\beta/(2\beta + 1)},\,1\biggr\},
\end{align*}
% Now take
% \[
% \delta := \frac{1}{n} \leq \frac{1}{2},\qquad B := 3\log_+\bigl(9\times 2^{-15/2}C_0^{-3/2}n\bigr).
% \]
% Since $9\varepsilon_{n,\delta}/2 > 1/n$ for $n \geq 3$ and $\delta \in (0,1)$, the additional assumptions on $n$ guarantee that
% \[
% B \geq B_{n,\delta}', \qquad (B^2 + s)\delta \lesssim s\min\Bigl\{\Bigl(\frac{L}{s^{\beta/2}}\Bigr)^{2/(2\beta + 1)}\red{\log_+\Bigl(\frac{L}{s^{\beta/2}}\Bigr)}\Bigl(\frac{\log n }{n}\Bigr)^{\beta/(2\beta + 1)},\,1\Bigr\}.
% \] 
as required.
\end{proof}

\subsection{Proofs of minimax lower bounds}
\label{subsec:proofs-lower-bds}

\subsubsection{Proofs for Sections~\ref{subsec:main-results-prelim} and~\ref{subsec:tail-growth}}

We begin with a lemma based on the construction discussed in Section~\ref{subsec:main-results-prelim}, which is used to prove the lower bounds over both the class $\mathcal{F}$ in Proposition~\ref{prop:risk-inf} and $\mathcal{J}_{\gamma,L,r}$ in Theorem~\ref{thm:tail-growth-lower-bd}. 
% \red{Does Proposition~\ref{prop:risk-inf} even need a separate proof given that it follows from the main lower bounds?}

\begin{lemma}
\label{lem:fa-2pt}
Given $a \geq \sqrt{2}$ and $h \in [0,1/a]$, define $x_a := a - \frac{1}{a}$ and $\psi_{a,h},\phi_{a,h} \colon \R \to \R$ by
\[
\psi_{a,h}(x) := -\sgn(x)\Bigl(a\Ind_{\{|x| \geq x_a + h\}} + \frac{a}{2}\Ind_{\{|x| \in (x_a - h, x_a + h)\}} \Bigr), \qquad \phi_{a,h}(x) := \int_0^x \psi_{a,h}.
\]
Then $f_{a,h} := e^{\phi_{a,h}}/\int_{-\infty}^\infty e^{\phi_{a,h}}$ is a symmetric density with antisymmetric score function $\psi_{a,h}$, and $f_{a,h} \in \mathcal{J}_{\gamma,a^\gamma,2}$ for any $\gamma \in (0,1]$. Moreover, the corresponding distribution $P_{a,h}$ satisfies
\begin{equation}
\label{eq:fa-divergences}
\chi^2(P_{a,h},P_{a,0}) := \int_{-\infty}^\infty \Bigl(\frac{f_{a,h}}{f_{a,0}} - 1\Bigr)^2 f_{a,0} < 2ah^3 \quad\text{and}\quad \int_{-\infty}^\infty (\psi_{a,0} - \psi_{a,h})^2 (f_{a,0} \wedge f_{a,h}) \geq \frac{ah}{2e}.
\end{equation}
Consequently, for $\delta \in (0,1/4)$, we have
\begin{align*}
\sup_{h \in (0,1/a]} \mathcal{M}_n(\{f_{a,0},f_{a,h}\}) \gtrsim \Bigl(\frac{a^2}{n}\Bigr)^{1/3} \wedge 1, \quad\;\; \sup_{h \in (0,1/a]} \mathcal{M}_n(\delta,\{f_{a,0},f_{a,h}\}) \gtrsim \Bigl(\frac{a^2\log(1/\delta)}{n}\Bigr)^{1/3} \wedge 1.
\end{align*}
\end{lemma}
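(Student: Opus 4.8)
The plan is to verify the four assertions of the lemma in sequence, beginning with the structural ones. First, $\psi_{a,h}$ is odd (since $\sgn$ is odd and both indicators depend only on $|x|$) and decreasing (by inspecting the at most five intervals on which it is constant), so $\phi_{a,h}=\int_0^{\cdot}\psi_{a,h}$ is even and concave; it decays linearly in both tails, since $\psi_{a,h}=\mp a$ there, so $Z_{a,h}:=\int_{-\infty}^{\infty}e^{\phi_{a,h}}$ is finite and $f_{a,h}=e^{\phi_{a,h}}/Z_{a,h}$ is a continuous, symmetric, log-concave density with score $\phi_{a,h}'=\psi_{a,h}$, which gives the first assertion. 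Integrating $e^{\phi_{a,h}}$ piecewise over $[0,x_a-h]$, $[x_a-h,x_a+h]$ and $[x_a+h,\infty)$ and doubling yields the closed form $Z_{a,h}=2\bigl(a-h+(1-e^{-ah})/a\bigr)$, from which $ah-(ah)^2/2\le 1-e^{-ah}\le ah$ gives the bounds $2a-1/a\le Z_{a,h}\le 2a=Z_{a,0}$ that I will use repeatedly.

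Next, for membership in $\mathcal{J}_{\gamma,a^\gamma,2}$, integrating $\psi_{a,h}^2e^{\phi_{a,h}}$ over the same pieces gives $i(f_{a,h})=a(1+e^{-ah})/Z_{a,h}\le 2a/(2a-1/a)\le 4/3\le 2$ for $a\ge\sqrt2$. For the envelope~\eqref{eq:tail-growth} with $L=a^\gamma$, I would use that $\psi_{a,h}$ vanishes on $(-(x_a-h),x_a-h)$ while $|\psi_{a,h}|\le a$ elsewhere, so it suffices to check that $F_{a,h}(x)\wedge(1-F_{a,h}(x))\le a^{-2}$ for $|x|\ge x_a-h$, where $F_{a,h}$ denotes the distribution function of $f_{a,h}$; indeed then $a^\gamma\{u\wedge(1-u)\}^{-(1-\gamma)/2}\ge a^\gamma a^{1-\gamma}=a$ at $u=F_{a,h}(x)$. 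By symmetry and monotonicity of $F_{a,h}$ the extreme case is $x=x_a-h$, at which $1-F_{a,h}(x_a-h)=(2-e^{-ah})/(aZ_{a,h})$; substituting $t=ah\in[0,1]$ and using the closed form for $Z_{a,h}$, the required inequality reduces to $2(te^t-e^t+1)\le a^2$, and the left-hand side is increasing on $[0,1]$ (derivative $2te^t\ge 0$) with value $2$ at $t=1$, hence at most $2\le a^2$.

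Turning to the divergence bounds in~\eqref{eq:fa-divergences}, the second is the easier: $(\psi_{a,0}-\psi_{a,h})^2=a^2/4$ almost everywhere on the set $S:=(x_a-h,x_a+h)\cup(-(x_a+h),-(x_a-h))$ of Lebesgue measure $4h$ and vanishes off $S$, and on $S$ one has $0\le(|x|-x_a)_+\le h$ and $-ah\le\phi_{a,h}\le 0$, so $f_{a,0}\ge e^{-1}/(2a)$ and, using $Z_{a,h}\le 2a$, also $f_{a,h}\ge e^{-ah}/Z_{a,h}\ge e^{-1}/(2a)$, whence $\int(\psi_{a,0}-\psi_{a,h})^2(f_{a,0}\wedge f_{a,h})\ge(a^2/4)(e^{-1}/(2a))(4h)=ah/(2e)$. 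For the $\chi^2$ bound I would write $f_{a,h}/f_{a,0}=(Z_{a,0}/Z_{a,h})e^{g}$ with $g:=\phi_{a,h}-\phi_{a,0}$ supported on $S$ and $|g|\le ah/2$, record that $Z_{a,0}/Z_{a,h}=(1-\rho)^{-1}$ with $0\le\rho=(ah-1+e^{-ah})/a^2\le(ah)^2/(2a^2)$, expand $(f_{a,h}/f_{a,0}-1)^2\le 2(Z_{a,0}/Z_{a,h})^2(e^g-1)^2+2(Z_{a,0}/Z_{a,h}-1)^2$, and integrate against $f_{a,0}$: the second term contributes $O(\rho^2)=O(h^4)$, while for the first, $(e^g-1)^2\le e\,g^2$, $f_{a,0}\le 1/(2a)$ and the elementary identity $\int_S g^2=a^2h^3/3$ keep the contribution strictly below $2ah^3$.

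Finally, for the minimax consequences: since $\KL(P_{a,h},P_{a,0})\le\chi^2(P_{a,h},P_{a,0})<2ah^3$ by $\log t\le t-1$, Pinsker's inequality gives $\TV(P_{a,h}^{\otimes n},P_{a,0}^{\otimes n})<\sqrt{nah^3}$, while for every $\psi\colon\R\to\R$,
\[
\mathcal{L}(\psi,\psi_{a,0})+\mathcal{L}(\psi,\psi_{a,h})\ge\int_{-\infty}^{\infty}\bigl\{(\psi-\psi_{a,0})^2+(\psi-\psi_{a,h})^2\bigr\}(f_{a,0}\wedge f_{a,h})\ge\tfrac12\int_{-\infty}^{\infty}(\psi_{a,0}-\psi_{a,h})^2(f_{a,0}\wedge f_{a,h})\ge\frac{ah}{4e}.
\]
Choosing $h^*:=(2na)^{-1/3}\wedge(1/a)$ makes $na(h^*)^3\le 1/2$, and Le Cam's two-point lemma \citep[e.g.][Lemma~8.4]{samworth24modern} then yields $\mathcal{M}_n(\{f_{a,0},f_{a,h^*}\})\gtrsim ah^*\asymp(a^2/n)^{1/3}\wedge1$; the quantile statement follows identically from the high-probability two-point lemma \citep[Corollary~6]{ma2025high} (using $\delta\in(0,1/4)$) with $h^*:=(\log(1/\delta)/(na))^{1/3}\wedge(1/a)$. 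The step I expect to be fiddliest is the $\chi^2$ estimate, since it requires simultaneous control of the normalising ratio $Z_{a,0}/Z_{a,h}$ and of $g$ on $S$; the reassurance is that the true order of $\chi^2(P_{a,h},P_{a,0})$ is $ah^3/6$, leaving ample slack, so only crude second-order Taylor bounds for $e^{-ah}$ are needed and the exact closed form of $\chi^2$ can be bypassed.
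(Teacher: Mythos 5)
Your proposal follows the same overall strategy as the paper's proof: establish symmetry and class membership, prove the two divergence bounds in~\eqref{eq:fa-divergences}, and then apply Le Cam's two-point lemma together with its high-probability analogue from~\citet{ma2025high}.  The main stylistic difference is that you derive closed-form expressions for $Z_{a,h}$, $i(f_{a,h})$ and $1-F_{a,h}(x_a-h)$ and then bound them, whereas the paper bounds the relevant quantities on the fly; your approach is arguably cleaner and both are valid.  Your factorisation $f_{a,h}/f_{a,0}=(Z_{a,0}/Z_{a,h})e^{g}$ in the $\chi^2$ bound is also a slight variant of the paper's decomposition, which absorbs the constant into the exponent as $\phi_{a,h}-\phi_{a,0}+\log c_h$.

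There are, however, two small arithmetic slips that need patching, though they do not affect the overall method.  First, in the $\chi^2$ estimate, the bound $(e^g-1)^2\leq e\,g^2$ is both unnecessarily loose and in fact insufficient for your stated conclusion: with it, the first term alone is bounded by $\frac{16e}{27}\,ah^3\approx 1.61\,ah^3$, and after adding the ``$O(h^4)$'' term -- which in the extremal case $a=\sqrt{2}$, $h=1/a$ is as large as $\frac{4}{9}\,ah^3\approx 0.44\,ah^3$ -- the total exceeds $2ah^3$.  Since $g\leq 0$, you should instead use the exact inequality $(e^g-1)^2\leq g^2$ (which follows from $e^g\geq 1+g$); this brings the first term down to $\frac{16}{27}\,ah^3\approx 0.59\,ah^3$ and the combined bound comfortably under $2ah^3$.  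Second, your choice $h^*=(\log(1/\delta)/(na))^{1/3}\wedge(1/a)$ for the quantile bound gives $2na(h^*)^3=2\log(1/\delta)$, which does \emph{not} satisfy the precondition $n\KL<\log\bigl(1/(4\delta(1-\delta))\bigr)$ of the high-probability two-point lemma for any $\delta\in(0,1/4)$ (note $\log\bigl(1/(4\delta(1-\delta))\bigr)<\log(1/\delta)$ there).  Rescaling $h^*$ down by a universal constant, as the paper does with $h''$, restores validity and leaves the $\gtrsim$ conclusion unchanged, since $\log\bigl(1/(4\delta(1-\delta))\bigr)\asymp\log(1/\delta)$ on $(0,1/4)$.
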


% \begin{figure}
% \begin{center}
% \subfigure{\includegraphics[width=0.45\textwidth]{Tikz/lowerbd-fa-logdensity-v2.pdf}}
% \hfill
% \subfigure{\includegraphics[width=0.45\textwidth]{Tikz/lowerbd-fa-score-v2.pdf}}
% \end{center}
% \caption{The log-densities $\phi_{a,0}$ and $\phi_{a,h}$ (left) and corresponding score functions $\psi_{a,0}$ and $\psi_{a,h}$ (right) defined in Lemma~\ref{lem:fa-2pt}. \red{Change labels; remove this version if we use the one in the main text.}}
% \label{fig:lower-bd-tails-c1}
% \end{figure}

The functions $\phi_{a,h}$ and $\psi_{a,h}$ are illustrated in Figure~\ref{fig:lower-bd-tails-c1} in the main text. 

\begin{proof}
Since $h \in [0,1/a]$, we have $x_a - h  \geq a - \frac{2}{a} \geq 0$. For $x \in \R$, we have $\phi_{a,0}(x) = -a(|x| - x_a)_+$ and
\[
\phi_{a,h}(x) = \int_0^x \psi_{a,h} = \phi_{a,0}(x) - \frac{a}{2}\bigl(h - \bigl||x| - x_a\bigr|\bigr)_+ \leq \phi_{a,0}(x),
\]
with equality if and only if $|x| \notin (x_a - h, x_a + h)$. Therefore,
\[
C_h := \int_{-\infty}^\infty e^{\phi_{a,h}} \leq \int_{-\infty}^\infty e^{\phi_{a,0}} = 2x_a + 2\int_{x_a}^\infty e^{-a(x - x_a)} = 2a =: C_0,
\]
so because $\psi_{a,h}$ is decreasing, $f_{a,h}$ is a continuous and symmetric log-concave density with antisymmetric score function $\psi_{a,h}$. Since $e^{\phi_{a,h}} \leq e^{\phi_{a,0}} \leq 1$, we have
\begin{align}
\label{eq:fa-normalisation}
C_0 - C_h = \int_{-\infty}^\infty (e^{\phi_{a,0}} - e^{\phi_{a,h}}) \leq \int_{-\infty}^\infty (\phi_{a,0} - \phi_{a,h}) = a\int_{-h}^h (h - |x|) \laplaced x = ah^2 \leq \frac{1}{a} \leq \frac{C_0}{4}.
\end{align}

For $\gamma \in (0,1)$, we claim that $f_{a,h} \in \mathcal{J}_{\gamma,a^\gamma,2}$ for all $h \in [0,1/a]$. Indeed, since $f_{a,h}$ is symmetric, the corresponding distribution function $F_{a,h}$ satisfies $F_{a,h}(x) = 1 - F_{a,h}(-x)$ for $x \in \R$. First consider $h = 0$. If $x \geq x_a$, then $F_{a,0}(-x) \leq 1/2 \leq F_{a,0}(x)$ and
\begin{align*}
F_{a,0}(-x)^{\frac{1 - \gamma}{2}} = \bigl(1 - F_{a,0}(x)\bigr)^{\frac{1 - \gamma}{2}} &= \biggl(\frac{1}{2a} \int_x^\infty e^{-a(z - x_a)} \laplaced z\biggr)^{\frac{1 - \gamma}{2}} \\
&\leq \Bigl(\frac{1}{2a^2}\Bigr)^{\frac{1 - \gamma}{2}} = \frac{2^{-\frac{1 - \gamma}{2}}a^\gamma}{a} = \frac{2^{-\frac{1 - \gamma}{2}}a^\gamma}{|\psi_{a,0}(x)|} \leq \frac{2^{-\frac{1 - \gamma}{2}}a^\gamma}{|\psi_{a,0}(-x)|}.
\end{align*}
Moreover, if $x \in [0,x_a)$, then $|\psi_{a,0}(x)| = 0 \leq a^\gamma/\min\bigl(F_{a,0}(x), 1 - F_{a,0}(x)\bigr)^{\frac{1 - \gamma}{2}}$. Since $i(f_{a,0}) = \int_{-\infty}^\infty \psi_{a,0}^2 f_{a,0} = 2a^2\int_{x_a}^\infty f_{a,0} = 1$, we have $f_{a,0} \in \mathcal{J}_{\gamma,a^\gamma,1} \subseteq \mathcal{J}_{\gamma,a^\gamma,2}$.

Now consider $h \in (0,1/a]$. If $x \leq 0$, then $F_{a,h}(x) \leq 1/2 \leq F_{a,h}(-x)$, and by~\eqref{eq:fa-normalisation},
\begin{align*}
1 - F_{a,h}(-x) = F_{a,h}(x) = \frac{\int_{-\infty}^x e^{\phi_{a,h}}}{C_h} &< \frac{2\int_{-\infty}^x e^{\phi_{a,0}}}{C_0} \\
&= 2F_{a,0}(x) \leq \Bigl(\frac{a^\gamma}{|\psi_{a,h}(x)|}\Bigr)^{\frac{2}{1 - \gamma}} = \Bigl(\frac{a^\gamma}{|\psi_{a,h}(-x)|}\Bigr)^{\frac{2}{1 - \gamma}}.
\end{align*}
For $x \in [-x_a + h,0]$, we have $\frac{1}{2} - F_{a,h}(x) = \int_x^0 f_{a,h} = \int_x^0 e^{\phi_{a,0}}/C_h \geq \int_x^0 e^{\phi_{a,0}}/C_0 = \frac{1}{2} - F_{a,0}(x)$, so because $|\psi_{a,h}(x)| \leq a \Ind_{\{|x| \geq x_a - h\}}$, we have
\begin{align*}
i(f_{a,h}) = 2a^2\int_{-\infty}^{-x_a + h} f_{a,h} &= 2a^2 F_{a,h}(-x_a + h) \leq 2a^2 F_{a,0}(-x_a + h) \\
&= 2a^2\Bigl(\int_{-\infty}^{-x_a} \frac{e^{a(x_a + x)}}{2a} \laplaced x + \int_{-x_a}^{-x_a + h} \frac{1}{2a}\Bigr) = 1 + ah \leq 2.
\end{align*}
Thus, $f_{a,h} \in \mathcal{J}_{\gamma,a^\gamma,2}$, as claimed. 

Next, by~\eqref{eq:fa-normalisation}, $c_h := C_0/C_h \in [1,4/3]$ and $\log f_{a,h} - \log f_{a,0} = \phi_{a,h} - \phi_{a,0} + \log c_h \leq \log(4/3)$. Moreover,
\[
c_h - 1 \leq \frac{4(C_0 - C_h)}{3C_0} \leq \frac{4ah^2}{3C_0} = \frac{2h^2}{3}. 
\]
Since $f_{a,0} \leq 1/C_0 = 1/(2a)$ on $\R$ and $|e^x - 1| \leq 4|x|/3$ for $x \in (-\infty,\log(4/3)]$, it follows that
\begin{align*}
\chi^2(P_{a,h},P_{a,0}) &= \int_{-\infty}^\infty \Bigl(\frac{f_{a,h}}{f_{a,0}} - 1\Bigr)^2 f_{a,0} \leq 2\int_{x_a - h}^{x_a + h} (e^{\phi_{a,h} - \phi_{a,0} + \log c_h} - 1)^2\,\frac{1}{2a} + \int_{-\infty}^\infty (c_h - 1)^2 f_{a,0} \\
&\leq \frac{16}{9a} \int_{x_a - h}^{x_a + h} (\phi_{a,h} - \phi_{a,0} + \log c_h)^2 + (c_h - 1)^2 \\
&\leq \frac{16}{9a} \biggl(\frac{a^2}{2}\int_{-h}^h (h - |x|)^2 \laplaced x + 4h\log^2 c_h\biggr) + \frac{4h^4}{9} \\
&\leq \frac{16}{9a}\Bigl(\frac{a^2 h^3}{3} + \frac{16h^5}{9}\Bigr) + \frac{4h^4}{9} = ah^3\Bigl(\frac{16}{27} + \frac{256h^2}{81a^2} + \frac{4h}{9a}\Bigr) < 2ah^3.
\end{align*}
Moreover, $|\psi_{a,0}(x) - \psi_{a,h}(x)| = a\Ind_{\|x| \in (x_a - h, x_a + h\}}/2$ for all $x \in \R$. If $|x| \in [x_a - h, x_a + h]$, then
\[
f_{a,h}(x) = \frac{e^{\phi_{a,h}(x)}}{2a} \geq \frac{e^{\phi_{a,0}(x_a + h)}}{2a} = \frac{e^{-ah}}{2a} \geq \frac{1}{2ea}
\]
for all $h \in [0,1/a]$. Therefore,
\[
\int_{-\infty}^\infty (\psi_{a,0} - \psi_{a,h})^2 (f_{a,0} \wedge f_{a,h}) \geq \frac{a^2}{2} \int_{x_a - h}^{x_a + h} f_{a,0} \wedge f_{a,h} \geq \frac{ah}{2e},
\]
which establishes~\eqref{eq:fa-divergences}. Finally, taking
\[
h' := \Bigl(\frac{1}{2an}\Bigr)^{1/3} \wedge \frac{1}{a},
\]
we have $h' \in (0,1/a]$ and $\KL(P_{a,h'}^{\otimes n},P_{a,0}^{\otimes n}) = n\KL(P_{a,h'},P_{a,0}) \leq n\chi^2(P_{a,h'},P_{a,0}) < 1$, so by Le Cam's two-point lemma \citep[e.g.][Lemma~8.4]{samworth24modern},
\begin{align*}
\mathcal{M}_n(\{f_{a,0},f_{a,h'}\}) &\geq \inf_{\hat{\psi}_n \in \hat{\Psi}_n} \max_{h \in \{0,h'\}} \E_{f_{a,h}}\biggl(\int_{-\infty}^\infty (\widehat{\psi}_n - \psi_{a,h})^2 (f_{a,0} \wedge f_{a,h'})\biggr) \\
&\geq \frac{1}{16}\int_{-\infty}^\infty (\psi_{a,0} - \psi_{a,h'})^2 (f_{a,0} \wedge f_{a,h'}) \geq \frac{ah'}{32e} = \frac{1}{32e}\min\Bigl\{\Bigl(\frac{a^2}{2n}\Bigr)^{1/3},\,1\Bigr\}.
\end{align*}
On the other hand, given $\delta \in (0,1/4]$, let
\[
h'' := \Bigl\{\frac{1}{2an}\log\Bigl(\frac{1}{4\delta(1 - \delta)}\Bigr)\Bigr\}^{1/3} \wedge \frac{1}{a}.
\]
Then $\KL(P_{a,h''}^{\otimes n},P_{a,0}^{\otimes n}) \leq n\chi^2(P_{a,h''},P_{a,0}) < \log\bigl(\frac{1}{4\delta(1 - \delta)}\bigr)$, so by \citet[Corollary~6]{ma2025high},
\begin{align*}
\mathcal{M}_n(\delta,\{f_{a,0},f_{a,h''}\}) &\geq \inf_{\hat{\psi}_n \in \hat{\Psi}_n} \max_{h \in \{0,h''\}}\mathrm{Quantile}_{f_{a,h}}\biggl(1 - \delta, \int_{-\infty}^\infty (\widehat{\psi}_n - \psi_{a,h})^2 (f_{a,0} \wedge f_{a,h''})\biggr) \\
&\geq \frac{1}{4}\int_{-\infty}^\infty (\psi_{a,0} - \psi_{a,h''})^2 (f_{a,0} \wedge f_{a,h''}) \\
&\geq \frac{ah''}{8e} = \frac{1}{8e}\min\biggl(\Bigl\{\frac{a^2}{2n}\log\Bigl(\frac{1}{4\delta(1 - \delta)}\Bigr)\Bigr\}^{1/3},\,1\biggr).
\end{align*}
This concludes the proof.
\end{proof}

\begin{proof}[Proof of Proposition~\ref{prop:risk-inf}]
By Lemma~\ref{lem:fa-2pt} with $a = (2 \vee n)^{1/2}$, we have $\mathcal{M}_n(\mathcal{F}_2) \wedge \mathcal{M}_n(\delta,\mathcal{F}_2) \gtrsim 1$. For general $r \in (0,\infty)$, we have $\mathcal{F}_r = \{(r/2)^{1/2}f_0\bigl((r/2)^{1/2}\cdot\bigr) : f_0 \in \mathcal{F}_2\}$, so
\[
\mathcal{M}_n(\mathcal{F}_r) = \frac{r}{2} \mathcal{M}_n(\mathcal{F}_2) \gtrsim r, \qquad \mathcal{M}_n(\delta,\mathcal{F}_r) = \frac{r}{2} \mathcal{M}_n(\delta,\mathcal{F}_2) \gtrsim r.
\]
On the other hand, by considering the estimator $\hat{\psi}_n \equiv 0$, we obtain $\mathcal{M}_n(\delta,\mathcal{F}_r) \vee \mathcal{M}_n(\mathcal{F}_r) \leq r$.
\end{proof}

\begin{proof}[Proof of Theorem~\ref{thm:tail-growth-lower-bd}]
For both the minimax risk and minimax quantile, we obtain two separate lower bounds and take their maximum at the end of the proof to establish the desired conclusion. The first bound follows directly from Lemma~\ref{lem:fa-2pt}, while the second is derived by applying Le Cam's two-point lemma and its high-probability analogue \citep[Theorem~4 and Corollary~6]{ma2025high} to a perturbation of a different base density defined below.

\begin{figure}
\begin{center}
\subfigure{\includegraphics[width=0.45\textwidth]{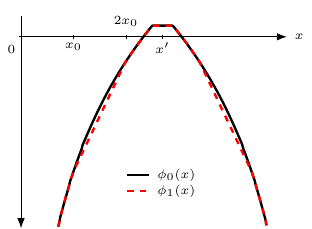}}
\hfill
\subfigure{\includegraphics[width=0.45\textwidth]{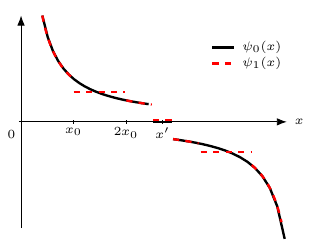}}
\end{center}
\caption{The log-densities $\phi_0$ and $\phi_1$ (left) and corresponding score functions $\psi_0$ and $\psi_1$ (right) used in the proof of Theorem~\ref{thm:tail-growth-lower-bd}.}
\label{fig:lower-bd-tails-c2}
\end{figure}

Given $\tilde{L} \geq 1$, let $b := (1 + \gamma)/(1 - \gamma) > 1$ and define $g_0 \colon \R \to \R$ by
\[
g_0(x) :=
\begin{cases}
\tilde{L}^{b+1}x_+^b \;&\text{if }x \leq \tilde{L}^{-1/\gamma} \\
\tilde{L}^{-1/\gamma} \;&\text{if }x \in (\tilde{L}^{-1/\gamma},x'], \\
g_0(2x' - x) \;&\text{if }x > x',
\end{cases}
\]
where $x' := \frac{1}{2}\tilde{L}^{1/\gamma} + (1 - \frac{1 - \gamma}{2})\tilde{L}^{-1/\gamma} \geq \tilde{L}^{-1/\gamma}$. Then $g_0$ is a continuous density on $\R$ whose score function~$\psi_0$ satisfies
\[
\psi_0(x) = \frac{g_0'(x)}{g_0(x)} =
\begin{cases}
\,\dfrac{b}{x} \;&\text{if }x \in (0,\tilde{L}^{-1/\gamma}) \\[6pt]
\,\,0 \;&\text{if }x \in (\tilde{L}^{-1/\gamma}, x'] \\[3pt]
-\psi_0(2x' - x) \;&\text{if } x \in (x',2x') \setminus \{2x' - \tilde{L}^{-1/\gamma}\};
\end{cases}
\]
see Figure~\ref{fig:lower-bd-tails-c2}.
% Moreover, the corresponding distribution function $G_0$, quantile function $G_0^{-1}$ and the derivative $J_0'$ of the density quantile function satisfy
% \begin{alignat*}{2}
% G_0(x) &= \gamma(\tilde{L}x)^{1/\gamma}& &\quad\text{if }x \leq \tilde{L}^{-1/\gamma} \\
% G_0^{-1}(u) &= \frac{1}{\tilde{L}}\Bigl(\frac{u}{\gamma}\Bigr)^\gamma & &\quad\text{if }u \in (0,\gamma \tilde{L}^{-1/\gamma}) \\
% J_0'(u) &= \psi_0\bigl(G_0^{-1}(u)\bigr) && =
% \begin{cases}
% \tilde{L}b \gamma^\gamma u^{-\gamma} \;&\text{if }u \in (0,\gamma \tilde{L}^{-1/\gamma}) \\
% 0 \;&\text{if }u \in (\gamma \tilde{L}^{-1/\gamma},1/2] \\
% -J_0'(1 - u) &\text{if }u \in (1/2,1). 
% \end{cases}
% \end{alignat*}
Thus, $\psi_0$ is decreasing on $(0,2x')$, so $g_0 \in \mathcal{F}$.  Next, for some $x_0 \in (0,\tilde{L}^{-1/\gamma}/2]$ that we will specify later on, let $\phi_0 := \log g_0$ and define $\phi_1 \colon (0,\infty) \to \R$ by
\[
\phi_1(x) := 
\begin{cases}
\dfrac{2x_0 - x}{x_0}\phi_0(x_0) + \dfrac{x - x_0}{x_0}\phi_0(2x_0) = \phi_0(x_0) + \dfrac{x - x_0}{x_0}\,b\log 2 \;&\text{if }x \in [x_0,2x_0] \\[5pt]
\,\phi_0(x) \;&\text{if }x \in (0,x'] \setminus (x_0,2x_0) \\[3pt]
\phi_1(2x' - x) \;&\text{if } x > x'.
\end{cases}
\]
% Non-symmetric version: let $\phi_1 \colon (0,\infty)$ be the function that agrees with $\phi_0 := \log g_0$ on $(0,\infty) \setminus [x_0,2x_0]$ and is affine on $[x_0,2x_0]$
Since $\phi_0$ is concave and continuous on $(0,2x')$, the same is true of $\phi_1$, and $\phi_1 \leq \phi_0$ on $(0,2x')$. Thus, $C_1 := \int_0^\infty e^{\phi_1} \leq \int_0^\infty e^{\phi_0} = 1$, and $g_1 := e^{\phi_1}/C_1 \in \mathcal{F}$ has an antisymmetric score function $\psi_1$ satisfying
\[
\psi_1(x) =
\begin{cases}
\dfrac{b\log 2}{x_0} \;&\text{if }x \in (x_0,2x_0) \\[6pt]
\,\psi_0(x) \;&\text{if }x \in (0,x'] \setminus [x_0,2x_0] \\[3pt]
-\psi_1(2x' - x) \;&\text{if } x \in (x',2x').
\end{cases}
\]
It follows that
% since $\log y \geq (y - 1)\log 2$ for $y \in [1,2]$, we have
\begin{align*}
1 - C_1 = \int_{-\infty}^\infty (e^{\phi_0} - e^{\phi_1}) &= 2\int_{x_0}^{2x_0} \biggl(\tilde{L}^{b+1}x^b - \exp\Bigl\{\log(\tilde{L}^{b+1}x_0^b) + \frac{x - x_0}{x_0}\,b\log 2\Bigr\}\biggr) \laplaced x \notag \\
&= 2\tilde{L}^{b+1}x_0^{b + 1} \int_1^2 (y^b - 2^{b(y - 1)}) \laplaced y \leq \frac{c_b}{2}(\tilde{L}x_0)^{2/(1 - \gamma)},
\end{align*}
where $c_b := 1 \vee \bigl(4\int_1^2 (y^b - 2^{b(y - 1)}) \laplaced y\bigr) > 0$ since $y > 2^{y-1}$ for $y \in (1,2)$. Therefore, if $\tilde{L} \geq c_b^{\gamma/2}$, then $C_1 = 1 - \frac{1}{2}c_b(\tilde{L}x_0)^{2/(1 - \gamma)} \geq 1 - \frac{1}{2}c_b\tilde{L}^{-2/\gamma} \geq 1/2$. 

Now given any $L \geq 2c_b^{\gamma/2}(1 - \gamma)^{\frac{1 - \gamma}{2}}b\log 2 =: L_\gamma$, define
\[
\tilde{L} := \frac{L}{2(1 - \gamma)^{\frac{1 - \gamma}{2}}b\log 2} \geq c_b^{\gamma/2}, \qquad r_\gamma := \frac{(4b\log 2)^2}{b - 1}.
\]
We claim that $g_0,g_1 \in \mathcal{J}_{\gamma,L,r_\gamma}$. Indeed, $g_0,g_1$ are symmetric about $x'$ by construction, so the corresponding distribution functions $G_0,G_1$ satisfy $G_k(x) = 1 - G_k(2x' - x) \leq 1/2$ for $x \in [0,x']$ and $k \in \{0,1\}$. If $x \in (0,\tilde{L}^{-1/\gamma})$, then
\begin{align*}
% (1 -\gamma)/2 = 1/(b + 1)
G_0(x)^{\frac{1 - \gamma}{2}} = \biggl(\int_0^x \tilde{L}^{b+1}z^b \laplaced z\biggr)^{\frac{1 - \gamma}{2}} = \tilde{L}x\Bigl(\frac{1 - \gamma}{2}\Bigr)^{\frac{1 - \gamma}{2}} = \frac{\tilde{L}b}{|\psi_0(x)|}\Bigl(\frac{1 - \gamma}{2}\Bigr)^{\frac{1 - \gamma}{2}}.
\end{align*}
For such $x$, we also have $0 < \psi_1(x) \leq (2\log 2)\psi_0(x)$, so
\[
G_1(x) = \frac{\int_{-\infty}^x e^{\phi_1}}{C_1} \leq \frac{\int_{-\infty}^x e^{\phi_0}}{C_1} \leq 2G_0(x) = (1 - \gamma)\Bigl(\frac{\tilde{L}b}{|\psi_0(x)|}\Bigr)^{\frac{2}{1 - \gamma}} \leq \Bigl(\frac{L}{|\psi_1(x)|}\Bigr)^{\frac{2}{1 - \gamma}}.
\] 
Moreover, $\psi_0(x) = \psi_1(x) = 0$ for $x \in (\tilde{L}^{-1/\gamma}, x']$. On the other hand, if $x \in (x',2x')$ and $k \in \{0,1\}$, then $G_k(x) = 1 - G_k(2x' - x) \geq 1/2$ and
\begin{align*}
% |\psi_0(x)| &= |\psi_0(2x' - x)| = \frac{\tilde{L}b}{G_0(2x' - x)^{(1 - \gamma)/2}}\Bigl(\frac{1 - \gamma}{2}\Bigr)^{\frac{1 - \gamma}{2}} = \frac{\tilde{L}b}{\bigl(1 - G_0(x)\bigr)^{(1 - \gamma)/2}}\Bigl(\frac{1 - \gamma}{2}\Bigr)^{\frac{1 - \gamma}{2}}, \\
|\psi_k(x)| &= |\psi_k(2x' - x)| \leq \frac{L}{G_k(2x' - x)^{(1 - \gamma)/2}} =
\frac{L}{\bigl(1 - G_k(x)\bigr)^{(1 - \gamma)/2}}.
\end{align*}
Therefore, for any $u \in (0,1)$ and $k \in \{0,1\}$, we deduce by taking $x = G_k^{-1}(u) \in (0,2x')$ that
\[
|J_k'(u)| = |\psi_k(x)| \leq \frac{L}{G_k(x)^{(1 - \gamma)/2} \wedge \bigl(1 - G_k(x)\bigr)^{(1 - \gamma)/2}} = \frac{L}{u^{(1 - \gamma)/2} \wedge (1 - u)^{(1 - \gamma)/2}}.
\]
Since 
% $0 < \psi_1\leq (2\log 2)\psi_0$ and 
$g_1 = e^{\phi_1}/C_1 \leq 2e^{\phi_0} = 2g_0$ on $(0,\tilde{L}^{-1/\gamma})$ and since $\psi_1 = \psi_0 = 0$ on $(\tilde{L}^{-1/\gamma},x']$, we also have
\begin{align*}
i(g_1) = 2\int_0^{\tilde{L}^{-1/\gamma}} \psi_1^2\,g_1 &\leq (4\log 2)^2 \int_0^{\tilde{L}^{-1/\gamma}} \psi_0^2\,g_0 = (8\log^2 2)\,i(g_0) \\
&= (4\log 2)^2 \int_0^{\tilde{L}^{-1/\gamma}} \tilde{L}^{b+1}b^2 x^{b - 2} \laplaced x = \frac{(4b\log 2)^2}{b - 1}.
\end{align*}
Thus, $g_0,g_1 \in \mathcal{J}_{\gamma,L,r_\gamma}$, as required. 

Next, $\phi_1 - \phi_0 - \log C_1 \leq \log 2$ on $(0,2x')$ and
\[
\int_{x_0}^{2x_0} (\phi_0 - \phi_1)^2 = \int_{x_0}^{2x_0} \biggl(b\log\frac{x}{x_0} - \Bigl(\frac{x}{x_0} - 1\Bigr)b\log 2\biggr)^2 \laplaced x 
% &= b^2 x_0\int_1^2 \bigl(\log y - (y - 1)\log 2\bigr)^2 \laplaced y \\
< \frac{b^2 x_0}{527}.
\]
Hence, denoting by $Q_k$ the probability measure with density $g_k$ for $k \in \{0,1\}$, we have
\begin{align*}
\chi^2(Q_1,Q_0) = \int_{-\infty}^\infty \Bigl(\frac{g_1}{g_0} - 1\Bigr)^2 g_0 &\leq 2\int_{x_0}^{2x_0} (e^{\phi_1 - \phi_0 - \log C_1} - 1)^2\,\tilde{L}^{b+1}x^b \laplaced x + \int_{-\infty}^\infty \Bigl(\frac{1}{C_1} - 1\Bigr)^2 g_0 \\
&\leq 8\tilde{L}^{b+1}(2x_0)^b \int_{x_0}^{2x_0} (\phi_1 - \phi_0 - \log C_1)^2 + \Bigl(\frac{1}{C_1} - 1\Bigr)^2 \\
&\leq 16\tilde{L}^{b+1}(2x_0)^b\biggl(x_0\log^2 C_1 + \int_{x_0}^{2x_0} (\phi_1 - \phi_0)^2\biggr) + 4(1 - C_1)^2 \\
% By concavity, -log(1 - \delta)/\delta \geq -\log(1/2)/(1/2) = 2\log 2 for \delta \in [0,1/2]
&< 16\tilde{L}^{b+1}(2x_0)^b\Bigl(x_0(4\log^2 2)(1 - C_1)^2 + \frac{b^2 x_0}{527}\Bigr) + c_b^2(\tilde{L}x_0)^{\frac{4}{1 - \gamma}} \\
&\leq 2^{b + 4}(\tilde{L}x_0)^{\frac{2}{1 - \gamma}}\Bigl(\log^2 2 + \frac{b^2}{527} + \frac{c_b^2}{2^{b + 4}}(\tilde{L}x_0)^{\frac{2}{1 - \gamma}}\Bigr) \leq c_b'(\tilde{L}x_0)^{{\frac{2}{1 - \gamma}}},
\end{align*}
where we can take $c_b' := 2^{b + 4}(\log^2 2 + b^2/527) + c_b$ since $(\tilde{L}x_0)^{2/(1 - \gamma)} \leq \tilde{L}^{-2/\gamma} \leq 1/c_b$. 
% by assumption

Thus, letting
\[
x_0 := \frac{1}{\tilde{L}}\Bigl\{\frac{1}{nc_b'}\log\Bigl(\frac{1}{4\delta(1 - \delta)}\Bigr)\Bigr\}^{(1 - \gamma)/2} \wedge \frac{\tilde{L}^{-1/\gamma}}{2},
\]
we have $2x_0 \leq \tilde{L}^{-1/\gamma}$, and 
\begin{align*}
\KL(Q_1^{\otimes n},Q_0^{\otimes n}) = n\KL(Q_1,Q_0) \leq n\chi^2(Q_1,Q_0) < \log\Bigl(\frac{1}{4\delta(1 - \delta)}\Bigr).
\end{align*}
Since $g_0(x) \wedge g_1(x) \geq g_0(x_0) \geq \tilde{L}^{b+1}x_0^b$  for $x \in [x_0,2x_0]$, it follows from \citet[Theorem~4 and Corollary~6]{ma2025high} that for any $\delta \in (0,1/4]$ and $L \geq L_\gamma$, we have
\begin{align*}
\mathcal{M}_n(\delta,\mathcal{J}_{\gamma,L,r_\gamma}) &\geq 
% \mathcal{M}_n(\{g_0,g_1\}) \geq 
\inf_{\hat{\psi}_n \in \hat{\Psi}_n}\max_{j \in \{0,1\}}\mathrm{Quantile}_{g_j}\biggl(1 - \delta, \int_{-\infty}^\infty (\widehat{\psi}_n - \psi_j)^2 (g_0 \wedge g_1)\biggr) \\
&\geq \frac{1}{4}\int_{-\infty}^\infty (\psi_0 - \psi_1)^2 (g_0 \wedge g_1) \\
&\geq \frac{1}{2}\int_{x_0}^{2x_0} \tilde{L}^{b+1}x_0^b\,\Bigl(\frac{b}{x} - \frac{b\log 2}{x_0} \Bigr)^2\,dx = \frac{b^2 (1 - 2\log^2 2)}{4}\,\tilde{L}^{b+1}x_0^{b - 1} \\
&\gtrsim_\gamma \min\biggl\{L^2\Bigl(\frac{\log(1/\delta)}{n}\Bigr)^\gamma,\,1\biggr\}.
\end{align*}
On the other hand, taking
\[
x_0 := \frac{1}{\tilde{L}}\Bigl(\frac{1}{nc_b'}\Bigr)^{(1 - \gamma)/2} \wedge \frac{\tilde{L}^{-1/\gamma}}{2},
\]
we have $\KL(Q_1^{\otimes n},Q_0^{\otimes n}) \leq 1$, so by Le Cam's two-point lemma \citep[e.g.][Lemma~8.4]{samworth24modern} and a similar calculation to that above,
\begin{align*}
\mathcal{M}_n(\mathcal{J}_{\gamma,L,r_\gamma}) \geq \inf_{\hat{\psi}_n \in \hat{\Psi}_n} \max_{j \in \{0,1\}} \E\biggl(\int_{-\infty}^\infty (\widehat{\psi}_n - \psi_j)^2 (g_0 \wedge g_1)\biggr) &\geq \frac{1}{16}\int_{-\infty}^\infty (\psi_0 - \psi_1)^2 (g_0 \wedge g_1) \\
&\gtrsim_\gamma \min\Bigl(\frac{L^2}{n^\gamma},\,1\Bigr).
\end{align*}

For general $L,r > 0$, let $r'' := r \wedge (r_\gamma L^2/L_\gamma^2)$ and $\lambda := (r''/r_\gamma)^{1/2}$. Then $L/\lambda \geq L_\gamma$ and $\mathcal{J}_{\gamma,L,r} \supseteq \mathcal{J}_{\gamma,L,r''} = \bigl\{x \mapsto \lambda f_0(\lambda x) : f_0 \in \mathcal{J}_{\gamma,\lambda^{-1}L,r_\gamma}\bigr\}$, so
\begin{align}
\mathcal{M}_n(\delta,\mathcal{J}_{\gamma,L,r}) = \lambda^2\mathcal{M}_n(\delta,\mathcal{J}_{\gamma,\lambda^{-1}L,r_\gamma}) &\gtrsim_\gamma \lambda^2 \min\biggl\{\frac{L^2}{\lambda^2}\biggl(\frac{\log(1/\delta)}{n}\biggr)^\gamma,\,1\biggr\} \notag \\
\label{eq:gamma-lower-bd-2}
&\gtrsim_\gamma \min\biggl\{L^2\biggl(\frac{\log(1/\delta)}{n}\biggr)^\gamma,\,\bar{r}\biggr\},
\end{align}
and
\begin{align}
\label{eq:gamma-lower-bd-expectation-2}
\mathcal{M}_n(\mathcal{J}_{\gamma,L,r}) = \lambda^2\mathcal{M}_n(\mathcal{J}_{\gamma,\lambda^{-1}L,r_\gamma}) &\gtrsim_\gamma \min\Bigl\{\frac{L^2}{n^\gamma},\,\bar{r}\Bigr\}.
\end{align}

On the other hand, let $\breve{r} := r \wedge (2^{1 - \gamma}L^2)$ and $\lambda := (\breve{r}/2)^{1/2}$. Then $a := (L/\lambda)^{1/\gamma} \geq \sqrt{2}$ and $\mathcal{J}_{\gamma,L,r} \supseteq \mathcal{J}_{\gamma,L,\breve{r}} = \bigl\{x \mapsto \lambda f_0(\lambda x) : f_0 \in \mathcal{J}_{\gamma,a^\gamma,2}\bigr\}$, so by Lemma~\ref{lem:fa-2pt},
\begin{align}
\mathcal{M}_n(\delta,\mathcal{J}_{\gamma,L,r}) \geq \lambda^2\mathcal{M}_n(\delta,\mathcal{J}_{\gamma,a^\gamma,2}) &\geq \frac{\lambda^2}{8e}\min\biggl(\Bigl\{\frac{L^{2/\gamma}}{2\lambda^{2/\gamma}n}\log\Bigl(\frac{1}{4\delta(1 - \delta)}\Bigr)\Bigr\}^{1/3},\,1\biggr) \notag \\
\label{eq:gamma-lower-bd-1}
&\gtrsim_\gamma \bar{r}\min\biggl\{\Bigl(\frac{L^{2/\gamma}}{\bar{r}^{1/\gamma}} \cdot \frac{\log(1/\delta)}{n}\Bigr)^{1/3},\,1\biggr\}.
\end{align}
Similarly,
\begin{align}
\label{eq:gamma-lower-bd-expectation-1}
\mathcal{M}_n(\mathcal{J}_{\gamma,L,r}) \geq \lambda^2\mathcal{M}_n(\mathcal{J}_{\gamma,a^\gamma,2}) &\gtrsim_\gamma \bar{r}\min\Bigl\{\Bigl(\frac{L^{2/\gamma}}{\bar{r}^{1/\gamma}} \cdot \frac{1}{n}\Bigr)^{1/3},\,1\Bigr\}.
\end{align}

Finally, taking the maximum of the bounds in~\eqref{eq:gamma-lower-bd-2} and~\eqref{eq:gamma-lower-bd-1}, we obtain
\begin{align*}
\mathcal{M}_n(\delta,\mathcal{J}_{\gamma,L,r}) &\gtrsim_\gamma \max\biggl\{\bar{r}\biggl(\frac{L^{2/\gamma}}{\bar{r}^{1/\gamma}} \cdot \frac{\log(1/\delta)}{n}\biggr)^{1/3},\,\bar{r}\biggl(\frac{L^{2/\gamma}}{\bar{r}^{1/\gamma}} \cdot \frac{\log(1/\delta)}{n}\biggr)^\gamma\biggr\} \wedge \bar{r} \\
&= \bar{r}\biggl\{\biggl(\frac{L^{2/\gamma}}{\bar{r}^{1/\gamma}} \cdot \frac{\log(1/\delta)}{n}\biggr)^{\gamma \wedge \frac{1}{3}},\,1\biggr\},
\end{align*}
and similarly combining~\eqref{eq:gamma-lower-bd-expectation-2} and~\eqref{eq:gamma-lower-bd-expectation-1} yields
\[
\mathcal{M}_n(\mathcal{J}_{\gamma,L,r}) \gtrsim_\gamma \max\biggl\{\bar{r}\Bigl(\frac{L^{2/\gamma}}{\bar{r}^{1/\gamma}} \cdot \frac{1}{n}\Bigr)^{1/3},\,\frac{L^2}{n^\gamma}\biggr\} \wedge \bar{r} = \bar{r}\min\biggl\{\Bigl(\frac{L^{2/\gamma}}{\bar{r}^{1/\gamma}} \cdot \frac{1}{n}\Bigr)^{\gamma \wedge \frac{1}{3}},\,1\biggr\}. \qedhere
\]
\end{proof}

\subsubsection{Proofs for Section~\ref{subsec:holder}}

Turning now to the proof of Theorem~\ref{thm:holder-lower-bd}, we remark that the construction in the case $\beta = 1$ case is very similar to that in Lemma~\ref{lem:fa-2pt}.

\begin{proof}[Proof of Theorem~\ref{thm:holder-lower-bd}]
Let $\tilde{L} \geq 1$. Given $h \in (0,\tilde{L}^{-1/\beta})$, let 
\[
m := \Bigl\lceil\Bigl(\frac{1}{\tilde{L}^{1/\beta}h}\Bigr)^{\beta - 1}\Bigr\rceil,
\]
so that 
\begin{equation}
\label{eq:lower-bd-holder-height}
\tilde{L}^{1/\beta} = \tilde{L}h^{\beta - 1}\Bigl(\frac{1}{\tilde{L}^{1/\beta}h}\Bigr)^{\beta - 1} \leq \tilde{L}h^{\beta - 1}m \leq \tilde{L}h^{\beta - 1}\biggl(\Bigl(\frac{1}{\tilde{L}^{1/\beta}h}\Bigr)^{\beta - 1} + 1\biggr) \leq 2\tilde{L}^{1/\beta}.
\end{equation}
Noting that $m = 1$ if and only if $\beta = 1$, define the intervals
\[
I_j := 
\begin{cases}
(\tilde{L}^{1/\beta}, \tilde{L}^{1/\beta} + h) &\;\;\text{if }j = 1\\
\biggl(\tilde{L}^{1/\beta} + \Bigl\{\Bigl(\dfrac{j}{2}\Bigr)^{1/(\beta - 1)} -\dfrac{1}{2}\Bigr\}2h,\;\tilde{L}^{1/\beta} + \Bigl(\dfrac{j}{2}\Bigr)^{1/(\beta - 1)}2h\biggr) &\;\;\text{if }j \in \{2,\dotsc,m\},
\end{cases}
\]
which are of length $h$ and disjoint because $\max\bigl\{1,\bigl((j - 1)/2\bigr)^{1/(\beta-1)} + 1/2\bigr\} \leq (j/2)^{1/(\beta-1)}$ for $j \geq 2$ and $\beta \in (1,2]$. Moreover, when $m =1$, we have $\sup I_m - \tilde{L}^{1/\beta} = h \leq \tilde{L}^{-1/\beta}$. If instead $m \geq 2$, then $\beta > 1$ and by~\eqref{eq:lower-bd-holder-height},
\begin{equation}
\label{eq:lower-bd-holder-width}
\sup I_m - \tilde{L}^{1/\beta} = \Bigl(\frac{m}{2}\Bigr)^{1/(\beta - 1)}2h \leq \frac{2}{\tilde{L}^{1/\beta}}.
\end{equation}

\begin{figure}
\begin{center}
\subfigure{\includegraphics[width=0.32\textwidth]{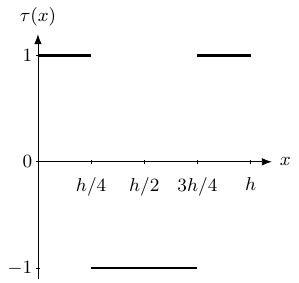}}
\hfill
\subfigure{\includegraphics[width=0.32\textwidth]{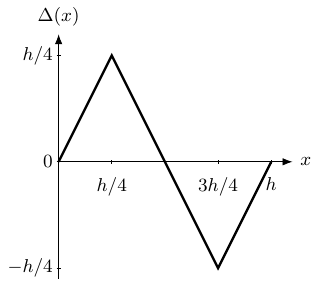}}
\hfill
\subfigure{\includegraphics[width=0.32\textwidth]{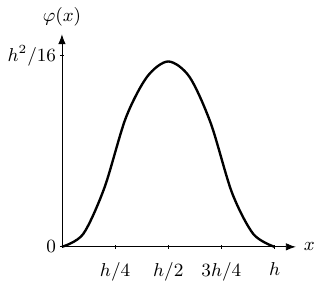}}
\end{center}
\caption{Plots of the functions $\tau$, $\Delta$ and $\varphi$ on the interval $[0,h]$.}
\label{fig:tauDeltaPhi}
\end{figure}

Next, let
\[
\tau := \Ind_{(0,h/4]} - \Ind_{(h/4,3h/4]} + \Ind_{(3h/4,h]} = \Ind_{(0,h]} - 2\Ind_{(h/4,3h/4]}.
\]
Defining $\Delta,\varphi \colon \R \to \R$ by $\Delta(x) := \int_0^x \tau$ and $\varphi(x) := \int_0^x \Delta$ (see Figure~\ref{fig:tauDeltaPhi}), we have $\Ind_{(0,h]}(x) + \zeta\tau(x) \geq 0$ for $x \in [0,h]$ and $\zeta \in \{0,1\}$, and $\Delta(x) = \varphi(x) = 0$ for $x \in \R \setminus [0,h]$. For $\zeta = (\zeta_1,\dotsc,\zeta_m) \in \{0,1\}^m$, define $\psi_\zeta \colon \R \to \R$ to be the decreasing continuous function with $\psi_\zeta(0) := 0$ and right derivative given by
\[
\psi_\zeta^{(\mathrm{R})}(x) := 
\begin{cases}
-\tilde{L}h^{\beta - 2}\sum_{j=1}^m (\Ind_{(0,h]} + \zeta_j\tau)(x - \inf I_j) &\text{for }x \geq 0 \\
\psi_\zeta^{(\mathrm{R})}(-x) &\text{for }x < 0.
\end{cases}
\]
Then $\psi_\zeta^{(\mathrm{R})}(x) = 0$ whenever $|x| \notin  \bigcup_{j=1}^m\,I_j =: E_m$, and
\[
\psi_{(0,\ldots,0)}(x) = -\tilde{L}h^{\beta - 2}\int_0^x \Ind_{E_m}
\]
for $x \geq 0$, as illustrated in Figure~\ref{fig:perturb}. Moreover, every $\psi_\zeta$ is an odd decreasing function with
\[
\psi_\zeta(x) = 
\begin{cases}
\psi_{(0,\ldots,0)}(x) -  \tilde{L}h^{\beta - 2}\sum_{j=1}^m \zeta_j \Delta(x - \inf I_j) \;\;&\text{for }x \in [0,\sup I_m] \\[3pt]
-\tilde{L}h^{\beta-1}m \;\;&\text{for }x > \sup I_m
\end{cases}
\]
and antiderivative $\phi_\zeta \colon \R \to (-\infty,0]$ given by $\phi_\zeta(x) := \int_0^x \psi_\zeta$, which is therefore symmetric and concave. Since $\psi_\zeta(x) \leq 0$ for $x \geq 0$ with equality when $|x| \leq \tilde{L}^{1/\beta}$, and $\psi_\zeta(x) \geq -\tilde{L}h^{\beta - 1}m$ for $x \geq 0$ with equality when $x \geq \sup I_m$, we have
\begin{align}
\label{eq:phi-bound-exp}
-\tilde{L}h^{\beta-1}m(|x| - \tilde{L}^{1/\beta})_+ \leq \phi_\zeta(x) \leq -\tilde{L}h^{\beta-1}m(|x| - \sup I_m)_+
\end{align}
for all $x \in \R$. Therefore, by~\eqref{eq:lower-bd-holder-height} and \eqref{eq:lower-bd-holder-width}, $C_\zeta := \log\bigl(\int_{-\infty}^\infty e^{\phi_\zeta}\bigr)$ satisfies
\begin{align}
2\tilde{L}^{1/\beta} + \frac{2}{\tilde{L}h^{\beta-1} m} &= \int_{-\infty}^\infty e^{-\tilde{L}h^{\beta-1}m(|x| - \tilde{L}^{1/\beta})_+} \laplaced x \leq e^{C_\zeta} \leq \int_{-\infty}^\infty e^{-\tilde{L}h^{\beta-1} m(|x| - \sup I_m)_+} \laplaced x \notag \\ 
\label{eq:expC-bd}
&\leq 2\Bigl(\tilde{L}^{1/\beta} + \frac{2}{\tilde{L}^{1/\beta}} + \frac{1}{\tilde{L}h^{\beta-1}m}\Bigr) \leq 2\Bigl(\tilde{L}^{1/\beta} + \frac{3}{\tilde{L}^{1/\beta}}\Bigr) \leq 8\tilde{L}^{1/\beta}.
\end{align}
Together with~\eqref{eq:lower-bd-holder-width}, this implies that $f_\zeta := e^{\phi_\zeta - C_\zeta}$ is a symmetric log-concave density with 
\begin{align}
i(f_\zeta) = \int_{-\infty}^\infty \psi_\zeta^2 f_\zeta \leq \frac{2(\tilde{L}h^{\beta - 1}m)^2}{e^{C_\zeta}}\int_{\tilde{L}^{1/\beta}}^\infty e^{\phi_\zeta} &\leq \frac{(\tilde{L}h^{\beta - 1}m)^2}{\tilde{L}^{1/\beta}} \int_{\tilde{L}^{1/\beta}}^\infty e^{-\tilde{L}h^{\beta - 1}m(x - \sup I_m)_+} \laplaced x \notag \\
\label{eq:holder-fisher-upper-bd}
&\leq \frac{(\tilde{L}h^{\beta - 1}m)^2}{\tilde{L}^{1/\beta}}\Bigl(\frac{2}{\tilde{L}^{1/\beta}} + \frac{1}{\tilde{L}h^{\beta-1}m}\Bigr) \leq 10.
\end{align}
% \red{and}
% \begin{align*}
% i(f_\zeta) \geq \frac{2(\tilde{L}h^{\beta - 1}m)^2}{e^{C_\zeta}}\int_{\sup I_m}^\infty e^{\phi_\zeta} &\geq \frac{2(\tilde{L}h^{\beta - 1}m)^2}{6\tilde{L}^{1/\beta}} \cdot \frac{1}{\tilde{L}h^{\beta - 1}m} \geq \frac{1}{3}.
% \end{align*}
Moreover, if $|x| \leq \sup I_m$, then by~\eqref{eq:phi-bound-exp},
\begin{align}
\frac{e^{-4}}{8\tilde{L}^{1/\beta}} \leq \frac{\exp\bigl(-\tilde{L}h^{\beta-1}m(\sup I_m - \tilde{L}^{1/\beta})\bigr)}{8\tilde{L}^{1/\beta}} &\leq \exp\bigl(\phi_\zeta(\sup I_m) - C_\zeta\bigr) \notag \\
\label{eq:fZeta-bd}
&\leq f_\zeta(x) \leq \frac{1}{e^{C_\zeta}} \leq \frac{1}{2\tilde{L}^{1/\beta}}.
\end{align}

\begin{figure}
\begin{center}
\includegraphics[scale=0.8]{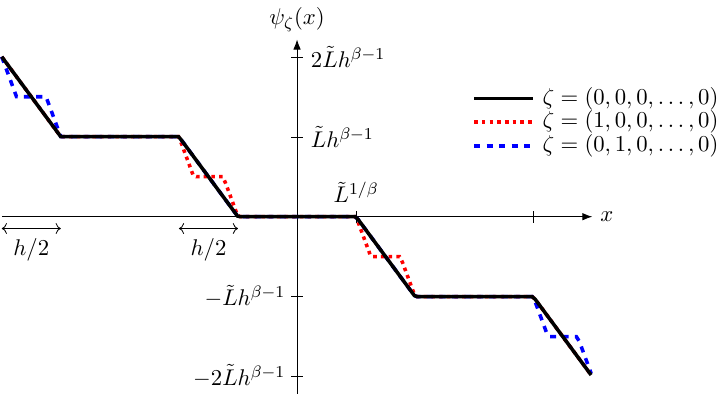}
\end{center}
\caption{Illustration of the functions $\psi_\zeta$ defined in the proof of Theorem~\ref{thm:holder-lower-bd} for various values of $\zeta \in \{0,1\}^m$. }
\label{fig:perturb}
\end{figure}

We now verify that $f_\zeta \in \mathcal{F}_{\beta,6\tilde{L},r}$ with $r = 10$ for every $\zeta \in \{0,1\}^m$. Let $x \in \R$. First, if $\beta = 1$ and $t \in \mathbb{R}$, then $|\psi_\zeta(x + t) - \psi_\zeta(x)| \leq 2\tilde{L}$. On the other hand, if $\beta \in (1,2]$ and $t \in [0,h]$, then
\[
|\psi_\zeta(x + t) - \psi_\zeta(x)| \leq 2\tilde{L}h^{-(2-\beta)}t \leq 2\tilde{L}t^{\beta-1}.
\]
Further, for $j \in [m]$, we have
\[
\sup_{x \in I_j}\psi_\zeta(x)-\inf_{x \in I_j}\psi_\zeta(x) = \psi_\zeta(\inf I_j) - \psi_\zeta(\sup I_j) =\tilde{L}h^{\beta-1}
\]
and $\inf I_{j+1}-\sup I_j$ is increasing in $j \in [m-1]$. Since $\psi_\zeta^{(\mathrm{R})} = 0$ on $\bigl(E_m \cup(-E_m)\bigr)^c$, it follows that for $t > h$, we have 
\begin{align*}
|\psi_\zeta(x + t) - \psi_\zeta(x)| 
&\leq \tilde{L}h^{\beta - 1}\sum_{j=1}^m \bigl(\mathbbm{1}_{\{I_j \cap (x,x+t) \neq \emptyset\}} + \mathbbm{1}_{\{(-I_j) \cap (x,x+t) \neq \emptyset\}}\bigr) \\
&\leq \tilde{L}h^{\beta - 1}\biggl\{\sum_{j=1}^m \bigl(\mathbbm{1}_{\{I_j \cap (\tilde{L}^{1/\beta},\tilde{L}^{1/\beta} + t)\neq\emptyset\}} + \mathbbm{1}_{\{(-I_j) \cap (-\tilde{L}^{1/\beta} - t,-\tilde{L}^{1/\beta}) \neq \emptyset\}}\bigr) + 2\biggr\} \\
% Check that this holds even when t/h \leq (3/2)^{1/(\beta - 1)}
&\leq \tilde{L}h^{\beta - 1}\Bigl(\frac{4(t+h)^{\beta-1}}{(2h)^{\beta-1}} + 2\Bigr) \leq \tilde{L}t^{\beta-1}(4 + 2) = 6\tilde{L}t^{\beta-1}.
\end{align*}
Since $f_\zeta$ is symmetric and~\eqref{eq:holder-fisher-upper-bd} holds, we have $f_\zeta \in \mathcal{F}_{\beta,6\tilde{L},10}$, as claimed.

For $\zeta = (\zeta_1,\dotsc,\zeta_m)$ and $\xi = (\xi_1,\dotsc,\xi_m) \in \{0,1\}^m$, let $S := \{j \in [m]: \zeta_j \neq \xi_j\}$. Since $\Delta(x) = 0$ for $x \geq h$ and $\int_0^h \Delta = 0$, we have $\psi_\zeta(x) = \psi_\xi(x)$ and $\phi_\zeta(x) = \phi_\xi(x)$ for all $x \in [0,\infty) \setminus \bigcup_{j \in S} I_j$. For $j \in S$, we have
\begin{align}
\label{eq:bulk-assouad-1}
\int_{I_j} (\psi_\zeta - \psi_\xi)^2 &= (\tilde{L}h^{\beta - 2})^2\int_0^h \Delta^2 = \frac{\tilde{L}^2 h^{2\beta-1}}{48}, \\
\label{eq:bulk-assouad-2}
\int_{I_j} (\phi_\zeta - \phi_\xi)^2 &= (\tilde{L}h^{\beta - 2})^2\int_0^h \varphi^2 = \frac{23\tilde{L}^2 h^{2\beta + 1}}{15360}.
\end{align}
Moreover, since $|\log(x) - \log(x')| \leq |x - x'|$ for $x,x' \geq 1$ and $|e^x - e^{x'}| \leq |x - x'|$ for $x,x' \leq 0$,~\eqref{eq:expC-bd} and~\eqref{eq:lower-bd-holder-height} imply that
\begin{align}
|C_\zeta - C_\xi| &= \bigl|\log\bigl(e^{C_\zeta}/(2\tilde{L}^{1/\beta})\bigr) - \log\bigl(e^{C_{\xi}}/(2\tilde{L}^{1/\beta})\bigr)\bigr| \leq \frac{1}{2\tilde{L}^{1/\beta}}\int_{-\infty}^\infty |e^{\phi_\zeta} - e^{\phi_\xi}| \notag \\
\label{eq:C-xi-bd}
&\leq \frac{1}{\tilde{L}^{1/\beta}}\sum_{j \in S} \int_{I_j} |\phi_\zeta - \phi_\xi| = \frac{|S|\tilde{L}h^{\beta - 2}}{\tilde{L}^{1/\beta}}\int_0^h \varphi 
= \frac{|S|\tilde{L}^{1 - 1/\beta}h^{\beta + 1}}{32} \\
&\leq \frac{m\tilde{L}^{1 - 1/\beta}h^{\beta + 1}}{32} \leq \frac{h^2}{16} \leq \frac{\tilde{L}^{-2/\beta}}{16} \leq \frac{1}{16}. \notag
\end{align}
Since $x \mapsto (e^x - 1)/x$ is increasing on $(0,\infty)$, we have %$|e^{C_\xi - C_\zeta} - 1| \leq 16(e^{1/16} - 1)\,|C_\xi - C_\zeta|$, 
% \leq (e^{1/16} - 1)\,|S|\tilde{L}^{1 - 1/\beta}h^{\beta + 1}/2
%so
\begin{align}
\int_{(0,\infty) \setminus \cup_{j \in S} I_j} (e^{\phi_\zeta - \phi_\xi + C_\xi - C_\zeta} - 1)^2\,f_\xi &= \int_{(0,\infty) \setminus \cup_{j \in S}I_j} (e^{C_\xi - C_\zeta} - 1)^2\,f_\xi \notag \\
&\leq 16^2(e^{1/16} - 1)^2(C_\xi - C_\zeta)^2\int_{(0,\infty) \setminus \cup_{j \in S}I_j} f_\xi. \nonumber
% (|S|\tilde{L}^{1 - 1/\beta}h^{\beta + 1})^2
\end{align}
By~\eqref{eq:phi-bound-exp}, $|\phi_\zeta - \phi_\xi + C_\xi - C_\zeta| \leq \tilde{L}h^{\beta-1}m\,(\sup I_m - L^{1/\beta}) + 1/16 \leq 65/16$ on $E_m$, so for $j \in S$, it follows from~\eqref{eq:fZeta-bd} and~\eqref{eq:bulk-assouad-2} that
\begin{align}
\int_{I_j} (e^{\phi_\zeta - \phi_\xi + C_\xi - C_\zeta} - 1)^2\,f_\xi &\leq \biggl(\frac{16(e^{65/16} - 1)}{65} \biggr)^2\int_{I_j} (\phi_\zeta - \phi_\xi + C_\xi - C_\zeta)^2 \,f_\xi \notag \\
&< 396\int_{I_j}\bigl((\phi_\xi - \phi_\zeta)^2 + (C_\xi - C_\zeta)^2\bigr)\,f_\xi \notag \\
\label{eq:chi-squared-1}
& \leq 396\,\biggl(\frac{23\tilde{L}^2 h^{2\beta + 1}}{15360 \cdot 2\tilde{L}^{1/\beta}} + (C_\xi - C_\zeta)^2\int_{I_j}f_{\xi}\biggr).
\end{align}
Let $P_\zeta,P_\xi$ denote the probability measures with densities $f_\zeta,f_\xi$ respectively. Then by~\eqref{eq:C-xi-bd} and~\eqref{eq:chi-squared-1},
\begin{align}
\KL(P_\zeta,P_\xi) \leq \chi^2(P_\zeta,P_\xi) = \int_{-\infty}^\infty \Bigl(\frac{f_\zeta}{f_\xi} - 1\Bigr)^2 f_\xi &= 2\int_0^\infty (e^{\phi_\zeta - \phi_\xi + C_\xi - C_\zeta} - 1)^2\,f_\xi \notag \\ 
% 32^2 = 1024 < 15360 \times 2 / 23
\label{eq:minimax-chi-squared}
&< \frac{396}{2^8}\bigl(|S|\tilde{L}^{2 - 1/\beta}h^{2\beta + 1} + |S|^2\tilde{L}^{2 - 2/\beta}h^{2\beta + 2}\bigr).
\end{align}
In particular, if $|S| = 1$, in which case we write $\zeta \sim \xi$ (and $\zeta \sim_j \xi$ if $S = \{j\}$ for $j \in [m]$), then by Pinsker's inequality,
\begin{align}
\label{eq:minimax-chi-squared-1}
\TV^2(P_\zeta^{\otimes n},P_\xi^{\otimes n}) \leq \frac{n\KL(P_\zeta,P_\xi)}{2} < \frac{396}{2^9}\,n\tilde{L}^{2 - 1/\beta}h^{2\beta + 1}(1 + \tilde{L}^{-1/\beta}h) \leq \frac{396}{2^8}\,n\tilde{L}^{2 - 1/\beta}h^{2\beta + 1}.
\end{align}

We now establish two separate lower bounds on the minimax $(1 -\delta)$th quantile via different choices of~$h$ and techniques from~\citet{ma2025high}. First, let
\[
h := \frac{1}{2\tilde{L}^{1/\beta}}\biggl(\Bigl(\frac{\tilde{L}^{2/\beta}}{n}\Bigr)^{1/(2\beta + 1)} \wedge 1\biggr),
%\blue{,\qquad 
%m := \Bigl\lceil\Bigl(\frac{1}{\tilde{L}^{1/\beta}h}\Bigr)^{\beta - 1}\Bigr\rceil}.
\]
so that $\TV(P_\zeta^{\otimes n},P_\xi^{\otimes n}) \leq 1/2$ by~\eqref{eq:minimax-chi-squared-1}. Since $\bigl\{f_\zeta : \zeta \in \{0,1\}^m\bigr\} \subseteq \mathcal{F}_{\beta,6\tilde{L},10}$, it follows from~\eqref{eq:fZeta-bd}, Assouad's lemma~\citep[Lemma~9.7]{samworth24modern},~\eqref{eq:bulk-assouad-1},~\eqref{eq:minimax-chi-squared-1} and~\eqref{eq:lower-bd-holder-height} that
\begin{align}
\mathcal{M}_n(\mathcal{F}_{\beta,6\tilde{L},10}) &\geq \inf_{\hat{\psi}_n \in \hat{\Psi}_n} \max_{\zeta \in \{0,1\}^m} \E_{f_\zeta}\biggl(\sum_{j=1}^m \int_{(-I_j) \cup I_j} \frac{(\widehat{\psi}_n - \psi_\zeta)^2}{8e^4\tilde{L}^{1/\beta}}\biggr) \notag \\
&\geq \frac{1}{4} \sum_{j=1}^m \min_{\zeta,\xi \in \{0,1\}^m:\zeta \sim_j \xi}\,\int_{(-I_j) \cup I_j} \frac{(\psi_\zeta - \psi_\xi)^2}{8e^4\tilde{L}^{1/\beta}} \Bigl(1 - \max_{\zeta,\xi \in \{0,1\}^m : \zeta \sim \xi} \TV(P_\zeta^{\otimes n},P_\xi^{\otimes n})\Bigr) \notag \\
&\geq \frac{m}{4 \times 4e^4\tilde{L}^{1/\beta}}\frac{\tilde{L}^2 h^{2\beta-1}}{48} \Bigl(1 - \frac{1}{2}\Bigr) \notag \\
\label{eq:lower-bd-holder-unweighted}
&\geq \frac{\tilde{L}h^\beta}{16e^4 \times 48} \cdot \frac{1}{2} \geq \frac{1}{6144e^4}\Bigl\{\Bigl(\frac{\tilde{L}^{2/\beta}}{n}\Bigr)^{\beta/(2\beta + 1)} \wedge 1\Bigr\}.
\end{align}
Moreover, by~\eqref{eq:bulk-assouad-1} and~\eqref{eq:lower-bd-holder-height},
\begin{align}
\label{eq:lower-bd-holder-diameter}
% d = D^2 in \citet[Theorem~8]{ma2025high}
d := \max_{\zeta,\xi \in \{0,1\}^m} \sum_{j=1}^m \int_{(-I_j) \cup I_j} \frac{(\psi_\zeta - \psi_\xi)^2}{8e^4\tilde{L}^{1/\beta}} = \frac{m\tilde{L}^2 h^{2\beta-1}}{192e^4\tilde{L}^{1/\beta}} \in \Bigl[\frac{\tilde{L}h^\beta}{192e^4},\frac{\tilde{L}h^\beta}{96e^4}\Bigr].
\end{align}
Hence, taking $\epsilon = 1/12$ and
\[
0 < \delta \leq \frac{1}{136} < \frac{\frac{1}{64} - \epsilon^2}{(1 + \epsilon)^2} \leq 
% \frac{\frac{\tilde{L}h^\beta}{4820} - \epsilon^2\frac{\tilde{L}h^\beta}{72}}{(1 + \epsilon)^2\frac{\tilde{L}h^\beta}{72}} \leq
\frac{\frac{\tilde{L}h^\beta}{6144e^4} - \epsilon^2 d}{(1 + \epsilon)^2 d}
\]
in \citet[Theorem~8]{ma2025high}, we deduce from~\eqref{eq:fZeta-bd} that
\begin{align*}
\mathcal{M}_n(\delta,\mathcal{F}_{\beta,6\tilde{L},10}) &\geq \inf_{\hat{\psi}_n \in \hat{\Psi}_n} \max_{\zeta \in \{0,1\}^m} \mathrm{Quantile}_{f_\zeta}\biggl(1 - \delta,\,\int_{-\infty}^\infty (\widehat{\psi}_n - \psi_\zeta)^2 f_\zeta\biggr) \\
&\geq \inf_{\hat{\psi}_n \in \hat{\Psi}_n} \max_{\zeta \in \{0,1\}^m} \mathrm{Quantile}_{f_\zeta}\biggl(1 - \delta,\,\sum_{j=1}^m \int_{(-I_j) \cup I_j} \frac{(\widehat{\psi}_n - \psi_\zeta)^2}{8e^4\tilde{L}^{1/\beta}}\biggr) \\
&\geq  \epsilon^2 d \geq \frac{\tilde{L}h^\beta}{144 \times 192e^4} \geq \frac{1}{576 \times 192e^4}\biggl\{\Bigl(\frac{\tilde{L}^{2/\beta}}{n}\Bigr)^{\beta/(2\beta + 1)} \wedge 1\biggr\}.
\end{align*}
Since the function $x \mapsto x^3 + 3x^2(1 - x) =: H(x)$ satisfies $(H \circ H \circ H \circ H)(1/4) < 1/136$, taking $k = 4$ in \citet[Proposition~9]{ma2025high} shows that for every $\delta \in (0,1/4]$, we have
\begin{align}
\label{eq:lower-bd-holder-quantile-1}
\mathcal{M}_n(\delta,\mathcal{F}_{\beta,6\tilde{L},10}) \geq \frac{1}{4^4 \times 3^2 \times 576 \times 192 e^4}\Bigl\{\Bigl(\frac{\tilde{L}^{2/\beta}}{n}\Bigr)^{\beta/(2\beta + 1)} \wedge 1\Bigr\}.
% = \frac{1}{12^8}\Bigl\{\Bigl(\frac{\tilde{L}^{2/\beta}}{n}\Bigr)^{\beta/(2\beta + 1)} \wedge 1\Bigr\}
\end{align}

Now for $\delta \in (0,1/4]$, we instead define
\[
h := \Bigl\{\frac{2^7}{1188n\tilde{L}}\log\Bigl(\frac{1}{4\delta(1 - \delta)}\Bigr)\Bigr\}^{1/(\beta + 2)} \wedge \frac{1}{2\tilde{L}^{1/\beta}}. %\blue{, \qquad 
%m := \Bigl\lceil\Bigl(\frac{1}{\tilde{L}^{1/\beta}h}\Bigr)^{\beta - 1}\Bigr\rceil.}
\]
Fix $\zeta,\xi \in \{0,1\}^m$ with $S = [m]$. Then by~\eqref{eq:minimax-chi-squared} and~\eqref{eq:lower-bd-holder-height},
\begin{align*}
\KL(P_\zeta^{\otimes n},P_\xi^{\otimes n}) &< \frac{396}{2^8}\,n\bigl(m\tilde{L}^{2 - 1/\beta}h^{2\beta + 1} + m^2\tilde{L}^{2 - 2/\beta}h^{2\beta + 2}\bigr) \\
&\leq \frac{396n}{2^8}(2\tilde{L}h^{\beta + 2} + 4h^4) %= \frac{396 n\tilde{L}h^{\beta + 2}}{2^7}(1 + 2\tilde{L}^{-1}h^{2-\beta}) \\
\leq \frac{396n\tilde{L}h^{\beta + 2}}{2^7}(1 + 2\tilde{L}^{-2/\beta}) \\
&\leq \frac{1188n\tilde{L}h^{\beta + 2}}{2^7} \leq \log\Bigl(\frac{1}{4\delta(1 - \delta)}\Bigr).
\end{align*}
Arguing as in~\eqref{eq:lower-bd-holder-unweighted}, we deduce from \citet[Theorem~4 and Corollary~6]{ma2025high} and~\eqref{eq:lower-bd-holder-diameter} that
\begin{align}
\mathcal{M}_n(\delta,\mathcal{F}_{\beta,6\tilde{L},10}) &\geq \inf_{\hat{\psi}_n \in \hat{\Psi}_n}\max_{\theta \in \{\zeta,\xi\}}\mathrm{Quantile}_{f_\theta}\biggl(1 - \delta,\,\sum_{j=1}^m \int_{(-I_j) \cup I_j} \frac{(\widehat{\psi}_n - \psi_\theta)^2}{8e^4\tilde{L}^{1/\beta}}\biggr) \notag \\
\label{eq:lower-bd-holder-quantile-2}
&\geq \frac{1}{4} \times \frac{\tilde{L}h^\beta}{192e^4} \geq \frac{1}{768e^4}\biggl(\biggl\{\frac{2^7\tilde{L}^{2/\beta}}{1188n}\log\Bigl(\frac{1}{4\delta(1 - \delta)}\Bigr)\biggr\}^{\beta/(\beta + 2)} \wedge 1\biggr).
\end{align}
Finally, for general $L,r > 0$ and $s := r \wedge 10(L/6)^{2/\beta}$, we have $\tilde{L} := L(s/10)^{-\beta/2}/6 \geq 1$. Since $\mathcal{F}_{\beta,L,r} \supseteq \mathcal{F}_{\beta,L,s} = \bigl\{x \mapsto (s/10)^{1/2}f_0\bigl((s/10)^{1/2}x\bigr) : f_0 \in \mathcal{F}_{\beta,6\tilde{L},10}\bigr\}$, it follows from~\eqref{eq:lower-bd-holder-unweighted} that
\begin{align}
\mathcal{M}_n(\mathcal{F}_{\beta,L,r}) = \frac{s}{10}\mathcal{M}_n(\mathcal{F}_{\beta,6\tilde{L},10}) &\geq \frac{s}{10 \times 6144e^4}\Bigl\{\Bigl(\frac{(L/6)^{2/\beta}}{ns/10}\Bigr)^{\beta/(2\beta + 1)} \wedge 1\Bigr\} \notag \\
% \gtrsim_\beta becomes \gtrsim because $\beta \in [1,2]$?
\label{eq:holder-lower-bd-expectation}
&\gtrsim r'\min\biggl\{\biggl(\frac{L^{2/\beta}}{r'} \cdot \frac{1}{n}\biggr)^{\beta/(2\beta + 1)},\,1\biggr\}.
\end{align}
Moreover, from~\eqref{eq:lower-bd-holder-quantile-1}, for $\delta \in (0,1/4]$,
\begin{align}
\label{eq:holder-lower-bd-asssouad}
\mathcal{M}_n(\delta,\mathcal{F}_{\beta,L,r}) = \frac{s}{10}\mathcal{M}_n(\delta,\mathcal{F}_{\beta,6\tilde{L},10})
% &\geq \frac{s}{8 \times 12^8}\Bigl\{\Bigl(\frac{(L/8)^{2/\beta}}{ns/8}\Bigr)^{\beta/(2\beta + 1)} \wedge 1\Bigr\}
&\gtrsim r'\min\biggl\{\biggl(\frac{L^{2/\beta}}{r'} \cdot \frac{1}{n}\biggr)^{\beta/(2\beta + 1)},\,1\biggr\}.
\end{align}
Similarly, it follows from~\eqref{eq:lower-bd-holder-quantile-2} that
\begin{align}
\label{eq:holder-lower-bd-le-cam}
\mathcal{M}_n(\delta,\mathcal{F}_{\beta,L,r}) \gtrsim_\beta r'\min\biggl\{\biggl(\frac{L^{2/\beta}}{r'} \cdot \frac{\log(1/\delta)}{n}\biggr)^{\beta/(\beta + 2)},\,1\biggr\}.
\end{align}
The desired lower bounds on $\mathcal{M}_n(\mathcal{F}_{\beta,L,r})$ and $\mathcal{M}_n(\delta,\mathcal{F}_{\beta,L,r})$ follow respectively from~\eqref{eq:holder-lower-bd-expectation}, and the maximum of the bounds in~\eqref{eq:holder-lower-bd-asssouad} and~\eqref{eq:holder-lower-bd-le-cam}.
\end{proof}

We conclude this section with a minimax lower bound over the larger class of densities with H\"older scores that are not necessarily decreasing. For $\beta \in [1,2]$ and $L > 0$, denote by $\tilde{\mathcal{F}}_{\beta,L}$ 
% Avoid a notational clash between \tilde{\mathcal{F}}_{\beta,L} and \goodEvent
the set of all locally absolutely continuous densities $f_0$ on $\R$ whose score functions $\psi_0 = (\log f_0)'$ (defined Lebesgue almost everywhere when $\beta = 1$) satisfy
\[
|\psi_0(x) - \psi_0(y)| \leq L|x - y|^{\beta-1}
\]
for all $x,y \in \R$. For $r > 0$, define $\tilde{\mathcal{F}}_{\beta,L,r} := \{f_0 \in \tilde{\mathcal{F}}_{\beta,L}: i(f_0) \leq r\}$.
% \mathcal{F}_{\beta,L} = \mathcal{F} \cap \tilde{\mathcal{F}}_{\beta,L} 

\begin{prop}
\label{prop:lower-bd-no-shape-c}
For $\beta \in [1,2]$, $L,r > 0$, $n \in \N$ and $\delta \in (0,1/4)$, we have
\begin{align*}
\mathcal{M}_n(\delta,\tilde{\mathcal{F}}_{\beta,L,r}) &\gtrsim_\beta r'\min\biggl\{\biggl(\frac{L^{2/\beta}}{r'} \cdot \frac{1}{n}\biggr)^{\frac{2(\beta - 1)}{2\beta + 1}} + \biggl(\frac{L^{2/\beta}}{r'} \cdot \frac{\log(1/\delta)}{n}\biggr)^{\frac{\beta - 1}{\beta}},\,1\biggr\}, \\
\mathcal{M}_n(\tilde{\mathcal{F}}_{\beta,L,r}) &\gtrsim_\beta r'\min\biggl\{\biggl(\frac{L^{2/\beta}}{r'} \cdot \frac{1}{n}\biggr)^{\frac{2(\beta-1)}{2\beta+1}},\,1\biggr\},
\end{align*}
where $r' := r \wedge L^{2/\beta}$.
\end{prop}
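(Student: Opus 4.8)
The plan is to mirror the strategy used to prove Theorem~\ref{thm:holder-lower-bd}, but to exploit the \emph{absence} of the monotonicity constraint by packing the score perturbations much more densely. First I would reduce to a normalised problem: writing $r' = r \wedge L^{2/\beta}$ and fixing a universal constant $c_0 \ge 1$ (the Fisher-information scale of the construction below), the scaling identity $\tilde{\mathcal{F}}_{\beta,c^\beta L',c^2 r''} = \{cf_0(c\,\cdot):f_0\in\tilde{\mathcal{F}}_{\beta,L',r''}\}$, combined with~\eqref{eq:loss-scaling} and the inclusion $\tilde{\mathcal{F}}_{\beta,L,r}\supseteq\tilde{\mathcal{F}}_{\beta,L,r'}$, shows that it suffices to prove $\mathcal{M}_n(\delta,\tilde{\mathcal{F}}_{\beta,\tilde L,c_0})\gtrsim_\beta\min\bigl\{(\tilde L^{2/\beta}/n)^{2(\beta-1)/(2\beta+1)}+(\tilde L^{2/\beta}\tilde{\varepsilon}_{n,\delta})^{(\beta-1)/\beta},\,1\bigr\}$ for every $\tilde L\ge 1$, and then substitute $\tilde L^{2/\beta}=c_0L^{2/\beta}/r'$ (noting that $\tilde L = (c_0/r')^{\beta/2}L\ge 1$ because $(r')^{\beta/2}\le L$).

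For the normalised class I would take as base density a \emph{flattened} log-concave density $f_*$ with $\phi_*:=\log f_*$ constant on the plateau $[-\tilde L^{1/\beta},\tilde L^{1/\beta}]$ (so $\psi_*\equiv 0$ there), transitioning over a window of width $\asymp\tilde L^{-1/\beta}$ to a linear decay of slope $\asymp\tilde L^{1/\beta}$ in the tails; this makes $f_*\asymp\tilde L^{-1/\beta}$ on the plateau, $i(f_*)\lesssim_\beta 1$, and $\psi_*$ globally $(\beta-1,C\tilde L)$-H\"older. Inside the right half of the plateau I would place $m$ disjoint intervals $I_1,\dots,I_m$ of common width $h$, and for $\zeta\in\{0,1\}^m$ perturb $\psi_*$ on $I_j$ (when $\zeta_j=1$) by a rescaling $\propto\tilde Lh^{\beta-2}$ of the mean-zero profiles $\Delta,\varphi$ from the proof of Theorem~\ref{thm:holder-lower-bd}, so that each bump has $\psi$-amplitude $\asymp\tilde Lh^{\beta-1}$ and $\phi$-amplitude $\asymp\tilde Lh^\beta$, the log-density is altered only inside $\bigcup_jI_j$, and the perturbed scores stay $(\beta-1,C'\tilde L)$-H\"older. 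Taking $m\asymp\tilde L^{1/\beta}/h$ as large as the plateau permits, the estimates $\int_{I_j}(\psi_\zeta-\psi_\xi)^2\asymp\tilde L^2h^{2\beta-1}$ (for $\zeta,\xi$ differing only in coordinate $j$) and $\chi^2(P_\zeta,P_\xi)\lesssim|S|\,\tilde L^{2-1/\beta}h^{2\beta+1}+|S|^2\tilde L^{2-2/\beta}h^{2\beta+2}$ (as in Theorem~\ref{thm:holder-lower-bd}) give an Assouad diameter $d\asymp\tilde L^2h^{2\beta-2}$, a single-flip $\chi^2$ of order $\tilde L^{2-1/\beta}h^{2\beta+1}$, and a perturbation contribution to $i(f_\zeta)$ also of order $d$. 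Choosing $h\asymp(\tilde L^{1/\beta-2}/n)^{1/(2\beta+1)}\wedge(c_0/\tilde L^2)^{1/(2\beta-2)}$ keeps all $f_\zeta$ inside $\tilde{\mathcal{F}}_{\beta,C'\tilde L,c_0}$ and keeps $n$ times the single-flip $\chi^2$ bounded, so Assouad's lemma~\citep[Lemma~9.7]{samworth24modern} (for the risk) and \citet[Theorem~8 and Proposition~9]{ma2025high} (for the $(1-\delta)$th quantile) yield $\mathcal{M}_n\gtrsim d$, which one checks dominates $\min\{(\tilde L^{2/\beta}/n)^{2(\beta-1)/(2\beta+1)},c_0\}$ in every regime (in fact $d$ carries an extra factor $\tilde L^{2/\beta}$ when unsaturated, which is harmless since we only need a lower bound). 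For the second, Le Cam, term of the quantile bound I would instead enlarge $h$ to order $\tilde L^{-1/\beta}(\log(1/\delta)/n)^{1/(2\beta)}\wedge(c_0/\tilde L^2)^{1/(2\beta-2)}$, so that $n\chi^2(P_{(0,\dots,0)},P_{(1,\dots,1)})\asymp\log(1/\delta)$, and apply \citet[Theorem~4 and Corollary~6]{ma2025high} to the two-point pair obtained by flipping all coordinates; this contributes a separation of order $\tilde L^{2/\beta}(\log(1/\delta)/n)^{(\beta-1)/\beta}$. The case $\beta=1$ needs a minor variant, in which the Fisher-information cap constrains the product $mh$ rather than $h$ itself, paralleling the relation to Lemma~\ref{lem:fa-2pt} noted before Theorem~\ref{thm:holder-lower-bd}. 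Taking maxima and unwinding the scaling gives the asserted bounds with $L^{2/\beta}/r'$ and $\tilde{\varepsilon}_{n,\delta}$.

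The main obstacle I anticipate is the simultaneous bookkeeping in the construction: since the perturbations' contribution to the Fisher information is of the \emph{same} order as the Assouad diameter, one must choose $(m,h)$ so that the perturbed densities genuinely lie in $\tilde{\mathcal{F}}_{\beta,C'\tilde L,c_0}$, which forces a careful interplay between the plateau-width cap on $m$, the Fisher-information cap on $h$ (or on $mh$ when $\beta=1$), and the information-theoretic cap on $h$, while still extracting a lower bound that dominates the target throughout. Verifying that the perturbed scores remain $(\beta-1,C'\tilde L)$-H\"older across the junctions between neighbouring intervals $I_j$ (using that the rescaled profile $\Delta$ has matching one-sided slopes at its endpoints), and checking the tail estimates controlling the normalising constants $C_\zeta$ and $i(f_\zeta)$, are the remaining technical points, but these are routine adaptations of the corresponding steps in the proof of Theorem~\ref{thm:holder-lower-bd}.
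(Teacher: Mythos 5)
Your proposal is substantively correct and follows the same strategy as the paper: perturb a flattened Laplace-like base density $f_*$ (plateau of width $\asymp\tilde{L}^{1/\beta}$ at height $\asymp\tilde{L}^{-1/\beta}$, linear log-density decay of slope $\asymp\tilde{L}^{1/\beta}$ in the tails) by mean-zero bumps $\propto\tilde{L}h^{\beta-2}\Delta$ on disjoint intervals of width $h$, apply Assouad/Fano via a Gilbert--Varshamov code and a Le Cam two-point pair, and invoke the quantile-lower-bound machinery of \citet{ma2025high}. The $\chi^2$ estimate $\chi^2(P_\zeta,P_\xi)\lesssim|S|\tilde{L}^{2-1/\beta}h^{2\beta+1}+|S|^2\tilde{L}^{2-2/\beta}h^{2\beta+2}$, the $L^2(P_0)$ separation $\asymp m\tilde{L}^{2-1/\beta}h^{2\beta-1}$, and the verification of the $(\beta-1)$-H\"older condition via the two-piece bound $|\Delta_\zeta(x)-\Delta_\zeta(y)|\leq(\tilde{L}h^{\beta-2}|x-y|)\wedge\frac{1}{2}\tilde{L}h^{\beta-1}$ all match the paper's.

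The one genuine structural difference is your placement of the bumps on the plateau with $m\asymp\tilde{L}^{1/\beta}/h$, whereas the paper places them in the second half of the transition window $[a+\tilde{L}^{-1/\beta}/2,\,a+\tilde{L}^{-1/\beta}]$ with $m=\lfloor\tilde{L}^{-1/\beta}/(2h)\rfloor$. This matters because the perturbations' contribution to $i(f_\zeta)$ is of the same order as the Assouad diameter $d\asymp m\tilde{L}^{2-1/\beta}h^{2\beta-1}$: with the paper's narrow $m$, $d\asymp\tilde{L}^{2-2/\beta}h^{2\beta-2}$ is automatically $\lesssim 1$ once $h\lesssim\tilde{L}^{-1/\beta}$ (a constraint already forced by the geometry), so no separate Fisher-information bookkeeping is needed, and moreover all $\psi_\zeta$ remain non-positive on $(0,\infty)$, which simplifies control of the normalising constants $C_\zeta$ and the tail integrals. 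With your wider $m$, $d\asymp\tilde{L}^2h^{2\beta-2}$ overshoots and you correctly identify the additional cap $h\lesssim(c_0/\tilde{L}^2)^{1/(2\beta-2)}$; this degenerates to the paper's $m$ when $\beta=1$, as you note. The arithmetic works out — when the Fisher cap binds you obtain $d\asymp 1$, and when the $\chi^2$ cap binds your $d$ carries a strictly larger power of $\tilde{L}$ than the target, so your lower bound is at least as strong — but the extra case analysis and the loss of the sign property $\psi_\zeta\leq 0$ on $(0,\infty)$ (which requires a slightly different argument for $C_\zeta$) make the paper's choice a cleaner implementation of the same idea.
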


% \begin{figure}[h]
% \begin{center}
% \includegraphics[scale=0.8]{Tikz/lowerbd-holder-perturb-no-shape-c.pdf}
% \end{center}
% \caption{The function $\psi_\zeta$ defined in the proof of Proposition~\ref{prop:lower-bd-no-shape-c} for various values of $\zeta$ when $\beta>1$. The solid black line corresponds to $\psi$. }
% \label{fig:perturb-no-shape}
% \end{figure}

\begin{proof}
Suppose first that $\tilde{L} := 2L/5 \geq 1$. Let $a := \tilde{L}^{1/\beta}$ and define $\psi_0 \colon \R \to \R$ by
\[
\psi_0(x) := -\tilde{L}^{2/\beta} \int_0^x \Ind_{\{0 \leq |z| - a \leq \tilde{L}^{-1/\beta}\}} \laplaced z = -\tilde{L}^{1/\beta}\sgn(x)\min\bigl\{\tilde{L}^{1/\beta}(|x| - a)_+,1\bigr\}.
\]
Then for all $x,y \in \R$, we have
\begin{equation}
\label{eq:psi-0-holder-easy}
|\psi_0(x) - \psi_0(y)| \leq \tilde{L}^{2/\beta}(|x - y| \wedge 2\tilde{L}^{-1/\beta}) \leq 2^{2 - \beta}\tilde{L}|x - y|^{\beta - 1}.
\end{equation}
% Consequently, defining $\phi_0 \colon \R \to \R$ by $\phi_0(x) := \int_0^x \psi_0$, we have $-\tilde{L}^{1/\beta}(|x| - \tilde{L}^{1/\beta})_+ \leq \phi_0(x) \leq -\tilde{L}^{1/\beta}(|x| - \tilde{L}^{1/\beta} - \tilde{L}^{-1/\beta})_+$
Given $h \in (0,\tilde{L}^{-1/\beta}/2]$, let
\[
\tau := \Ind_{(0,h/4]} - \Ind_{(h/4,3h/4]} + \Ind_{(3h/4,h]} = \Ind_{(0,h]} - 2\Ind_{(h/4,3h/4]},
\]
and as in the proof of Theorem~\ref{thm:holder-lower-bd}, define $\Delta,\varphi \colon \R \to \R$ by $\Delta(x) := \int_0^x \tau$ and $\varphi(x) := \int_0^x \Delta$ (see Figure~\ref{fig:tauDeltaPhi}). Then $\Delta(x) = \varphi(x) = 0$ for $x \in \R \setminus [0,h]$. Let $m := \floor{\tilde{L}^{-1/\beta}/(2h)} \geq 1$, and for $\zeta = (\zeta_1,\dotsc,\zeta_m) \in \{0,1\}^m$, define $\Delta_\zeta \colon \R \to \R$ by
\[
\Delta_\zeta(x) := \tilde{L}h^{\beta - 2}\sum_{j=1}^m \zeta_j \Delta\Bigl(x - a - \frac{\tilde{L}^{-1/\beta}}{2} - (j - 1)h\Bigr) =: -\Delta_\zeta(-x)
\]
for $x \geq 0$. Since $mh \leq \tilde{L}^{-1/\beta}/2$, we have $\Delta_\zeta(x) = 0$ whenever $|x| \leq \tilde{L}^{1/\beta}$ or $|x| \geq a + \tilde{L}^{-1/\beta}$. Moreover, $|\Delta_\zeta(x)| \leq \tilde{L}h^{\beta - 1}/4$ for all $x \in \R$, so
\begin{equation}
\label{eq:Delta-zeta-holder}
|\Delta_\zeta(x) - \Delta_\zeta(y)| \leq \bigl(\tilde{L}h^{\beta - 2}|x - y|\bigr) \wedge \frac{\tilde{L}h^{\beta - 1}}{2} \leq 2^{\beta - 2}\tilde{L}|x - y|^{\beta - 1}
\end{equation}
for all $x,y \in \R$. Now let 
\begin{equation*}
\psi_\zeta := \psi_0 + \Delta_\zeta. 
\end{equation*}
Then by~\eqref{eq:psi-0-holder-easy} and~\eqref{eq:Delta-zeta-holder},
\begin{equation}\label{eq:psi-zeta-holder}
|\psi_\zeta(x) - \psi_\zeta(y)| \leq (2^{2 - \beta} + 2^{\beta - 2})\tilde{L}|x - y|^{\beta - 1} \leq L|x - y|^{\beta - 1}
\end{equation}
for all $x,y \in \R$.  Moreover, $\psi_\zeta(x) = \psi_0(x) = 0$ when $|x| \leq a$ and $\psi_\zeta(x) = \psi_0(x) = -\tilde{L}^{1/\beta}\sgn(x)$ when $|x| \geq a + \tilde{L}^{-1/\beta}$. Since $h \in (0,\tilde{L}^{-1/\beta}/2]$, we have $\psi_\zeta(x) \leq (-\tilde{L}^{2/\beta} \cdot \frac{1}{2}\tilde{L}^{-1/\beta} + \frac{1}{4}\tilde{L}h^{\beta - 1})_+ = 0$ for all $x \geq 0$. 
% \orange{RJS: I think it's at least easiest to keep the factor of $1/2$ in blue?}. 
Therefore, the functions $\phi_0 \colon \R \to (-\infty,0]$ and $\phi_\zeta \colon \R \to (-\infty,0]$ given by $\phi_0(x) := \int_0^x \psi_\zeta$ and $\phi_\zeta(x) := \int_0^x \psi_\zeta$ respectively are symmetric, and since $\int_0^x \Delta_\zeta \geq 0$ for all $x \geq 0$, we have 
\begin{align}
\label{eq:phi-bound-exp-no-shape-c}
-\tilde{L}^{1/\beta}(|x| - a)_+ \leq \phi_0(x) \leq \phi_\zeta(x) \leq -\tilde{L}^{1/\beta}(|x| - a - \tilde{L}^{-1/\beta})_+ \leq 0
\end{align}
for all $x \in \R$. Since $a = \tilde{L}^{1/\beta} \geq \tilde{L}^{-1/\beta}$, this means that $C_\zeta := \log\bigl(\int_{-\infty}^\infty e^{\phi_\zeta}\bigr)$ satisfies
\begin{align} 
2\tilde{L}^{1/\beta} < 2(a + \tilde{L}^{-1/\beta}) = \int_{-\infty}^\infty e^{-\tilde{L}^{1/\beta}(|x| - a)_+} \laplaced x \leq e^{C_\zeta} &\leq \int_{-\infty}^\infty e^{-\tilde{L}^{1/\beta}(|x| - a - \tilde{L}^{-1/\beta})_+} \laplaced x \notag \\ 
\label{eq:expCBound-no-shape-c}
&= 2(a + 2\tilde{L}^{-1/\beta}) \leq 6\tilde{L}^{1/\beta}.
\end{align}
Thus, $f_\zeta := e^{\phi_\zeta - C_\zeta}$ is a symmetric, absolutely continuous density, and for all $x \in [-(a + \tilde{L}^{-1/\beta}),a +\tilde{L}^{-1/\beta}]$, we have
\begin{align}
\label{eq:fZetaBound-no-shape-c}
\frac{1}{6e\tilde{L}^{1/\beta}} &\leq \exp\bigl(-\tilde{L}^{1/\beta}(|x| - a)_+ - C_\zeta\bigr) \leq f_\zeta(x) \leq \frac{1}{2\tilde{L}^{1/\beta}}.
\end{align}
Further, by~\eqref{eq:phi-bound-exp-no-shape-c} and~\eqref{eq:expCBound-no-shape-c}, 
% the Fisher information of $f_\zeta$ satisfies
\begin{align*}
i(f_\zeta) =  \int_{-\infty}^\infty \psi_\zeta^2 f_\zeta \leq \frac{2\tilde{L}^{2/\beta}}{e^{C_\zeta}} \int_a^\infty e^{\phi_\zeta}
&\leq \tilde{L}^{1/\beta} \int_a^\infty e^{-\tilde{L}^{1/\beta}(x - a - \tilde{L}^{-1/\beta})_+} \laplaced x = 2.
\end{align*}
% \orange{Remove blue absolute values in display above.}
Together with~\eqref{eq:psi-zeta-holder}, this implies that $f_\zeta \in \tilde{\mathcal{F}}_{\beta,L,2}$ for all $\zeta \in \{0,1\}^m$. 

We now proceed with calculations similar to those in the proof of Theorem~\ref{thm:holder-lower-bd}. For $\zeta = (\zeta_1,\dotsc,\zeta_m)$ and $\xi = (\xi_1,\dotsc,\xi_m) \in \{0,1\}^m$, let $S := \{j \in [m]: \zeta_j \neq \xi_j\}$. Define $I_j := \bigl[a + \frac{1}{2}\tilde{L}^{-1/\beta} + (j - 1)h, a + \frac{1}{2}\tilde{L}^{-1/\beta} + jh\bigr]$ for $j \in [m].$ Since $\Delta(x) = 0$ for $x \geq h$ and $\int_0^h \Delta = 0$, we have $\psi_\zeta(x) = \psi_\xi(x)$ and $\phi_\zeta(x) = \phi_\xi(x)$ for all $x \in [0,\infty) \setminus \bigcup_{j \in S} I_j$. For $j \in S$, we have 
\begin{align}
\label{eq:bulk-assouad-1-no-shape-c}
\int_{I_j} (\psi_\zeta - \psi_\xi)^2 &= (\tilde{L}h^{\beta - 2})^2 \int_0^h \Delta^2 = \frac{\tilde{L}^2 h^{2\beta-1}}{48}, \\
\int_{I_j} (\phi_\zeta - \phi_\xi)^2 &= (\tilde{L}h^{\beta - 2})^2 \int_0^h \varphi^2 = \frac{23\tilde{L}^2 h^{2\beta + 1}}{15360}. \notag
\end{align}
In view of~\eqref{eq:expCBound-no-shape-c},~\eqref{eq:fZetaBound-no-shape-c} and the facts that $h \leq \tilde{L}^{-1/\beta}/2 \leq 1/2$, $m \leq \tilde{L}^{-1/\beta}/(2h)$ and $|\phi_\zeta - \phi_\xi| \leq \tilde{L}^{1/\beta} \cdot \tilde{L}^{-1/\beta}$ = 1 on $\R$, virtually identical arguments to those in~\eqref{eq:C-xi-bd}--\eqref{eq:chi-squared-1} yield the bounds
\begin{align*}
|C_\zeta - C_\xi| &\leq \frac{|S|\tilde{L}^{1 - 1/\beta}h^{\beta + 1}}{32} \leq\frac{\tilde{L}^{1 - 2/\beta}h^\beta}{64} \leq \frac{1}{16}, \\
\int_{(0,\infty) \setminus \cup_{j \in S} I_j} (e^{\phi_\zeta - \phi_\xi + C_\xi - C_\zeta} - 1)^2\,f_\xi &\leq 16^2(e^{1/16} - 1)^2(C_\xi - C_\zeta)^2\int_{(0,\infty) \setminus \cup_{j \in S}I_j} f_\xi, \\
\int_{I_j} (e^{\phi_\zeta - \phi_\xi + C_\xi - C_\zeta} - 1)^2\,f_\xi &< 396\,\biggl(\frac{23\tilde{L}^2 h^{2\beta + 1}}{15360 \cdot 2\tilde{L}^{1/\beta}} + (C_\xi - C_\zeta)^2\int_{I_j}f_{\xi}\biggr)
\end{align*}
for $j \in S$. Thus, denoting by $P_\zeta,P_\xi$ the probability measures with densities $f_\zeta,f_\xi$ respectively, 
\begin{align}
\KL(P_\zeta,P_\xi) \leq \chi^2(P_\zeta,P_\xi) &= \int_{-\infty}^\infty \Bigl(\frac{f_\zeta}{f_\xi} - 1\Bigr)^2 f_\xi = 2\int_0^\infty (e^{\phi_\zeta - \phi_\xi + C_\xi - C_\zeta} - 1)^2\,f_\xi \notag \\
&< \frac{396}{2^8}\bigl(|S|\tilde{L}^{2 - 1/\beta}h^{2\beta + 1} + |S|^2\tilde{L}^{2 - 2/\beta}h^{2\beta + 2}\bigr) \notag \\
\label{eq:minimax-kl-no-shape-c}
&\leq \frac{396m\tilde{L}^{2 - 1/\beta}h^{2\beta + 1}}{2^8}\Bigl(1 + \frac{\tilde{L}^{-2/\beta}}{2}\Bigr) \leq \frac{1188m\tilde{L}^{2 - 1/\beta}h^{2\beta + 1}}{2^9},
\end{align}
where the final two inequalities hold because $m \leq \tilde{L}^{-1/\beta}/(2h)$ and $\tilde{L} \geq 1$. Now let 
\begin{align*}
h := \frac{1}{\tilde{L}^{1/\beta}}\biggl(\Bigl(\frac{\tilde{L}^{2/\beta}}{113n}\Bigr)^{1/(2\beta + 1)} \wedge \frac{1}{2}\biggr),
% = \Bigl(\frac{1}{2n\tilde{L}^{2 - 1/\beta}}\Bigr)^{1/(2\beta + 1)} \wedge \frac{\tilde{L}^{-1/\beta}}{2},
% \blue{\qquad m := \biggl\lfloor\frac{1}{2\tilde{L}^{1/\beta}h}\biggl\rfloor,} 
\qquad M := \ceil{e^{m/8}} \geq 2,
\end{align*}
where we recall that $m = \floor{\tilde{L}^{-1/\beta}/(2h)}$. By the Gilbert--Varshamov lemma~\citep[Exercise~8.10]{samworth24modern}, there exist distinct $\zeta^1,\dots,\zeta^M \in \{0,1\}^m$ such that $\sum_{j=1}^m \Ind_{\{\zeta_j^p \neq \zeta_j^q\}} > m/4$ for any distinct $p,q \in [M]$ 
% \orange{[this isn't true if $m=1$ because of the maximum with $3$ in the definition of $M$]}. 
By~\eqref{eq:minimax-kl-no-shape-c},
\begin{align*}
\frac{M^{-1}\sum_{j=1}^M \KL(P_{\zeta^j}^{\otimes n},P_{\zeta^1}^{\otimes n}) + \log(2 - M^{-1})}{\log M} &\leq \frac{1}{m/8} \cdot \frac{1188mn\tilde{L}^{2 - 1/\beta}h^{2\beta + 1}}{2^9} + \frac{\log(3/2)}{\log 2} < \frac{3}{4}.
\end{align*}
We conclude that if $L \geq 5/2$, then by~\eqref{eq:fZetaBound-no-shape-c}, Fano's lemma \citep[Corollary~8.12]{samworth24modern} and~\eqref{eq:bulk-assouad-1-no-shape-c},
\begin{align}
\mathcal{M}_n(\tilde{\mathcal{F}}_{\beta,L,2}) &\geq 
% \inf_{\hat{\psi}_n \in \hat{\Psi}_n} \max_{\ell \in [M]}\,\E\biggl(\int_{-\infty}^\infty (\widehat{\psi}_n - \psi_{\zeta^\ell})^2 f_{\zeta^\ell}\biggr) \geq
\inf_{\hat{\psi}_n \in \hat{\Psi}_n} \max_{\ell \in [M]}\,\E_{f_{\zeta^\ell}}\biggl(\sum_{j=1}^m \int_{(-I_j) \cup I_j} \frac{(\widehat{\psi}_n - \psi_{\zeta^\ell})^2}{6e\tilde{L}^{1/\beta}}\biggr) \geq \frac{2m}{4} \cdot \frac{\tilde{L}^2 h^{2\beta - 1}}{2^2 \times 6e\tilde{L}^{1/\beta} \times 48} \Bigl(1 - \frac{3}{4}\Bigr) \notag \\
\label{eq:holder-no-shape-lower-bd-1}
&\geq \frac{\tilde{L}^{2 - \frac{2}{\beta}}h^{2(\beta - 1)}}{2^6 \times 6e \times 48} 
% \geq \frac{1}{{2^7 \times 6\red{e} \times 48}} 
\gtrsim \min\biggl\{\Bigl(\frac{\tilde{L}^{2/\beta}}{n}\Bigr)^{\frac{2(\beta - 1)}{2\beta + 1}},\,1\biggr\}.
\end{align}
Similarly, for $\delta \in (0,1/4)$, \citet[Lemma~7]{ma2025high} yields
\begin{align}
\mathcal{M}_n(\delta,\tilde{\mathcal{F}}_{\beta,L,2}) 
% &\geq \inf_{\hat{\psi}_n \in \hat{\Psi}_n} \max_{\ell \in [M]}\,\mathrm{Quantile}_{f_{\zeta^\ell}}\biggl(1 - \delta,\,\int_{-\infty}^\infty (\widehat{\psi}_n - \psi_{\zeta^\ell})^2 f_{\zeta^\ell}\biggr) \\
&\geq \inf_{\hat{\psi}_n \in \hat{\Psi}_n} \max_{\ell \in [M]}\,\mathrm{Quantile}_{f_{\zeta^\ell}}\biggl(1 - \delta,\,\sum_{j=1}^m \int_{(-I_j) \cup I_j} \frac{(\widehat{\psi}_n - \psi_{\zeta^\ell})^2}{6e\tilde{L}^{1/\beta}}\biggr) \notag \\
\label{eq:holder-no-shape-lower-bd-1a}
&\geq \frac{2m}{4} \cdot \frac{\tilde{L}^2 h^{2\beta-1}}{2^2 \times 6e\tilde{L}^{1/\beta} \times 48} \gtrsim \min\biggl\{\Bigl(\frac{\tilde{L}^{2/\beta}}{n}\Bigr)^{\frac{2(\beta - 1)}{2\beta + 1}},\,1\biggr\}.
\end{align}

Now suppose instead that we define
\[
h := \biggl\{\frac{2^{10}}{1188n\tilde{L}^{2(1 - \frac{1}{\beta})}}\log\Bigl(\frac{1}{4\delta(1 - \delta)}\Bigr)\biggr\}^{1/(2\beta)} \wedge \frac{1}{2\tilde{L}^{1/\beta}}.
% \blue{\qquad m := \biggl\lfloor\frac{1}{2\tilde{L}^{1/\beta}h}\biggl\rfloor}.
\]
Fix $\zeta,\xi \in \{0,1\}^m$ with $S = [m]$. Then by~\eqref{eq:minimax-kl-no-shape-c},
\begin{align*}
\KL(P_\zeta^{\otimes n},P_\xi^{\otimes n}) < \frac{1188mn\tilde{L}^{2 - 1/\beta}h^{2\beta + 1}}{2^9} \leq \frac{1188n\tilde{L}^{2 - \frac{2}{\beta}}h^{2\beta}}{2^{10}} \leq \log\Bigl(\frac{1}{4\delta(1 - \delta)}\Bigr),
\end{align*}
so by~\eqref{eq:fZetaBound-no-shape-c} and \citet[Corollary~6]{ma2025high},
\begin{align}
\mathcal{M}_n(\delta,\tilde{\mathcal{F}}_{\beta,L,2}) &\geq \inf_{\hat{\psi}_n \in \hat{\Psi}_n}\max_{\theta \in \{\zeta,\xi\}}\mathrm{Quantile}_{f_\theta}\biggl(1 - \delta,\,\sum_{j=1}^m \int_{(-I_j) \cup I_j} \frac{(\widehat{\psi}_n - \psi_\theta)^2}{6e\tilde{L}^{1/\beta}}\biggr) \notag \\
\label{eq:holder-no-shape-lower-bd-1b}
&\geq 2m\,\frac{\tilde{L}^2 h^{2\beta-1}}{2^2 \times 6e\tilde{L}^{1/\beta} \times 48}
% \geq \frac{\tilde{L}^{2 - \frac{2}{\beta}}h^{2(\beta - 1)}}{2^2 \times 6 \times 48}
\gtrsim \min\biggl\{\biggl(\frac{\tilde{L}^{2/\beta}\log(1/\delta)}{n}\biggr)^{\frac{\beta - 1}{\beta}},\,1\biggr\}.
\end{align}

Finally, consider arbitrary $L,r > 0$. Let $s := \min\bigl\{r,2(2L/5)^{2/\beta}\bigr\}$. Then $L(s/2)^{-\beta/2} \geq 5/2$ and 
\[
\tilde{\mathcal{F}}_{\beta,L,r} \supseteq \tilde{\mathcal{F}}_{\beta,L,s} = \bigl\{(s/2)^{1/2}f_0\bigl((s/2)^{1/2}\cdot\bigr) : f_0 \in \tilde{\mathcal{F}}_{\beta,L(s/2)^{-\beta/2},2}\bigr\},
\]
so by~\eqref{eq:holder-no-shape-lower-bd-1},
\begin{align*}
\mathcal{M}_n(\tilde{\mathcal{F}}_{\beta,L,r}) &\geq \frac{s\mathcal{M}_n(\tilde{\mathcal{F}}_{\beta,L(s/2)^{-\beta/2},2})}{2}\gtrsim s\min\biggl\{\biggl(\frac{L^{2/\beta}}{s} \cdot \frac{1}{n}\biggr)^{\frac{2(\beta - 1)}{2\beta + 1}},\,1\biggr\}.
\end{align*}
Similarly, by~\eqref{eq:holder-no-shape-lower-bd-1a} and~\eqref{eq:holder-no-shape-lower-bd-1b},
\begin{align*}
\mathcal{M}_n(\delta,\tilde{\mathcal{F}}_{\beta,L,r}) &\geq \frac{s\mathcal{M}_n(\delta,\tilde{\mathcal{F}}_{\beta,L(s/2)^{-\beta/2},2})}{2} \\
&\gtrsim s\min\biggl\{\biggl(\frac{L^{2/\beta}}{s} \cdot \frac{1}{n}\biggr)^{\frac{2(\beta - 1)}{2\beta + 1}} + \biggl(\frac{L^{2/\beta}}{s} \cdot \frac{\log(1/\delta)}{n}\biggr)^{\frac{\beta - 1}{\beta}},\,1\biggr\},
\end{align*}
which completes the proof.
\end{proof}

\section{Proofs for Section~\ref{sec:multiscale}}
\label{sec:proofs-multiscale}

\begin{proof}[Proof of Lemma~\ref{lem:biasControl}]
By Fubini's theorem, $\int_{-\infty}^\infty f_h(x) \laplaced x = \int_{-\infty}^\infty \int_{-\infty}^\infty \kernelFunction[h](x - y) \laplaced x\,f_0(y) \laplaced y = 1$. Moreover, for every $x_0 \in \R$, we have $0 \leq f_h(x_0) = \int_{-\infty}^\infty K_h(x_0 - y)f_0(y) \laplaced y \leq 1/h < \infty$, so
\begin{align*}
f_h(x_0) 
= \int_{-\infty}^\infty \int_{-\infty}^{x_0} \kernelFunction[h]'(x - y) \laplaced x\,f_0(y) \laplaced y = \int_{-\infty}^{x_0} \int_{-\infty}^\infty \kernelFunction[h]'(x - y)f_0(y) \laplaced y \laplaced x.
\end{align*}
This shows that $f_h$ is an absolutely continuous function on $\R$. Since $|K_h'| \leq h^{-2}$ almost everywhere on $\R$, it follows from the dominated convergence theorem that
\[
(K_h' * f_0)(x') = \int_{-\infty}^\infty \kernelFunction[h]'(x' - y)f_0(y) \laplaced y \to (K_h' * f_0)(x_0)
\]
as $x' \to x_0$, so $K_h' * f_0$ is continuous. Hence, by the fundamental theorem of calculus,~$f_h$ is in fact continuously differentiable on $\R$ with derivative $f_h' = K_h' * f_0$. 

Moreover, $f_0$ is locally absolutely continuous by assumption, so $f_0(x) - f_0(0) = \int_0^x f_0'$ for all $x \in \R$. Since $\int_{-\infty}^\infty K_h' = 0$, it follows again by Fubini's theorem that
\begin{align*}
f_h'(x) &= \int_{-\infty}^\infty K_h'(x - y)\bigl(f_0(y) - f_0(0)\bigr) \laplaced y \\
&= \int_{-\infty}^\infty \int_{-\infty}^\infty \kernelFunction[h]'(x - y) f_0'(z)(\one_{\{0 \leq z \leq y\}} - \one_{\{y < z \leq 0\}}) \laplaced z \laplaced y \\
&= \int_{-\infty}^\infty \kernelFunction[h](x - z) f_0'(z)(\one_{\{z \geq 0\}} + \one_{\{z < 0\}}) \laplaced z = \int_{-\infty}^\infty \kernelFunction[h](x - z) f_0'(z) \laplaced z.
\end{align*}
Thus, $f_h' = K_h * f_0'$.
% $f_0$ need not have bounded variation, i.e.~$f_0'$ may not be integrable on $\R$, so $\int_{-\infty}^x f_0'$ may be undefined (and hence not equal to $f_0(x)$) for $x \in \R$. For instance, consider a Lipschitz density with $f_0'(x) \in \{-1,1\}$ for all $x \in \R$.

If in addition $f_0$ is log-concave, then $\psi_0 \colon \R \to [-\infty,\infty]$ is decreasing. Since $\kernelFunction[h]$ is supported on $[-h,h]$, it follows that for $x \in \R$ and $\omega \in \{-1,1\}$, we have
\begin{align*}
\omega f_h'(x) = \omega \int_{x-h}^{x+h} \kernelFunction[h](x - z)\psi_0(z)f_0(z)\laplaced z &\geq \omega \int_{x-h}^{x+h} \kernelFunction[h](x - z)\psi_0(x + \omega h)f_0(z)\laplaced z \\
&= \omega\psi_0(x + \omega h)f_h(x).
\end{align*}
This implies that $\omega\psi_h(x) \geq \omega\psi_0(x + \omega h)$ when $f_h(x) > 0$, i.e.~$[x - h, x + h] \cap \support \neq \emptyset$. Otherwise, either $x + h \leq \inf\support$, in which case $\omega\psi_0(x + \omega h) = \psi_h(x) = \infty$, or $x - h \geq \sup\support$ and $\omega\psi_0(x + \omega h) = \psi_h(x) = -\infty$. Therefore,~\eqref{eq:psi_h-psi_0} holds in all cases.
\end{proof}

The next two results apply to any Borel probability distribution  $\probDistribution$ on $\R$ and $X_1,\dotsc,X_n \iid \probDistribution$ with empirical distribution $\mathbb{P}_n$. Some of the remaining lemmas require $\probDistribution$ to have an absolutely continuous density, but the log-concavity assumption will only be needed again in the proofs of Propositions~\ref{prop:confidenceBandsAreValid} and~\ref{prop:scorePointwiseErr}.

First, we establish a simultaneous concentration bound for indicator functions of intervals.  It is convenient to define $\kullbackLeibler_+(p,q) := \one_{\{p > q\}}\,\kullbackLeibler(p,q)$ for $p,q \in [0,1]$, where $\kullbackLeibler$ is as in~\eqref{eq:kl-bernoulli}. For a related result, see \citet[Proposition~1]{ryter2024tails}.

\begin{lemma}
\label{lem:uniformOverIntervals} 
%Let $X_1,\ldots,X_n \iid \probDistribution$ for some Borel probability distribution $\probDistribution$ on $\R$.
For any $n \in \N$ and any $\delta \in (0,1)$, 
%let $\goodEvent$ denote the event that for every interval $A \subseteq \R$, we have
% \begin{align}
% \label{eq:goodEvent}
% \inf_{t \in [0,1]} \kullbackLeibler\biggl(\empiricalProb(A),\Bigl(1 - \frac{2}{n}\Bigr)\,\probDistribution(A) + \frac{2t}{n}\biggr) < \frac{\log(n^2/\delta)}{n}.
% \end{align}
% Then
we have $\Prob(\goodEvent) \geq 1 - \delta$. 
\end{lemma}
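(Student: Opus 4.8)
The statement to prove is that $\Prob(\goodEvent) \geq 1 - \delta$, where $\goodEvent$ is the event that for every interval $A \subseteq \R$,
\[
\inf_{t \in [0,1]} \kullbackLeibler\Bigl(\empiricalProb(A),\bigl(1 - \tfrac{2}{n}\bigr)\probDistribution(A) + \tfrac{2t}{n}\Bigr) < \frac{\log(n^2/\delta)}{n}.
\]
The plan is to reduce this to a union bound over finitely many intervals combined with a Chernoff-type deviation bound for $n\empiricalProb(A) \sim \mathrm{Bin}\bigl(n,\probDistribution(A)\bigr)$. First I would invoke the promised reformulation of Hoeffding's inequality (Lemma~\ref{lem:kl-binomial} / the restatement of \citet[Theorem~1]{hoeffding1963probability}) to control $\empiricalProb(A)$ for a single fixed interval $A$: this gives that the probability of the bad event for $A$ — namely that the displayed infimum is $\geq \log(n^2/\delta)/n$ — is at most $\delta/n^2$, the factor $n^2$ in the logarithm being chosen precisely so that the per-interval failure probability is $\delta/n^2$. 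The key point of the $\kullbackLeibler$ function with the $(1-2/n)\probDistribution(A) + 2t/n$ argument is that, after taking the infimum over $t$, the resulting threshold no longer depends on $\probDistribution(A)$, which is what lets the union bound go through uniformly.

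Second, I would discretise the uncountable family of intervals. The empirical measure $\empiricalProb$ is supported on $\{X_1,\dotsc,X_n\}$, so $\empiricalProb(A)$ depends on $A$ only through which of the (at most $n$) data points it contains; more importantly, the quantity we must bound is monotone in an appropriate sense. Concretely, for the bad direction $\empiricalProb(A) > (1-2/n)\probDistribution(A) + 2t/n$ that makes $\kullbackLeibler$ nonzero, one wants to enlarge $A$ to its "worst-case" representative: among all intervals containing a given subset $S$ of the data, $\empiricalProb$ is constant but $\probDistribution(A)$ is minimised by shrinking $A$ down to the convex hull of $S$ (when $S \neq \emptyset$), and $\kullbackLeibler(p,\cdot)$ is decreasing in $q$ on $[0,p]$; thus the infimum over $t$ of the left-hand side is maximised over such intervals. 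Hence it suffices to verify the inequality on the finitely many intervals of the form $[X_{(i)},X_{(j)}]$ with $1 \leq i \leq j \leq n$ (together with the empty interval and the degenerate cases, which are trivial). There are at most $\binom{n}{2} + n + 1 \leq n^2$ such intervals (one can be slightly wasteful here), so a union bound over them, each contributing at most $\delta/n^2$, yields $\Prob(\goodEvent^c) \leq n^2 \cdot \delta/n^2 = \delta$.

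The main obstacle — and the step requiring the most care — is the discretisation argument: making precise that the supremum over \emph{all} intervals of the "badness functional" $A \mapsto \inf_{t}\kullbackLeibler\bigl(\empiricalProb(A),(1-2/n)\probDistribution(A)+2t/n\bigr)$ is attained (or at least dominated up to the strict inequality) on the finite class of order-statistic intervals. This needs the monotonicity of $q \mapsto \kullbackLeibler(p,q)$ for $q \leq p$, the observation that shrinking an interval to the convex hull of the data it contains weakly decreases $\probDistribution(A)$ while leaving $\empiricalProb(A)$ fixed, and a short check of the boundary cases ($\empiricalProb(A) = 0$, where $\kullbackLeibler_+ = 0$ and the inequality holds trivially for any $A$; and $\empiricalProb(A) = 1$). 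Once that reduction is in place, the union bound and the single-interval tail bound from Lemma~\ref{lem:kl-binomial} finish the proof directly, with no further computation. One should also note at the outset that if $\delta \geq 1$ the statement is vacuous, so we may assume $\delta \in (0,1)$ as in the hypothesis, and that for $n = 1$ the bound $\log(n^2/\delta) = \log(1/\delta)$ together with the single interval $[X_1,X_1]$ still works.
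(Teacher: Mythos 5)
Your high-level plan (per-interval concentration bound, discretisation to finitely many intervals, union bound) is in the right spirit, but there is a genuine gap in how you handle the data-dependence of the discretised intervals, and this is exactly the point the paper's proof is designed to solve.

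You propose to apply Lemma~\ref{lem:kl-binomial} ``for a single fixed interval $A$'' to get a per-interval failure probability of $\delta/n^2$, and then to union-bound over the $\leq n^2$ intervals of the form $[X_{(i)},X_{(j)}]$. But these intervals are \emph{random}: they depend on the sample. The concentration bound for a fixed interval controls $\Pr\{\text{bad event for that specific }A\}$, and there is no fixed family of $n^2$ intervals whose bad events cover the bad events for the data-dependent intervals. So the union bound as you stated it does not go through. The paper's fix is the conditioning device: for a given pair $(X_{n-1},X_n)$ one conditions on those two points, making $\mathcal{I}(X_{n-1},X_n)$ a fixed set, and then applies the binomial concentration bound to the remaining $n-2$ conditionally i.i.d.\ indicators $\one_{\{X_i \in \mathcal{I}(X_{n-1},X_n)\}}$. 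The $(1-2/n)$ factor and the $2t/n$ shift in the definition of $\goodEvent$ are not, as you suggest, merely a device for removing dependence of the threshold on $\probDistribution(A)$; they are exactly what converts the bound on the $(n-2)$-point empirical measure into a bound on the full $n$-point empirical measure, via the identity $\empiricalProb\{\mathcal{I}(X_{n-1},X_n)\} = (1-2/n)\,\empiricalProb[n-2]\{\mathcal{I}(X_{n-1},X_n)\} + 2/n$ together with the joint convexity of $\kullbackLeibler_+$ (Lemma~\ref{lem:kl-joint-convexity}). Your proposal does not mention conditioning at all, so the concentration step is not correctly set up.

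A secondary issue is the discretisation itself. You consider only intervals of the form $[X_{(i)},X_{(j)}]$ with $i \le j$, arguing by shrinking $A$ to the convex hull of the data it contains. That handles the case where $\empiricalProb(A)$ sits above the appropriately shifted $\probDistribution(A)$, i.e.\ the paper's case $\mathbb{D}_n(A) := (n/2)\empiricalProb(A) - (n/2-1)\probDistribution(A) > 1$. But the event $\goodEvent$ imposes a two-sided $\kullbackLeibler$ bound, and you have not handled the other direction $\mathbb{D}_n(A) < 0$, where one must pass to the complement $\R\setminus A$, which is a ``co-interval''. This is why the paper's $\mathcal{I}(x_0,x_1)$ notation covers both $[x_0,x_1]$ (for $x_0 \le x_1$) and $\R\setminus(x_1,x_0)$ (for $x_1<x_0$), and why the union bound is over all ordered pairs $(i,j)\in[n]^2$ rather than unordered pairs. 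You also correctly notice the trivial middle regime: the paper disposes of $\mathbb{D}_n(A)\in[0,1]$ by taking $t = \mathbb{D}_n(A)$ so that the infimum is zero, which is a cleaner route than your case analysis on $\empiricalProb(A)\in\{0,1\}$. In summary, to repair your argument you would need to (i) replace the ``fixed interval plus union bound'' step with the conditional Hoeffding step on $n-2$ points and the convexity lemma to pass between $\empiricalProb[n-2]$ and $\empiricalProb$, and (ii) include co-intervals in your discretisation to cover the $\mathbb{D}_n(A) < 0$ case.
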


\begin{proof} 
For each interval $A \subseteq \R$, define $\mathbb{D}_n(A) := (n/2)\,\empiricalProb(A) - (n/2 - 1)\probDistribution(A)$. If $\mathbb{D}_n(A) \in [0,1]$ for some interval $A$, then~\eqref{eq:goodEvent} holds with the infimal value of zero attained at $t = \mathbb{D}_n(A)$. In particular, for $n \leq 2$, the claim~\eqref{eq:goodEvent} holds with probability 1 for all intervals $A \subseteq \R$. Therefore, we suppose henceforth that $n \geq 3$. For $x_0,x_1 \in \R$, define
\begin{align*}
\mathcal{I}(x_0,x_1) := 
\begin{cases} [x_0,x_1] &\text{ if }x_0 \leq x_1\\
\R \setminus (x_1,x_0) & \text{ if }x_1 < x_0.
\end{cases}
\end{align*}
Letting $\empiricalProb[n-2](B) := (n-2)^{-1}\,\sum_{i=1}^{n-2}\one_{\{X_i \in B\}}$ for Borel sets $B \subseteq \R$, we apply Lemma~\ref{lem:kl-binomial} to the
% + via a chi-squared upper bound on $\mathrm{kl}$ and by the Chernoff--Hoeffding inequality for random variables taking values in $[0,1]$, together with a lower bound on $\mathrm{kl}(p + \delta || p)$?
% + Or by~\citet[Lemma~8]{mardia2020concentration}: method of types?
$n-2$ conditionally independent and identically distributed Bernoulli random variables $(\one_{\{X_i \in \mathcal{I}(X_{n-1},X_n)\}})_{i \in [n-2]}$ to obtain
\begin{align*}
\Prob\Bigl\{\kullbackLeibler_+\Bigl(\empiricalProb[n-2]\{\mathcal{I}(X_
{n-1},X_n)\},\probDistribution\{\mathcal{I}(X_
{n-1},X_n)\}\Bigr) \geq \frac{\log(n^2/\delta)}{n-2} \Bigm| X_{n-1},X_n\Bigr\} \leq \frac{\delta}{n^2}.   
\end{align*}
Moreover, by Lemma~\ref{lem:kl-joint-convexity}, 
\begin{align*}
&\kullbackLeibler_+\biggl(\empiricalProb[n]\{\mathcal{I}(X_
{n-1},X_n)\},\Bigl(1 - \frac{2}{n}\Bigr)\probDistribution\{\mathcal{I}(X_
{n-1},X_n)\} + \frac{2}{n}\biggr) \\
&\quad=\kullbackLeibler_+\biggl(\Bigl(1 - \frac{2}{n}\Bigr)\empiricalProb[n-2]\{\mathcal{I}(X_
{n-1},X_n)\} + \frac{2}{n},\Bigl(1 - \frac{2}{n}\Bigr)\probDistribution\{\mathcal{I}(X_
{n-1},X_n)\} + \frac{2}{n}\biggr) \\
&\quad \leq \frac{n-2}{n}\,\kullbackLeibler_+\Bigl(\empiricalProb[n-2]\,\{\mathcal{I}(X_
{n-1},X_n)\},\probDistribution\{\mathcal{I}(X_
{n-1},X_n)\}\Bigr).
\end{align*}
% + since $X_{n-1},X_n \in \mathcal{I}(X_{n-1},X_n)$, we have $n\hat{\Pr}_n(\mathcal{I}(X_{n-1},X_n)) = (n - 2)\hat{\Pr}_{n-2}(\mathcal{I}(X_{n-1},X_n)) + 2$
Hence, by the law of total expectation, 
\begin{align*}
\Prob\biggl\{\kullbackLeibler_+\biggl(\empiricalProb\{\mathcal{I}(X_
{n-1},X_n)\},\Bigl(1 - \frac{2}{n}\Bigr)\probDistribution\{\mathcal{I}(X_
{n-1},X_n)\} + \frac{2}{n}\biggr) \geq \frac{\log(n^2/\delta)}{n}\biggr\} \leq \frac{\delta}{n^2},
\end{align*}
so by symmetry and a union bound, the event
\begin{align*}
\mathcal{E}_{n,\delta} := \bigcap_{i=1}^n \bigcap_{j=1}^n\biggl\{\kullbackLeibler_+\biggl(\empiricalProb\{\mathcal{I}(X_i,X_j)\},\Bigl(1 - \frac{2}{n}\Bigr)\probDistribution\{\mathcal{I}(X_i,X_j)\} + \frac{2}{n}\biggr) < \frac{\log(n^2/\delta)}{n}\biggr\}
\end{align*}
has probability at least $1 - \delta$. 

Now suppose that $\mathcal{E}_{n,\delta}$ holds and let $A \subseteq \R$ be an interval. As noted, if $\mathbb{D}_n(A) \in [0,1]$, then~\eqref{eq:goodEvent} follows. Now suppose that $\mathbb{D}_n(A) > 1$. Then since $\empiricalProb(A) > 2/n$, we may choose a random tuple $(i_A,j_A) \in [n]^2$ such that $i_A < j_A$, with $\mathcal{I}(X_{i_A},X_{j_A})\subseteq A$ and $\empiricalProb(A) = \empiricalProb\{\mathcal{I}(X_{i_A},X_{j_A})\}$, so that on the event $\mathcal{E}_{n,\delta}$ we have
\begin{align*}
\inf_{t \in [0,1]}\,& \kullbackLeibler\biggl(\empiricalProb(A),\Bigl(1 - \frac{2}{n}\Bigr)\,\probDistribution(A) + \frac{2t}{n}\biggr) \\
&= \kullbackLeibler\biggl(\empiricalProb(A),\Bigl(1 - \frac{2}{n}\Bigr)\probDistribution(A) + \frac{2}{n}\biggr) \\
&= \kullbackLeibler_+\biggl(\empiricalProb\{\mathcal{I}(X_{i_A},X_{j_A})\},\Bigl(1 - \frac{2}{n}\Bigr)\probDistribution(A) + \frac{2}{n}\biggr) \\
& \leq \kullbackLeibler_+\biggl(\empiricalProb\{\mathcal{I}(X_{i_A},X_{j_A})\},\Bigl(1 - \frac{2}{n}\Bigr)\probDistribution\{\mathcal{I}(X_{i_A},X_{j_A})\} + \frac{2}{n}\biggr) < \frac{\log(n^2/\delta)}{n},
\end{align*}
where we have used the fact that $q \mapsto \kullbackLeibler_+(p,q)$ is decreasing on $[0,1]$ for each $p \in [0,1]$. Therefore,~\eqref{eq:goodEvent} holds when $\mathbb{D}_n(A) > 1$. Finally, if $\mathbb{D}_n(A) < 0$, then we may proceed similarly with $\R \setminus A$ in place of $A$. Indeed, since $\empiricalProb(A) < 1$, we may choose a random tuple $(i_A^\circ,j_A^\circ) \in [n]^2$ with $i_A^\circ > j_A^\circ$ such that $\mathcal{I}(X_{i_A^\circ},X_{j_A^\circ}) \subseteq \R \setminus A$ and $\empiricalProb\{\mathcal{I}(X_{i_A^\circ},X_{j_A^\circ})\} = \empiricalProb(\R \setminus A)$. Then on the event $\mathcal{E}_{n,\delta}$, we have
\begin{align*}
\inf_{t \in [0,1]}\,& \kullbackLeibler\biggl(\empiricalProb(A),\Bigl(1 - \frac{2}{n}\Bigr)\,\probDistribution(A) + \frac{2t}{n}\biggr)\\
&= \kullbackLeibler\biggl(\empiricalProb(A),\Bigl(1 - \frac{2}{n}\Bigr)\probDistribution(A)\biggr)\\
&=
\kullbackLeibler_+\biggl(1-\empiricalProb(A),\Bigl(1 - \frac{2}{n}\Bigr)\{1 - \probDistribution(A)\} + \frac{2}{n}\biggr)\\
& \leq
\kullbackLeibler_+\biggl(\empiricalProb\{\mathcal{I}(X_{i_A^\circ},X_{j_A^\circ})\},\Bigl(1 - \frac{2}{n}\Bigr)\probDistribution\{\mathcal{I}(X_{i_A^\circ},X_{j_A^\circ})\} + \frac{2}{n}\biggr)<\frac{\log(n^2/\delta)}{n},
\end{align*}
so~\eqref{eq:goodEvent} again holds. Thus, $\mathcal{E}_{n,\delta}\subseteq \goodEvent$, so $\Prob(\goodEvent) \geq \Prob(\mathcal{E}_{n,\delta}) \geq 1 - \delta$.
\end{proof}

Recall that $\varphi:\mathbb{R} \to \mathbb{R}$ is \emph{unimodal} if there exists $x_0 \in \mathbb{R}$ such that $\varphi(x) \leq \varphi(x')$ for $x \leq x' \leq x_0$ and $\varphi(x) \geq \varphi(x')$ for $x_0 \leq x \leq x'$.  We now extend Lemma~\ref{lem:uniformOverIntervals} from the class of indicator functions of intervals to the class $\functionClassUnimodal$ of all unimodal functions $\varphi: \R \to [0,1]$. For $n \geq 3$ and $\omega \in \{-1,1\}$, let
\begin{align*}
\tilde{P}^{\omega}(\varphi) &:=  \Bigl(1 - \frac{2}{n}\Bigr){\probDistribution}(\varphi) + \frac{1+\omega}{n}\\
\tilde{\mathbb{J}}_{n,\delta,\omega}(\varphi) &:=  \frac{2(1 -  2\epsilonByNDeltaMod)\tilde{P}^{\omega}(\varphi) + 3\epsilonByNDeltaMod + 3\omega\sqrt{4\epsilonByNDeltaMod \tilde{P}^{1}(\varphi)\{1- \tilde{P}^{-1}(\varphi)\}+ \epsilonByNDeltaMod^2}}{2(1+\epsilonByNDeltaMod)}\\
\hat{\mathbb{J}}_{n,\delta,\omega}(\varphi) &:=  \frac{(1 - 2\epsilonByNDeltaMod)\empiricalProb(\varphi)+ 3\epsilonByNDeltaMod + 3\omega\sqrt{\epsilonByNDeltaMod\empiricalProb(\varphi)\{1 - \empiricalProb(\varphi)\} + \epsilonByNDeltaMod^2}}{(1 + 4\epsilonByNDeltaMod)(1 - 2/n)} + \frac{\omega-1}{n-2}.
\end{align*}

\begin{lemma}
\label{lem:uniformBoundedFunctions} 
Suppose that $n \in \mathbb{N}$, $\delta \in (0,1)$ and the event $\goodEvent$ holds. Then
\begin{align*}
\inf_{t \in [0,1]} \kullbackLeibler\biggl(\empiricalProb(\varphi),\Bigl(1 - \frac{2}{n}\Bigr)\,\probDistribution(\varphi) + \frac{2t}{n}\biggr) < \frac{\log(n^2/\delta)}{n}
\end{align*}
% + using measure-theoretic notation for integrals of $\varphi$ wrt $\Pr_n$ and $P$ respectively
for all $\varphi \in \functionClassUnimodal$. Further, if $n \geq 3$, then for all $\varphi \in \functionClassUnimodal$ we have $\empiricalProb(\varphi) \leq 2\probDistribution(\varphi) + 21\epsilonByNDeltaMod$, $\empiricalProb(\varphi) \in [\tilde{\mathbb{J}}_{n,\delta,-1}(\varphi),\tilde{\mathbb{J}}_{n,\delta,1}(\varphi)]$ and $\probDistribution(\varphi) \in [\hat{\mathbb{J}}_{n,\delta,-1}(\varphi),\hat{\mathbb{J}}_{n,\delta,1}(\varphi)]$. Moreover,
% $\ratioEpsilonOverP:= \{\epsilonByNDeltaMod\vee (2/n)\}/\{1 \wedge\sqrt{\epsilonByNDeltaMod\,\probDistribution(\varphi)}\} \leq 1/2$, 
\begin{align*}
\max_{\omega \in \{-1,1\}} \bigl|\hat{\mathbb{J}}_{n,\delta,\omega}(\varphi) - \probDistribution(\varphi)\bigr| \leq 
% \bigl(6+50\sqrt{\ratioEpsilonOverP}\bigr)\sqrt{\epsilonByNDeltaMod \probDistribution(\varphi)}
26\sqrt{\epsilonByNDeltaMod\probDistribution(\varphi)} + 112\epsilonByNDeltaMod
\end{align*}
for all $\varphi \in \functionClassUnimodal$.
\end{lemma}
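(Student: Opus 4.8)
The plan is to lift the interval concentration bound built into $\goodEvent$ to arbitrary unimodal functions by a convexity argument, and then to invert the resulting Kullback--Leibler inequality into the stated quadratic bounds. First I would fix $\varphi \in \functionClassUnimodal$ and record the layer-cake representation: since $\varphi$ is unimodal, each superlevel set $A_s := \{\varphi > s\}$ is an interval, and as $0 \le \varphi \le 1$ we have $\empiricalProb(\varphi) = \int_0^1 \empiricalProb(A_s)\,\laplaced s$ and $\probDistribution(\varphi) = \int_0^1 \probDistribution(A_s)\,\laplaced s$, so $\bigl(\empiricalProb(\varphi),\probDistribution(\varphi)\bigr)$ is the Lebesgue average over $s\in[0,1]$ of the pairs $\bigl(\empiricalProb(A_s),\probDistribution(A_s)\bigr)$. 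Next, writing $c := \log(n^2/\delta)/n$ and $G(p,q) := \inf_{t\in[0,1]}\kullbackLeibler\bigl(p,(1-2/n)q+2t/n\bigr)$, I would use that $(p,q)\mapsto\kullbackLeibler(p,q)$ is jointly convex on $[0,1]^2$: composing with the affine map $(p,q,t)\mapsto\bigl(p,(1-2/n)q+2t/n\bigr)$ and then partially minimizing over the interval $t\in[0,1]$ preserves joint convexity, so $G$ is convex and $\{G<c\}$ is a convex subset of $[0,1]^2$. By~\eqref{eq:goodEvent}, on $\goodEvent$ every interval $A$ satisfies $\bigl(\empiricalProb(A),\probDistribution(A)\bigr)\in\{G<c\}$ (the degenerate cases $A_s\in\{\emptyset,\R\}$ being immediate since then $\empiricalProb(A_s)=\probDistribution(A_s)\in\{0,1\}$ and $G(0,0)=G(1,1)=0$); moreover $s\mapsto\bigl(\empiricalProb(A_s),\probDistribution(A_s)\bigr)$ is coordinatewise monotone, hence Borel, and $G$ is a finite convex function, so $s\mapsto G\bigl(\empiricalProb(A_s),\probDistribution(A_s)\bigr)$ is a bounded Borel function valued in $[0,c)$, whence its integral over $[0,1]$ is \emph{strictly} below $c$. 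Jensen's inequality for $G$ then gives $G\bigl(\empiricalProb(\varphi),\probDistribution(\varphi)\bigr)\le\int_0^1 G\bigl(\empiricalProb(A_s),\probDistribution(A_s)\bigr)\,\laplaced s<c$, which is exactly the first displayed assertion.

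Then I would invert this bound. Since $q\mapsto\kullbackLeibler(p,q)$ is decreasing on $[0,p]$ and increasing on $[p,1]$, and $t\mapsto(1-2/n)\probDistribution(\varphi)+2t/n$ sweeps $[\tilde P^{-1}(\varphi),\tilde P^{1}(\varphi)]$, the inequality $G\bigl(\empiricalProb(\varphi),\probDistribution(\varphi)\bigr)<c$ splits into three cases: $\empiricalProb(\varphi)\in[\tilde P^{-1}(\varphi),\tilde P^{1}(\varphi)]$; or $\empiricalProb(\varphi)>\tilde P^{1}(\varphi)$ with $\kullbackLeibler\bigl(\empiricalProb(\varphi),\tilde P^{1}(\varphi)\bigr)<c$; or $\empiricalProb(\varphi)<\tilde P^{-1}(\varphi)$ with $\kullbackLeibler\bigl(\empiricalProb(\varphi),\tilde P^{-1}(\varphi)\bigr)<c$. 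In the first case the relations $\empiricalProb(\varphi)\in[\tilde{\mathbb{J}}_{n,\delta,-1}(\varphi),\tilde{\mathbb{J}}_{n,\delta,1}(\varphi)]$ and $\probDistribution(\varphi)\in[\hat{\mathbb{J}}_{n,\delta,-1}(\varphi),\hat{\mathbb{J}}_{n,\delta,1}(\varphi)]$ follow from the one-line inequalities $\tilde{\mathbb{J}}_{n,\delta,-1}(\varphi)\le\tilde P^{-1}(\varphi)$ and $\tilde{\mathbb{J}}_{n,\delta,1}(\varphi)\ge\tilde P^{1}(\varphi)$ (using $\sqrt{4\epsilonByNDeltaMod\tilde P^{1}(1-\tilde P^{-1})+\epsilonByNDeltaMod^2}\ge\epsilonByNDeltaMod$) together with their counterparts obtained after solving for $\probDistribution(\varphi)$. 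In the remaining two cases I would apply a Bernstein-type lower bound for $\kullbackLeibler$ of the kind recorded among the auxiliary results (Lemma~\ref{lem:kl-binomial} and its companion inversion estimate), which turns $\kullbackLeibler(p,q)<c=\tfrac92\epsilonByNDeltaMod$ into a quadratic inequality; solving it for $p$ — respectively for the preimage of $q$ under the $(1-2/n)$-scaling and $2/n$-shift, which is where the factor $(1+4\epsilonByNDeltaMod)(1-2/n)$ and the correction $(\omega-1)/(n-2)$ enter — recovers precisely $\tilde{\mathbb{J}}_{n,\delta,\pm1}(\varphi)$ and $\hat{\mathbb{J}}_{n,\delta,\pm1}(\varphi)$ as the extreme admissible values, giving the two membership claims.

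Finally, the two explicit consequences are constant-chasing. From $\empiricalProb(\varphi)\le\tilde{\mathbb{J}}_{n,\delta,1}(\varphi)$, bounding $1-\tilde P^{-1}(\varphi)\le1$, $\tilde P^{1}(\varphi)\le\probDistribution(\varphi)+2/n$ and $(1+\epsilonByNDeltaMod)^{-1}\le1$, using $3\sqrt{\epsilonByNDeltaMod\tilde P^{1}(\varphi)}\le\tfrac12\tilde P^{1}(\varphi)+\tfrac92\epsilonByNDeltaMod$ and the elementary estimate $2/n\le\tfrac{9}{2\log 3}\epsilonByNDeltaMod$ valid for $n\ge3$ (since $\log(n^2/\delta)>2\log n\ge2\log3$), one obtains $\empiricalProb(\varphi)\le2\probDistribution(\varphi)+21\epsilonByNDeltaMod$; and in the explicit formula for $\hat{\mathbb{J}}_{n,\delta,\omega}(\varphi)$ one bounds $\bigl|\tfrac{1-2\epsilonByNDeltaMod}{(1+4\epsilonByNDeltaMod)(1-2/n)}-1\bigr|\empiricalProb(\varphi)$, $\tfrac{3\epsilonByNDeltaMod}{(1+4\epsilonByNDeltaMod)(1-2/n)}$ and $\tfrac{2}{n-2}$ all by $O(\epsilonByNDeltaMod)$, bounds $\tfrac{3\sqrt{\epsilonByNDeltaMod\empiricalProb(\varphi)(1-\empiricalProb(\varphi))+\epsilonByNDeltaMod^2}}{(1+4\epsilonByNDeltaMod)(1-2/n)}\le3\sqrt{\epsilonByNDeltaMod\empiricalProb(\varphi)}+3\epsilonByNDeltaMod$, and converts $\sqrt{\epsilonByNDeltaMod\empiricalProb(\varphi)}$ to $\sqrt{\epsilonByNDeltaMod\probDistribution(\varphi)}$ through $\empiricalProb(\varphi)\le2\probDistribution(\varphi)+21\epsilonByNDeltaMod$, arriving at $\max_{\omega\in\{-1,1\}}\bigl|\hat{\mathbb{J}}_{n,\delta,\omega}(\varphi)-\probDistribution(\varphi)\bigr|\le26\sqrt{\epsilonByNDeltaMod\probDistribution(\varphi)}+112\epsilonByNDeltaMod$. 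The main obstacle is the lifting step: that the interval bound upgrades to every unimodal $\varphi$ is not a pointwise monotonicity statement but rests on the joint convexity of the partially minimized Bernoulli Kullback--Leibler functional combined with the layer-cake/Jensen device, and one must take slight measure-theoretic care to extract the \emph{strict} inequality from the averaging; by comparison, the inversion and the constant bookkeeping in the last two steps are routine.
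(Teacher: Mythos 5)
Your proposal is correct and follows essentially the same route as the paper, with one worthwhile packaging difference in the lifting step. Where you define $G(p,q):=\inf_{t\in[0,1]}\kullbackLeibler\bigl(p,(1-2/n)q+2t/n\bigr)$, observe directly that partial minimization of a jointly convex function over the convex set $[0,1]$ preserves convexity, and then apply Jensen to $G$, the paper instead picks the explicit minimizer $\tau_\gamma(\varphi)$ for each level $\gamma$, applies the joint convexity of $\kullbackLeibler$ itself (its Lemma~\ref{lem:klJointConvex}), and notes that $\int_0^1\tau_\gamma(\varphi)\,\laplaced\gamma\in[0,1]$ so that the resulting argument of $\kullbackLeibler$ is still of the admissible form. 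These are two ways of saying the same thing; your version is marginally cleaner since it avoids having to verify that $\gamma\mapsto\tau_\gamma(\varphi)$ is measurable, while the paper's version buys a specific value $q=(1-2/n)\probDistribution(\varphi)+(2/n)\int_0^1\tau_\gamma\in[\tilde{P}^{-1}(\varphi),\tilde{P}^{1}(\varphi)]$ that it feeds straight into Lemma~\ref{lem:klLowerBound} without any case split. Your three-case decomposition of $G\bigl(\empiricalProb(\varphi),\probDistribution(\varphi)\bigr)<c$ is correct but a touch more roundabout than the paper's one-line invocation of Lemma~\ref{lem:klLowerBound} at the specific $q$; in the boundary cases ($\empiricalProb(\varphi)>\tilde{P}^{1}$ or $<\tilde{P}^{-1}$) you also need the small observation that $q(1-q)\le\tilde{P}^{1}(\varphi)\{1-\tilde{P}^{-1}(\varphi)\}$ for all $q\in[\tilde{P}^{-1},\tilde{P}^{1}]$ so that the square-root term you obtain is dominated by the one appearing in $\tilde{\mathbb{J}}_{n,\delta,\pm1}$; this is an inequality the paper also uses implicitly. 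The ``companion inversion estimate'' you gesture at is Lemma~\ref{lem:klLowerBound}, not Lemma~\ref{lem:kl-binomial}, but it is clear which result you mean. Your measure-theoretic point about strict inequality in the layer-cake integral is right: since $G\bigl(\empiricalProb(A_s),\probDistribution(A_s)\bigr)<c$ pointwise on $[0,1]$ and $[0,1]$ has positive measure, the integral is strictly below $c$. The constant-chasing at the end differs cosmetically from the paper's (you use $3\sqrt{ab}\le\tfrac12a+\tfrac92b$ and the sharper $2/n\le\tfrac{9}{2\log 3}\epsilonByNDeltaMod$; the paper uses $2/n<\tfrac92\epsilonByNDeltaMod$ and $3\sqrt{\epsilonByNDeltaMod a}\le a$ for $a\ge9\epsilonByNDeltaMod$), but both arrive at the stated constants.
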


\begin{proof} 
Let $\varphi \in \functionClassUnimodal$. For each $\gamma \in [0,1]$, the set $A_\gamma := \{x \in \R : \varphi(x) \geq \gamma\}$ is an interval, and moreover for $x \in \mathbb{R}$, we have
\[
\varphi(x) = \int_0^{\varphi(x)}\laplaced\gamma = \int_0^1 \one_{A_\gamma}(x)\laplaced\gamma.
\]
In addition, let 
\[
\tau_\gamma(\varphi) := \argmin_{t \in [0,1]} \kullbackLeibler\biggl(\empiricalProb(A_\gamma),\Bigl(1 - \frac{2}{n}\Bigr)\,\probDistribution(A_\gamma) + \frac{2t}{n}\biggr).
\]
Then by Fubini's theorem,
\begin{align*}
\inf_{t \in [0,1]} \kullbackLeibler\biggl(\empiricalProb(\varphi) &,\Bigl(1 - \frac{2}{n}\Bigr)\,\probDistribution(\varphi) + \frac{2t}{n}\biggr) \\
&\leq \kullbackLeibler\biggl(\empiricalProb\Bigl(\int_0^1\one_{A_\gamma}\laplaced \gamma\Bigr),\Bigl(1 - \frac{2}{n}\Bigr)\,\probDistribution\Bigl(\int_0^1\one_{A_\gamma}\laplaced \gamma\Bigr) + \frac{2}{n}\Bigl(\int_0^1\tau_\gamma(\varphi)\laplaced\gamma\Bigr)\biggr)\\
&=\kullbackLeibler\biggl(\int_0^1\empiricalProb(A_\gamma)\laplaced \gamma,\int_0^1\Bigl\{\Bigl(1 - \frac{2}{n}\Bigr)\,\probDistribution(A_\gamma) + \frac{2\tau_\gamma(\varphi)}{n}\Bigr\} \laplaced \gamma\biggr)\\
&\leq \int_0^1\kullbackLeibler\biggl(\empiricalProb(A_\gamma),\Bigl(1 - \frac{2}{n}\Bigr)\,\probDistribution(A_\gamma) + \frac{2\tau_\gamma(\varphi)}{n}\biggr)\laplaced \gamma< \frac{\log(n^2/\delta)}{n} = \frac{9\epsilonByNDeltaMod}{2},
\end{align*}
where the penultimate inequality follows from the joint convexity of the Kullback--Leibler divergence (Lemma~\ref{lem:klJointConvex}) and the final inequality follows from the definition of $\goodEvent$ in~\eqref{eq:goodEvent}.

For the second part of the lemma, 
%let $t \in [0,1]$ be such that
% \[
% \kullbackLeibler\biggl(\empiricalProb(\varphi),\Bigl(1 - \frac{2}{n}\Bigr)\,\probDistribution(\varphi) + \frac{2t}{n}\biggr) < \frac{\log(n^2/\delta)}{n} = \frac{9\epsilonByNDeltaMod}{2}.
% \]
taking 
\[
p = \empiricalProb(\varphi) \quad \text{and} \quad q = \Bigl(1 - \frac{2}{n}\Bigr)\probDistribution(\varphi) + \frac{2}{n}\int_0^1\tau_\gamma(\varphi)\laplaced \gamma \in [\tilde{P}^{-1}(\varphi),\tilde{P}^1(\varphi)]
\]
in the second part of Lemma~\ref{lem:klLowerBound} yields 
% \begin{align*}
% % -\frac{3\sqrt{4q(1- q)\epsilonByNDeltaMod + \epsilonByNDeltaMod^2}}{2(1 + \epsilonByNDeltaMod)} - \frac{2}{n} \leq 
% &\frac{2(1 - 2\epsilonByNDeltaMod)\tilde{P}^{-1}(\varphi) + 3\epsilonByNDeltaMod -3\sqrt{4\epsilonByNDeltaMod\tilde{P}^1(\varphi)\{1- \tilde{P}^{-1}(\varphi)\} + \epsilonByNDeltaMod^2}}{2(1 + \epsilonByNDeltaMod)} \\
% &\qquad\leq \empiricalProb(\varphi) \leq \frac{2(1 - 2\epsilonByNDeltaMod)\tilde{P}^1(\varphi) + 3\epsilonByNDeltaMod + 3\sqrt{4\epsilonByNDeltaMod\tilde{P}^1(\varphi)\{1- \tilde{P}^{-1}(\varphi)\} + \epsilonByNDeltaMod^2}}{2(1 + \epsilonByNDeltaMod)},
% \end{align*}
$\empiricalProb(\varphi) \in [\tilde{\mathbb{J}}_{n,\delta,-1}(\varphi),\tilde{\mathbb{J}}_{n,\delta,1}(\varphi)]$. Since $n \geq 3$ and $\delta \in (0,1)$, we have $\log(n^2/\delta) > 1$, so $9\epsilonByNDeltaMod/2 > 1/n$ and hence
\begin{align*}
\empiricalProb(\varphi) \leq \tilde{\mathbb{J}}_{n,\delta,1}(\varphi) &\leq \probDistribution(\varphi) + \frac{2}{n} + \frac{3\epsilonByNDeltaMod}{2} + \frac{3}{2}\biggl(4\epsilonByNDeltaMod\Bigl(\probDistribution(\varphi) + \frac{2}{n}\Bigr) + \epsilonByNDeltaMod^2\biggr)^{1/2} \\
&\leq \probDistribution(\varphi) + \frac{21\epsilonByNDeltaMod}{2} + 3\epsilonByNDeltaMod^{1/2}\Bigl(\probDistribution(\varphi) + \frac{21\epsilonByNDeltaMod}{2}\Bigr)^{1/2} \leq 2\probDistribution(\varphi) + 21\epsilonByNDeltaMod.
\end{align*}
By instead applying the third part of Lemma~\ref{lem:klLowerBound} with the same values of $p$ and $q$, we obtain
% \begin{align*}
% -\frac{3\sqrt{\empiricalProb(\varphi)\bigl(1- \empiricalProb(\varphi)\bigr)\epsilonByNDeltaMod + \epsilonByNDeltaMod^2}}{1 + 4\epsilonByNDeltaMod} - \frac{2}{n} &\leq \Bigl(1 - \frac{2}{n}\Bigr)\probDistribution(\varphi) - \frac{(1 - 2\epsilonByNDeltaMod)\empiricalProb(\varphi) + 3\epsilonByNDeltaMod}{1 + 4\epsilonByNDeltaMod} \\[3pt]
% &\leq \frac{3\sqrt{\empiricalProb(\varphi)\bigl(1- \empiricalProb(\varphi)\bigr)\epsilonByNDeltaMod + \epsilonByNDeltaMod^2}}{1 + 4\epsilonByNDeltaMod},
% \end{align*}
$\probDistribution(\varphi) \in [\hat{\mathbb{J}}_{n,\delta,-1}(\varphi),\hat{\mathbb{J}}_{n,\delta,1}(\varphi)]$. We have
\begin{align*}
\max_{\omega \in \{-1,1\}} \bigl|\hat{\mathbb{J}}_{n,\delta,\omega}(\varphi) - \probDistribution(\varphi)\bigr| &\leq \bigl|\hat{\mathbb{J}}_{n,\delta,1}(\varphi) - \hat{\mathbb{J}}_{n,\delta,-1}(\varphi)\bigr| \leq \frac{2n}{n - 2}\Biggl(\frac{3\sqrt{\empiricalProb(\varphi)\epsilonByNDeltaMod + \epsilonByNDeltaMod^2}}{1 + 4\epsilonByNDeltaMod} + \frac{1}{n}\Biggr) \\
&\leq 18\epsilonByNDeltaMod^{1/2}\bigl(2\probDistribution(\varphi) + 22\epsilonByNDeltaMod\bigr)^{1/2} + 27\epsilonByNDeltaMod \leq 26\sqrt{\epsilonByNDeltaMod\probDistribution(\varphi)} + 112\epsilonByNDeltaMod,
\end{align*}
as required.
\end{proof}

We now establish the simultaneous validity of the confidence intervals $\bigl[\scoreConfidenceBoundByBandwidthCentre[-1](z),\scoreConfidenceBoundByBandwidthCentre[1](z)\bigr]$ for $\psi_h(z)$, across all $z \in \R$ and bandwidths $h > 0$. When $\psi_0$ is decreasing, the multiscale aggregation in~\eqref{eq:scoreEstimateShapeConstrainedInfimum} is designed to exploit the relationship between $\psi_h$ and $\psi_0$ in Lemma~\ref{lem:biasControl}, thereby guaranteeing the coverage of the confidence band $\bigl[\scoreConfidenceBoundShapeConstrained[-1](\cdot), \scoreConfidenceBoundShapeConstrained[1](\cdot)\bigr]$ for $\psi_0(\cdot)$ in Proposition~\ref{prop:confidenceBandsAreValid}.

\begin{lemma}
\label{lem:confidenceBandStatisticalErrorControl}
Suppose that $n \geq 3$, $\delta \in (0,1)$ and $\probDistribution$ has an absolutely continuous density $f_0$. Then on the event $\goodEvent$, we have
\begin{align*}
0 \leq \omega\biggl\{\frac{\scoreConfidenceBoundByBandwidthCentreDenominator(z)}{(1 + 4\epsilonByNDeltaMod)(1 - 2/n)}-h^2\, f_h(z)\biggr\} \quad\text{and}\quad 0 \leq 
\omega \biggl\{\frac{\avgDerivSign{n}{h}(z)\,\scoreConfidenceBoundByBandwidthCentreMagnitudeNumerator[\omega \avgDerivSign{n}{h}(z)](z)}{(1 + 4\epsilonByNDeltaMod)(1 - 2/n)}-h^2f_h'(z)\biggr\}
\end{align*}
for all $z \in \mathbb{R}$, $h > 0$ and $\omega \in \{-1,1\}$. It follows that
\begin{align*}
\goodEvent\subseteq  \bigcap_{z \in \R}\bigcap_{h > 0}\bigl\{\scoreConfidenceBoundByBandwidthCentre[-1](z) \leq \psi_h(z) \leq \scoreConfidenceBoundByBandwidthCentre[1](z) \bigr\}.
\end{align*}
\end{lemma}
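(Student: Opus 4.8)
The plan is to deduce the two displayed one‑sided bounds from the uniform concentration on $\goodEvent$, and then to read off the band for $\psi_h$ by a short argument on ratios of intervals. Fix $z \in \R$ and $h > 0$, and set $\varphi_{z,h}(\cdot) := \bigl(1 - |z - \cdot|/h\bigr)_+$, so that $\kernelFunction[h](z - x) = h^{-1}\varphi_{z,h}(x)$ and hence $h^2 f_h(z) = h\,\probDistribution(\varphi_{z,h})$ and $h^2\hat f_{n,h}(z) = h\,\empiricalProb(\varphi_{z,h})$, with $\varphi_{z,h} \in \functionClassUnimodal$. From $f_h' = \kernelFunction[h]' * f_0$ (Lemma~\ref{lem:biasControl}) and the identity $h^2\kernelFunction[h]'(u) = -\sgn(u)\,\Ind_{\{0 < |u| < h\}}$ for a.e.\ $u$, one obtains $h^2 f_h'(z) = \probDistribution(I_z^+) - \probDistribution(I_z^-)$ and, with the version of the weak derivative fixed in the construction, $h^2\hat f_{n,h}'(z) = \empiricalProb(I_z^+) - \empiricalProb(I_z^-)$, where $I_z^+$ and $I_z^-$ are the length‑$h$ intervals immediately to the right and left of $z$. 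I would also record at the outset the algebraic identities $\avgDerivSignNoArg\,\scoreConfidenceBoundByBandwidthCentreMagnitudeNumerator[\omega\avgDerivSignNoArg](z) = (1 - 2\epsilonByNDeltaMod)h^2\hat f_{n,h}'(z) + 2\omega\,\scoreConfidenceBoundByBandwidthWidth(z)$ for $\omega \in \{-1,1\}$ (valid since $\avgDerivSignNoArg\,|\hat f_{n,h}'(z)| = \hat f_{n,h}'(z)$ and $\avgDerivSignNoArg^2 = 1$) and $2c_{n,\delta}/(n - 2) = 2(1 + 4\epsilonByNDeltaMod)/n$, together with the crude domination $\empiricalProb(\varphi_{z,h}) \vee \empiricalProb(I_z^+) \vee \empiricalProb(I_z^-) \le \empiricalProb([z - h, z + h])$.

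Next I would invoke Lemma~\ref{lem:uniformBoundedFunctions}. Applied to $\varphi_{z,h}$ it gives $\probDistribution(\varphi_{z,h}) \in \bigl[\hat{\mathbb{J}}_{n,\delta,-1}(\varphi_{z,h}),\,\hat{\mathbb{J}}_{n,\delta,1}(\varphi_{z,h})\bigr]$ on $\goodEvent$; multiplying by $h$, substituting $\empiricalProb(\varphi_{z,h}) = h\hat f_{n,h}(z)$ and expanding the definitions of $\hat{\mathbb{J}}_{n,\delta,\pm1}$ and $c_{n,\delta}$, the claim $\scoreConfidenceBoundByBandwidthCentreDenominator[-1](z)/c_{n,\delta} \le h^2 f_h(z) \le \scoreConfidenceBoundByBandwidthCentreDenominator[1](z)/c_{n,\delta}$ reduces to $3\sqrt{\epsilonByNDeltaMod\,\empiricalProb(\varphi_{z,h})\{1 - \empiricalProb(\varphi_{z,h})\} + \epsilonByNDeltaMod^2} + 3\epsilonByNDeltaMod \le \scoreConfidenceBoundByBandwidthWidth(z) + (3 - 4/n)\epsilonByNDeltaMod - 1/n$ and its lower counterpart, both of which follow (with equality in the worst case) from the algebraic facts above after bounding $\empiricalProb(\varphi_{z,h})\{1 - \empiricalProb(\varphi_{z,h})\} \le \empiricalProb([z-h,z+h])$; the lower bound additionally exploits the $-2/(n-2)$ term of $\hat{\mathbb{J}}_{n,\delta,-1}$. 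Applying the same lemma to $\Ind_{I_z^+}$ and $\Ind_{I_z^-}$ gives $\probDistribution(I_z^+) \le \hat{\mathbb{J}}_{n,\delta,1}(\Ind_{I_z^+})$ and $\probDistribution(I_z^-) \ge \hat{\mathbb{J}}_{n,\delta,-1}(\Ind_{I_z^-})$; subtracting, substituting $h^2\hat f_{n,h}'(z) = \empiricalProb(I_z^+) - \empiricalProb(I_z^-)$, and bounding the resulting two square‑root terms and the $2c_{n,\delta}/(n-2)$ term by $2\scoreConfidenceBoundByBandwidthWidth(z) = 6\sqrt{\epsilonByNDeltaMod\,\empiricalProb([z-h,z+h]) + \epsilonByNDeltaMod^2} + 2(1 + 4\epsilonByNDeltaMod)/n$ yields $h^2 f_h'(z) \le \bigl\{(1 - 2\epsilonByNDeltaMod)h^2\hat f_{n,h}'(z) + 2\scoreConfidenceBoundByBandwidthWidth(z)\bigr\}/c_{n,\delta}$, with the reversed bound obtained symmetrically. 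By the identity for $\avgDerivSignNoArg\,\scoreConfidenceBoundByBandwidthCentreMagnitudeNumerator[\omega\avgDerivSignNoArg]$, these are exactly the second display; equivalently, on $\goodEvent$ we have $h^2 f_h'(z) \in \hat{\mathsf{M}}_{n,\delta,h}(z)$ and $h^2 f_h(z) \in \hat{\mathsf{D}}_{n,\delta,h}(z)$ for all $z \in \R$, $h > 0$.

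For the final inclusion: if $\scoreConfidenceBoundByBandwidthCentreDenominator[-1](z) \le 0$ then $\scoreConfidenceBoundByBandwidthCentre[-1](z) = -\infty$ and $\scoreConfidenceBoundByBandwidthCentre[1](z) = +\infty$ by definition, so the inclusion is trivial. Otherwise $\scoreConfidenceBoundByBandwidthCentreDenominator[-1](z) > 0$, hence $\hat{\mathsf{D}}_{n,\delta,h}(z) \subseteq (0,\infty)$ and, by the bound just proved, $h^2 f_h(z) \ge \scoreConfidenceBoundByBandwidthCentreDenominator[-1](z)/c_{n,\delta} > 0$; in particular $f_h(z) > 0$, so $\psi_h(z) = \bigl(h^2 f_h'(z)\bigr)/\bigl(h^2 f_h(z)\bigr)$ equals $m/d$ for $m := h^2 f_h'(z) \in \hat{\mathsf{M}}_{n,\delta,h}(z)$ and $d := h^2 f_h(z) \in \hat{\mathsf{D}}_{n,\delta,h}(z)$. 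Since $\scoreConfidenceBoundByBandwidthCentre[-1](z)$ and $\scoreConfidenceBoundByBandwidthCentre[1](z)$ are by definition the infimum and supremum of $m'/d'$ over $(m',d') \in \hat{\mathsf{M}}_{n,\delta,h}(z) \times \hat{\mathsf{D}}_{n,\delta,h}(z)$, this gives $\psi_h(z) \in \bigl[\scoreConfidenceBoundByBandwidthCentre[-1](z),\,\scoreConfidenceBoundByBandwidthCentre[1](z)\bigr]$, as required; intersecting over $z$ yields the stated inclusion.

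The main obstacle is the bookkeeping in the concentration step: one must verify that the width statistic $\scoreConfidenceBoundByBandwidthWidth$, which involves only $\empiricalProb([z-h,z+h])$, is calibrated precisely so as to absorb all the stochastic and $O(1/n)$ error terms produced by $\hat{\mathbb{J}}_{n,\delta,\pm1}$ at the three narrower sets $\varphi_{z,h}$, $\Ind_{I_z^+}$, $\Ind_{I_z^-}$ — which works thanks to the crude domination $\empiricalProb(\cdot) \le \empiricalProb([z-h,z+h])$ and the identity $2c_{n,\delta}/(n-2) = 2(1+4\epsilonByNDeltaMod)/n$. A secondary point requiring care is choosing the Borel version of $\hat f_{n,h}'$ so that $h^2\hat f_{n,h}'(z) = \empiricalProb(I_z^+) - \empiricalProb(I_z^-)$ holds for every $z$, and noting that the degenerate cases $f_h(z) = 0$ and $\scoreConfidenceBoundByBandwidthCentreDenominator[-1](z) \le 0$ are absorbed by the convention in the definition of $\scoreConfidenceBoundByBandwidthCentre[\pm1]$.
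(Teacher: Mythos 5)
Your proposal is correct and follows essentially the same route as the paper. Both proofs write $h^2 f_h(z) = h\,\probDistribution(\varphi_0)$ and $h^2 f_h'(z) = \probDistribution(\varphi_1^+) - \probDistribution(\varphi_1^-)$ for the three unimodal functions (the triangular hat and the two one-sided indicators), apply Lemma~\ref{lem:uniformBoundedFunctions}, and exploit the crude domination $\empiricalProb(\varphi_0) \vee \empiricalProb(\varphi_1^\pm) \leq \empiricalProb([z-h,z+h])$ together with the identity $(1+4\epsilonByNDeltaMod)/n = c_{n,\delta}/(n-2)$ to match up the $\scoreConfidenceBoundByBandwidthWidth$ calibration. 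For the final inclusion, your argument is slightly more streamlined than the paper's: you observe directly that on $\goodEvent$ we have $h^2 f_h'(z) \in \hat{\mathsf{M}}_{n,\delta,h}(z)$ and $h^2 f_h(z) \in \hat{\mathsf{D}}_{n,\delta,h}(z) \subseteq (0,\infty)$ when $\scoreConfidenceBoundByBandwidthCentreDenominator[-1](z) > 0$, and then invoke the sup/inf definition of $\scoreConfidenceBoundByBandwidthCentre[\pm 1]$, whereas the paper works from the explicit closed-form expression for $\scoreConfidenceBoundByBandwidthCentre$ and does a three-way case split on the signs of $\scoreConfidenceBoundByBandwidthCentreDenominator[-1](z)$ and $\scoreConfidenceBoundByBandwidthCentreMagnitudeNumerator[-1](z)$. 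Both routes are valid; yours avoids having to manipulate the ratio expression termwise, at the cost of relying on the asserted equivalence between the sup/inf definition of $\scoreConfidenceBoundByBandwidthCentre$ and its closed form, which the paper states but does not re-derive inside this lemma.
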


\begin{proof}
Fix $z \in \R$ and $h > 0$.  Recall that $K$ is absolutely continuous on $\R$ with a weak derivative~$K'$ given by $K'(x) := -\sgn(x)\mathbbm{1}_{\{|x| < 1\}}$.  Writing $(K')_+$ and $(K')_-$ for the positive and negative parts of $K'$, observe that the functions $\varphi_0,\varphi_1^\pm : \R \to \R$ defined by
\begin{align*}
\varphi_0(x) := \kernelFunction\Bigl(\frac{z - x}{h}\Bigr) = h\,\kernelFunction[h](z - x), \qquad
\varphi_1^\pm(x) := (\kernelFunction')_\pm\Bigl(\frac{z - x}{h}\Bigr) =  \mathbbm{1}_{\{x \in (z,z\pm h)\}}
\end{align*}
are all unimodal, take values in $[0,1]$ and are supported within $[z - h, z + h]$. Letting $\varphi_1 := \varphi_1^+ - \varphi_1^-$, so that $\varphi_1(x) = \sgn(x-z)\mathbbm{1}_{\{|x-z|<h\}}$, we deduce from Lemma~\ref{lem:biasControl} that
\[
\empiricalProb(\varphi_0) = h\hat{f}_{n,h}(z), \quad \probDistribution(\varphi_0) = h f_h(z), \quad \empiricalProb(\varphi_1) = h^2\hat{f}_{n,h}'(z), \quad \probDistribution(\varphi_1) = 
% (\kernelFunction[h]' * f_0)(x_0) = 
h^2 f_h'(z)
\]
and $\empiricalProb(\varphi_0) \vee \empiricalProb(\varphi_1^-) \vee \empiricalProb(\varphi_1^+) \leq \empiricalProb([z - h, z + h]) =: \hat{p}_{z,h}$. Henceforth, we work on the event~$\goodEvent$. By the second part of Lemma~\ref{lem:uniformBoundedFunctions} applied to $\varphi \in \{\varphi_0,\varphi_1^+,\varphi_1^-\}$ and $\omega \in \{-1,1\}$, we have
\begin{align*}
\omega\probDistribution(\varphi) &\leq \omega \cdot \frac{(1 - 2\epsilonByNDeltaMod)\empiricalProb(\varphi) + 3\epsilonByNDeltaMod + 3\omega\sqrt{\epsilonByNDeltaMod\, \bigl(\hat{p}_{z,h} + \epsilonByNDeltaMod \bigr)} - (1 + 4\epsilonByNDeltaMod)(1-\omega)/n}{(1 + 4\epsilonByNDeltaMod)(1 - 2/n)} \\
&=  \omega \cdot \frac{(1 - 2\epsilonByNDeltaMod)\empiricalProb(\varphi) +  \omega\scoreConfidenceBoundByBandwidthWidth(z)+(3-4/n)\epsilonByNDeltaMod-1/n}{(1 + 4\epsilonByNDeltaMod)(1 - 2/n)}.
\end{align*}
Hence, taking $\varphi=\varphi_0$, it follows that $\omega\bigl\{h^2\,f_h(z)\,(1 + 4\epsilonByNDeltaMod)(1 - 2/n) - \scoreConfidenceBoundByBandwidthCentreDenominator(z)\bigr\} \leq 0$ for $\omega \in \{-1,1\}$.  Moreover, writing $\avgDerivSignNoArg := \avgDerivSign{n}{h}(z)$, we have
\begin{align*}
\omega h^2f'_h(z) &= \omega\, \probDistribution(\varphi_1) = \omega\{\probDistribution(\varphi_1^+) - \probDistribution(\varphi_1^-)\} \notag \\
% \label{eq:M-varphi_1}
& \leq \omega \cdot \frac{(1 - 2\epsilonByNDeltaMod)\{\empiricalProb(\varphi_1^+) - \empiricalProb(\varphi_1^-)\} +  2\omega\scoreConfidenceBoundByBandwidthWidth(z)}{(1 + 4\epsilonByNDeltaMod)(1 - 2/n)} \\
&= \omega\avgDerivSignNoArg \cdot \frac{(1 - 2\epsilonByNDeltaMod)h^2\,|\hat{f}_{n,h}'(z)|\, +  2\omega\avgDerivSignNoArg\scoreConfidenceBoundByBandwidthWidth(z)}{(1 + 4\epsilonByNDeltaMod)(1 - 2/n)} = \frac{\omega\avgDerivSignNoArg\,\scoreConfidenceBoundByBandwidthCentreMagnitudeNumerator[\omega \avgDerivSignNoArg](z)}{(1 + 4\epsilonByNDeltaMod)(1 - 2/n)}. \notag
\end{align*}
If $\scoreConfidenceBoundByBandwidthCentreMagnitudeNumerator[-1](z) > 0$,  then $\scoreConfidenceBoundByBandwidthCentreMagnitudeNumerator[\omega \avgDerivSignNoArg](z) > 0$. Also, if $\scoreConfidenceBoundByBandwidthCentreDenominator[-1](z) > 0$, then $\hat{f}_{n,h}(z) > 0$ and hence there must exist $X_i \in [z - h, z + h]$ for some $i \in [n]$, so $f_h(z) > 0$. Thus, when $\scoreConfidenceBoundByBandwidthCentreMagnitudeNumerator[-1](z) > 0$ and $\scoreConfidenceBoundByBandwidthCentreDenominator[-1](z) > 0$, we have
\begin{align*}
\omega\bigl\{\psi_h(z) - \scoreConfidenceBoundByBandwidthCentre(z)\bigr\}
&= \frac{\omega f'_h(z)}{f_h(z)} - \frac{\omega\avgDerivSignNoArg\,\scoreConfidenceBoundByBandwidthCentreMagnitudeNumerator[\omega \avgDerivSignNoArg](z)}{\scoreConfidenceBoundByBandwidthCentreDenominator[-\omega \avgDerivSignNoArg](z)}\\
& \leq \frac{\omega h^2 f'_h(z)}{h^2 f_h(z)}-\frac{\omega\avgDerivSignNoArg\,\scoreConfidenceBoundByBandwidthCentreMagnitudeNumerator[\omega \avgDerivSignNoArg](z)}{h^2\, f_h(z)\,(1 + 4\epsilonByNDeltaMod)(1 - 2/n)} \leq 0,
\end{align*}
since $n>2$. Otherwise, if $\scoreConfidenceBoundByBandwidthCentreMagnitudeNumerator[-1](z) \leq 0$, then since $\scoreConfidenceBoundByBandwidthCentreMagnitudeNumerator[1](z) > 0$ we have $\omega\avgDerivSignNoArg\,\scoreConfidenceBoundByBandwidthCentreMagnitudeNumerator[\omega \avgDerivSignNoArg](z) \geq 0$. Hence, if in addition $\scoreConfidenceBoundByBandwidthCentreDenominator[-1](z) > 0$, then
\begin{align*}
\omega\bigl\{\psi_h(z) - \scoreConfidenceBoundByBandwidthCentre(z)\bigr\}
&= \frac{\omega\,f'_h(z)}{f_h(z)}-\frac{\omega\avgDerivSignNoArg\,\scoreConfidenceBoundByBandwidthCentreMagnitudeNumerator[\omega \avgDerivSignNoArg](z)}{\scoreConfidenceBoundByBandwidthCentreDenominator[-1](z)}\\
& \leq \frac{\omega\,h^2\,f'_h(z)}{h^2\, f_h(z)}-\frac{\omega\avgDerivSignNoArg\,\scoreConfidenceBoundByBandwidthCentreMagnitudeNumerator[\omega \avgDerivSignNoArg](z)}{h^2\, f_h(z)\,(1 + 4\epsilonByNDeltaMod)(1 - 2/n)} \leq 0.
\end{align*}
Finally, if $\scoreConfidenceBoundByBandwidthCentreDenominator[-1](z) \leq 0$, then $\scoreConfidenceBoundByBandwidthCentre(z) = \omega \times \infty$ so again $\omega\bigl\{\psi_h(z) - \scoreConfidenceBoundByBandwidthCentre(z)\bigr\} \leq 0$, which completes the proof of the lemma.
\end{proof}

\begin{proof}[Proof of Proposition~\ref{prop:confidenceBandsAreValid}]
The first assertion follows from Lemma~\ref{lem:uniformOverIntervals}. Next, on the event $\goodEvent$, Lemma~\ref{lem:confidenceBandStatisticalErrorControl} ensures that $0 \leq \omega\bigl\{\scoreConfidenceBoundByBandwidthCentre[\omega](z) - \psi_h(z)\bigr\}$ for all $z \in \R$, $\omega \in \{-1,1\}$ and $h > 0$. When $f_0 \in \mathcal{F}$, this implies that
\begin{align*}
\omega \scoreConfidenceBoundShapeConstrained[\omega](x) = \inf\bigl\{\omega \scoreConfidenceBoundByBandwidthCentre[\omega](z) : \omega(x - z) \geq h > 0\bigr\} &\geq \inf\{\omega\psi_h(z) : \omega(x - z) \geq h > 0\} \\
&\geq \inf\{\omega \psi_0(z + \omega h) : \omega(x - z) \geq h > 0\} \\
&\geq \omega \psi_0(x)
\end{align*}
for all $x \in \R$ and $\omega \in \{-1,1\}$, where the second inequality follows from the log-concavity of $f_0$ and Lemma~\ref{lem:biasControl}. Therefore,
\begin{align}
\label{eq:confidence-interval-valid}
\goodEvent \subseteq \bigcap_{x \in \R} \bigl\{\scoreConfidenceBoundShapeConstrained[-1](x) \leq \psi_0(x) \leq \scoreConfidenceBoundShapeConstrained[1](x) \bigr\} = \bigcap_{x \in \R} \bigl\{\psi_0(x) \in \hat{\mathcal{I}}_{n,\delta}(x)\bigr\},
\end{align}
as required.
\end{proof}

Next, on $\goodEvent$, we seek to bound the widths of the confidence intervals $\bigl[\scoreConfidenceBoundByBandwidthCentre[-1](z),\scoreConfidenceBoundByBandwidthCentre[1](z)\bigr]$ for $\psi_h(z)$, so as to obtain via~\eqref{eq:confidence-interval-valid} the pointwise bounds in Proposition~\ref{prop:scorePointwiseErr} on the error of the final multiscale estimator $\scoreEstimateShapeConstrained$. To this end, in Lemma~\ref{lem:scoreErrNumDenom} below, we control the widths of the confidence intervals $\hat{\mathsf{M}}_{n,\delta,h}(z),\hat{\mathsf{D}}_{n,\delta,h}(z)$ for $h^2 f_h'(z), h^2f_h(z)$ respectively, from which the above interval for $\psi_h(z) = f_h'(z)/f_h(z)$ is derived. In particular,~\eqref{eq:scoreErrDenom} ultimately bounds the multiplicative error of the lower estimate for $f_h(z)$; see~\eqref{eq:denom-relative-lower-bd} and its application in~\eqref{eq:scoreErr}.

\begin{lemma}
\label{lem:scoreErrPrelim}
Suppose that $\goodEvent$ holds with $n \geq 3$ and $\delta \in (0,1)$. Then for any $z \in \mathbb{R}$ and $h > 0$, we have
\[
\frac{\empiricalProb([z - h, z + h]) + \epsilonByNDeltaMod}{(1 + 4\epsilonByNDeltaMod)^2(1 - 2/n)^2} \leq 129\bigl\{\probDistribution([z - h, z + h]) \vee \epsilonByNDeltaMod\bigr\}.
\]
\end{lemma}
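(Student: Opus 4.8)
The plan is to reduce the statement to the second part of Lemma~\ref{lem:uniformBoundedFunctions} applied to the indicator of $[z-h,z+h]$, and then to bound the deterministic denominator $(1+4\epsilonByNDeltaMod)^2(1-2/n)^2$ away from zero. First I would observe that $\varphi := \Ind_{[z-h,z+h]}$ is the indicator of an interval, hence unimodal (take the mode to be any point of $[z-h,z+h]$), and takes values in $[0,1]$, so $\varphi \in \functionClassUnimodal$. On the event $\goodEvent$ with $n\ge 3$, Lemma~\ref{lem:uniformBoundedFunctions} therefore yields $\empiricalProb([z-h,z+h]) = \empiricalProb(\varphi) \leq 2\probDistribution(\varphi) + 21\epsilonByNDeltaMod = 2\probDistribution([z-h,z+h]) + 21\epsilonByNDeltaMod$. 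Writing $M := \probDistribution([z-h,z+h]) \vee \epsilonByNDeltaMod$ and using $\probDistribution([z-h,z+h]) \leq M$ together with $\epsilonByNDeltaMod \leq M$, this gives
\[
\empiricalProb([z-h,z+h]) + \epsilonByNDeltaMod \leq 2\probDistribution([z-h,z+h]) + 22\epsilonByNDeltaMod \leq 24M.
\]

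Next I would show that $c := (1+4\epsilonByNDeltaMod)(1-2/n) \geq 1/2$ for all $n\ge 3$ and $\delta \in (0,1)$. For $n\ge 4$ this is immediate from $1-2/n \geq 1/2$ and $1+4\epsilonByNDeltaMod \geq 1$. For $n=3$, since $\delta < 1$ we have $\epsilonByNDeltaMod = 2\log(9/\delta)/27 > 4\log 3/27 > 1/8$, the last inequality being equivalent to $\log 3 > 27/32$, which holds because $3 > e$; hence $1+4\epsilonByNDeltaMod > 3/2$ and $c = (1+4\epsilonByNDeltaMod)/3 > 1/2$. Combining this with the previous display,
\[
\frac{\empiricalProb([z-h,z+h]) + \epsilonByNDeltaMod}{(1+4\epsilonByNDeltaMod)^2(1-2/n)^2} = \frac{\empiricalProb([z-h,z+h]) + \epsilonByNDeltaMod}{c^2} \leq \frac{24M}{1/4} = 96M \leq 129M,
\]
which is the asserted bound (with the stated constant not tight).

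This argument is entirely routine once Lemma~\ref{lem:uniformBoundedFunctions} is available; there is no real obstacle. The only mild subtlety, and the one place where the hypothesis $n\ge 3$ is genuinely used, is the lower bound on $c$ at $n=3$, which relies on $\epsilonByNDeltaMod$ itself being bounded below there (a consequence of $\epsilonByNDeltaMod \geq 4\log n/(9n)$ and $\delta < 1$); for $n\le 2$ the quantity $1-2/n$ would fail to be bounded away from $0$.
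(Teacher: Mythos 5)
Your proof is correct and follows essentially the same route as the paper: apply the bound $\empiricalProb(\varphi) \leq 2\probDistribution(\varphi) + 21\epsilonByNDeltaMod$ from Lemma~\ref{lem:uniformBoundedFunctions} to the indicator $\varphi = \Ind_{[z-h,z+h]}$, and then lower-bound the deterministic factor $(1+4\epsilonByNDeltaMod)(1-2/n)$ for $n \geq 3$. The only cosmetic difference is that the paper uses $(1+4\epsilonByNDeltaMod)(1-2/n) > 35/81$ whereas you establish the slightly sharper bound $\geq 1/2$ (correctly handling $n=3$ via $\epsilonByNDeltaMod > 4\log 3/27 > 1/8$), yielding the constant $96 \leq 129$.
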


\begin{proof}
For $z \in \mathbb{R}$ and $h > 0$, define $\varphi_2:\mathbb{R}\to\{0,1\}$ by $\varphi_2 := \mathbbm{1}_{[z - h, z + h]}$. Since $\varphi_2 \in \functionClassUnimodal$, we have $\empiricalProb(\varphi_{2}) \leq 2\probDistribution(\varphi_2)+21\epsilonByNDeltaMod$ by Lemma~\ref{lem:uniformBoundedFunctions}.  Thus, using the fact that $(1 + 4\epsilonByNDeltaMod)(1 - 2/n) > \{1+8/(9n)\}(1 - 2/n) \geq 35/81$ when $n \geq 3$ and $\delta \in (0,1)$, we obtain

%and the definitions of $\hat{\mathbb{J}}_{n,\delta,-1}$ and $\hat{\mathbb{J}}_{n,\delta,1}$ to obtain
%  \[
% \probDistribution(\varphi_2),\,\frac{\empiricalProb(\varphi_{2})(1 - 2\epsilonByNDeltaMod)+3 \epsilonByNDeltaMod}{(1 + 4\epsilonByNDeltaMod)(1 - 2/n)} \in [\hat{\mathbb{J}}_{n,\delta,-1}(\varphi_{2}),\hat{\mathbb{J}}_{n,\delta,1}(\varphi_{2})].
% \]
% By the proof of Lemma~\ref{lem:uniformBoundedFunctions}, $\hat{\mathbb{J}}_{n,\delta,1}(\varphi_{2}) - \hat{\mathbb{J}}_{n,\delta,-1}(\varphi_{2}) \leq 26 \sqrt{\epsilonByNDeltaMod \probDistribution(\varphi_2)}+112\epsilonByNDeltaMod$, so
%  \begin{align*}
% \empiricalProb(\varphi_{2}) &\leq 2\probDistribution(\varphi_2)+21\epsilonByNDeltaMod.
% \end{align*}
% where we have used the fact that $(1 + 4\epsilonByNDeltaMod)(1 - 2/n) \geq (1+8/(9n))(1 - 2/n) \geq 35/81$ when $n \geq 3$ and $\delta \in (0,1)$. Then
\[
\frac{\empiricalProb(\varphi_2) + \epsilonByNDeltaMod}{(1 + 4\epsilonByNDeltaMod)^2(1 - 2/n)^2} \leq 11\probDistribution(\varphi_{2})+118\epsilonByNDeltaMod\leq 129\bigl\{\probDistribution(\varphi_{2}) \vee \epsilonByNDeltaMod\bigr\}.
\qedhere
%\leq h(1+C/2) \sup_{x \in [z - h, z + h]}f_0(x).
\]
\end{proof}

\begin{lemma}
\label{lem:scoreErrNumDenom}
Suppose that $\goodEvent$ holds with $n \geq 3$ and $\delta \in (0,1)$. If $\probDistribution$ has an absolutely continuous density $f_0$, then for any $z \in \mathbb{R}$, $h > 0$ and $\omega \in \{-1,1\}$, we have
\begin{align}
\label{eq:scoreErrDenom}
0 &\leq \omega \biggl\{\frac{\scoreConfidenceBoundByBandwidthCentreDenominator(z)}{(1 + 4\epsilonByNDeltaMod)(1 - 2/n)}-h^2 f_h(z)\biggr\}
\leq \COneZH \cdot h \epsilonByNDeltaMod^{1/2}\bigl\{hf_h(z)\vee \epsilonByNDeltaMod\bigr\}^{1/2},
% \COneZH \cdot h^2 f_h(z)\Bigl\{\Bigl(\frac{\epsilonByNDeltaMod}{hf_h(z)}\Bigr)^{1/2}\vee \frac{\epsilonByNDeltaMod}{hf_h(z)}\Bigr\}
\\
\label{eq:scoreErrNum}
0 &\leq \omega \biggl\{\frac{\avgDerivSign{n}{h}(z) \scoreConfidenceBoundByBandwidthCentreMagnitudeNumerator[\omega\avgDerivSign{n}{h}(z)](z)}{(1 + 4\epsilonByNDeltaMod)(1 - 2/n)} - h^2 f_h'(z)\biggr\}
\leq \CTwoZH \cdot \epsilonByNDeltaMod^{1/2}\bigl\{hf_h(z)\vee \epsilonByNDeltaMod\bigr\}^{1/2}
% \leq \CTwoZH \cdot hf_h(z)\Bigl\{\Bigl(\frac{\epsilonByNDeltaMod}{hf_h(z)}\Bigr)^{1/2}\vee \frac{\epsilonByNDeltaMod}{hf_h(z)}\Bigr\},
\end{align}
where $\COneZH$ and $\CTwoZH$ are defined in~\eqref{eq:C-zh}.
\end{lemma}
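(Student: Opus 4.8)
Both estimates in \eqref{eq:scoreErrDenom} and \eqref{eq:scoreErrNum} are two-sided, and the lower bounds $0 \le \omega\{\cdots\}$ have already been established in Lemma~\ref{lem:confidenceBandStatisticalErrorControl}; the task is therefore the matching upper bounds. The plan is to work throughout on $\goodEvent$ and reuse the unimodal functions $\varphi_0(x) := \kernelFunction((z-x)/h) = h\kernelFunction[h](z-x)$ and $\varphi_1^\pm(x) := \mathbbm{1}_{\{x \in (z,z\pm h)\}}$ from the proof of Lemma~\ref{lem:confidenceBandStatisticalErrorControl}, together with the identities $\empiricalProb(\varphi_0) = h\hat{f}_{n,h}(z)$, $\probDistribution(\varphi_0) = hf_h(z)$, $\empiricalProb(\varphi_1^+) - \empiricalProb(\varphi_1^-) = h^2\hat{f}_{n,h}'(z)$, $\probDistribution(\varphi_1^\pm) = \probDistribution([z,z\pm h])$, and the bound $\empiricalProb(\varphi_0) \vee \empiricalProb(\varphi_1^+) \vee \empiricalProb(\varphi_1^-) \le \empiricalProb([z-h,z+h]) =: \hat{p}_{z,h}$.

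For \eqref{eq:scoreErrDenom}, I would substitute the definitions of $\scoreConfidenceBoundByBandwidthCentreDenominator(z)$ and $\scoreConfidenceBoundByBandwidthWidth(z)$ and cancel the $O(1/n)$ constant terms (using the identity $(1+4\epsilonByNDeltaMod)(1-2/n)/(n-2) = (1+4\epsilonByNDeltaMod)/n$) to put $\scoreConfidenceBoundByBandwidthCentreDenominator(z)/\{h(1+4\epsilonByNDeltaMod)(1-2/n)\}$ in a form that differs from $\hat{\mathbb{J}}_{n,\delta,\omega}(\varphi_0)$ only in that the term $3\sqrt{\epsilonByNDeltaMod\empiricalProb(\varphi_0)(1-\empiricalProb(\varphi_0))+\epsilonByNDeltaMod^2}$ in $\hat{\mathbb{J}}_{n,\delta,\omega}$ is replaced by the larger $3\sqrt{\epsilonByNDeltaMod\hat{p}_{z,h}+\epsilonByNDeltaMod^2}$ (larger since $\empiricalProb(\varphi_0)(1-\empiricalProb(\varphi_0)) \le \empiricalProb(\varphi_0) \le \hat{p}_{z,h}$), with the sign of this excess matching $\omega$. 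Combining this with the coverage $\probDistribution(\varphi_0) \in [\hat{\mathbb{J}}_{n,\delta,-1}(\varphi_0),\hat{\mathbb{J}}_{n,\delta,1}(\varphi_0)]$ of Lemma~\ref{lem:uniformBoundedFunctions} yields
\[
\omega\biggl\{\frac{\scoreConfidenceBoundByBandwidthCentreDenominator(z)}{h(1+4\epsilonByNDeltaMod)(1-2/n)} - hf_h(z)\biggr\} \le \bigl|\hat{\mathbb{J}}_{n,\delta,\omega}(\varphi_0) - \probDistribution(\varphi_0)\bigr| + \frac{3\sqrt{\epsilonByNDeltaMod\hat{p}_{z,h}+\epsilonByNDeltaMod^2}}{(1+4\epsilonByNDeltaMod)(1-2/n)}.
\]
I would then bound the first term on the right by $26\sqrt{\epsilonByNDeltaMod\probDistribution(\varphi_0)} + 112\epsilonByNDeltaMod \le 138\,\epsilonByNDeltaMod^{1/2}\{hf_h(z)\vee\epsilonByNDeltaMod\}^{1/2}$ via the last inequality of Lemma~\ref{lem:uniformBoundedFunctions}, and the second by writing $\sqrt{\epsilonByNDeltaMod\hat{p}_{z,h}+\epsilonByNDeltaMod^2} = \epsilonByNDeltaMod^{1/2}\sqrt{\hat{p}_{z,h}+\epsilonByNDeltaMod}$ and invoking Lemma~\ref{lem:scoreErrPrelim} to obtain $3\sqrt{129}\,\epsilonByNDeltaMod^{1/2}\{\probDistribution([z-h,z+h])\vee\epsilonByNDeltaMod\}^{1/2} \le 35\,\epsilonByNDeltaMod^{1/2}\{\probDistribution([z-h,z+h])\vee\epsilonByNDeltaMod\}^{1/2}$. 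Multiplying through by $h$ and factoring out $h\epsilonByNDeltaMod^{1/2}\{hf_h(z)\vee\epsilonByNDeltaMod\}^{1/2}$ recovers exactly $\COneZH$, giving \eqref{eq:scoreErrDenom}.

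For \eqref{eq:scoreErrNum}, using $\avgDerivSignNoArg|\hat{f}_{n,h}'(z)| = \hat{f}_{n,h}'(z)$ I would note that $\hat{\mathsf{M}}_{n,\delta,h}(z)$ has endpoints $\avgDerivSign{n}{h}(z)\scoreConfidenceBoundByBandwidthCentreMagnitudeNumerator[\pm\avgDerivSign{n}{h}(z)](z)/c_{n,\delta}$ with $c_{n,\delta} = (1+4\epsilonByNDeltaMod)(1-2/n)$, hence width $4\scoreConfidenceBoundByBandwidthWidth(z)/c_{n,\delta}$; since Lemma~\ref{lem:confidenceBandStatisticalErrorControl} places $h^2f_h'(z)$ in this interval, $\omega\{\avgDerivSign{n}{h}(z)\scoreConfidenceBoundByBandwidthCentreMagnitudeNumerator[\omega\avgDerivSign{n}{h}(z)](z)/c_{n,\delta} - h^2f_h'(z)\} \le 4\scoreConfidenceBoundByBandwidthWidth(z)/c_{n,\delta}$ for both $\omega$. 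Writing $4\scoreConfidenceBoundByBandwidthWidth(z)/c_{n,\delta} = 12\epsilonByNDeltaMod^{1/2}\sqrt{\hat{p}_{z,h}+\epsilonByNDeltaMod}/c_{n,\delta} + 4/(n-2)$, I would control the first summand with Lemma~\ref{lem:scoreErrPrelim} and the elementary bound $\probDistribution([z-h,z+h])\vee\epsilonByNDeltaMod \le 2\max_{\omega'\in\{-1,1\}}\{\probDistribution([z,z+\omega'h])\vee\epsilonByNDeltaMod\}$, and the second using $\epsilonByNDeltaMod(n-2) \ge 4\log(n)(n-2)/(9n) \ge 4\log(3)/27 > 0$ for $n \ge 3$ to absorb it into $\text{const}\cdot\epsilonByNDeltaMod^{1/2}\{hf_h(z)\vee\epsilonByNDeltaMod\}^{1/2}$; since $\max_{\omega'}\{\probDistribution([z,z+\omega'h])\vee\epsilonByNDeltaMod\} \ge \tfrac12\{hf_h(z)\vee\epsilonByNDeltaMod\}$ forces $\CTwoZH$ to be bounded below by a universal constant, there is ample room to collect everything into $\CTwoZH\,\epsilonByNDeltaMod^{1/2}\{hf_h(z)\vee\epsilonByNDeltaMod\}^{1/2}$. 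The main obstacle is precisely this constant bookkeeping: one must check that the $O(1/n)$ remainders — the $(1+4\epsilonByNDeltaMod)/n$ inside $\scoreConfidenceBoundByBandwidthWidth(z)$ and the $1/(n-2)$ shift built into $\hat{\mathbb{J}}_{n,\delta,-1}$ — cancel exactly where the $\hat{\mathbb{J}}$ comparison is made and are otherwise dominated by the stated constants $138$, $35$ and $374$, which is delicate only in the regime of small $n$ where $1/(n-2)$ is comparable to $\epsilonByNDeltaMod$.
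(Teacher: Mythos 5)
Your proposal is correct. For~\eqref{eq:scoreErrDenom} you follow the paper's route exactly: after verifying that the $O(1/n)$ constants baked into $\scoreConfidenceBoundByBandwidthCentreDenominator$ and $\hat{\mathbb{J}}_{n,\delta,\omega}$ cancel (via $(1 + 4\epsilonByNDeltaMod)/(nc_{n,\delta}) = 1/(n-2)$), you express the one-sided error as the sum of a sign-preserved offset from $\hat{\mathbb{J}}_{n,\delta,\omega}(\varphi_0)$ and a coverage error, and bound these respectively via Lemma~\ref{lem:scoreErrPrelim} and the last display of Lemma~\ref{lem:uniformBoundedFunctions}; this reproduces the paper's $\COneZH = 138 + 35\{\cdot\}^{1/2}$ decomposition.

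For~\eqref{eq:scoreErrNum} you take a genuinely different and somewhat cleaner route. The paper again goes through the $\hat{\mathbb{J}}$-machinery: it shows that the $\pm 2/(n-2)$ shifts inside $\hat{\mathbb{J}}_{n,\delta,\omega}(\varphi_1^+) - \hat{\mathbb{J}}_{n,\delta,-\omega}(\varphi_1^-)$ cancel against the $2\omega(1+4\epsilonByNDeltaMod)/(nc_{n,\delta})$ terms in $\scoreConfidenceBoundByBandwidthCentreMagnitudeNumerator$, yielding an offset controlled by $69\epsilonByNDeltaMod^{1/2}\{\probDistribution(\varphi_2)\vee\epsilonByNDeltaMod\}^{1/2}$ plus two coverage terms of $138\epsilonByNDeltaMod^{1/2}\{\probDistribution(\varphi_1^\pm)\vee\epsilonByNDeltaMod\}^{1/2}$ each (whence $69\sqrt{2}+276 < 374$). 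You instead simply bound the one-sided error by the width of $\hat{\mathsf{M}}_{n,\delta,h}(z)$, which is $4\scoreConfidenceBoundByBandwidthWidth(z)/c_{n,\delta} = 12\epsilonByNDeltaMod^{1/2}\sqrt{\hat{p}_{z,h}+\epsilonByNDeltaMod}/c_{n,\delta} + 4/(n-2)$, since Lemma~\ref{lem:confidenceBandStatisticalErrorControl} guarantees that $h^2f_h'(z)$ lies in that interval. This bypasses the $\hat{\mathbb{J}}$-comparison entirely — no cancellation of $O(1/n)$ shifts needs to be tracked, and there is no coverage term — at the cost of not using the fact that $\omega$ places the estimate on a specific side of the target. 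Your bookkeeping of the leftover $4/(n-2)$ term is correct: since $\epsilonByNDeltaMod(n-2) \geq 4\log 3/27$ for $n\geq 3$ and $\delta\in(0,1)$, you get $4/(n-2) \leq 25\epsilonByNDeltaMod \leq 25\epsilonByNDeltaMod^{1/2}\{hf_h(z)\vee\epsilonByNDeltaMod\}^{1/2}$. Combining this with $12\sqrt{129}\cdot\sqrt{2}\approx 193$ for the main term and $\CTwoZH \geq 374/\sqrt{2} \approx 265$ (because $hf_h(z)\vee\epsilonByNDeltaMod \leq 2\max_{\omega'}\{\probDistribution([z,z+\omega'h])\vee\epsilonByNDeltaMod\}$), your total of roughly $228$ fits comfortably beneath $\CTwoZH$. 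In short, your argument for~\eqref{eq:scoreErrNum} is both simpler and, as it happens, produces a smaller numerical constant than the paper's $374$; the paper's route is what actually pins down the value $374$ assigned to $\CTwoZH$, but once that constant is given, your width-based bound verifies the lemma with room to spare.
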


\begin{proof}
For $z \in \mathbb{R}$ and $h > 0$, define the functions $\varphi_0,\varphi_1^\pm, \varphi_1, \varphi_{2} \in \functionClassUnimodal$ as in Lemmas~\ref{lem:confidenceBandStatisticalErrorControl} and~\ref{lem:scoreErrPrelim}. %Then $
%\probDistribution(\varphi_0) = hf_h(z)$, $\probDistribution(\varphi_1^+) - \probDistribution(\varphi_1^-) = \probDistribution(\varphi_1) = h^2f_h'(z)$ and 
By Lemma~\ref{lem:uniformBoundedFunctions}, for any $\varphi \in \{\varphi_0, \varphi_1^+, \varphi_1^-\}$, we have
\begin{align}
\label{eq:varphiIneq}
\max_{\omega' \in \{-1,1\}} \bigl|\hat{\mathbb{J}}_{n,\delta,\omega'}(\varphi) - \probDistribution(\varphi)\bigr| \leq 138\epsilonByNDeltaMod^{1/2}\bigl\{\probDistribution(\varphi)\vee \epsilonByNDeltaMod\bigr\}^{1/2}.
\end{align}
For $\omega \in \{-1,1\}$, we have
\begin{align*}
&\omega \biggl\{\frac{\scoreConfidenceBoundByBandwidthCentreDenominator(z)}{(1 + 4\epsilonByNDeltaMod)(1 - 2/n)} - h\hat{\mathbb{J}}_{n,\delta,\omega}(\varphi_0)\biggr\} \\
&\hspace{1.5cm}= 3h \epsilonByNDeltaMod^{1/2} \cdot \frac{\{\empiricalProb(\varphi_2) + \epsilonByNDeltaMod\}^{1/2}-\bigl\{\empiricalProb(\varphi_0)\bigl(1-\empiricalProb(\varphi_0)\bigr) +\epsilonByNDeltaMod \bigr\}^{1/2}}{(1 + 4\epsilonByNDeltaMod)(1 - 2/n)}\\
&\hspace{1.5cm} \leq \frac{3 h\epsilonByNDeltaMod^{1/2}\bigl\{\empiricalProb(\varphi_{2}) + \epsilonByNDeltaMod\bigr\}^{1/2}} {(1 + 4\epsilonByNDeltaMod)(1 - 2/n)} \leq 35h\epsilonByNDeltaMod^{1/2}\bigl\{\probDistribution(\varphi_2)\vee \epsilonByNDeltaMod\bigr\}^{1/2},
\end{align*}
where the final inequality follows from Lemma~\ref{lem:scoreErrPrelim}.  Hence, by~\eqref{eq:varphiIneq} and the first part of Lemma~\ref{lem:confidenceBandStatisticalErrorControl},
\begin{align*}
0 &\leq %\omega \left\lbrace\frac{\scoreConfidenceBoundByBandwidthCentreDenominator(z)}{(1 + 4\epsilonByNDeltaMod)(1 - 2/n)}-h^2f_h(z)\right\rbrace=
\omega \biggl\{\frac{\scoreConfidenceBoundByBandwidthCentreDenominator(z)}{(1 + 4\epsilonByNDeltaMod)(1 - 2/n)}-h\probDistribution(\varphi_0)\biggr\} \\
&\leq h\epsilonByNDeltaMod ^{1/2}\bigl\{\probDistribution(\varphi_0)\vee \epsilonByNDeltaMod\bigr\}^{1/2}\biggl\{\frac{35\bigl(\probDistribution(\varphi_{2}) \vee \epsilonByNDeltaMod\bigr)^{1/2} }{\bigl(\probDistribution(\varphi_0)\vee \epsilonByNDeltaMod\bigr)^{1/2}}+138\biggr\} \\
&= \COneZH \cdot h \epsilonByNDeltaMod^{1/2}\bigl\{hf_h(z)\vee \epsilonByNDeltaMod\bigr\}^{1/2},
\end{align*}
which establishes~\eqref{eq:scoreErrDenom}. Similarly, with $\avgDerivSignNoArg \equiv \avgDerivSign{n}{h}(z)$,
%by Lemma~\ref{lem:scoreErrPrelim}, 
\begin{align}
\label{eq:MHatLength}
\omega\,\biggl\{&\frac{\avgDerivSignNoArg\scoreConfidenceBoundByBandwidthCentreMagnitudeNumerator[\omega \avgDerivSignNoArg](z)}{(1 + 4\epsilonByNDeltaMod)(1 - 2/n)}-\hat{\mathbb{J}}_{n,\delta,\omega}(\varphi_1^+) + \hat{\mathbb{J}}_{n,\delta,-\omega}(\varphi_1^-)\biggr\} \nonumber \\
&=\frac{3 \epsilonByNDeltaMod^{1/2}}{(1 + 4\epsilonByNDeltaMod)(1 - 2/n)}\sum_{\varphi \in \{\varphi_1^+, \varphi_1^-\}}\Bigl(\bigl\{\empiricalProb(\varphi_2) + \epsilonByNDeltaMod\bigr\}^{1/2}-\bigl\{\empiricalProb(\varphi)\bigl(1 - \empiricalProb(\varphi)\bigr) +\epsilonByNDeltaMod \bigr\}^{1/2}\Bigr) \nonumber \\
&\leq \frac{6\epsilonByNDeltaMod^{1/2}\bigl\{\empiricalProb(\varphi_{2}) + \epsilonByNDeltaMod\bigr\}^{1/2}} {(1 + 4\epsilonByNDeltaMod)(1 - 2/n)} \leq 69\epsilonByNDeltaMod^{1/2}\bigl\{\probDistribution(\varphi_2)\vee \epsilonByNDeltaMod\bigr\}^{1/2},
\end{align}
where the final inequality follows from another application of Lemma~\ref{lem:scoreErrPrelim}.  We conclude from the second part of Lemma~\ref{lem:confidenceBandStatisticalErrorControl},~\eqref{eq:MHatLength} and~\eqref{eq:varphiIneq} that
\begin{align*}
0 &\leq \omega\,\biggl\{\frac{\avgDerivSignNoArg\scoreConfidenceBoundByBandwidthCentreMagnitudeNumerator[\omega \avgDerivSignNoArg](z)}{(1 + 4\epsilonByNDeltaMod)(1 - 2/n)}-\probDistribution(\varphi_1^+) + \probDistribution(\varphi_1^-)\biggr\} \\
&\leq 374\epsilonByNDeltaMod ^{1/2}\max_{\varphi \in \{\varphi_1^+, \varphi_1^-, \varphi_2/2\}}\bigl\{\probDistribution(\varphi)\vee \epsilonByNDeltaMod\bigr\}^{1/2} = \CTwoZH \cdot \epsilonByNDeltaMod^{1/2}\bigl\{hf_h(z)\vee \epsilonByNDeltaMod\bigr\}^{1/2},
\end{align*} 
as required.
\end{proof}

\begin{proof}[Proof of Proposition~\ref{prop:scorePointwiseErr}]
Fix $n \geq 3$, $\delta \in (0,1)$ and work throughout on the event $\goodEvent$. Fix $x_0 \in \mathbb{R}$.

\medskip
\noindent \textit{(a)} Let $\omega \in \{-1,1\}$. If $\mathcal{H}_{f_0}^{(\omega)}(x_0) \neq \emptyset$, then $\Delta_{f_0}^{(\omega)}(x_0) = \infty$ by definition, so the result holds trivially. Otherwise, consider any $(z,h) \in \mathcal{H}_{f_0}^{(\omega)}(x_0) $. By the log-concavity of $f_0$ and~\eqref{eq:psi_h-psi_0} in Lemma~\ref{lem:biasControl}, we have $ \omega\bigl(\psi_h(z) - \psi_0(x_0)\bigr) \leq \omega\bigl(\psi_0(z - \omega h) - \psi_0(x_0)\bigr)\leq |\psi_0(z - \omega h) - \psi_0(x_0)|$, so 
% $0 \leq \omega\bigl(\psi_0(z + \omega h) - \psi_0(x_0)\bigr) \leq \omega\bigl(\psi_h(z) - \psi_0(x_0)\bigr) \leq \omega\bigl(\psi_0(z - \omega h) - \psi_0(x_0)\bigr)$, so
\begin{align*}
\omega\bigl(\scoreConfidenceBoundByBandwidthCentre[\omega](z) - \psi_0(x_0)\bigr) 
% &\leq \omega (\scoreConfidenceBoundByBandwidthCentre[\omega](z) - \psi_h(z)\bigr) + |\psi_h(z) - \psi_0(z)|+|\psi_0(z) - \psi_0(x_0)| \notag \\
% &\leq \omega\bigl(\scoreConfidenceBoundByBandwidthCentre[\omega](z) - \psi_h(z)\bigr) + 2|\psi_0(z) - \psi_0(x_0)| \vee |\psi_0(z - \omega h) - \psi_0(z)| \notag \\
&\leq \omega\bigl(\scoreConfidenceBoundByBandwidthCentre[\omega](z) - \psi_h(z)\bigr) + |\psi_0(z - \omega h) - \psi_0(x_0)|.
\end{align*}
Since $\mathcal{H}_{f_0}^{(\omega)}(x_0) \subseteq \{(z,h) \in \mathbb{R} \times (0,\infty) : \omega(x_0 - z) \geq h\}$, it follows that
\begin{align}
\omega\bigl(\scoreConfidenceBoundShapeConstrained[\omega](x_0) &- \psi_0(x_0)\bigr) \leq  \inf\bigl\{\omega\bigl(\scoreConfidenceBoundByBandwidthCentre[\omega](z) - \psi_0(x_0)\bigr) : (z,h) \in \mathcal{H}_{f_0}^{(\omega)}(x_0)\bigr\} \notag \\ 
\label{eq:scoreErrCase1}
&\leq \inf\bigl\{\omega\bigl(\scoreConfidenceBoundByBandwidthCentre[\omega](z) - \psi_h(z)\bigr)+|\psi_0(z-\omega h) - \psi_0(x_0)|: (z,h) \in \mathcal{H}_{f_0}^{(\omega)}(x_0)\bigr\}.
\end{align}
We now bound $\omega\bigl(\scoreConfidenceBoundByBandwidthCentre[\omega](z) - \psi_h(z)\bigr)$ for $(z,h) \in \mathcal{H}_{f_0}^{(\omega)}(x_0)$. Recalling that $c_{n,\delta} := (1 + 4\epsilonByNDeltaMod)(1 - 2/n)$, we apply~\eqref{eq:scoreErrDenom} in Lemma~\ref{lem:scoreErrNumDenom} and the fact that $\alpha_{z,h} \leq 1/4$ %prev. 1/2
to obtain
\begin{align}
\label{eq:denom-relative-lower-bd}
\scoreConfidenceBoundByBandwidthCentreDenominator[1](z) \geq \scoreConfidenceBoundByBandwidthCentreDenominator[-1](z) &\geq c_{n,\delta}h^2f_h(z)\biggl(1 - \COneZH\Bigl\{\Bigl(\frac{\epsilonByNDeltaMod}{hf_h(z)}\Bigr)^{1/2}\vee \frac{\epsilonByNDeltaMod}{hf_h(z)}\Bigr\}\biggr) \geq \frac{c_{n,\delta}h^2f_h(z)}{2} > 0.
\end{align}
If $\psi_h(z) = 0$, then invoking Lemma~\ref{lem:confidenceBandStatisticalErrorControl} and combining~\eqref{eq:denom-relative-lower-bd} with~\eqref{eq:scoreErrNum} in Lemma~\ref{lem:scoreErrNumDenom} yields
\begin{align}
\label{eq:scoreErr-1}
0 \leq \omega\scoreConfidenceBoundByBandwidthCentre(z) \leq \frac{\omega \avgDerivSignNoArg\,\scoreConfidenceBoundByBandwidthCentreMagnitudeNumerator[\omega \avgDerivSignNoArg](z)}{\scoreConfidenceBoundByBandwidthCentreDenominator[-1](z)} \leq \frac{2 \CTwoZH}{h}\biggl\{\Bigl(\frac{\epsilonByNDeltaMod}{hf_h(z)}\Bigr)^{1/2}\vee \frac{\epsilonByNDeltaMod}{hf_h(z)}\biggr\} \leq \frac{\beta_{z,h}}{2},
\end{align}
where $\avgDerivSignNoArg \equiv \avgDerivSign{n}{h}(z)$. On the other hand, if $\psi_h(z) \neq 0$, then for some $\omega' \in \{-1,1\}$, it follows from Lemma~\ref{lem:confidenceBandStatisticalErrorControl},~\eqref{eq:denom-relative-lower-bd} and Lemma~\ref{lem:scoreErrNumDenom} that
\begin{align}
0 &\leq \omega\bigl(\scoreConfidenceBoundByBandwidthCentre(z) - \psi_h(z)\bigr)
= \omega \psi_h(z)\biggl\{\frac{\avgDerivSignNoArg\scoreConfidenceBoundByBandwidthCentreMagnitudeNumerator[\omega \avgDerivSignNoArg](z)/\bigl(c_{n,\delta}h^2f'_h(z)\bigr)}{\scoreConfidenceBoundByBandwidthCentreDenominator[\omega'](z)/\bigl(c_{n,\delta}h^2f_h(z)\bigr)} - 1\biggr\} \nonumber \\
&\leq \omega \psi_h(z) \frac{\scoreConfidenceBoundByBandwidthCentreDenominator[-1](z)}{c_{n,\delta}h^2 f_h(z)} \biggl\{\biggl(\frac{\avgDerivSignNoArg\scoreConfidenceBoundByBandwidthCentreMagnitudeNumerator[\omega \avgDerivSignNoArg](z)}{c_{n,\delta}h^2f'_h(z)} - 1\biggr) + \biggl(1 - \frac{\scoreConfidenceBoundByBandwidthCentreDenominator[\omega'](z)}{c_{n,\delta}h^2f_h(z)} \biggr)\biggr\} \notag \\
&\leq 2\,|\psi_h(z)|\,\biggl\{\omega\biggl(\frac{\avgDerivSignNoArg \scoreConfidenceBoundByBandwidthCentreMagnitudeNumerator[\omega \avgDerivSignNoArg](z)}{|c_{n,\delta}h^2f'_h(z)|} - \sgn f_h'(z)\biggr) + \Bigl| 1 - \frac{\scoreConfidenceBoundByBandwidthCentreDenominator[\omega'](z)}{c_{n,\delta}h^2f_h(z)}\Bigr|\biggr\} \nonumber \\
\label{eq:scoreErr}
&\leq 2\,|\psi_h(z)|\,\biggl(\frac{\CTwoZH}{h|\psi_h(z)|} + \COneZH\biggr) \biggl\{\Bigl(\frac{\epsilonByNDeltaMod}{hf_h(z)}\Bigr)^{1/2}\vee \frac{\epsilonByNDeltaMod}{hf_h(z)}\biggr\} \leq \beta_{z,h}
\end{align}
Since $(z,h) \in \mathcal{H}_{f_0}^{(\omega)}(x_0)$ was arbitrary, combining~\eqref{eq:scoreErr} with~\eqref{eq:scoreErr-1} and~\eqref{eq:scoreErrCase1} yields the result.

\medskip
\noindent \textit{(b)} By~\eqref{eq:confidence-interval-valid} and the definition of $\scoreEstimateShapeConstrained(x_0)$, we have $\psi_0(x_0), \scoreEstimateShapeConstrained(x_0)\in \hat{\mathcal{I}}_{n,\delta}(x_0)$, so
% \[
% \scoreConfidenceBoundShapeConstrained[-1](x_0) \leq \psi_0(x_0) \leq \scoreConfidenceBoundShapeConstrained[1](x_0),
% \]
% so
\begin{align*}
\bigl|\scoreEstimateShapeConstrained(x_0) - \psi_0(x_0)\bigr| \leq \lambda\bigl(\hat{\mathcal{I}}_{n,\delta}(x_0)\bigr) = \bigl(\psi_0(x_0) - \scoreConfidenceBoundShapeConstrained[-1](x_0)\bigr) + \bigl(\scoreConfidenceBoundShapeConstrained[1](x_0) - \psi_0(x_0)\bigr)
\end{align*}
% Thus, if $\mathcal{H}_{f_0}^{(-1)}(x_0) \neq \emptyset$ and $\mathcal{H}_{f_0}^{(1)}(x_0) \neq \emptyset$, then 
and the result follows from part~\textit{(a)}. 
% Otherwise, $\Delta_{f_0}^{(-1)}(x_0) + \Delta_{f_0}^{(1)}(x_0) = \infty$ and the bound holds trivially.

\medskip
\noindent \textit{(c)} Since $\scoreEstimateShapeConstrained(x_0)$ is defined in \eqref{eq:scoreEstimateShapeConstrained} as the Euclidean projection of $0$ onto $\hat{\mathcal{I}}_{n,\delta}(x_0)$, we have $\scoreEstimateShapeConstrained(x_0)\bigl(y - \scoreEstimateShapeConstrained(x_0)\bigr) \geq 0$ for all $y \in \hat{\mathcal{I}}_{n,\delta}(x_0)$.  Moreover, $\psi_0(x_0) \in \hat{\mathcal{I}}_{n,\delta}(x_0)$ by~\eqref{eq:confidence-interval-valid}, so considering $y = \psi_0(x_0)$ yields
\begin{align*}
\bigl(\scoreEstimateShapeConstrained(x_0) - \psi_0(x_0)\bigr)^2 &\leq \bigl(\psi_0(x_0) - \scoreEstimateShapeConstrained(x_0)\bigr)^2 + \hat{\psi}_{n,\delta}(x_0)^2 \\
&=\psi_0(x_0)^2 - 2\scoreEstimateShapeConstrained(x_0)\bigl(\psi_0(x_0) - \scoreEstimateShapeConstrained(x_0)\bigr) \leq \psi_0(x_0)^2
\end{align*}
and hence $|\hat{\psi}_{n,\delta}(x_0) - \psi_0(x_0)| \leq |\psi_0(x_0)|$.
\end{proof}

\section{Auxiliary results}
\label{sec:auxiliary}

\begin{lemma}
\label{lem:log-concave-continuity}
Let $I \subseteq \R$ be a closed interval. Then an upper semicontinuous log-concave density $f_0 \colon \R \to [0,\infty)$ is continuous on $I$ if and only if $f_0$ is absolutely continuous on $I$.
\end{lemma}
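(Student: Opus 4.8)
The statement is an iff, but the non-trivial direction is only one of them: absolute continuity on a compact interval always implies continuity there, so the content is the forward implication. Hence the plan is to show that an upper semicontinuous log-concave density $f_0$ that is continuous on a closed interval $I$ is in fact absolutely continuous on $I$. I would first reduce to the case where $I$ is compact, since absolute continuity on a general closed interval is defined via absolute continuity on every compact subinterval (this is the ``locally absolutely continuous'' formulation recalled in Section~\ref{sec:notation}). On a compact $I = [\alpha,\beta]$, if $f_0$ vanishes identically on $I$ there is nothing to prove, so I may assume $f_0$ is positive somewhere on $I$; by log-concavity and upper semicontinuity the support of $f_0$ is an interval, and the zero set of $f_0$ within $I$ is a (possibly empty) union of at most two closed subintervals at the ends of $I$, on which $f_0 \equiv 0$ is trivially absolutely continuous. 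So the real work is on a compact subinterval $[\alpha',\beta'] \subseteq I$ on which $f_0 > 0$ throughout.

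On such an interval, set $\phi_0 := \log f_0$, which is a finite real-valued concave function on $[\alpha',\beta']$. A finite concave function on a compact interval is Lipschitz on any compact subinterval of its interior, but at the endpoints it need only be bounded; crucially, concavity gives that $\phi_0$ has a one-sided derivative everywhere in the interior and these are monotone, hence of bounded variation on compact subintervals of the interior. The key step I would carry out is to upgrade this to absolute continuity of $\phi_0$ on all of $[\alpha',\beta']$: since $\phi_0$ is continuous on the closed interval $[\alpha',\beta']$ (because $f_0$ is continuous and positive there) and concave, it is the difference of an affine function and a convex function, and a finite convex function that is continuous up to the endpoints of a compact interval is absolutely continuous there --- indeed its right derivative $\phi_0^{(\mathrm{R})}$ is increasing and, by continuity of $\phi_0$ at the endpoints together with monotonicity, locally integrable up to the boundary, and $\phi_0(x) = \phi_0(\alpha') + \int_{\alpha'}^x \phi_0^{(\mathrm{R})}$ by the fundamental theorem of calculus for monotone derivatives of convex functions. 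Then $f_0 = e^{\phi_0}$ on $[\alpha',\beta']$ is the composition of the absolutely continuous function $\phi_0$ (with bounded range, since it is continuous on a compact set) with the function $\exp$, which is Lipschitz on that bounded range; composition of an absolutely continuous function with a Lipschitz function is absolutely continuous, so $f_0$ is absolutely continuous on $[\alpha',\beta']$. Patching together the at-most-three pieces of $I$ (the two zero-intervals and the positive-interval, which overlap only at their shared endpoints) gives absolute continuity of $f_0$ on $I$.

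The main obstacle I anticipate is the behaviour at the endpoints of the ``positive'' interval: in the interior, concavity immediately yields local Lipschitz bounds, but to get honest \emph{absolute} continuity (not just local absolute continuity on the open interior) one must handle the endpoints, and this is exactly where the hypothesis that $f_0$ --- equivalently $\phi_0$ --- is \emph{continuous} on the closed interval is used. Without continuity at an endpoint, $\phi_0$ could tend to $-\infty$ there (e.g.\ $f_0$ has a jump down to $0$), and then $\phi_0$ would fail to be of bounded variation, so $f_0$ would fail to be absolutely continuous at that point; the converse direction of the iff captures precisely this, since absolute continuity forces continuity. Concretely, I would prove the endpoint claim by noting that an increasing function $\phi_0^{(\mathrm{R})}$ on $(\alpha',\beta')$ satisfies $\int_{\alpha'}^{x} \phi_0^{(\mathrm{R})} = \phi_0(x) - \lim_{t \downarrow \alpha'} \phi_0(t)$ for $x$ in the interior, and the right-hand limit equals $\phi_0(\alpha')$ exactly by the assumed continuity, making $\phi_0^{(\mathrm{R})} \in L^1([\alpha',\beta'])$ and giving the integral representation on the closed interval; everything else (the reduction to compact $I$, the structure of the zero set via log-concavity, the Lipschitz-composition argument) is routine.
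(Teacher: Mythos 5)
Your route is genuinely different from the paper's: you work directly with $\phi_0 = \log f_0$, whereas the paper passes to the density quantile function $J_0 = f_0 \circ F_0^{-1}$, which is finite, concave and (by upper semicontinuity) continuous on the compact interval $[0,1]$, hence absolutely continuous there; since $F_0$ is Lipschitz (its derivative $f_0$ is bounded), $f_0 = J_0 \circ F_0$ is absolutely continuous on $[F_0^{-1}(0), F_0^{-1}(1)]$, and the continuity hypothesis at the support endpoints lets one extend beyond. The two arguments are structurally parallel (finite concave on a compact interval $\Rightarrow$ absolutely continuous, then compose with a Lipschitz map), but there is a gap in yours at exactly the place where $J_0$ is immune.

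The issue is that $\log f_0$ can tend to $-\infty$ on $I$ even when $f_0$ is continuous there, and this is not a pathological case. Let $a := \inf \supp(f_0)$. An upper semicontinuous log-concave density always satisfies $f_0(a) = \lim_{x \to a^+} f_0(x)$ (concavity of $\log f_0$ gives $\leq$, upper semicontinuity gives $\geq$), while $\lim_{x \to a^-} f_0(x) = 0$; thus $f_0$ is continuous at $a$ \emph{if and only if} $f_0(a) = 0$. If $a$ lies in $I$ and $f_0(a) = 0$ --- for instance a $\mathrm{Gamma}(2,1)$ density, with $0$ in the interior of $I$ --- then $\{x \in I : f_0(x) > 0\}$ is a half-open interval such as $(a,\beta]$, and there is no compact $[\alpha',\beta'] \subseteq I$ with $f_0 > 0$ throughout that covers it, contrary to your decomposition. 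On any $[\alpha',\beta']$ with $\alpha' > a$ your argument does give absolute continuity, but the missing piece $(a,\alpha']$ is precisely where $\phi_0(x) \to -\infty$: there $\phi_0^{(\mathrm{R})}$ is \emph{not} integrable near $a$ and the representation $\phi_0(x) = \phi_0(\alpha') + \int_{\alpha'}^x \phi_0^{(\mathrm{R})}$ cannot be pushed down to $\alpha' = a$. Your discussion of the endpoint behaviour covers only the discontinuous case (a jump in $f_0$, where $\phi_0$ is finite at the endpoint but disagrees with the one-sided limit), not this continuous-vanishing case, which is the one that remains. The conclusion is nevertheless correct, and can be rescued within your framework: $f_0$ is unimodal, hence increasing on $[a,m]$ for the mode $m$, and a continuous increasing function that is absolutely continuous on $[\epsilon,m]$ for every $\epsilon \in (a,m)$ is absolutely continuous on $[a,m]$ by monotone convergence applied to $\int_\epsilon^m f_0'$. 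That extra step must be supplied, and avoiding it is precisely why the paper prefers $J_0 = f_0 \circ F_0^{-1}$ (which stays finite on all of $[0,1]$) to $\log f_0$ (which does not).
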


\begin{proof}
Let $a := \inf\supp f_0 \equiv F_0^{-1}(0)$ and $b := \sup\supp f_0 \equiv F_0^{-1}(1)$, which are the two points at which $f_0$ may be discontinuous (if they are finite). Since $f_0$ is log-concave, we have $f_0(\pm \infty) := \lim_{x \to \pm\infty}f_0(x) = 0$ \citep[Lemma~1]{cule2010theoretical}, and $J_0 = f_0 \circ F_0^{-1}$ is non-negative and concave on $[0,1]$ \citep[Proposition~A.1(c)]{bobkov1996extremal}, with $J_0(0) = f_0(a)$ and $J_0(1) = b$. Therefore, by \citet[Corollary~24.2.1]{rockafellar97convex}, $J_0$ is absolutely continuous on $[0,1]$. Moreover, $\norm{f_0}_\infty = \norm{J_0}_\infty < \infty$, so $F_0$ is Lipschitz on $\R$. It follows that $f_0 = J_0 \circ F_0$ is absolutely continuous on $[a,b]$. If $f_0$ is also continuous at either $a$ or $b$, then $f_0$ is absolutely continuous on $(-\infty,b]$ or $[a,\infty)$ respectively.
\end{proof}

The proofs of our upper bounds over the classes $\mathcal{F}_{\beta,L}$ in Section~\ref{subsec:holder} rely on the following envelope functions for the associated density quantile functions and their derivatives.

\begin{lemma}
\label{lem:holder-density-quantile}
For $\beta \in [1,2]$, $L > 0$ and $u \in (0,1)$, we have
\begin{align}
% The first two statements clearly rely on the shape constraint, which rules out (symmetric) multimodal densities for instance
\label{eq:holder-density-quantile-deriv}
\sup_{f_0 \in \mathcal{F}_{\beta,L}} |J_0'(u)| &\leq 2L^{1/\beta}\log_+^{(\beta - 1)/\beta}\Bigl(\frac{C_\beta}{u \wedge (1 - u)}\Bigr), \\
\label{eq:holder-density-quantile}
\sup_{f_0 \in \mathcal{F}_{\beta,L}} |J_0(u)| &\leq 4L^{1/\beta}\bigl(u \wedge (1 - u)\bigr)\log_+^{(\beta - 1)/\beta}\Bigl(\frac{C_\beta}{u \wedge (1 - u)}\Bigr), \\
% NB: The Fisher information bound holds over the larger class $\mathcal{F}_{\beta,L}$ of densities with *not necessarily monotone* (\beta,L)-H\"older score functions 
\label{eq:holder-fisher-information}
\sup_{f_0 \in \mathcal{F}_{\beta,L}} i(f_0) &\leq 8L^{2/\beta},
% Suffices to consider $L = 1$ and then apply the scaling argument
\end{align}
where we can take $C_\beta := \sqrt{2\beta^{1/(\beta - 1)}(\beta - 1)/\pi} \leq 2/\sqrt{\pi}$.
\end{lemma}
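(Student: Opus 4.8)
The plan is to prove the three envelope bounds in Lemma~\ref{lem:holder-density-quantile} by combining the H\"older control on $\psi_0$ with the log-concavity structure encoded in the density quantile function. The starting point for~\eqref{eq:holder-density-quantile-deriv} is to fix $f_0 \in \mathcal{F}_{\beta,L}$ and $u \in (0,1/2]$ (the case $u \in (1/2,1)$ following by the symmetry $J_0'(u) = -J_0'(1-u)$ in distribution, i.e.~by applying the bound to the reflected density). Write $v := u \wedge (1-u) = u$ and $x := F_0^{-1}(u)$, so that $|J_0'(u)| = |\psi_0(x)|$. By Lemma~\ref{lem:density-quantile-basics}\textit{(a)} and the concavity of $J_0$ (with $J_0(0) = 0$), we have $u = F_0(x) = \int_0^u 1 \geq$ a lower bound on the mass to the left of $x$ in terms of $\psi_0(x)$: more precisely, since $\psi_0$ is decreasing and $(\beta-1)$-H\"older, for $y$ slightly to the left of $x$ we have $\psi_0(y) \geq \psi_0(x) - L(x-y)^{\beta-1}$, and integrating $f_0 = e^{\phi_0}$ with $\phi_0(x) - \phi_0(y) = \int_y^x \psi_0 \geq (x-y)\psi_0(x) - \frac{L}{\beta}(x-y)^\beta$ gives a lower bound on $F_0(x)$ of the shape $f_0(x)\int_0^\infty e^{-t\psi_0(x) - Lt^\beta/\beta}\,dt$ when $\psi_0(x) > 0$ (with the obvious modification, using just the Gaussian-type term, when $\psi_0(x) \le 0$). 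Carrying out this Laplace-type integral and using $f_0(x) = J_0(u)$ together with the concave bound $J_0(u) \le u|J_0'(u)|$ will produce an inequality of the form $u \gtrsim u|\psi_0(x)| \cdot \min\{1/|\psi_0(x)|, (\beta/L)^{1/\beta}\}$ in the regime $\psi_0(x)>0$, from which $|\psi_0(x)| \lesssim L^{1/\beta}\log_+^{(\beta-1)/\beta}(C_\beta/u)$ drops out after rearranging; the constant $C_\beta = \sqrt{2\beta^{1/(\beta-1)}(\beta-1)/\pi}$ is precisely what the Gaussian integral $\int_0^\infty e^{-Lt^\beta/\beta}\,dt$ contributes when $\beta$ is treated carefully, and the bound $C_\beta \le 2/\sqrt\pi$ follows from $\beta^{1/(\beta-1)}(\beta-1) \le 2/e \le \dots$ over $\beta \in [1,2]$ (checking the endpoint behaviour as $\beta \to 1^+$). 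The $\log_+$ rather than $\log$ handles the regime where $\psi_0(x)$ is bounded by an absolute constant, where the trivial bound $|\psi_0(x)| \le |\psi_0(F_0^{-1}(1/2))| + L(\dots)$ or a direct argument suffices.

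Given~\eqref{eq:holder-density-quantile-deriv}, the bound~\eqref{eq:holder-density-quantile} on $J_0$ itself is essentially immediate from the concavity identity: for $u \in (0,1/2]$, $J_0(u) = J_0(u) - J_0(0) = \int_0^u J_0' \le u \sup_{(0,u]}|J_0'| = u|J_0'(0^+)|$ is too crude, so instead I would use $J_0(u) \le u|J_0'(u)|$ directly (valid because $J_0$ concave, $J_0(0)=0$ forces $J_0(u)/u$ decreasing, hence $J_0(u)/u \le \lim_{v\to 0}J_0(v)/v$, but more simply the tangent line inequality $0 = J_0(0) \ge J_0(u) - uJ_0'(u)$ gives exactly $J_0(u) \le u|J_0'(u)|$ when $J_0'(u) \ge 0$, and when $J_0'(u) < 0$ one has $J_0(u) \le u|J_0'(u)| \cdot(\text{something})$ via the right endpoint — but for $u \le 1/2$ and $J_0$ concave with $J_0(1) = 0$ one always gets $J_0(u) \le 2u\sup_{v \in [u,1/2]}|J_0'(v)|$ or cleaner, $J_0(u) \le u|J_0'(u)| + $ correction; the factor $4$ versus $2$ in the statement reflects exactly this slack). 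Then substituting~\eqref{eq:holder-density-quantile-deriv} yields the stated form with constant $4$. For~\eqref{eq:holder-fisher-information}, I would write $i(f_0) = \int_0^1 (J_0')^2 = 2\int_0^{1/2}(J_0')^2$ (up to the symmetry reduction), bound the integrand using~\eqref{eq:holder-density-quantile-deriv}, and compute $\int_0^{1/2} 4L^{2/\beta}\log_+^{2(\beta-1)/\beta}(C_\beta/u)\,du$, which is $L^{2/\beta}$ times an absolute constant because $\int_0^{1/2}\log_+^{2(\beta-1)/\beta}(C/u)\,du$ is bounded uniformly over $\beta \in [1,2]$ (the exponent $2(\beta-1)/\beta \in [0,1]$, and $\int_0^c \log^p(1/u)\,du < \infty$ for all $p \ge 0$); keeping track of constants, $8L^{2/\beta}$ is an adequate bound. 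This integral computation is the ``\eqref{eq:holder-information-integral}'' referenced in the proof of Theorem~\ref{thm:holder-minimax-upper}.

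The main obstacle is the Laplace-type estimate in the first step: one must lower-bound $F_0(x)$ in terms of $f_0(x)$ and $\psi_0(x)$ using only the $(\beta-1)$-H\"older property of $\psi_0$, and get the constant $C_\beta$ exactly right (or at least no worse than the stated value), uniformly over $\beta \in [1,2]$ including the degenerate-looking endpoint $\beta = 1$ where $\beta^{1/(\beta-1)}$ requires an $L'H\^opital$/limit argument to see it tends to $e$ and the $\log_+$ exponent tends to $0$. The cleanest route is probably: since $\psi_0$ is decreasing and $\psi_0(y) \ge \psi_0(x) - L|x-y|^{\beta-1}$ for $y \le x$, we have $\phi_0(x) - \phi_0(y) = \int_y^x \psi_0 \ge (x-y)\psi_0(x) - \tfrac{L}{\beta}(x-y)^\beta$, hence
\begin{align*}
u = F_0(x) = \int_{-\infty}^x f_0(y)\,dy = f_0(x)\int_0^\infty e^{\phi_0(x-t)-\phi_0(x)}\,dt \ge f_0(x)\int_0^\infty e^{-t\psi_0(x)_+ - Lt^\beta/\beta}\,dt,
\end{align*}
and now bounding the integral below by restricting to $t \in [0, (\beta/L)^{1/\beta}]$ or by the substitution $s = L^{1/\beta}t$ gives $u \ge f_0(x) L^{-1/\beta} g(L^{-1/\beta}\psi_0(x)_+)$ for an explicit decreasing $g$ with $g(0) = \int_0^\infty e^{-s^\beta/\beta}\,ds = \beta^{1/\beta - 1}\Gamma(1/\beta) \asymp 1$; combining with $f_0(x) = J_0(u) \ge$ lower tangent bounds, or more directly inverting via $\psi_0(x) \le |J_0'(u)|$ and $f_0(x) = J_0(u)$, and using $J_0(u) \ge$ nothing (we only need the upper direction) — actually the clean finish is: from $u \ge J_0(u)L^{-1/\beta}g(L^{-1/\beta}|J_0'(u)|)$ and $J_0(u) \le u|J_0'(u)|$ we cannot directly conclude, so instead one uses $u \ge J_0(u) L^{-1/\beta} g(\cdot)$ together with the \emph{reverse} concavity bound $J_0(u) \ge c u |J_0'(u)|$ valid... this is where care is needed, and the honest statement is that the argument proceeds by a case split on whether $L^{-1/\beta}|\psi_0(x)| \le 1$ (trivial) or $> 1$ (where $g$ decays like $e^{-cs}$ or faster and one extracts the logarithm). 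I expect this case analysis, and pinning the constant $C_\beta$, to consume most of the proof; the remaining two bounds are routine consequences.
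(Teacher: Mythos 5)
Your plan for~\eqref{eq:holder-fisher-information} (square the envelope from~\eqref{eq:holder-density-quantile-deriv} and integrate, with integration by parts) matches the paper, and the $\beta=1$ case is trivial as you say. But the core of~\eqref{eq:holder-density-quantile-deriv} has a genuine gap that your own remarks almost identify but do not resolve.

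The problem is the route through a one-sided lower bound on $F_0(x)$ alone. Writing $\rho := \psi_0(x) = J_0'(u) > 0$, your Laplace estimate gives (with $g(s) := \int_0^\infty e^{-st - t^\beta/\beta}\,dt$)
\[
u \;\geq\; f_0(x)\,L^{-1/\beta}\,g(L^{-1/\beta}\rho) \;=\; J_0(u)\,L^{-1/\beta}\,g(L^{-1/\beta}\rho).
\]
The only a priori relationship available between $J_0(u)$, $u$ and $\rho$ coming from concavity is the tangent-line bound $J_0(u) \geq u\rho$ for $u \leq 1/2$ (you wrote the inequality $J_0(u) \leq u|J_0'(u)|$, but the tangent line at $u$ through the point $(0,J_0(0))=(0,0)$ gives $J_0(u) - uJ_0'(u) \geq 0$, i.e.\ the \emph{reverse} direction; cf.~\eqref{eq:partition-property-1}). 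Substituting $J_0(u) \geq u\rho$ into your inequality and cancelling $u$ gives $1 \geq sg(s)$ where $s = L^{-1/\beta}\rho$. But $g(s) \leq \int_0^\infty e^{-st}\,dt = 1/s$, so $sg(s) \leq 1$ always: the inequality is vacuous and yields no constraint on $\rho$ at all. Your fallback case split for large $\rho$ relies on $g$ decaying ``like $e^{-cs}$ or faster'', which is false — as $s \to \infty$, $g(s) \sim 1/s$ (the minimum of $t \mapsto st + t^\beta/\beta$ over $t \geq 0$ is attained at $t = 0$). So the argument as sketched closes a trivial loop.

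The missing idea is that one must bound the \emph{ratio} $u/(1-u) = \int_{-\infty}^x e^{\phi_0}\,/\,\int_x^\infty e^{\phi_0}$, applying the \emph{concavity} upper bound $\phi_0(y) \leq \phi_0(x) + \rho(y-x)$ to the numerator (giving $f_0(x)/\rho$) and the \emph{Hölder} lower bound $\phi_0(y) \geq \phi_0(x) + \rho(y-x) - \tfrac{L}{\beta}(y-x)^\beta$ to the denominator. The factor $f_0(x)$ cancels, and after the substitution $y - x = (t/\rho) z$ with $t := (\rho^\beta\beta/L)^{1/(\beta-1)}$, the denominator integral becomes $\int_0^\infty e^{t(z - z^\beta)}\,dz$, whose Laplace expansion around the interior maximiser $z_* = \beta^{-1/(\beta-1)}$ produces the exponential growth $e^{t\zeta(z_*)}$ in $t$ (hence in $\rho$) that ultimately yields the logarithm. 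Without the ratio, the $f_0(x)$ factor remains and the bound degenerates as shown above. Finally, for~\eqref{eq:holder-density-quantile}, the paper does not use a tangent-line bound at all; it writes $|J_0(u)| \leq \int_0^u |J_0'|$ and integrates the envelope~\eqref{eq:holder-density-quantile-deriv} by parts, which is cleaner than the route you outline.
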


\begin{proof}
If $\beta = 1$, then $|J_0'(u)| = |(\psi_0 \circ F_0^{-1})(u)| \leq L$ for all $u \in (0,1)$, so by Lemma~\ref{lem:density-quantile-basics}\textit{(b)}, $i(f_0) = \int_0^1 (J_0')^2 \leq L^2$. Moreover, $J_0(0) = J_0(1) = 0$ by Lemma~\ref{lem:density-quantile-basics}\textit{(a)}, so $|J_0(u)| \leq \int_0^u |J_0'| \wedge \int_u^1 |J_0'| \leq L\bigl(u \wedge (1 - u)\bigr)$ for all $u \in [0,1]$, so the result holds. 

Next, consider $\beta \in (1,2]$ and fix $u \in (0,1)$. Let $x := F_0^{-1}(u)$ and $\rho := \psi_0(x) = J_0'(u)$. Since $\phi_0 := \log f_0$ is concave,
\[
\phi_0(y) \leq \phi_0(x) + \rho(y - x)
\]
for all $y \in \R$. Moreover, if $y \geq x$, then by the H\"older condition,
\[
\phi_0(y) - \phi_0(x) = \int_x^y \psi_0 \geq \int_x^y \bigl(\rho - L(z - x)^{\beta - 1}\bigr) \laplaced z = \rho(y - x) - \frac{L}{\beta}(y - x)^\beta.
\]
Suppose first that $\rho > 0$ and let $t := (\rho^\beta\beta/L)^{1/(\beta - 1)}$. Then by the change of variables $z = \rho(y - x)/t$, we have
\begin{equation}
\label{eq:holder-quantile-bound}
\frac{u}{1 - u} = \frac{\int_{-\infty}^x e^{\phi_0}}{\int_x^\infty e^{\phi_0}} \leq \frac{f_0(x)\int_{-\infty}^x e^{\rho(y - x)} \laplaced y}{f_0(x)\int_x^\infty e^{\rho(y - x) - L(y - x)^\beta/\beta} \laplaced y} = \frac{1/\rho}{(t/\rho)\int_0^\infty e^{t(z - z^\beta)} \laplaced z}.
\end{equation}
The function $z \mapsto z - z^\beta =: \zeta(z)$ on $(0,\infty)$ is maximised by $z_* := \beta^{-1/(\beta - 1)}$, with $\zeta(z_*) = (\beta - 1)/\beta^{\beta/(\beta - 1)}$. If $w \geq 0$, then by Taylor's theorem with the mean value form of the remainder, there exists $\bar{w} \in [0,w]$ such that
\begin{align*}
\zeta(z_* + w) - \zeta(z_*) = w\zeta'(z_*) + \frac{w^2}{2}\zeta''(z_* + \bar{w}) = -\frac{\beta(\beta - 1)w^2}{2(z_* + \bar{w})^{2 - \beta}} \geq -\frac{\beta(\beta - 1)w^2}{2z_*^{2 - \beta}}.
\end{align*}
% Could also use the fact that $\zeta'(z) = 1 - \beta z^{\beta - 1} \leq \beta^{\beta - 1}(z_* - z)$ for $z \leq z_*$ to obtain $\zeta(z) - \zeta(z_*) \geq -\beta^{\beta - 1}(z - z_*)^2/2$ for $z \in [0,z_*]$. Thus, \int_0^{z_*} e^{t\zeta} \geq e^{t\zeta(z_*)} \int_0^{z_*} \exp\Bigl(-\frac{t\beta^{\beta - 1}z^2}{2}\Bigr) \laplaced z is of order e^{t\zeta(z_*)}/t^{1/2}, but this bound in terms of \Phi is not as nice as the following one
Hence,
\begin{align*}
\int_0^\infty e^{t(z - z^\beta)} \laplaced z \geq \int_{z_*}^\infty e^{t\zeta(z)} \laplaced z &\geq e^{t\zeta(z_*)}\int_0^\infty \exp\Bigl(-\frac{t\beta(\beta - 1)z_*^{\beta - 2}w^2}{2}\Bigr) \laplaced w = \frac{e^{t\zeta(z_*)}}{C_\beta t^{1/2}},
% = e^{t\zeta(z_*)}\sqrt{\frac{\pi}{2t\beta^{1/(\beta - 1)}(\beta - 1)}},
% This bound is tight as a function of t \geq 1, up to the factor C_\beta. The dominant contribution to \int_0^\infty e^{t\zeta} comes from an interval around the mode $z_*$ of width $O(t^{-1/2})$, on which $t\zeta$ is locally quadratic with leading coefficient of order t (meaning that the vertical increment is of order 1). More rigorously, we can use the fact that the restriction of $\zeta$ to $[0,1]$ is strongly convex at $z_*$: there exists a universal constant $k > 0$ such that $\zeta(z) - \zeta(z^*) \leq -k(z - z_*)^2/2$ for all $z \in [0,1]$. For instance, because $z \mapsto \zeta'(z) = 1 - \beta z^{\beta - 1}$ is convex, taking $k = (1 - \beta)/(1 - z_*)$ ensures that $\zeta'(z) \geq k(z - z_*)$ for $z \leq z_*$ and $\zeta'(z) \leq k(z - z_*)$ for $z \geq z_*$
% On the other hand, if t \leq 1, then \int_0^1 e^{t\zeta} \asymp 1 while \int_1^\infty e^{t\zeta} \asymp \int_1^\infty e^{-tz^\beta} \laplaced z \asymp t^{-1/\beta}
\end{align*}
so by~\eqref{eq:holder-quantile-bound},
\[
\frac{t^{1/2}e^{t\zeta(z_*)}}{C_\beta} \leq t\int_0^\infty e^{t(z - z^\beta)} \leq \frac{1 - u}{u}.
% C_\beta \leq \frac{2}{\sqrt{\pi}}
\]
Thus, when $\rho > 0$, we have
% Either $t \leq 1$, or otherwise if $t > 1$, then e^{t\zeta(z_*)} \leq C_\beta(1/u - 1)
\[
\Bigl(\frac{\rho^\beta\beta}{L}\Bigr)^{1/(\beta - 1)} = t \leq \max\biggl\{\frac{1}{\zeta(z_*)}\log\biggl(\frac{C_\beta(1 - u)}{u}\biggr),\,1\biggr\},
% \wedge \biggl(\frac{C_\beta(1 - u)}{u}\biggr)^2
\]
so
\[
\rho \leq \Bigl(\frac{L}{\beta}\Bigr)^{1/\beta}\max\biggl\{\frac{\beta}{(\beta - 1)^{(\beta - 1)/\beta}}\log^{(\beta - 1)/\beta}\biggl(\frac{C_\beta(1 - u)}{u}\biggr),\,1\biggr\}.
\]
Otherwise, if $\rho < 0$, then by applying the above reasoning to the log-concave density $x \mapsto f_0(-x)$ instead, we obtain
\[
\rho \geq -\Bigl(\frac{L}{\beta}\Bigr)^{1/\beta}\max\biggl\{\frac{\beta}{(\beta - 1)^{(\beta - 1)/\beta}}\log^{(\beta - 1)/\beta}\biggl(\frac{C_\beta u}{1 - u}\biggr),\,1\biggr\}.
\]
Therefore, in all cases,
\[
|J_0'(u)| = |\rho| \leq 2L^{1/\beta}\log_+^{(\beta - 1)/\beta}\Bigl(\frac{C_\beta}{u \wedge (1 - u)}\Bigr)
\]
for every $u \in (0,1)$, which establishes~\eqref{eq:holder-density-quantile-deriv}. If $u \in (0,1/2]$, then since $J_0(0) = 0$, integrating by parts yields
\begin{align*}
\frac{J_0(u)}{2L^{1/\beta}} \leq \int_0^u \log_+^{(\beta - 1)/\beta}\Bigl(\frac{C_\beta}{v}\Bigr) \laplaced v &= \Bigl[v\log_+^{(\beta - 1)/\beta}\Bigl(\frac{C_\beta}{v}\Bigr)\Bigr]_0^u + \frac{\beta - 1}{\beta} \int_0^{u \wedge (C_\beta/e)} \log^{-1/\beta}\Bigl(\frac{C_\beta}{v}\Bigr) \laplaced v \\
&\leq u\log_+^{(\beta - 1)/\beta}\Bigl(\frac{C_\beta}{u}\Bigr) + \frac{\beta - 1}{\beta}\Bigl(u \wedge \frac{C_\beta}{e}\Bigr)\log_+^{-1/\beta}\Bigl(\frac{C_\beta}{u}\Bigr) \\
&\leq \frac{2\beta - 1}{\beta}u\log_+^{(\beta - 1)/\beta}\Bigl(\frac{C_\beta}{u}\Bigr).
\end{align*}
Similarly, for $u \in (1/2,1)$, this bound holds with $u$ replaced with $1 - u$, so~\eqref{eq:holder-density-quantile} follows. Finally, by Lemma~\ref{lem:density-quantile-basics}\textit{(b)} and integrating by parts as above,
\begin{align}
\frac{i(f_0)}{L^{2/\beta}} = \int_0^1 \frac{(J_0')^2}{L^{2/\beta}} &\leq 2 \times 4\int_0^{1/2} \log_+^{2(\beta - 1)/\beta}\Bigl(\frac{C_\beta}{v}\Bigr) \laplaced v \notag \\
&= 8\biggl(\frac{1}{2}\log_+^{2(\beta - 1)/\beta}(2C_\beta) + \frac{2(\beta - 1)}{\beta} \int_0^{C_\beta/e} \log    ^{(\beta - 2)/\beta}\Bigl(\frac{C_\beta}{v}\Bigr) \laplaced v\biggr) \notag \\
\label{eq:holder-information-integral}
&\leq 8\Bigl(\frac{1}{2} + \frac{2}{e\sqrt{\pi}}\Bigr) < 8,
% When \beta is close to 1, the universal constant can be improved using the original factor \frac{\beta}{(\beta - 1)^{(\beta - 1)/\beta}} in~\eqref{eq:holder-density-quantile-deriv} instead of 2
\end{align}
where the penultimate and final lines rely on $\beta \leq 2$ and $C_\beta \leq 2/\sqrt{\pi} < e/2$, so~\eqref{eq:holder-fisher-information} holds.
\end{proof}

The remaining results establish some fundamental properties of the $\mathrm{kl}$ function defined in~\eqref{eq:kl-bernoulli}, which is central to the multiscale construction in Section~\ref{sec:multiscale}.

\begin{lemma}
\label{lem:kl-binomial}
Suppose that $Y_1,\dotsc,Y_n \iid \mathrm{Bern}(p)$ for some $p \in (0,1)$ and let $\bar{Y} := n^{-1}\sum_{i=1}^n Y_i$. Then for every $\varepsilon > 0$, we have
\[
\Prob\bigl(\kullbackLeibler_+(\bar{Y}, p) \geq \varepsilon\bigr) \leq e^{-n\varepsilon} \quad\text{and}\quad \Prob\bigl(\kullbackLeibler(\bar{Y}, p) \geq \varepsilon\bigr) \leq 2e^{-n\varepsilon}.
\]
\end{lemma}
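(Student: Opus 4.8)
The plan is to reduce both inequalities to the classical Chernoff--Cram\'er tail bound for sums of i.i.d.\ Bernoulli random variables, whose large-deviation rate function is precisely the Bernoulli Kullback--Leibler divergence. Writing $S_n := n\bar Y \sim \mathrm{Bin}(n,p)$, the first step is to establish the one-sided bound
\[
\Pr(\bar Y \geq t) \leq e^{-n\kullbackLeibler(t,p)} \quad \text{for all } t \in [p,1], \qquad \Pr(\bar Y \leq t) \leq e^{-n\kullbackLeibler(t,p)} \quad \text{for all } t \in [0,p].
\]
For the upper tail, apply Markov's inequality to $e^{\lambda S_n}$ with $\lambda > 0$, using $\E(e^{\lambda S_n}) = (pe^\lambda + 1 - p)^n$, to get $\Pr(\bar Y \geq t) \leq \exp\bigl(-n\{\lambda t - \log(pe^\lambda + 1 - p)\}\bigr)$, and then optimise over $\lambda$: the function $\lambda \mapsto \lambda t - \log(pe^\lambda + 1 - p)$ is concave with derivative vanishing at $\lambda = \log\frac{t(1-p)}{p(1-t)} \geq 0$ when $t \geq p$ (and increasing to the limit $-\log p$ as $\lambda \to \infty$ when $t = 1$), so its supremum over $\lambda \geq 0$ equals $t\log(t/p) + (1-t)\log\bigl((1-t)/(1-p)\bigr) = \kullbackLeibler(t,p)$, consistent with the convention $\kullbackLeibler(1,p) := -\log p$. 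The lower-tail bound then follows by applying the upper-tail bound to $1 - Y_1,\dotsc,1 - Y_n \iid \mathrm{Bern}(1-p)$ together with the identity $\kullbackLeibler(t,p) = \kullbackLeibler(1-t,1-p)$.

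The second step is to translate the events in the statement into deviation events via the shape of $q \mapsto \kullbackLeibler(q,p)$ on $[0,1]$: it is continuous, vanishes at $q = p$, is strictly decreasing on $[0,p]$ and strictly increasing on $[p,1]$ (since $\partial_q\kullbackLeibler(q,p) = \log\frac{q(1-p)}{p(1-q)}$), with endpoint values $\kullbackLeibler(0,p) = -\log(1-p)$ and $\kullbackLeibler(1,p) = -\log p$. For the first claim, fix $\varepsilon > 0$: if $\varepsilon > -\log p$ then $\{\kullbackLeibler_+(\bar Y,p) \geq \varepsilon\} = \emptyset$ and the bound is trivial; otherwise there is a unique $\tilde q \in (p,1]$ with $\kullbackLeibler(\tilde q,p) = \varepsilon$, and monotonicity gives $\{\kullbackLeibler_+(\bar Y,p) \geq \varepsilon\} = \{\bar Y \geq \tilde q\}$, whence the one-sided bound yields $\Pr(\kullbackLeibler_+(\bar Y,p) \geq \varepsilon) \leq e^{-n\kullbackLeibler(\tilde q,p)} = e^{-n\varepsilon}$. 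For the second claim, $\{\kullbackLeibler(\bar Y,p) \geq \varepsilon\} \subseteq \{\bar Y \geq \tilde q_+\} \cup \{\bar Y \leq \tilde q_-\}$, where $\tilde q_+ \in (p,1]$ and $\tilde q_- \in [0,p)$ are the (unique) solutions of $\kullbackLeibler(\cdot,p) = \varepsilon$ on the respective sides whenever the corresponding sub-event is non-trivial, and empty otherwise; applying the upper- and lower-tail bounds and summing gives $\Pr(\kullbackLeibler(\bar Y,p) \geq \varepsilon) \leq e^{-n\kullbackLeibler(\tilde q_+,p)} + e^{-n\kullbackLeibler(\tilde q_-,p)} \leq 2e^{-n\varepsilon}$.

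Everything here is standard: the moment generating function optimisation is routine convex duality, and the monotonicity of $\kullbackLeibler(\cdot,p)$ is immediate. The only point requiring genuine care is the consistent treatment of the boundary cases $\bar Y \in \{0,1\}$ and of large $\varepsilon$ under the conventions $\kullbackLeibler(0,q) = -\log(1-q)$ and $\kullbackLeibler(1,q) = -\log q$; I expect this edge-case bookkeeping to be the most fiddly part, but it presents no real obstacle.
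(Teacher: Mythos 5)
Your proof is correct and follows essentially the same route as the paper: both reduce the claim to the one-sided Chernoff--Hoeffding bound $\Prob(\bar Y \geq t) \leq e^{-n\kullbackLeibler(t,p)}$ via the strict monotonicity of $q \mapsto \kullbackLeibler(q,p)$ on $[p,1]$, handle the lower tail by symmetry through $1-\bar Y$, and sum the two tails. The only cosmetic difference is that you rederive the Chernoff bound from the moment generating function and Markov's inequality, whereas the paper simply cites the relative-entropy form of Hoeffding's inequality.
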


\begin{proof}
The function $\eta \mapsto \kullbackLeibler(p + \eta, p)$ is continuous and strictly increasing on $[0, 1 - p)$ with $\lim_{\eta \nearrow 1 - p} \kullbackLeibler(p + \eta, p) = \log(1/p)$. Thus, if $\varepsilon > \log(1/p)$, then 
\[
\Prob\bigl(\kullbackLeibler_+(\bar{Y}, p) \geq \varepsilon\bigr) = 0 \leq e^{-n\varepsilon}.
\]
On the other hand, if $\varepsilon \in \bigl(0, \log(1/p)\bigr]$, then there exists a unique $\eta_0 \in [0, 1 - p)$ such that $\kullbackLeibler(p + \eta_0, p) = \varepsilon$~\citep{reeve2024short}, so by a version of Hoeffding's inequality~\citep[e.g.][Exercise~10.6.7\textit{(b)}]{samworth24modern},
% (e.g.~\citealp{hoeffding1963probability}; \citealp[p.~23]{boucheron2013concentration})
\[
\Prob\bigl(\kullbackLeibler_+(\bar{Y}, p) \geq \varepsilon\bigr) = \Prob(\bar{Y} - p \geq \eta_0) \leq e^{-n\kullbackLeibler(p + \eta_0, p)} = e^{-n\varepsilon}.
\]
Similarly, by replacing $p$ and $\bar{Y}$ with $1 - p$ and $1 - \bar{Y}$ respectively, we obtain
\[
\Prob\bigl(\kullbackLeibler_+(1 - \bar{Y}, 1 - p) \geq \varepsilon\bigr) \leq e^{-n\varepsilon},
\]
% since $\kullbackLeibler(\bar{Y}, p) = \kullbackLeibler_+(\bar{Y}, p)\one_{\{\bar{Y} > p\}} + \kullbackLeibler_+(1 - \bar{Y}, 1 - p)\one_{\{\bar{Y} < p\}}$
so
\[
\Prob\bigl(\kullbackLeibler(\bar{Y}, p) \geq \varepsilon\bigr) = \Prob\bigl(\kullbackLeibler_+(\bar{Y}, p) \geq \varepsilon\bigr) + \Prob\bigl(\kullbackLeibler_+(1 - \bar{Y}, 1 - p) \geq \varepsilon\bigr) \leq 2e^{-n\varepsilon},
\]
as required.
\end{proof}

For two probability measures $P$ and $Q$ on a measurable space $(\mathcal{X},\mathcal{A})$, let 
\[
\mathrm{KL}(P,Q) := 
\begin{cases}
\log\bigl(\frac{dP}{dQ}\bigr) \laplaced P & \text{if }P \ll Q \\
\infty & \text{otherwise} 
\end{cases}
\]
denote the Kullback--Leibler divergence from $Q$ to $P$.

\begin{lemma}
\label{lem:klJointConvex}
The function $\kullbackLeibler:[0,1] \times [0,1] \to [0,\infty]$ is jointly convex. Hence, for any Borel measurable functions $p,q:[0,1]\to [0,1]$,
\[ 
\kullbackLeibler\biggl(\int_0^1 p(\gamma) \laplaced \gamma, \int_0^1 q(\gamma)\, \laplaced \gamma\biggr) \leq \int_0^1\kullbackLeibler\bigl(p(\gamma),q(\gamma)\bigr) \laplaced\gamma.
\]
\end{lemma}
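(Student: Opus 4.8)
The plan is to deduce the lemma from two classical facts: the joint convexity of the Bernoulli relative entropy $\kullbackLeibler$, and Jensen's inequality. For the first, I would recognise $\kullbackLeibler$ as a sum of perspective functions. Writing $h(a,b) := a\log(a/b)$ on $[0,\infty)^2$ with the conventions $0\log 0 = 0$, $0\log(0/0) := 0$ and $a\log(a/0) := \infty$ for $a > 0$, the function $h$ is precisely the perspective of the convex map $t \mapsto t\log t$ on $[0,\infty)$, and hence is jointly convex and lower semicontinuous; equivalently, joint convexity of $h$ is the two-term log-sum inequality. I would then check, against the boundary conventions adopted in the statement, that $\kullbackLeibler(p,q) = h(p,q) + h(1-p,1-q)$ for every $(p,q) \in [0,1]^2$. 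Since $(p,q) \mapsto (1-p,1-q)$ is affine, this exhibits $\kullbackLeibler$ as a sum of two jointly convex functions precomposed with affine maps, and therefore jointly convex, and lower semicontinuous (hence Borel measurable), on $[0,1]^2$.

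Given joint convexity, the integral inequality follows from Jensen's inequality. Concretely, I would let $\mu$ be the pushforward of Lebesgue measure on $[0,1]$ under the Borel-measurable map $\gamma \mapsto \bigl(p(\gamma),q(\gamma)\bigr)$, a Borel probability measure on the compact convex set $[0,1]^2$ whose barycentre is $\bigl(\int_0^1 p\laplaced\gamma,\int_0^1 q\laplaced\gamma\bigr) \in [0,1]^2$. Convexity and lower semicontinuity of $\kullbackLeibler$ then give $\kullbackLeibler\bigl(\int_0^1 p\laplaced\gamma,\int_0^1 q\laplaced\gamma\bigr) \le \int_{[0,1]^2}\kullbackLeibler\laplaced\mu = \int_0^1 \kullbackLeibler\bigl(p(\gamma),q(\gamma)\bigr)\laplaced\gamma$. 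If one prefers to avoid invoking Jensen's inequality for extended-real-valued functions, one can instead note that the right-hand side is trivially an upper bound when it equals $+\infty$, and otherwise use that $\kullbackLeibler$ is the pointwise supremum of the affine minorants it admits on the interior of $[0,1]^2$, applying the elementary scalar Jensen inequality to each minorant and taking the supremum.

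I expect the only delicate point to be the boundary bookkeeping: checking that the conventions in~\eqref{eq:kl-bernoulli} and in the definitions $\kullbackLeibler(0,q) := -\log(1-q)$, $\kullbackLeibler(1,q) := -\log q$ coincide with the lower-semicontinuous extension of $h$, so that the perspective identity $\kullbackLeibler = h(\cdot,\cdot) + h(1-\cdot,\,1-\cdot)$ genuinely holds on all of $[0,1]^2$, and that the possible value $+\infty$ causes no difficulty in the Jensen step. This is routine; the substantive input is the standard joint convexity of relative entropy, specialised here to two-point distributions.
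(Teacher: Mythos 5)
Your proposal is correct, and the first half takes a genuinely different and more self-contained route than the paper. For joint convexity of $\kullbackLeibler$, the paper identifies $\kullbackLeibler(p,q) = \KL\bigl(\mathrm{Bern}(p),\mathrm{Bern}(q)\bigr)$, observes that a convex combination of Bernoulli laws is again Bernoulli with the convexly combined parameter, and then cites the joint convexity of the Kullback--Leibler divergence between distributions as a black box. You instead decompose $\kullbackLeibler(p,q) = h(p,q) + h(1-p,1-q)$ with $h(a,b) = a\log(a/b)$ the perspective of $t\mapsto t\log t$, which gives joint convexity directly from the log-sum inequality and the fact that precomposition with an affine map preserves convexity. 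Your route is more elementary in the sense that it proves the ingredient the paper merely invokes; the paper's route is shorter given that ingredient is available in the cited reference. For the Jensen step, the two arguments coincide in substance --- the paper phrases it via random variables $X_1 = p(\Gamma)$, $X_2 = q(\Gamma)$ with $\Gamma\sim\mathrm{Unif}(0,1)$ and a multivariate Jensen inequality, whereas you phrase it via the barycentre of a pushforward measure on $[0,1]^2$; these are the same statement. Your remark about boundary bookkeeping (verifying that the conventions $\kullbackLeibler(0,q)=-\log(1-q)$, $\kullbackLeibler(1,q)=-\log q$ and the $0\log 0$ conventions agree with the lower semicontinuous extension of $h$, so $\kullbackLeibler$ is indeed a sum of extended-real-valued convex functions on all of $[0,1]^2$) is a real point that your argument must address and that the paper's argument sidesteps by working through distributions; you flag it as routine, and it is, but it should be spelled out if this decomposition is used.
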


\begin{proof}
Let $p_1,p_2,q_1,q_2 \in [0,1]$ and $\lambda \in [0,1]$. For $\ell \in \{1,2\}$, let $P_\ell := \mathrm{Bern}(p_\ell)$ and $Q_\ell := \mathrm{Bern}(q_\ell)$.  Then $\lambda P_1+(1 - \lambda)P_2$ is a Bernoulli distribution with parameter $\lambda p_1 + (1 - \lambda)p_2$, so
\begin{align*}
\kullbackLeibler\bigl(\lambda p_1 + (1 - \lambda)p_2, \lambda q_1 + (1 - \lambda)q_2\bigr) &=\mathrm{KL}\bigl(\lambda P_1+(1 - \lambda)P_2, \lambda Q_1+(1 - \lambda)Q_2\bigr)\\
&\leq \lambda \mathrm{KL}(P_1,Q_1) + (1 - \lambda)\mathrm{KL}(P_2,Q_2)\\
&=\lambda \kullbackLeibler(p_1,q_1) + (1 - \lambda)\kullbackLeibler(p_2,q_2),
\end{align*}
where we have used joint convexity of the Kullback--Leibler divergence \citep[Exercise~8.7]{samworth24modern}. Thus, $\kullbackLeibler:[0,1]\times[0,1]\to [0,\infty]$ is jointly convex. 

For the second claim, let $\Gamma \sim \text{Unif}(0,1)$, let $X_1:=p(\Gamma)$ and let $X_2:= q(\Gamma)$. Applying the multivariate version of Jensen's inequality \citep[Theorem~10.3.11]{samworth24modern}, we obtain
\begin{align*}
\kullbackLeibler\biggl(\int_0^1 p(\gamma) \laplaced\gamma, \int_0^1 q(\gamma) \laplaced\gamma\biggr) = \kullbackLeibler\bigl(\mathbb{E}(X_1), \mathbb{E}(X_2)\bigr) \leq \mathbb{E}\bigl(\kullbackLeibler(X_1,X_2)\bigr) = \int_0^1\kullbackLeibler\bigl(p(\gamma),q(\gamma)\bigr) \laplaced\gamma,
\end{align*}
as required.
\end{proof}

\begin{lemma}
\label{lem:kl-joint-convexity}
The function $\kullbackLeibler_+:[0,1]\times [0,1] \to [0,\infty]$ is jointly convex.
\end{lemma}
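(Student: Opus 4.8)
The plan is to represent $\kullbackLeibler_+$ as a pointwise supremum of jointly convex functions and then invoke the elementary fact that such a supremum is convex. The key ingredient is the Donsker--Varadhan variational formula for the Bernoulli Kullback--Leibler divergence, with the test exponent restricted to $[0,\infty)$. Concretely, for $t \geq 0$ I would define $g_t \colon [0,1]^2 \to \R$ by
\[
g_t(p,q) := tp - \log\bigl(1 + q(e^t - 1)\bigr),
\]
which is finite and continuous on $[0,1]^2$ because $1 + q(e^t - 1) \geq 1$ whenever $t \geq 0$ and $q \in [0,1]$. The first step is to prove the identity $\kullbackLeibler_+(p,q) = \sup_{t \geq 0} g_t(p,q)$ for all $(p,q) \in [0,1]^2$.

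To verify this identity, fix $(p,q) \in (0,1)^2$ and note that $t \mapsto g_t(p,q)$ is smooth and strictly concave on $\R$ (its second derivative equals $-qe^t(1-q)/\{1 + q(e^t-1)\}^2 < 0$), with unique stationary point $t^\ast = \log\frac{p(1-q)}{q(1-p)}$. Substituting $t^\ast$ and simplifying the argument of the logarithm to $\tfrac{1-q}{1-p}$ gives $g_{t^\ast}(p,q) = p\log\frac{p}{q} + (1-p)\log\frac{1-p}{1-q} = \kullbackLeibler(p,q)$. Since $t^\ast \geq 0$ exactly when $p \geq q$, it follows that for $p \geq q$ the constrained supremum over $t \geq 0$ agrees with the unconstrained one, namely $\kullbackLeibler(p,q)$, while for $p < q$ the concave map $t \mapsto g_t(p,q)$ is decreasing on $[0,\infty)$, so its supremum there is $g_0(p,q) = 0$; in both cases this equals $\kullbackLeibler_+(p,q)$. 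The finitely many boundary cases $p \in \{0,1\}$ or $q \in \{0,1\}$ are then dispatched by direct substitution: $g_0 \equiv 0$ forces $\sup_{t \geq 0} g_t \geq 0$ everywhere, matching $\kullbackLeibler_+ \geq 0$; when $q = 1$ one finds $\sup_{t\geq 0} g_t(p,1) = \sup_{t \geq 0} t(p-1) = 0 = \kullbackLeibler_+(p,1)$; and when $q = 0$ one finds $\sup_{t \geq 0} g_t(p,0) = \sup_{t\geq 0} tp$, which is $+\infty$ for $p > 0$ and $0$ for $p = 0$, in accordance with the conventions defining $\kullbackLeibler$ (hence $\kullbackLeibler_+$) at these points.

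The second step is to observe that for each fixed $t \geq 0$, the function $g_t$ is jointly convex on $[0,1]^2$: the term $tp$ is linear in $(p,q)$, and $q \mapsto -\log\bigl(1 + q(e^t - 1)\bigr)$ is the composition of the convex decreasing function $-\log$ with the affine map $q \mapsto 1 + q(e^t-1)$, whose slope $e^t - 1$ is nonnegative precisely because $t \geq 0$; hence it is convex, and the sum is jointly convex. Therefore $\kullbackLeibler_+ = \sup_{t \geq 0} g_t$ is a pointwise supremum of jointly convex functions on the convex set $[0,1]^2$, and so is itself jointly convex (with values in $[0,\infty]$), which is the assertion.

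The main obstacle is the careful bookkeeping in the first step: one must check that restricting the supremum to $t \geq 0$ faithfully reproduces the indicator factor $\one_{\{p > q\}}$, and one must treat the degenerate boundary points — where $\kullbackLeibler$ is defined by convention and may equal $0$ or $+\infty$ — separately, confirming that $\sup_{t \geq 0} g_t$ takes exactly the same values there. Everything else (strict concavity of $g_t(p,q)$ in $t$, the explicit maximiser, the convexity of each $g_t$, and the stability of convexity under suprema) is routine, and none of it relies on the already-established joint convexity of $\kullbackLeibler$ in Lemma~\ref{lem:klJointConvex}, though that lemma could alternatively be used to shortcut the computation $g_{t^\ast}(p,q) = \kullbackLeibler(p,q)$.
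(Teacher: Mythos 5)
Your proof is correct, but takes a genuinely different route from the paper's. The paper verifies the convexity inequality directly by a case analysis on the signs of $p_i - q_i$ for $i \in \{1,2\}$: when both are positive it invokes the already-established joint convexity of $\kullbackLeibler$ (Lemma~\ref{lem:klJointConvex}), and in the mixed cases it first uses the monotonicity of $\kullbackLeibler_+(p',\cdot)$ on $[0,1]$ to reduce to $\kullbackLeibler$. You instead represent $\kullbackLeibler_+$ as a one-sided Donsker--Varadhan supremum, $\kullbackLeibler_+(p,q) = \sup_{t \geq 0} g_t(p,q)$ with $g_t(p,q) := tp - \log\bigl(1 + q(e^t - 1)\bigr)$, where the truncation to $t \geq 0$ is exactly what manufactures the indicator $\one_{\{p > q\}}$; joint convexity then follows at once because each $g_t$ is jointly convex on $[0,1]^2$ and a pointwise supremum of convex functions is convex. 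Your route is self-contained---it does not use Lemma~\ref{lem:klJointConvex} at all---and avoids the paper's casework, at the cost of establishing and boundary-checking the variational identity, which you do correctly. One small wording inaccuracy: the nonnegativity of the slope $e^t - 1$ plays no role in the convexity of $g_t$ (a convex function composed with any affine map is convex, and indeed $g_t$ is jointly convex and finite on $[0,1]^2$ for every $t \in \R$); the restriction $t \geq 0$ is used solely to recover $\kullbackLeibler_+$ rather than $\kullbackLeibler$, not to make $g_t$ convex or well-defined.
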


\begin{proof}
Let $p_1,p_2,q_1,q_2 \in [0,1]$ and $\lambda \in [0,1]$. Define $p := \lambda p_1 + (1 - \lambda)p_2$ and $q := \lambda q_1 + (1 - \lambda)q_2$. If $p \leq q$, then
\[
\kullbackLeibler_+(p,q) = 0 \leq \lambda \kullbackLeibler_+(p_1,q_1) + (1 - \lambda)\kullbackLeibler_+(p_2,q_2).
\]
Now suppose that $p > q$. If both $p_1 > q_1$ and $p_2 > q_2$, then by Lemma~\ref{lem:klJointConvex},
\begin{align*}
\kullbackLeibler_+(p,q) = \kullbackLeibler(p,q) &\leq \lambda \kullbackLeibler(p_1,q_1) + (1 - \lambda)\kullbackLeibler(p_2,q_2) \\
&=\lambda\kullbackLeibler_+(p_1,q_1) + (1 - \lambda)\kullbackLeibler_+(p_2,q_2).
\end{align*}
If $p_1 \leq q_1$, then we must have $p_2 > q_2$, and
\begin{align*}
\lambda p_1 + (1 - \lambda) q_2 \leq \lambda q_1 + (1 - \lambda)q_2 = q < p.
\end{align*}
It follows that
\begin{align*}
\kullbackLeibler_+(p,q) &\leq \kullbackLeibler_+\bigl(\lambda p_1 + (1 - \lambda)p_2, \lambda p_1 + (1 - \lambda)q_2\bigr)\\
&=\kullbackLeibler\bigl(\lambda p_1 + (1 - \lambda)p_2, \lambda p_1 + (1 - \lambda)q_2\bigr) \\
&\leq (1 - \lambda)\kullbackLeibler(p_2,q_2) = \lambda\kullbackLeibler_+(p_1,q_1) + (1 - \lambda)\kullbackLeibler_+(p_2,q_2),
\end{align*}
where we have used the fact that $\kullbackLeibler_+(p',\cdot)$ is decreasing on $[0,1]$ for each $p' \in [0,1]$, as well as Lemma~\ref{lem:klJointConvex}. The case where $p_2 \leq q_2$ follows similarly.
\end{proof}

\begin{lemma}
\label{lem:klLowerBound}
Given any $p,q \in [0,1]^2$, we have
\begin{align*}
\frac{9(p-q)^2}{2(p+2q)(3-p-2q)} \leq \kullbackLeibler(p,q).
\end{align*}
Consequently, for $\eta > 0$, if $\kullbackLeibler(p,q) < 9\eta/2$ then 
\[
\Bigl|p - \frac{2(1 - 2\eta)q + 3\eta}{2(1 + \eta)}\Bigr| \leq \frac{3\sqrt{4q(1 - q)\eta + \eta^2}}{2(1 + \eta)}
\]
and
\[
\Bigl|q - \frac{(1 - 2\eta)p + 3\eta}{1 + 4\eta}\Bigr| \leq \frac{3\sqrt{p(1 - p)\eta + \eta^2}}{1 + 4\eta}.
\]
\end{lemma}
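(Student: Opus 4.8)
The plan is to first establish the quadratic lower bound on $\kullbackLeibler(p,q)$ and then deduce the two confidence-interval-type consequences by solving quadratic inequalities. For the main inequality, I would fix $q$ and consider the function $g(p) := \kullbackLeibler(p,q) - \frac{9(p-q)^2}{2(p+2q)(3-p-2q)}$ on $[0,1]$, aiming to show $g \geq 0$. Note $g(q) = 0$, so it suffices to analyse the sign of $g'$ on either side of $q$; alternatively, a cleaner route is to use the standard refinement of Pinsker's inequality. Indeed, a known bound (see e.g.\ the literature on Bernoulli KL, or prove it directly) states that for $p,q \in [0,1]$,
\[
\kullbackLeibler(p,q) \geq \frac{(p-q)^2}{2v(p,q)}
\]
where $v(p,q)$ is an appropriate ``variance-like'' quantity; here the claim is equivalent to taking $v(p,q) = \frac{(p+2q)(3-p-2q)}{9}$. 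I would verify this by checking that $h(p) := \kullbackLeibler(p,q) \cdot 2v(p,q) - (p-q)^2$ satisfies $h(q) = h'(q) = 0$ and $h''(p) \geq 0$ throughout $[0,1]$, reducing to a polynomial inequality after clearing the logarithmic derivatives; the factor structure $(p+2q)(3-p-2q)$ is engineered precisely so that $v(q,q) = q(1-q) \cdot \tfrac{?}{}$ — actually $v(q,q) = \frac{3q(3-3q)}{9} = q(1-q)$, matching Pinsker's constant at $p=q$, which is the correct normalisation and a good sanity check.

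For the two consequences, suppose $\kullbackLeibler(p,q) < 9\eta/2$. Then the main inequality gives $\frac{9(p-q)^2}{2(p+2q)(3-p-2q)} < \frac{9\eta}{2}$, i.e.
\[
(p-q)^2 < \eta\,(p+2q)(3-p-2q).
\]
Expanding the right-hand side in terms of $q$ (treating $p$ as fixed) yields a quadratic inequality in $q$: writing it as $(p-q)^2 - \eta(p+2q)(3-p-2q) < 0$ and collecting powers of $q$, one gets $(1+4\eta)q^2 - 2\{p + \eta(3/2 - p) + \dots\}q + \dots < 0$ — I would carefully expand $(p+2q)(3-p-2q) = 3p - p^2 + 6q - 2pq - 2pq + \dots$; more precisely $(p+2q)(3-p-2q) = p(3-p) + 2q(3 - 2p) - 4q^2$, hmm let me just say the expansion is routine. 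The roots of the resulting quadratic in $q$ are $\frac{(1-2\eta)p + 3\eta}{1+4\eta} \pm \frac{3\sqrt{p(1-p)\eta + \eta^2}}{1+4\eta}$ (the centre being the ratio of the linear coefficient to twice the leading coefficient, and the half-width coming from the discriminant), which gives the stated bound on $q$. For the bound on $p$, I would instead collect powers of $p$ in the same inequality $(p-q)^2 < \eta(p+2q)(3-p-2q)$, obtaining a quadratic in $p$ with leading coefficient $1 + \eta$ (since the $-p^2$ from $\eta(p+2q)(\cdots)$ contributes $+\eta p^2$ to the left-minus-right), centre $\frac{2(1-2\eta)q + 3\eta}{2(1+\eta)}$, and discriminant yielding half-width $\frac{3\sqrt{4q(1-q)\eta+\eta^2}}{2(1+\eta)}$.

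The main obstacle will be verifying the polynomial inequality underlying $\kullbackLeibler(p,q) \geq \frac{9(p-q)^2}{2(p+2q)(3-p-2q)}$: after reducing via $h''(p) \geq 0$ one still faces a somewhat involved expression mixing $\log(p/q)$, $\log\bigl((1-p)/(1-q)\bigr)$ and rational functions, and care is needed near the boundary points $p \in \{0,1\}$ and the degenerate cases $q \in \{0,1\}$ where the convention $\kullbackLeibler(0,q) = -\log(1-q)$, $\kullbackLeibler(1,q) = -\log q$ applies. A robust alternative, which I would use to sidestep the heavy calculus, is to invoke the integral representation $\kullbackLeibler(p,q) = \int_q^p \frac{p - t}{t(1-t)}\,dt$ (valid for $p > q$, with the analogous form for $p < q$) and then bound $\frac{1}{t(1-t)}$ from below on $[q,p]$ — but here we need a \emph{lower} bound on the integral, so instead I would use $\frac{1}{t(1-t)} \geq \frac{1}{v(p,q)}$ fails in general; rather, the cleanest rigorous path is Jensen/Cauchy--Schwarz: $\kullbackLeibler(p,q) = \int_q^p \frac{p-t}{t(1-t)}\,dt \geq \frac{\bigl(\int_q^p \sqrt{p-t}\,dt\bigr)^2}{\int_q^p (p-t)\,t(1-t)\,dt}$ by Cauchy--Schwarz, and the denominator $\int_q^p (p-t)t(1-t)\,dt$ can be bounded above by $(p-q)^2 \cdot \tfrac{1}{2}\max_{[q,p]} t(1-t)$ times a constant; pinning down the constant to recover exactly $\tfrac{2}{9}(p+2q)(3-p-2q)$ is the delicate bookkeeping step. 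I would allocate the bulk of the write-up to this lemma and keep the quadratic-inequality deductions brief.
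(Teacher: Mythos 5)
Your handling of the two ``consequences'' mirrors the paper exactly: one rearranges $(p-q)^2 < \eta\,(p+2q)(3-p-2q)$ to
\[
0 \geq p^2(1+\eta) - \bigl(2(1-2\eta)q + 3\eta\bigr)p + (1+4\eta)q^2 - 6\eta q,
\]
and reads off the roots of this expression viewed alternately as a quadratic in $p$ (leading coefficient $1+\eta$, discriminant $9\{4q(1-q)\eta + \eta^2\}$) and in $q$ (leading coefficient $1+4\eta$, discriminant $36\{p(1-p)\eta + \eta^2\}$). So that part is fine.

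For the main inequality, the paper simply cites an external lemma (\citet[Lemma~7]{reeve2024short}), whereas you offer several partial sketches without settling on one, and explicitly leave the verification unfinished (``pinning down the constant \dots is the delicate bookkeeping step''). As it stands, that is a genuine gap: you have not proved the first display. However, your Cauchy--Schwarz idea does in fact close cleanly, so let me record both the correction and the missing bookkeeping. The display has a typo: with $f^2 = \frac{p-t}{t(1-t)}$ and $g^2 = (p-t)t(1-t)$ one has $fg = p-t$, so the numerator should be $\bigl(\int_q^p (p-t)\,\mathrm{d}t\bigr)^2 = (p-q)^4/4$, not $\bigl(\int_q^p \sqrt{p-t}\,\mathrm{d}t\bigr)^2$. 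Writing $m := p - q$ and substituting $t = q + u$,
\[
\int_q^p (p-t)\,t(1-t)\,\mathrm{d}t = \frac{m^2 q(1-q)}{2} + \frac{m^3(1-2q)}{6} - \frac{m^4}{12} = \frac{m^2}{12}\bigl(6q(1-q) + 2m(1-2q) - m^2\bigr),
\]
so Cauchy--Schwarz gives $\kullbackLeibler(p,q) \geq \frac{3m^2}{6q(1-q) + 2m(1-2q) - m^2}$. Since $(p+2q)(3-p-2q) = (3q+m)(3-3q-m) = 9q(1-q) + 3m(1-2q) - m^2$, the target bound is $\frac{9m^2}{2\{9q(1-q) + 3m(1-2q) - m^2\}}$, and comparing denominators requires precisely $3\{6q(1-q) + 2m(1-2q) - m^2\} \leq 2\{9q(1-q) + 3m(1-2q) - m^2\}$, which reduces to $-3m^2 \leq -2m^2$, true. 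The case $p < q$ follows by the substitution $(p,q,t) \mapsto (1-p,1-q,1-t)$, and the boundary cases $p \in \{0,1\}$ are handled directly since the right-hand side is then $+\infty$. So your alternative is valid and even slightly stronger than the stated inequality; you simply needed to finish it rather than gesturing at three different strategies.
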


\begin{proof}
For the first bound, see for example~\citet[Lemma~7]{reeve2024short}. Given $\eta > 0$, rearrangement of the resulting inequality yields
% Solve[(p - q)^2/((p + 2 q) (3 - p - 2 q)) == z, p]
\begin{align*}
0 &\geq p^2(1 + \eta) - \bigl(2(1 - 2\eta)q + 3\eta\bigr)p + (1 + 4\eta)q^2 - 6q\eta,
% Solve[p^2 (1 + z) - (2 q - (4 q - 3) z) p + q^2 (1 + 4 z) - 6 q*z == 0, p]
\end{align*}
which is equivalent to both the second and third bounds.
% Simplify[(2q - (4q - 3)*z)^2 - 4 (1 + z) (q^2 (1 + 4z) - 6 q*z)]
% \begin{align*}
% \bigl((1 - 2\eta)q + 3\eta\bigr)^2 - 4(1 + \eta)\bigl((1 + 4\eta)q^2 - 6q\eta\bigr) = 9\eta\bigl(4q(1 - q) + \eta\bigr))
% \end{align*}
\end{proof}

\end{document}